\newtheorem{theorem}{Theorem}
\newtheorem*{theoremannounce}{Theorem}
\numberwithin{theorem}{section}
\theoremstyle{plain}
\newtheorem*{acknowledgement}{Acknowledgement}
\newtheorem{corollary}[theorem]{Corollary}
\newtheorem{definition}[theorem]{Definition}
\newtheorem{lemma}[theorem]{Lemma}
\newtheorem*{notation}{Notation}
\newtheorem{proposition}[theorem]{Proposition}
\theoremstyle{remark}
\newtheorem{example}{Example}
\newtheorem{remark}[theorem]{Remark}
\numberwithin{equation}{section}
\begin{document}
\title[$K$-theory of locally compact modules]{$K$-theory of locally compact modules\linebreak over rings of integers}
\author{Oliver Braunling}
\address{Freiburg Institute for Advanced Studies (FRIAS), University of Freiburg,
D-79104 Freiburg im\ Breisgau, Germany}
\thanks{The author was supported by DFG GK1821 \textquotedblleft Cohomological Methods
in Geometry\textquotedblright\ and a Junior Fellowship at the Freiburg
Institute for Advanced Studies (FRIAS)}
\subjclass[2000]{ Primary 22B05; Secondary 19D10}
\keywords{Locally compact abelian groups, LCA groups, Lichtenbaum conjectures, infinite primes}

\begin{abstract}
We generalize a recent result of Clausen:\ For a number field with integers
$\mathcal{O}$, we compute the $K$-theory of locally compact $\mathcal{O}%
$-modules. For the rational integers this recovers Clausen's result as a
special case. Our method of proof is quite different:\ Instead of a
homotopy coherent cone construction in $\infty$-categories, we rely on
calculus of fraction type results in the style of Schlichting. This produces
concrete exact category models for certain quotients, a fact which might be of
independent interest. As in Clausen's work, our computation works for all
localizing invariants, not just $K$-theory.

\end{abstract}
\maketitle

\section{Introduction}

Let $\mathsf{LCA}$ denote the category of locally compact abelian (LCA)
groups, $\operatorname*{Cat}_{\infty}^{\operatorname*{ex}}$ the $\infty
$-category of small stable $\infty$-categories, $\mathsf{A}$ any stable
$\infty$-category. Recently, Dustin Clausen proved the following theorem:

\begin{theoremannounce}
[Clausen]For every localizing invariant $K:\operatorname*{Cat}_{\infty
}^{\operatorname*{ex}}\rightarrow\mathsf{A}$, there is a canonical fiber
sequence%
\[
K(\mathbb{Z})\longrightarrow K(\mathbb{R})\longrightarrow K(\mathsf{LCA}%
)\text{,}%
\]
where the first map is induced from the ring homomorphism $\mathbb{Z}%
\rightarrow\mathbb{R}$.
\end{theoremannounce}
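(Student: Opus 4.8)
The plan is to exhibit $\mathsf{LCA}$, up to $K$-theory, as a two-step Schlichting quotient built from $\mathbf{Mod}_{\mathbb{R}}^{\mathrm{fd}}$ and $\mathbf{Mod}_{\mathbb{Z}}^{\mathrm{fg}}$, and then to identify the resulting connecting map with base change along $\mathbb{Z}\to\mathbb{R}$. The structural inputs I would assume are: $\mathsf{LCA}$ is quasi-abelian, hence an exact category to which localizing invariants apply via its bounded derived $\infty$-category; Pontryagin duality is an exact anti-equivalence of $\mathsf{LCA}$ with itself; and every LCA group has an open subgroup $\mathbb{R}^n\times K$ with $K$ compact and discrete quotient. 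Let $\mathsf{LCA}_{cg}\subseteq\mathsf{LCA}$ be the full subcategory of compactly generated groups — concretely those of the form $\mathbb{R}^a\times\mathbb{Z}^b\times K$ with $K$ compact. Closed subgroups, Hausdorff quotients, and extensions of compactly generated groups are again compactly generated, so $\mathsf{LCA}_{cg}$ is a fully exact, extension-closed subcategory closed under admissible sub- and quotient objects; I would verify it is left (and right) $s$-filtering in Schlichting's sense, obtaining a localization — realized by a Verdier quotient of derived $\infty$-categories, hence valid for every localizing invariant — giving a fiber sequence $K(\mathsf{LCA}_{cg})\to K(\mathsf{LCA})\to K(\mathsf{LCA}/\mathsf{LCA}_{cg})$.

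The next step is to compute the two outer terms by producing concrete exact-category models for the two relevant quotients. For the cofiber term: discrete groups embed in $\mathsf{LCA}$, every object of $\mathsf{LCA}/\mathsf{LCA}_{cg}$ becomes isomorphic to a discrete one once the open subgroup $\mathbb{R}^n\times K$ is killed, and finitely generated discrete groups become zero; a calculus-of-fractions computation of the localized Hom-groups then identifies $\mathsf{LCA}/\mathsf{LCA}_{cg}$ with the Serre quotient $\mathbf{Ab}/\mathbf{Ab}_{\mathrm{fg}}$ of abelian groups modulo finitely generated ones. Since $\mathbf{Ab}$ has exact countable coproducts one has $K(\mathbf{Ab})=0$ by the Eilenberg swindle, and since $\mathbb{Z}$ is regular the resolution theorem gives $K(\mathbf{Ab}_{\mathrm{fg}})=K(\mathbf{Mod}_{\mathbb{Z}}^{\mathrm{fg}})\simeq K(\mathbb{Z})$; hence $K(\mathsf{LCA}/\mathsf{LCA}_{cg})\simeq\Sigma K(\mathbb{Z})$. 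For the fiber term: the full subcategory $\mathsf{LCA}_C\subseteq\mathsf{LCA}_{cg}$ of compact groups is again $s$-filtering, and $K(\mathsf{LCA}_C)=0$ (Eilenberg swindle via countable products, or Pontryagin duality and $K(\mathbf{Ab})=0$), so $K(\mathsf{LCA}_{cg})\simeq K(\mathsf{LCA}_{cg}/\mathsf{LCA}_C)$. In $\mathsf{LCA}_{cg}/\mathsf{LCA}_C$ the compact factor of $\mathbb{R}^a\times\mathbb{Z}^b\times K$ dies and, decisively, every lattice inclusion $\mathbb{Z}^m\hookrightarrow\mathbb{R}^m$ has compact cokernel $\mathbb{T}^m$ and thus becomes invertible; so every object becomes $\cong\mathbb{R}^{a+b}$, and a Hom-group computation identifies $\mathsf{LCA}_{cg}/\mathsf{LCA}_C$ with $\mathbf{Mod}_{\mathbb{R}}^{\mathrm{fd}}$, so $K(\mathsf{LCA}_{cg})\simeq K(\mathbb{R})$. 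Substituting both identifications yields a fiber sequence $K(\mathbb{R})\to K(\mathsf{LCA})\to\Sigma K(\mathbb{Z})$, which rotates to the asserted $K(\mathbb{Z})\to K(\mathbb{R})\to K(\mathsf{LCA})$.

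Finally I would identify the first map with the one induced by $\mathbb{Z}\to\mathbb{R}$. The inclusion of discrete groups gives a map of $s$-filtering pairs $(\mathbf{Ab}_{\mathrm{fg}}\subseteq\mathbf{Ab})\to(\mathsf{LCA}_{cg}\subseteq\mathsf{LCA})$, hence a ladder of localization fiber sequences in which the map on cofiber terms is precisely the equivalence $\mathbf{Ab}/\mathbf{Ab}_{\mathrm{fg}}\xrightarrow{\sim}\mathsf{LCA}/\mathsf{LCA}_{cg}$ used above; chasing the connecting maps, the map $K(\mathbb{Z})\to K(\mathbb{R})$ in question becomes the one induced by the exact functor $\mathbf{Ab}_{\mathrm{fg}}\hookrightarrow\mathsf{LCA}_{cg}\to\mathsf{LCA}_{cg}/\mathsf{LCA}_C\simeq\mathbf{Mod}_{\mathbb{R}}^{\mathrm{fd}}$, which sends $\mathbb{Z}^n\mapsto\mathbb{R}^n$ and finite groups to $0$, i.e. is $-\otimes_{\mathbb{Z}}\mathbb{R}$. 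I expect the real work — and the part where the method genuinely differs from Clausen's $\infty$-categorical cone — to be the two ``concrete model'' identifications $\mathsf{LCA}/\mathsf{LCA}_{cg}\simeq\mathbf{Ab}/\mathbf{Ab}_{\mathrm{fg}}$ and $\mathsf{LCA}_{cg}/\mathsf{LCA}_C\simeq\mathbf{Mod}_{\mathbb{R}}^{\mathrm{fd}}$, which hinge on a careful calculus-of-fractions analysis of the localized morphism sets and on verifying Schlichting's $s$-filtering axioms against the rigid structure of LCA groups; by comparison the swindles, the resolution theorem, and the final compatibility with $\mathbb{Z}\to\mathbb{R}$ are routine.
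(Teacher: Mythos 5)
Your first quotient step is exactly the paper's: $\mathsf{LCA}_{cg}\subset\mathsf{LCA}$ is left $s$-filtering, the quotient is identified with $\mathsf{Mod}(\mathbb{Z})/\mathsf{Mod}_{fg}(\mathbb{Z})$, and the swindle plus the resolution theorem give $\Sigma K(\mathbb{Z})$ for the cofiber. The gap is in your second step. You assert that $\mathsf{LCA}_{C}\subseteq\mathsf{LCA}_{cg}$ is ``again $s$-filtering,'' but it is neither left nor right $s$-filtering there, and the paper makes a point of this (Example \ref{example_1}): the exact sequence $\mathbb{Z}\hookrightarrow\mathbb{R}\twoheadrightarrow\mathbb{T}$ lies entirely inside $\mathsf{LCA}_{cg}$, and left specialness would require a compact group $Y$ with an admissible epic $Y\twoheadrightarrow\mathbb{T}$ factoring through a map $Y\rightarrow\mathbb{R}$; the image of $Y$ in $\mathbb{R}$ is a compact subgroup, hence trivial, a contradiction. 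Right $s$-filtering fails even at the filtering stage: a homomorphism $\mathbb{Z}\rightarrow\mathbb{T}$ with dense image does not factor through any admissible quotient $\mathbb{Z}\twoheadrightarrow\mathbb{Z}/n$. So Schlichting's localization theorem does not apply to the pair $(\mathsf{LCA}_{C},\mathsf{LCA}_{cg})$, and the fiber sequence $K(\mathsf{LCA}_{C})\rightarrow K(\mathsf{LCA}_{cg})\rightarrow K(\mathsf{LCA}_{cg}/\mathsf{LCA}_{C})$ on which your computation of the fiber term rests is unjustified. The obstruction is precisely the discrete lattice summands ($\mathbb{Z}^{b}$, resp. the ideals $J$ in the general case) occurring in compactly generated groups.

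The paper's route around this is an extra resolution step: every object of $\mathsf{LCA}_{cg}$ admits a length-two injective resolution by objects of $\mathsf{LCA}_{\mathbb{R}C}$ (vector group $\oplus$ compact) --- the lattice $\mathbb{Z}^{b}$ is resolved by $\mathbb{Z}^b\hookrightarrow\mathbb{R}^b\twoheadrightarrow\mathbb{T}^b$ --- so the resolution theorem gives $K(\mathsf{LCA}_{cg})\simeq K(\mathsf{LCA}_{\mathbb{R}C})$ (this is Lemma \ref{lemma_KOfLCAnss_or_cg}, stated dually via $\mathsf{LCA}_{nss}$ and projective resolutions in $\mathsf{LCA}_{\mathbb{R}D}$). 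Inside $\mathsf{LCA}_{\mathbb{R}C}$ the subcategory $\mathsf{LCA}_{C}$ \emph{is} left $s$-filtering (Lemma \ref{lemma_LCAC_leftsfiltInLCARC}), exactly because the discrete summands are gone, and the quotient is the category of vector groups, giving $K(\mathsf{LCA}_{cg})\simeq K(\mathbb{R})$ as you intended. Your final identification of the connecting map with $-\otimes_{\mathbb{Z}}\mathbb{R}$ is morally the paper's additivity argument with the Minkowski sequence $\mathbb{Z}\hookrightarrow\mathbb{R}\twoheadrightarrow\mathbb{T}$, but as written it also routes through the nonexistent quotient $\mathsf{LCA}_{cg}/\mathsf{LCA}_{C}$ and would need to be redone through $\mathsf{LCA}_{\mathbb{R}C}$.
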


This is \cite[Theorem 3.4]{clausen}. Clausen's work is based on earlier ideas
of Hoffmann and Spitzweck \cite{MR2329311}. As an example, we can take
non-connective $K$-theory for $K$ (the letter was chosen suggestively), which
has values in spectra, and obtain a computation of the $K$-theory of the
category $\mathsf{LCA}$.\medskip

We generalize Clausen's result as follows:

\begin{theorem}
\label{thm_intro_Main1}Let $F$ be a number field and $\mathcal{O}$ its ring of
integers. Let $\mathsf{LCA}_{\mathcal{O}}$ be the category of locally compact
$\mathcal{O}$-modules. For every localizing invariant $K:\operatorname*{Cat}%
_{\infty}^{\operatorname*{ex}}\rightarrow\mathsf{A}$, there is a canonical
fiber sequence%
\[
K(\mathcal{O})\longrightarrow K(\mathbb{R})^{r}\oplus K(\mathbb{C}%
)^{s}\longrightarrow K(\mathsf{LCA}_{\mathcal{O}})\text{,}%
\]
where $r$ is the number of real places and $s$ the number of complex places.
\end{theorem}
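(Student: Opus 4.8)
The plan is to exhibit $\mathsf{LCA}_{\mathcal{O}}$ as a Schlichting quotient of a concrete exact category and read the asserted sequence off the resulting localization theorem; I begin by fixing the ambient structure. The category $\mathsf{LCA}_{\mathcal{O}}$ is quasi-abelian — admissible monomorphisms are the closed-submodule inclusions, admissible epimorphisms the open surjections — and idempotent complete, and $\mathcal{O}$-equivariant Pontryagin--van Kampen duality $M\mapsto M^{\vee}=\operatorname{Hom}_{\mathrm{cts}}(M,\mathbb{R}/\mathbb{Z})$, with $(a\cdot f)(x)=f(ax)$, is an exact anti-equivalence interchanging discrete and compact modules and preserving the full subcategory $\mathcal{V}$ of vector modules, i.e.\ those topologically isomorphic to a finitely generated module over $\mathcal{O}_{\mathbb{R}}:=\mathcal{O}\otimes_{\mathbb{Z}}\mathbb{R}$. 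The structure theory of LCA groups (an open submodule of the form $\textrm{vector}\times\textrm{compact}$ with discrete quotient) together with the fact that $\mathcal{O}_{\mathbb{R}}$ is an injective $\mathcal{O}$-module — it is a $\mathbb{Q}$-vector space, hence divisible, over a Dedekind domain — shows that every object of $\mathsf{LCA}_{\mathcal{O}}$ is an admissible extension of a vector module by a module with a compact open submodule. Finally, $\mathcal{O}_{\mathbb{R}}\cong\prod_{v\mid\infty}F_{v}$ with each $F_{v}\in\{\mathbb{R},\mathbb{C}\}$ a field, so $\mathcal{V}$ is the product of the categories of finite-dimensional $F_{v}$-vector spaces and $K(\mathcal{V})\simeq K(\mathcal{O}_{\mathbb{R}})\simeq K(\mathbb{R})^{r}\oplus K(\mathbb{C})^{s}$; similarly $K$ of the category of finitely generated $\mathcal{O}$-modules is $K(\mathcal{O})$ because $\mathcal{O}$ is regular.

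\emph{The localization.} The core step is to construct an auxiliary exact category $\mathcal{E}$ with a left (or right) $s$-filtering subcategory $\mathcal{A}$ such that $K(\mathcal{A})\simeq K(\mathcal{O})$, $K(\mathcal{E})\simeq K(\mathcal{O}_{\mathbb{R}})$, and the Schlichting quotient $\mathcal{E}/\mathcal{A}$ is, after idempotent completion, exact-equivalent to $\mathsf{LCA}_{\mathcal{O}}$; here $\mathcal{A}$ is to be a concrete model for finitely generated $\mathcal{O}$-modules rigidified by their archimedean scalar extensions, so that $\mathcal{A}\hookrightarrow\mathcal{E}$ induces on $K$-theory the map of the ring homomorphism $\mathcal{O}\to\mathcal{O}_{\mathbb{R}}$. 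Schlichting's localization theorem then produces $K(\mathcal{A})\to K(\mathcal{E})\to K(\mathcal{E}/\mathcal{A})$; since it arises from a Verdier (Karoubi) sequence $D^{b}(\mathcal{A})\to D^{b}(\mathcal{E})\to D^{b}(\mathcal{E}/\mathcal{A})$ of bounded derived $\infty$-categories, it is a fiber sequence for every localizing invariant $K$, and substituting the identifications of the first paragraph turns it into $K(\mathcal{O})\to K(\mathbb{R})^{r}\oplus K(\mathbb{C})^{s}\to K(\mathsf{LCA}_{\mathcal{O}})$.

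\emph{Main obstacle.} The heart of the argument is the construction of $\mathcal{E}$ and the explicit computation of the Schlichting quotient, i.e.\ showing that the fractions defining its morphisms collapse to exactly the continuous $\mathcal{O}$-module maps of the corresponding locally compact modules. The point is that one cannot simply quotient $\mathsf{LCA}_{\mathcal{O}}$ by an ``arithmetic'' subcategory: neither the compact, nor the discrete, nor the finitely generated discrete modules are $s$-filtering in $\mathsf{LCA}_{\mathcal{O}}$, because dense-image phenomena — the model case being the embedding $\mathbb{Z}\hookrightarrow\mathbb{R}/\mathbb{Z}$ with dense image, and its Pontryagin dual — break both the ``filtering'' and the ``special'' axioms. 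One must instead arrange $\mathcal{E}$ so that the arithmetic subcategory becomes $s$-filtering there, verify the axioms, and then carry out the fraction calculus, using the structure theory of the first paragraph to pass between an object and its vector/compact/discrete constituents and Pontryagin duality to treat the two sides of a locally compact module symmetrically. I expect essentially all the work to be here.

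\emph{Conclusion.} For $F=\mathbb{Q}$ one has $\mathcal{O}=\mathbb{Z}$, $r=1$, $s=0$, $\mathcal{O}_{\mathbb{R}}=\mathbb{R}$ and $\mathsf{LCA}_{\mathbb{Z}}=\mathsf{LCA}$, recovering Clausen's theorem; the only new feature for a general number field is linear algebra over the semisimple ring $\prod_{v\mid\infty}F_{v}$ rather than over a single archimedean field — together with the harmless fact, visible already on $\pi_{0}$, that $\operatorname{Pic}(\mathcal{O})$ makes $K_{0}(\mathcal{O})\to K_{0}(\mathcal{O}_{\mathbb{R}})$ non-injective.
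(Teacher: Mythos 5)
Your framework (quasi-abelian structure, $\mathcal{O}$-equivariant Pontryagin duality, the structure theorem, the identifications $K(\mathcal{V})\simeq K(\mathbb{R})^{r}\oplus K(\mathbb{C})^{s}$ and $K(\mathsf{Mod}_{fg}(\mathcal{O}))\simeq K(\mathcal{O})$, and the observation that the naive arithmetic subcategories fail Schlichting's axioms) is all correct and is used in the paper. But the step you defer to the ``main obstacle'' is the entire content of the theorem, and the architecture you propose for it --- finding an exact category $\mathcal{E}$ with an $s$-filtering subcategory $\mathcal{A}$ so that $K(\mathcal{A})\simeq K(\mathcal{O})$, $K(\mathcal{E})\simeq K(\mathcal{O}_{\mathbb{R}})$ and $\mathcal{E}/\mathcal{A}$ is $\mathsf{LCA}_{\mathcal{O}}$ up to idempotent completion --- is not carried out in the paper and it is far from clear it can be: such an $\mathcal{E}$ would have to surject (up to summands) onto all of $\mathsf{LCA}_{\mathcal{O}}$ while having the $K$-theory of finite-dimensional archimedean vector spaces, and you give no candidate. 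As it stands the proposal is a statement of the desired cofiber presentation of $K(\mathsf{LCA}_{\mathcal{O}})$, not a proof of it.

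The paper reaches that cofiber by the opposite localization. The subcategory that actually is left $s$-filtering in $\mathsf{LCA}_{\mathcal{O}}$ is $\mathsf{LCA}_{\mathcal{O},cg}$, the compactly generated modules (the enlargement from compact to compactly generated is exactly what repairs the failures you correctly diagnose); proving left filtering and left specialness for it is the technical core. One then compares the resulting localization sequence with the algebraic one $\mathsf{Mod}_{fg}(\mathcal{O})\hookrightarrow\mathsf{Mod}(\mathcal{O})\twoheadrightarrow\mathsf{Mod}(\mathcal{O})/\mathsf{Mod}_{fg}(\mathcal{O})$, shows the induced functor $\mathsf{Mod}(\mathcal{O})/\mathsf{Mod}_{fg}(\mathcal{O})\to\mathsf{LCA}_{\mathcal{O}}/\mathsf{LCA}_{\mathcal{O},cg}$ is an exact equivalence (the structure theorem makes $G\to D$ an isomorphism in the quotient), and invokes the Eilenberg swindle to get $K(\mathsf{Mod}(\mathcal{O}))=0$. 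The resulting bicartesian square identifies $K(\mathsf{LCA}_{\mathcal{O}})$ with the cofiber of $K(\mathsf{Mod}_{fg}(\mathcal{O}))\to K(\mathsf{LCA}_{\mathcal{O},cg})$, and a separate argument (two-term resolutions by objects of $\mathsf{LCA}_{\mathcal{O},\mathbb{R}D}$ resp.\ $\mathsf{LCA}_{\mathcal{O},\mathbb{R}C}$, plus another swindle to kill the discrete/compact part) gives $K(\mathsf{LCA}_{\mathcal{O},cg})\simeq K(\mathbb{R})^{r}\oplus K(\mathbb{C})^{s}$. If you want to complete your write-up, you should either supply your $\mathcal{E}$ explicitly and verify the fraction calculus, or restructure the argument along these lines.
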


See Theorem \ref{thm_MainThm} below for details. For $\mathcal{O}=\mathbb{Z}$,
the category $\mathsf{LCA}_{\mathcal{O}}$ agrees with the category
$\mathsf{LCA}$, so we get the same statement as in Clausen's result. Beyond
this, the category $\mathsf{LCA}_{\mathcal{O}}$ appears to reflect some phenomena around the infinite primes in
a very natural way. This is quite remarkable since many such theories require
some manual handling for infinite primes.

The category $\mathsf{LCA}_{\mathcal{O}}$ is of glaring beauty:\ $\mathsf{LCA}%
_{\mathcal{O}}$ is a quasi-abelian exact category with duality. The Minkowski
embedding gives rise to an exact sequence%
\[
\mathcal{O}\hookrightarrow\bigoplus_{\sigma\in S}\mathbb{R}_{\sigma
}\twoheadrightarrow\mathbb{T}_{\mathcal{O}}%
\]
($S$ the set of infinite places, $\mathbb{R}_{\sigma}$ the codomain of
$\sigma$, and $\mathbb{T}_{\mathcal{O}}$ the torus quotient). In
$\mathsf{LCA}_{\mathcal{O}}$ this sequence is simultaneously, (1) an injective
resolution of $\mathcal{O}$, (2) a projective resolution of $\mathbb{T}%
_{\mathcal{O}}$, and (3) the Pontryagin dual of the sequence is isomorphic to
the sequence itself. So the Minkowski embedding is hardcoded in the
homological algebra of $\mathsf{LCA}_{\mathcal{O}}$.

Similarly for the Dirichlet embedding: For algebraic $K$-theory, the sequence
in Theorem \ref{thm_intro_Main1} in degree one yields%
\[
K_{1}(\mathcal{O})\longrightarrow(\mathbb{R}^{\times}\mathbb{)}^{r}%
\oplus(\mathbb{C}^{\times})^{s}\longrightarrow K_{1}(\mathsf{LCA}%
_{\mathcal{O}})\longrightarrow\operatorname{Cl}(\mathcal{O})\longrightarrow0\text{,}%
\]
which essentially identifies $K_{1}(\mathsf{LCA}_{\mathcal{O}})$ as
an extension of the class group with $\mathbb{R}_{>0}^{\times}\times($torus$)$, naturally containing the torus
arising from Dirichlet's Unit Theorem. The free real factor $\mathbb{R}%
_{>0}^{\times}$ corresponds to choosing a normalization for the Haar measure
the underlying LCA\ group.

These facts resemble regulator constructions. Indeed, making use of properties
of the Borel regulator, we get:

\begin{theorem}
Let $F$ be a number field, $\mathcal{O}$ its ring of integers, and $K$
(non-connective) algebraic $K$-theory. With rational coefficients, the long
exact sequence induced from the theorem above splits into short exact
sequences%
\[
0\rightarrow K_{n}(\mathcal{O})_{\mathbb{Q}}\rightarrow K_{n}(\mathbb{R}%
)_{\mathbb{Q}}^{r}\oplus K_{n}(\mathbb{C})_{\mathbb{Q}}^{s}\rightarrow
K_{n}(\mathsf{LCA}_{\mathcal{O}})_{\mathbb{Q}}\rightarrow0
\]
for all $n$.
\end{theorem}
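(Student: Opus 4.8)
The plan is to deduce this directly from Theorem \ref{thm_intro_Main1} together with Borel's theorem on the ranks of $K$-groups of rings of integers. The fiber sequence of Theorem \ref{thm_intro_Main1} produces, after $\otimes\mathbb{Q}$ (which preserves exactness, $\mathbb{Q}$ being flat), a long exact sequence
\[
\cdots\longrightarrow K_{n+1}(\mathsf{LCA}_{\mathcal{O}})_{\mathbb{Q}}\overset{\partial}{\longrightarrow}K_{n}(\mathcal{O})_{\mathbb{Q}}\overset{\alpha_{n}}{\longrightarrow}K_{n}(\mathbb{R})_{\mathbb{Q}}^{r}\oplus K_{n}(\mathbb{C})_{\mathbb{Q}}^{s}\overset{\beta_{n}}{\longrightarrow}K_{n}(\mathsf{LCA}_{\mathcal{O}})_{\mathbb{Q}}\overset{\partial}{\longrightarrow}\cdots .
\]
This breaks into short exact pieces of the asserted shape if and only if every boundary map $\partial$ vanishes, and by exactness $\partial=0$ is equivalent to the injectivity of $\alpha_{n}$ for all $n$. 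So the entire statement reduces to: \emph{$\alpha_{n}$ is injective after rationalization, for every $n$.}

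First I would dispose of the easy degrees. For $n<0$ the regular ring $\mathcal{O}$ and the fields $\mathbb{R},\mathbb{C}$ have vanishing negative $K$-theory, so $\alpha_{n}$ is the zero map with zero source. For $n=0$ the map $K_{0}(\mathcal{O})_{\mathbb{Q}}=\mathbb{Q}\to\mathbb{Q}^{r+s}$ is the diagonal recording the rank of a projective module under the archimedean embeddings, hence injective. For $n$ even and positive, Borel's theorem gives $K_{n}(\mathcal{O})_{\mathbb{Q}}=0$, so nothing is to be shown.

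The substance is the odd degrees $n=2i-1$, and here the key input is that the map $\alpha_{n}$ of Theorem \ref{thm_intro_Main1} is the one induced by the archimedean completions $\mathcal{O}\to\mathbb{R}_{\sigma}$, one for each infinite place $\sigma$ — this is exactly where the Minkowski embedding enters. Post-composing $\alpha_{n}$ with the product over $\sigma$ of the Borel (equivalently, up to a nonzero scalar, Beilinson) regulators $K_{2i-1}(\mathbb{R})_{\mathbb{Q}}\to\mathbb{R}$ and $K_{2i-1}(\mathbb{C})_{\mathbb{Q}}\to\mathbb{R}$ gives, by its very construction, the classical Borel regulator $K_{2i-1}(\mathcal{O})_{\mathbb{Q}}\to\bigoplus_{\sigma}\mathbb{R}$. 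Borel's theorem says that this regulator becomes injective after $\otimes\mathbb{R}$ — indeed an isomorphism onto the sum of the contributing coordinates (all archimedean $\sigma$ when $i$ is odd, only the complex $\sigma$ when $i$ is even, with $n=1$ recovering the Dirichlet regulator). Since $K_{2i-1}(\mathcal{O})_{\mathbb{Q}}$ is finite-dimensional and embeds into its scalar extension to $\mathbb{R}$, injectivity of this composite already forces $\alpha_{2i-1}$ to be injective over $\mathbb{Q}$. That finishes every degree, hence the splitting.

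The only genuine subtlety, and the step I would be most careful about, is the bookkeeping in the last paragraph: one must confirm that the map appearing in Theorem \ref{thm_intro_Main1} really is the archimedean-embedding map (so that its composite with the archimedean regulators is honestly Borel's regulator, not an a priori different map), and one must match up correctly which places contribute in which degree. Both are classical, so I expect no serious obstacle — the content of the theorem is entirely carried by Theorem \ref{thm_intro_Main1} and Borel's rank computation.
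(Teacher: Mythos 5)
Your proposal is correct and follows essentially the same route as the paper: reduce the splitting to the rational injectivity of $K_{n}(\mathcal{O})\rightarrow K_{n}(\mathbb{R})^{r}\oplus K_{n}(\mathbb{C})^{s}$ (equivalently, finiteness of the images of the boundary maps), dispose of even and negative degrees via finiteness of $K_{2n}(\mathcal{O})$ and regularity, and in odd degrees use that the Borel regulator factors through this Minkowski-type map and has finite kernel. The one point you rightly flag as needing care --- that the map in the fiber sequence really is the one induced by the archimedean embeddings --- is exactly what the paper establishes in a separate theorem before invoking Borel.
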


See Theorem \ref{thm_BorelReg}. Our methods are quite different from
Clausen's in \cite{clausen}. While Clausen sets up a homotopy coherent cone
construction in the context of stable $\infty$-categories, this paper
exclusively deals with exact categories. Our method is based on techniques of
Schlichting (namely left and right $s$-filtering subcategories) in order to
show that certain quotient constructions do not only make sense on a derived
level, but admit concrete exact category models. The main technical tool is the following:

\begin{theorem}
Let $F$ be a number field, $\mathcal{O}$ its ring of integers. Then
\begin{enumerate}
\item The compactly generated $\mathcal{O}$-modules $\mathsf{LCA}_{\mathcal{O},cg}$ are left
\textit{s}-filtering in $\mathsf{LCA}_{\mathcal{O}}$.
\item The $\mathcal{O}$-modules $\mathsf{LCA}_{\mathcal{O},nss}$ without small subgroups are right
\textit{s}-filtering in $\mathsf{LCA}_{\mathcal{O}}$.
\end{enumerate}
\end{theorem}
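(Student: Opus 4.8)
The plan is to verify, by hand, the defining conditions of a left (resp.\ right) $s$-filtering subcategory in the sense of Schlichting, using only the structure theory of locally compact abelian groups; the $\mathcal{O}$-module structure plays no essential role, since for an LCA $\mathcal{O}$-module being compactly generated or being without small subgroups is a property of the underlying LCA group, while the $\mathcal{O}$-action on a closed submodule or on a Hausdorff quotient is forced by continuity. I will establish (1) directly and then deduce (2) formally by Pontryagin duality, which is the reason the two halves look symmetric. The formal part of (1) is the observation that $\mathsf{LCA}_{\mathcal{O},cg}$ is a fully exact subcategory of $\mathsf{LCA}_{\mathcal{O}}$ closed under isomorphism, admissible subobjects, admissible quotients, and extensions: a closed $\mathcal{O}$-submodule of a compactly generated LCA $\mathcal{O}$-module is again compactly generated (a closed subgroup of a compactly generated LCA group is compactly generated), an $\mathcal{O}$-linear Hausdorff quotient is compactly generated (the image of a compact generating set generates), and an extension of two such is compactly generated (lift a compact generating set of the quotient through the quotient map, which is open). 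This disposes of all the purely formal axioms; two genuine points remain, the ``filtering'' and the ``special'' condition.

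The filtering condition requires that every morphism $f\colon U\to A$ with $U\in\mathsf{LCA}_{\mathcal{O},cg}$ and $A\in\mathsf{LCA}_{\mathcal{O}}$ factor through an admissible monomorphism $U'\rightarrowtail A$ with $U'\in\mathsf{LCA}_{\mathcal{O},cg}$. I would take $U':=\overline{f(U)}$, a closed $\mathcal{O}$-submodule of $A$, hence an admissible subobject, through which $f$ tautologically factors; the content is therefore the lemma that the closure of a continuous homomorphic image of a compactly generated LCA group is compactly generated. For this, put $H=\overline{f(U)}$ and choose an open compactly generated subgroup $H_{0}\le H$ (every LCA group has one); then $H/H_{0}$ is discrete, and since $f(U)$ is dense in $H$ it surjects onto $H/H_{0}$, so $H/H_{0}$ is generated by the image of a compact generating set of $U$ in a discrete group and is thus finitely generated. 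Being an extension of $H/H_{0}$ by $H_{0}$, the group $H$ is compactly generated.

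The special condition is where I expect the real work to be. In the present situation it amounts to the assertion that every admissible epimorphism $p\colon A\twoheadrightarrow C$ onto an object $C\in\mathsf{LCA}_{\mathcal{O},cg}$ is dominated by an admissible epimorphism from a compactly generated closed submodule of $A$: there should be a closed $\mathcal{O}$-submodule $A'\le A$ lying in $\mathsf{LCA}_{\mathcal{O},cg}$ such that $p|_{A'}\colon A'\twoheadrightarrow C$ is a quotient map. I would pick a compact generating set $S$ of $C$, lift it to a compact subset $\tilde{S}\subseteq A$ with $p(\tilde{S})=S$ (using that a continuous open surjection of locally compact groups lifts compacta), enlarge $\tilde{S}$ to a compact set generating the same $\mathcal{O}$-submodule by replacing it with $\bigcup_{i}\omega_{i}\tilde{S}$ for a $\mathbb{Z}$-basis $\{\omega_{i}\}$ of $\mathcal{O}$, and set $A':=\overline{\langle\tilde{S}\rangle}$. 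The open-subgroup argument of the previous paragraph again shows $A'$ is compactly generated; it is $\sigma$-compact, so $p|_{A'}$ is open by the open mapping theorem; and $p(A')\supseteq\langle S\rangle=C$. The chief obstacles are precisely the lifting of compact sets along $p$ and the verification that the closed submodule generated by a compact set is compactly generated; the remaining Schlichting axioms then follow from quasi-abelianness of $\mathsf{LCA}_{\mathcal{O}}$.

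Finally, (2) follows from (1) with no further argument. Pontryagin duality $M\mapsto M^{\vee}=\operatorname{Hom}_{\mathrm{cts}}(M,\mathbb{T})$, equipped with its natural $\mathcal{O}$-action, is a contravariant exact equivalence $\mathsf{LCA}_{\mathcal{O}}^{\mathrm{op}}\xrightarrow{\ \sim\ }\mathsf{LCA}_{\mathcal{O}}$ carrying $\mathsf{LCA}_{\mathcal{O},cg}$ onto $\mathsf{LCA}_{\mathcal{O},nss}$. Since ``left $s$-filtering'' and ``right $s$-filtering'' are interchanged by passing to the opposite category, and since both properties are invariant under exact equivalences, the statement that $\mathsf{LCA}_{\mathcal{O},cg}$ is left $s$-filtering in $\mathsf{LCA}_{\mathcal{O}}$ is equivalent to the statement that $\mathsf{LCA}_{\mathcal{O},nss}$ is right $s$-filtering in $\mathsf{LCA}_{\mathcal{O}}$, so (1) yields (2).
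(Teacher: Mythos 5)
Your argument is correct, and two of its three components coincide with the paper's: the filtering half is the same factorization of $f$ through $\overline{\operatorname*{im}\nolimits_{\mathsf{Set}}(f)}$ together with the observation that an LCA group with a dense compactly generated subgroup is compactly generated (the paper phrases this via $\overline{V}\cup f(U)$, you via an open compactly generated subgroup and its discrete quotient --- these are interchangeable), and the deduction of (2) from (1) by Pontryagin duality is exactly the paper's Lemma \ref{lemma_LCAnssRightSpecial}. Where you genuinely diverge is the special condition, which is indeed where the real work sits. The paper's proof of left specialness is structural: it splits off the projective summand of $Z$ using Levin's classification of compactly generated $\mathcal{O}$-modules to reduce to $Z$ compact, applies the structure theorem to $G$ to find a compact piece $H$, observes that $Z/\operatorname{im}(h)$ has no small subgroups, covers it by a projective object $\bigoplus_{\nu}\mathbb{R}_{\nu}\oplus\bigoplus_{J}J$, lifts that cover along the admissible epic by projectivity, and invokes Pontryagin's open mapping theorem to see the assembled map is an admissible epic. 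Your proof instead directly manufactures a compactly generated closed $\mathcal{O}$-submodule $A'\subseteq A$ still surjecting openly onto $C$: lift a compact generating neighbourhood $S$ of $C$ to a compact $\tilde{S}\subseteq A$ with $p(\tilde{S})=S$, saturate under a $\mathbb{Z}$-basis of $\mathcal{O}$, take $A'=\overline{\langle\tilde{S}\rangle}$, and conclude by the dense-compactly-generated-subgroup lemma and the open mapping theorem. This is more elementary --- it bypasses Levin's and Kryuchkov's classification theorems entirely, needing only Moskowitz's result that closed subgroups of compactly generated LCA groups are compactly generated --- and it yields the slightly stronger conclusion that the map $Y\rightarrow G$ in the special diagram can be taken to be an admissible monic, which the definition does not require. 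The one step you should spell out is the lifting of compacta along an open surjection $p$ of locally compact groups: with $W$ a compact neighbourhood of $0$ in $A$, finitely many translates $c_{i}+p(\operatorname{int}W)$ cover $S$, and $\tilde{S}:=\bigl(\bigcup_{i}(a_{i}+W)\bigr)\cap p^{-1}(S)$ with $p(a_{i})=c_{i}$ is compact and maps onto $S$; this is standard but is the hinge of your construction.
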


These results essentially amount to
setting up a calculus of left or right fractions with certain additional
properties. They might be of independent interest. Thus, even in the case $\mathcal{O}=\mathbb{Z}$, our proof is
different from the one in \cite{clausen}.\medskip

Needless to say, this article is heavily inspired by Clausen's work.\medskip

\textit{Conventions: }We shall work a lot with exact categories. We follow the
nomenclature of B\"{u}hler's survey \cite{MR2606234}. In particular
\textquotedblleft$\hookrightarrow$\textquotedblright\ resp. \textquotedblleft%
$\twoheadrightarrow$\textquotedblright\ denote admissible monics resp.
admissible epics with respect to the exact structure. For stable $\infty
$-categories and localizing invariants, we employ the conventions of
\cite{MR3070515}.\medskip

\section{\label{sect_LocCompModules}The category of locally compact modules}

Let $\mathsf{LCA}$ be the category of locally compact abelian groups with
continuous group homomorphisms as its morphisms. Hoffmann and\ Spitzweck have
observed that $\mathsf{LCA}$ is a quasi-abelian category \cite{MR2329311}, and
thus an exact category. This can be generalized as follows:

\begin{definition}
Let $R$ be a commutative unital ring. Regard $R$ as equipped with the discrete
topology. Let $\mathsf{LCA}_{R}$ be the category of locally compact
$R$-modules with continuous $R$-module homomorphisms as morphisms.
\end{definition}

This means that an object in $\mathsf{LCA}_{R}$ is (1) an$\mathcal{\ }%
R$-module $M$, (2) that its additive group $(M,+)$ comes equipped with the
structure of an LCA group, and (3) the constraint that we demand each element
of $R$ to act on $(M,+)$ via a continuous endomorphism.

\begin{example}
We have $\mathsf{LCA}_{\mathbb{Z}}=\mathsf{LCA}$, i.e. this definition
encompasses the category of LCA groups as a special case. The full subcategory
of discrete $R$-modules in $\mathsf{LCA}_{R}$ is literally isomorphic to the
category of all $R$-modules.
\end{example}

\begin{proposition}
The category $\mathsf{LCA}_{R}$ is quasi-abelian.
\end{proposition}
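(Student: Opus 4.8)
The plan is to transfer quasi-abelianness from $\mathsf{LCA}=\mathsf{LCA}_{\mathbb{Z}}$ to $\mathsf{LCA}_{R}$ along the forgetful functor $U\colon\mathsf{LCA}_{R}\rightarrow\mathsf{LCA}$ that retains only the underlying LCA group. This functor is faithful and additive: $\operatorname{Hom}_{\mathsf{LCA}_{R}}(M,N)$ is the subgroup of $\operatorname{Hom}_{\mathsf{LCA}}(UM,UN)$ cut out by $R$-equivariance, and the zero object and finite biproducts of $\mathsf{LCA}$ carry obvious $R$-module structures, so $\mathsf{LCA}_{R}$ is additive and $U$ preserves biproducts. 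Moreover $U$ reflects isomorphisms: if $\varphi$ is $R$-linear and $U\varphi$ is an isomorphism in $\mathsf{LCA}$, then the inverse is again $R$-linear, so $\varphi$ is an isomorphism in $\mathsf{LCA}_{R}$.

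First I would check that $U$ creates finite limits and finite colimits, so that all of them exist in $\mathsf{LCA}_{R}$ and are computed on underlying LCA groups. For a morphism $f\colon M\rightarrow N$ in $\mathsf{LCA}_{R}$: the kernel of $Uf$ in $\mathsf{LCA}$, equipped with the subspace topology, is stable under each $r\in R$ because $f$ is $R$-linear, and the restricted action is continuous; dually, multiplication by $r$ on $N$ preserves $\operatorname{im}(Uf)$ and hence its closure, so it descends continuously to the open quotient $N/\overline{\operatorname{im}(Uf)}$, which is the cokernel of $Uf$ in $\mathsf{LCA}$. A routine verification of universal properties (using that $U$ reflects isomorphisms) shows these are the kernel and cokernel in $\mathsf{LCA}_{R}$; the same bookkeeping handles finite products, pullbacks and pushouts, with the $R$-action induced by functoriality of the corresponding (co)limit in $\mathsf{LCA}$. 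In particular $\mathsf{LCA}_{R}$ has kernels and cokernels, and $U$ preserves all of the above.

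The key consequence is that $\varphi$ is a strict monomorphism (resp.\ strict epimorphism) in $\mathsf{LCA}_{R}$ if and only if $U\varphi$ is one in $\mathsf{LCA}$. Indeed, $\varphi$ is a strict monomorphism precisely when the canonical comparison $M\rightarrow\ker(\operatorname{coker}\varphi)$ is an isomorphism; since $U$ preserves kernels and cokernels, this comparison is sent by $U$ to the analogous comparison in $\mathsf{LCA}$, and since $U$ reflects isomorphisms the two conditions agree; the epic case is dual. With this in hand, the two defining axioms of a quasi-abelian category transfer verbatim: given a strict monic $i$ in $\mathsf{LCA}_{R}$ and an arbitrary morphism out of its source, form the pushout (which exists by the previous paragraph); applying $U$ exhibits it as the pushout of the strict monic $Ui$ along a morphism in $\mathsf{LCA}$, which is again strict monic because $\mathsf{LCA}$ is quasi-abelian, whence the pushout map is strict monic in $\mathsf{LCA}_{R}$. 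Pullbacks of strict epics are handled by the dual argument, and the proposition follows.

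The only step calling for genuine, though light, care is the creation of limits and colimits, i.e.\ checking that the topological $R$-module structures really do exist on the underlying (co)limits in $\mathsf{LCA}$ and satisfy the expected universal properties in $\mathsf{LCA}_{R}$; everything after that is formal. In fact the same argument shows with no extra effort that the category of $R$-module objects in any quasi-abelian category is again quasi-abelian.
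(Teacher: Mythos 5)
Your proof is correct. It does not, however, follow the paper's route: the paper simply asserts that the direct verification of Hoffmann--Spitzweck for $R=\mathbb{Z}$ ``carries over verbatim,'' i.e.\ one reruns their topological argument (existence of kernels as closed subgroups, cokernels as quotients by closures of images, and the openness/strictness checks for pullbacks of strict epis and pushouts of strict monos) with $R$-linearity carried along at each step. You instead treat the $\mathbb{Z}$ case as a black box and transfer the axioms along the forgetful functor $U\colon\mathsf{LCA}_{R}\to\mathsf{LCA}$, the essential points being that $U$ creates finite limits and colimits (the only place where anything topological happens: the $R$-action restricts to kernels and descends to $N/\overline{\operatorname{im}(Uf)}$ because multiplication by $r$ is continuous, so $r\cdot\overline{S}\subseteq\overline{rS}$), that $U$ reflects isomorphisms, and hence that strictness of monos and epis is detected by $U$ via the comparison map $\operatorname{coim}\to\operatorname{im}$. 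This is a clean and complete argument; what it buys over the paper's phrasing is that the reader need not reopen \cite{MR2329311} to confirm that no step there used $R=\mathbb{Z}$, and, as you note, it proves the more general statement that $R$-module objects in any quasi-abelian category form a quasi-abelian category. What the paper's approach buys is brevity and the (implicit) extra information that the admissible monics and epics in $\mathsf{LCA}_R$ are exactly the closed injections and open surjections, which your formal argument also yields but only after unwinding the creation of kernels and cokernels.
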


\begin{proof}
The proof of Hoffmann and Spitzweck for $R=\mathbb{Z}$ carries over verbatim,
\cite[Prop. 1.2]{MR2329311}.
\end{proof}

In particular, $\mathsf{LCA}_{R}$ is an exact category in a natural way
\cite[Prop. 4.4]{MR2606234}. With respect to this structure, the admissible
monics are the closed injective $R$-module homomorphisms, while the admissible
epics are open surjective $R$-module homomorphisms. All kernels and cokernels
exist; in particular the category is idempotent complete so that arguments
like \cite[Cor. 7.7]{MR2606234} apply. The category is never abelian.

Recall that every morphism $f:A\rightarrow B$ in a quasi-abelian category
$\mathsf{C}$ has a canonical factorization%
\[
A\twoheadrightarrow\operatorname*{coim}\nolimits_{\mathsf{C}}(f)\overset
{r}{\rightarrow}\operatorname*{im}\nolimits_{\mathsf{C}}(f)\hookrightarrow
B\text{,}%
\]
where $\operatorname*{coim}\nolimits_{\mathsf{C}}(f):=A/\ker(f)$ is called the
\emph{coimage}. In an abelian category one has the luxury of $r$ being an
isomorphism. This is usually false for quasi-abelian categories and false for
$\mathsf{LCA}_{R}$ in particular.

\begin{example}
The inclusion $\mathbb{Q}\rightarrow\mathbb{R}$ is a monic in $\mathsf{LCA}$,
but not an admissible monic. The coimage is $\mathbb{Q}$, while the image is
$\mathbb{R}$. The Pontryagin dual is $\mathbb{R}\rightarrow\mathbb{A}%
_{\mathbb{Q}}/\mathbb{Q}$, where $\mathbb{A}_{\mathbb{Q}}$ denotes the
rational ad\`{e}les. This map is an epic, but not an admissible epic. The
quotient $\mathbb{A}_{\mathbb{Q}}/\mathbb{Q}$ is a compact connected
$\mathbb{Q}$-vector space.
\end{example}

\begin{example}
If $\mathbb{R}_{\delta}$ denotes the reals with the discrete topology,
$\mathbb{R}_{\delta}\rightarrow\mathbb{R}$ is a morphism in $\mathsf{LCA}$. It
is both monic and epic, yet not an isomorphism. It is neither an admissible
monic nor an admissible epic. Its Pontryagin dual is $\mathbb{R}\rightarrow
b\mathbb{R}$, the map underlying the Bohr compactification.
\end{example}

\begin{notation}
In the literature for topological groups the notation $\operatorname*{im}%
(f)$\ usually refers to the set-theoretic image. In the present article we run
into the trouble that this is not the same as the image object
$\operatorname*{im}\nolimits_{\mathsf{LCA}}(f)$ in the category $\mathsf{LCA}%
$. However, it feels more than awkward to write $\operatorname*{coim}%
\nolimits_{\mathsf{LCA}}(f)$, which would be the appropriate term from the
viewpoint of category theory. For this reason, we shall always write
$\operatorname*{im}\nolimits_{\mathsf{Set}}(f)$ and speak of the
\emph{set-theoretic image}, as a compromise to create a both
category-theoretically correct yet reader-friendly text.
\end{notation}

We return to $\mathsf{LCA}$. We denote the circle group by $\mathbb{T}%
:=\mathbb{R}/\mathbb{Z}$ and write $G^{\vee}:=\operatorname*{Hom}%
\nolimits_{cts}(G,\mathbb{T})$ for the Pontryagin dual, where the homomorphism
group is equipped with the compact-open topology. Pontryagin duality then
yields an exact anti-equivalence%
\[
(-)^{\vee}:\mathsf{LCA}^{op}\overset{\sim}{\longrightarrow}\mathsf{LCA}%
\text{.}%
\]
If $f:G\rightarrow H$ is injective, then $f^{\vee}:H^{\vee}\rightarrow
G^{\vee}$ has dense set-theoretic image; if $f$ has dense set-theoretic image,
$f^{\vee}$ is injective. If $f$ is an admissible monic, $f^{\vee}$ is an
admissible epic, and conversely.

This generalizes to $\mathsf{LCA}_{R}$ as follows: Although $\mathbb{T}$ does
not carry a natural $R$-module structure in any way and thus only makes sense
in $\mathsf{LCA}$, the dual $G^{\vee}$ of an $R$-module again carries an
$R$-module structure by%
\[
(a\cdot\chi)(m):=\chi(a\cdot m)\qquad\text{for}\qquad a\in R\text{, }\chi\in
G^{\vee}\text{, }m\in G\text{.}%
\]
This leads to a generalized form of Pontryagin duality:

\begin{theorem}
\label{thm_PontrDualForLCAO}There is an exact anti-equivalence of exact
categories%
\[
(-)^{\vee}:\mathsf{LCA}_{R}^{op}\overset{\sim}{\longrightarrow}\mathsf{LCA}%
_{R}\text{,}%
\]
reflexive in the sense that the natural morphism $\eta_{G}:G\overset{\sim
}{\rightarrow}G^{\vee\vee}$ is an isomorphism. On the level of the underlying
additive groups, this agrees with ordinary Pontryagin duality.\ In particular,
it sends compact $R$-modules to discrete $R$-modules, and conversely.
\end{theorem}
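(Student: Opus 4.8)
The plan is to bootstrap the statement from ordinary Pontryagin duality on $\mathsf{LCA}$, which we already know to be an exact anti-equivalence interchanging admissible monics with admissible epics and compact objects with discrete objects. All that must be added is the verification that the $R$-module structure introduced by $(a\cdot\chi)(m):=\chi(a\cdot m)$ is compatible with everything in sight. The crucial observation, which I would isolate first, is that for a fixed $a\in R$ the endomorphism $a\cdot(-)$ of $G^{\vee}$ is \emph{literally} the Pontryagin dual (in $\mathsf{LCA}$) of the endomorphism $a\cdot(-)$ of $G$: unwinding the definition of $(-)^{\vee}$ on morphisms, the dual of the continuous map $m\mapsto a\cdot m$ sends $\chi$ to $\chi\circ(a\cdot-)$, i.e.\ to $m\mapsto\chi(a\cdot m)$, which is exactly $a\cdot\chi$. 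Since $a\cdot(-)$ is a continuous endomorphism of $G$ by hypothesis and $(-)^{\vee}$ is a functor on $\mathsf{LCA}$, its dual is a continuous endomorphism of $G^{\vee}$; the module axioms (additivity in $\chi$, the relations $(ab)\cdot\chi=a\cdot(b\cdot\chi)$ and $1\cdot\chi=\chi$) are immediate from the defining formula. Hence $G^{\vee}\in\mathsf{LCA}_{R}$.

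Next I would check that the functor is well defined on morphisms, i.e.\ that $f^{\vee}$ is $R$-linear whenever $f$ is. By definition $f$ is $R$-linear iff $f\circ(a\cdot-)_{G}=(a\cdot-)_{H}\circ f$ for every $a\in R$; applying the contravariant exact functor $(-)^{\vee}$ and using the identification of the previous step, this becomes $(a\cdot-)_{G^{\vee}}\circ f^{\vee}=f^{\vee}\circ(a\cdot-)_{H^{\vee}}$, which says precisely that $f^{\vee}$ is $R$-linear. Running the same computation backwards shows that the map $\operatorname{Hom}_{\mathsf{LCA}_{R}}(G,H)\to\operatorname{Hom}_{\mathsf{LCA}_{R}}(H^{\vee},G^{\vee})$ is a bijection: it is the restriction of the bijection $\operatorname{Hom}_{\mathsf{LCA}}(G,H)\to\operatorname{Hom}_{\mathsf{LCA}}(H^{\vee},G^{\vee})$ to the $R$-linear maps on both sides, and we have just seen that these subsets correspond under the duality. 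So $(-)^{\vee}\colon\mathsf{LCA}_{R}^{op}\to\mathsf{LCA}_{R}$ is full and faithful.

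For essential surjectivity and reflexivity it suffices to observe that the classical double-duality isomorphism $\eta_{G}\colon G\to G^{\vee\vee}$, $\eta_{G}(m)(\chi)=\chi(m)$, is $R$-linear: indeed $(a\cdot\eta_{G}(m))(\chi)=\eta_{G}(m)(a\cdot\chi)=(a\cdot\chi)(m)=\chi(a\cdot m)=\eta_{G}(a\cdot m)(\chi)$, so $a\cdot\eta_{G}(m)=\eta_{G}(a\cdot m)$. Since $\eta_{G}$ is already an isomorphism of the underlying LCA groups, and since an $R$-linear continuous map whose underlying map is an isomorphism of LCA groups is an isomorphism in $\mathsf{LCA}_{R}$ (its inverse is automatically continuous, and it is $R$-linear by a one-line check), $\eta$ is a natural isomorphism $\mathrm{id}\Rightarrow(-)^{\vee\vee}$; naturality is inherited from $\mathsf{LCA}$. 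This yields the desired reflexive anti-equivalence.

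Finally, exactness and the compact/discrete claim. The exact structure on $\mathsf{LCA}_{R}$ is the one whose admissible monics (resp.\ epics) are the closed injective (resp.\ open surjective) $R$-module maps, so a sequence in $\mathsf{LCA}_{R}$ is a conflation iff its underlying sequence in $\mathsf{LCA}$ is one; since classical $(-)^{\vee}$ carries conflations to conflations and we have checked that it preserves $R$-linearity, the same holds in $\mathsf{LCA}_{R}$, i.e.\ $(-)^{\vee}\colon\mathsf{LCA}_{R}^{op}\to\mathsf{LCA}_{R}$ is exact. The underlying topological group of $G^{\vee}$, and in particular the properties of being compact or discrete, are unaffected by the addition of the $R$-action, so the classical statement that $(-)^{\vee}$ interchanges compact and discrete objects carries over verbatim. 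The only genuine — and still very mild — obstacle in this argument is the continuity of the $R$-action on $G^{\vee}$; once that is subsumed under the identity ``$a\cdot(-)$ on $G^{\vee}$ equals the dual of $a\cdot(-)$ on $G$'', the remainder is diagram bookkeeping.
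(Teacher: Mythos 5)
Your argument is correct and complete: the key identification that $a\cdot(-)$ on $G^{\vee}$ is literally the $\mathsf{LCA}$-dual of $a\cdot(-)$ on $G$ does all the work, and the remaining checks ($R$-linearity of $f^{\vee}$ and of $\eta_{G}$, detection of conflations on underlying groups) are exactly the right bookkeeping, with commutativity of $R$ quietly ensuring the module axiom $(ab)\cdot\chi=a\cdot(b\cdot\chi)$. The paper itself gives no proof of this theorem, deferring to St\"ohr and Levin, so your write-up supplies precisely the standard argument that the cited references (and the reader) are expected to carry out.
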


This extension of Pontryagin duality has been observed by a number of people
in various versions, e.g.,\ St\"{o}hr \cite{MR0262223} and Levin \cite[Theorem
1]{MR0310125}.

In fact, we can combine the exact structure with this duality:

\begin{proposition}
The category $\mathsf{LCA}_{R}$, equipped with Pontryagin duality, is an exact
category with duality (in the sense of \cite[Definition 2.1]{MR2600285}).
\end{proposition}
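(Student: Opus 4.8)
The proof amounts to checking the three ingredients of \cite[Definition 2.1]{MR2600285}, and the substantial work has already been done in Theorem~\ref{thm_PontrDualForLCAO}. Recall that an exact category with duality is a triple $(\mathcal{E},*,\varpi)$ consisting of an exact category $\mathcal{E}$, an exact functor $*\colon\mathcal{E}^{op}\to\mathcal{E}$, and a natural isomorphism $\varpi\colon\operatorname{id}_{\mathcal{E}}\xrightarrow{\sim}**$ satisfying the coherence identity $(\varpi_{A})^{*}\circ\varpi_{A^{*}}=\operatorname{id}_{A^{*}}$ for every object $A$. I would take $\mathcal{E}=\mathsf{LCA}_{R}$ (quasi-abelian, hence additive and exact), $*=(-)^{\vee}$, and $\varpi=\eta$. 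Theorem~\ref{thm_PontrDualForLCAO} already supplies that $(-)^{\vee}$ is an exact anti-equivalence $\mathsf{LCA}_{R}^{op}\to\mathsf{LCA}_{R}$ and that each $\eta_{G}\colon G\to G^{\vee\vee}$ is an isomorphism in $\mathsf{LCA}_{R}$. So it remains to record the naturality of $\eta$ and to verify the coherence identity.

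Naturality is checked on elements via the formula $\eta_{G}(g)(\chi)=\chi(g)$ for $g\in G$ and $\chi\in G^{\vee}$: for a continuous $R$-linear map $f\colon G\to H$ one has $\eta_{H}(f(g))(\psi)=\psi(f(g))=(f^{\vee}\psi)(g)=\eta_{G}(g)(f^{\vee}\psi)=(f^{\vee\vee}\eta_{G}(g))(\psi)$, hence $\eta_{H}\circ f=f^{\vee\vee}\circ\eta_{G}$. The $R$-module structures are harmless here, precisely because the $R$-action on a dual was defined by precomposition with the $R$-action on the source; the same remark guarantees that $\eta_{G}$ is a morphism in $\mathsf{LCA}_{R}$, not merely of underlying topological groups.

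For the coherence identity I would reduce to ordinary Pontryagin duality. The forgetful functor $U\colon\mathsf{LCA}_{R}\to\mathsf{LCA}$ is faithful and exact; by the last sentence of Theorem~\ref{thm_PontrDualForLCAO} it intertwines $(-)^{\vee}$ with classical Pontryagin duality and sends $\eta$ to the classical double-duality transformation. Since $(\eta_{G})^{\vee}\circ\eta_{G^{\vee}}$ and $\operatorname{id}_{G^{\vee}}$ are parallel morphisms in $\mathsf{LCA}_{R}$ and $U$ is faithful, it suffices to establish $(\eta_{G})^{\vee}\circ\eta_{G^{\vee}}=\operatorname{id}$ in $\mathsf{LCA}$, which is the classical fact that Pontryagin duality is a duality; alternatively one computes directly, using $\eta_{G^{\vee}}(\chi)(\xi)=\xi(\chi)$ and $(\eta_{G})^{\vee}(\psi)=\psi\circ\eta_{G}$, that $(\eta_{G})^{\vee}(\eta_{G^{\vee}}(\chi))$ sends $g$ to $\eta_{G}(g)(\chi)=\chi(g)$, so it equals $\chi$.

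I do not expect a genuine obstacle: the proposition is essentially a repackaging of Theorem~\ref{thm_PontrDualForLCAO}. The only point that demands attention is bookkeeping of which category each arrow lives in — in particular that $\eta_{G}$ and $(\eta_{G})^{\vee}$ are continuous $R$-linear maps and that the forgetful functor behaves as claimed — and this is handled once and for all by the construction of the $R$-module structure on Pontryagin duals recalled just before Theorem~\ref{thm_PontrDualForLCAO}.
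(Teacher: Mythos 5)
Your proposal is correct and is precisely the standard verification the paper has in mind (its own proof consists of the single word ``Standard''): take $*=(-)^{\vee}$ and $\varpi=\eta$, invoke Theorem~\ref{thm_PontrDualForLCAO} for exactness and for $\eta_G$ being an isomorphism, and check naturality and the coherence identity $(\eta_G)^{\vee}\circ\eta_{G^{\vee}}=\operatorname{id}_{G^{\vee}}$ by the usual element computation. Your element-wise computations and the remark that the $R$-action on duals is defined by precomposition (so all arrows live in $\mathsf{LCA}_R$) are exactly the bookkeeping the paper leaves to the reader.
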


Of course this is precisely what everyone would expect, yet I know of no
literature studying the category $\mathsf{LCA}$ from the perspective of an
exact category with duality.

\begin{proof}
Standard.
\end{proof}

We return to $\mathsf{LCA}$:

\begin{definition}
Let $G$ be an LCA\ group. A subset $U\subseteq G$ is called \emph{symmetric}
if it is closed under taking inverses.

\begin{enumerate}
\item The group $G$ has \emph{no small subgroups} if there exists a symmetric
open neighbourhood $U\subset G$ of the neutral element such that $U $ contains
no non-trivial subgroup.

\item The group $G$ is \emph{compactly generated} if there exists a compact
symmetric neighbourhood $U\subset G$ of the neutral element such that
$G=\bigcup_{n\geq1}U^{n}$, where $U^{n}$ denotes the image of the product of
any $n$ elements in $U$.
\end{enumerate}

Let $\mathsf{LCA}_{nss}$ (resp. $\mathsf{LCA}_{cg}$) be the full subcategory
of groups having no small subgroups (resp. being compactly generated).
\end{definition}

The same qualifications make sense for objects in $\mathsf{LCA}_{R}$. We write
$\mathsf{LCA}_{R,nss}$ resp. $\mathsf{LCA}_{R,cg}$ for those objects whose
underlying additive groups have no small subgroups resp. are compactly generated.

Just as compact and discrete groups are exchanged under Pontryagin duality, so
are the above concepts:

\begin{proposition}
\label{prop_MoskowitzInterchange}The category $\mathsf{LCA}_{R,nss}$ (resp.
$\mathsf{LCA}_{R,cg}$) is extension-closed in $\mathsf{LCA}_{R}$. Moreover,
both are dual partners under Pontryagin duality, i.e. $\mathsf{LCA}_{R}%
^{op}\longrightarrow\mathsf{LCA}_{R}$ sends $\mathsf{LCA}_{R,nss}^{op}$ to
$\mathsf{LCA}_{R,cg}$ and conversely.
\end{proposition}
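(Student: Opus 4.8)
The plan is to reduce everything to the well-known structure theory for LCA groups, and then upgrade it to the $R$-equivariant setting. The duality assertion is the easiest part, so I would dispose of it first. Since the Pontryagin dual of Theorem \ref{thm_PontrDualForLCAO} agrees with ordinary Pontryagin duality on underlying additive groups, and the property ``no small subgroups'' (resp. ``compactly generated'') is by our very definition a property of the underlying additive group alone, it suffices to know the corresponding fact for $\mathsf{LCA}$ itself. That interchange is classical: a compact symmetric generating neighbourhood dualizes to a symmetric open neighbourhood of the identity containing no nontrivial subgroup, and vice versa. (One reference is Moskowitz; hence the name of the proposition.) Because $(-)^\vee$ is an exact anti-equivalence, extension-closedness of one subcategory is formally equivalent to extension-closedness of its dual partner, so it is enough to treat one of the two, say $\mathsf{LCA}_{R,cg}$.

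For the extension-closedness of $\mathsf{LCA}_{R,cg}$ in $\mathsf{LCA}_R$, again the $R$-module structure plays no role: if
\[
A \hookrightarrow B \twoheadrightarrow C
\]
is an admissible exact sequence in $\mathsf{LCA}_R$ with $A,C$ compactly generated, then the underlying sequence of abelian groups is an exact sequence in $\mathsf{LCA}$ with the same open/closed properties (the forgetful functor $\mathsf{LCA}_R \to \mathsf{LCA}$ is exact and reflects admissibility, since admissible monics resp. epics in either category are precisely the closed injective resp. open surjective homomorphisms). So I need the statement for $\mathsf{LCA}$. Here I would argue directly: pick compact symmetric generating neighbourhoods $U_A \subseteq A$ and $U_C \subseteq C$. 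Lift $U_C$ through the open surjection $B \twoheadrightarrow C$ to a compact symmetric neighbourhood $V \subseteq B$ mapping onto (a neighbourhood generating) $U_C$; one can arrange this because open surjections of LCA groups admit compact local lifts (a consequence of local compactness plus openness). Then $U_A \cup V$ (or a suitable symmetrization/compactification thereof, e.g. $U_A \cup V \cup (U_A + V)$ suitably chosen compact) is a compact symmetric neighbourhood of $0$ in $B$, and a diagram chase shows every element of $B$ lies in a finite power of it: given $b \in B$, its image lies in some $U_C^n$, lift that expression to land within $b + \langle A\rangle$, and then finish inside $A$ using that $A = \bigcup_m U_A^m$. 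Thus $B$ is compactly generated.

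I expect the main obstacle to be the lifting step --- producing a compact symmetric neighbourhood in $B$ whose image controls a generating neighbourhood of $C$, and bookkeeping the passage between ``$U_C$ generates'' and ``the lift together with $U_A$ generates $B$''. This is where the openness of $B \twoheadrightarrow C$ and the local compactness of $B$ must be used in tandem, and where one has to be a little careful that the chosen lift is genuinely compact (not merely relatively compact) and symmetric; replacing a candidate neighbourhood by the closure of a relatively compact symmetric refinement handles this. Everything else --- extension-closedness of the ``no small subgroups'' side, and the duality interchange --- then follows formally by applying $(-)^\vee$ and invoking Proposition/Theorem \ref{thm_PontrDualForLCAO} together with the classical fact that Pontryagin duality swaps the two local conditions. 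Alternatively, if one prefers to avoid the hands-on lifting argument entirely, one can cite the structure theorem $B \cong \mathbb{R}^n \oplus B_0$ with $B_0$ having a compact open subgroup, observe that compact generation of $B$ is equivalent to $B_0$ being compactly generated, and then note that ``compactly generated'' is equivalent to the more tractable condition of having a finitely generated group of connected components after quotienting by a compact open subgroup --- a condition manifestly stable under extensions of abelian groups. I would present the self-contained version in the main text and relegate the structure-theoretic shortcut to a remark.
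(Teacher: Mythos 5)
Your reduction is exactly the paper's: both the duality interchange and extension-closure depend only on the topology of the underlying additive group, so everything follows from the $R=\mathbb{Z}$ case, for which the paper simply cites Moskowitz (Theorem 2.6 of the reference \cite{MR0215016}) and stops. Your proposal is correct and takes the same route; the only difference is that you additionally supply a self-contained proof of the classical extension-closure of compactly generated groups in $\mathsf{LCA}$ (the finite-compact-cover lifting argument is sound), which the paper delegates entirely to the citation.
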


\begin{proof}
This is proven in Moskowitz for $R=\mathbb{Z}$, see \cite[Theorem
2.6]{MR0215016}. However, since these qualities only rely on the topology of
the underlying additive group, this is sufficient in our situation as well.
\end{proof}

\subsection{Number fields}

Let $F$ be a number field and $\mathcal{O}$ its ring of integers. Regard
$\mathcal{O}$ as equipped with the discrete topology. Then $\mathsf{LCA}%
_{\mathcal{O}}$ is the category of locally compact $\mathcal{O}$-modules with
continuous $\mathcal{O}$-module homomorphisms as morphisms.

Suppose%
\[
\sigma:F\longrightarrow\mathbb{R}\text{ (resp. }\mathbb{C}\text{)}%
\]
is a real or complex embedding of the number field (i.e. $\sigma$ is a ring homomorphism).

\begin{definition}
We define $\mathbb{R}_{\sigma}$ to be the $\mathcal{O}$-module whose
underlying additive group is $\mathbb{R}$ (resp. $\mathbb{C}$) and whose
$\mathcal{O}$-module structure is given by%
\[
a\cdot\beta:=\sigma(a)\beta\qquad\text{for}\qquad a\in\mathcal{O}\text{,
}\beta\in\mathbb{R}\text{ (resp. }\beta\in\mathbb{C}\text{).}%
\]

\end{definition}

Note that we write $\mathbb{R}_{\sigma}$ even if $\sigma$ comes from a
strictly complex embedding. This is convenient in order to avoid repeatedly
having to go through case distinctions for real and complex places. It is easy
to see that $\mathbb{R}_{\sigma}\in\mathsf{LCA}_{\mathcal{O}}$. We have
isomorphy%
\[
\mathbb{R}_{\sigma}^{\vee}\simeq\mathbb{R}_{\sigma}\text{,}%
\]
even though there is no canonical isomorphism.

We recall that an LCA group is called a \emph{vector group} if it admits an
isomorphism to $\mathbb{R}^{n}$ for some $n\geq0$. We shall call $G$ in
$\mathsf{LCA}_{\mathcal{O}}$ a \emph{vector }$\mathcal{O}$-\emph{module} if
its underlying additive group is a vector group. The following result goes
back to Levin \cite{MR0310125}.

\begin{proposition}
\label{prop_ClassifVectorOModules}If $G\in\mathsf{LCA}_{\mathcal{O}}$ is a
vector $\mathcal{O}$-module, $G$ is a finite direct sum $G\simeq
\bigoplus_{\sigma\in I}\mathbb{R}_{\sigma}$, where $I$ is a finite list of
real and complex places.
\end{proposition}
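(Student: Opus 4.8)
The plan is to reduce the classification to the elementary representation theory of the semisimple $\mathbb{R}$-algebra $F\otimes_{\mathbb{Q}}\mathbb{R}$. First I would recall that every continuous group endomorphism of $\mathbb{R}^{n}$ is automatically $\mathbb{R}$-linear: it is $\mathbb{Q}$-linear by additivity, and $\mathbb{Q}\subset\mathbb{R}$ is dense, so continuity upgrades this to $\mathbb{R}$-linearity. Hence, writing the underlying additive group of $G$ as $\mathbb{R}^{n}$, the structural ring homomorphism $\mathcal{O}\to\operatorname{End}(\mathbb{R}^{n},+)$ lands in $\operatorname{End}_{\mathbb{R}}(\mathbb{R}^{n})=M_{n}(\mathbb{R})$, and because it is compatible with the real scalars it extends to an $\mathbb{R}$-algebra homomorphism $\mathcal{O}\otimes_{\mathbb{Z}}\mathbb{R}\to M_{n}(\mathbb{R})$. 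Equivalently, $G$ becomes a finite-dimensional module over the $\mathbb{R}$-algebra $A:=\mathcal{O}\otimes_{\mathbb{Z}}\mathbb{R}\cong F\otimes_{\mathbb{Q}}\mathbb{R}$.

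Next I would use the classical decomposition $F\otimes_{\mathbb{Q}}\mathbb{R}\cong\prod_{v\mid\infty}F_{v}$ into $r$ copies of $\mathbb{R}$ (one per real place) and $s$ copies of $\mathbb{C}$ (one per complex place); concretely, writing $F=\mathbb{Q}(\theta)$ one factors the minimal polynomial of $\theta$ over $\mathbb{R}$ into linear and irreducible quadratic factors. In particular $A$ is a finite product of fields, hence semisimple, with simple modules exactly $\mathbb{R}$ (for each real factor) and $\mathbb{C}$, viewed as a $2$-dimensional real vector space, for each complex factor. Therefore the finite-dimensional $A$-module $G$ decomposes, via the idempotents $e_{v}\in A$ cutting out the factors $F_{v}$, as $G=\bigoplus_{v}e_{v}G$ with each $e_{v}G$ a finite free $F_{v}$-module, so that $G$ is an $A$-linear direct sum of copies of the simple modules above.

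It then remains to match the simple $A$-modules with the objects $\mathbb{R}_{\sigma}$ and to check that the decomposition respects the topology. For the first point, the composite $\mathcal{O}\hookrightarrow A\twoheadrightarrow F_{v}$ is, by construction, an embedding $\sigma$ representing the place $v$, so the simple $A$-module attached to $v$ is, as an $\mathcal{O}$-module, precisely $\mathbb{R}_{\sigma}$; for a complex place the two conjugate embeddings yield isomorphic $\mathcal{O}$-modules via complex conjugation $\mathbb{C}\overset{\sim}{\rightarrow}\mathbb{C}$, so the isomorphism type depends only on $v$. For the second point, every map occurring is $\mathbb{R}$-linear between finite-dimensional real vector spaces, hence a homeomorphism, and a finite algebraic direct sum of finite-dimensional real vector spaces is automatically a topological direct sum; thus the isomorphism $G\simeq\bigoplus_{\sigma\in I}\mathbb{R}_{\sigma}$ holds in $\mathsf{LCA}_{\mathcal{O}}$, where $I$ lists each archimedean place $v$ with multiplicity $\dim_{F_{v}}e_{v}G$. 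The only genuinely fiddly point is the bookkeeping between embeddings and places, so that $I$ is a well-defined list of places and $\mathbb{R}_{\sigma}$ is independent of the choice of embedding within a complex place; beyond that I foresee no obstacle.
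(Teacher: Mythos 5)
Your proof is correct and is essentially the paper's argument: the paper also first shows that continuous endomorphisms of a vector group are $\mathbb{R}$-linear and then reduces to module theory over the semisimple algebra $F\otimes_{\mathbb{Q}}\mathbb{R}$, which it presents concretely as $\mathbb{R}[T]/(f)$ for $f$ the minimal polynomial of a primitive element, the factorization of $f$ over $\mathbb{R}$ playing the role of your idempotent decomposition $\prod_{v\mid\infty}F_{v}$. Your phrasing via $\mathcal{O}\otimes_{\mathbb{Z}}\mathbb{R}$ and the idempotents $e_{v}$ is merely a coordinate-free rendering of the same reduction.
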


We write \textquotedblleft list\textquotedblright\ with the meaning that
repetitions are allowed.

\begin{proof}
(follows Levin, Part 4 of the proof of \cite[Prop. 1]{MR0310125}) We give a
quick sketch, as this argument plays a crucial r\^{o}le: By a standard
argument, every $\mathsf{LCA}$ endomorphism of a vector group is $\mathbb{R}%
$-linear (\textit{Sketch: }it is an abelian group homomorphism; then since $G$
is uniquely divisible, it must be $\mathbb{Q}$-linear and by continuity must
be $\mathbb{R}$-linear). Thus, $\mathcal{O}$ acts by $\mathbb{R}$-vector space
endomorphisms on $G$. Let $\lambda\in F$ be a primitive element of the number
field so that $F=\mathbb{Q}(\lambda)$. Without loss of generality, we may
demand $\lambda\in\mathcal{O}$; let $f$ be its minimal polynomial. Then the
$\mathcal{O}$-module structure is entirely determined by $\mathbb{R}%
$-linearity and the action of $\lambda$, so $G$ canonically is an
$\mathbb{R}[T]/(f)$-module, where $T$ acts as $T(m):=\lambda\cdot m$. So the
$\mathcal{O}$-module structure uniquely defines an $\mathbb{R}[T]/(f)$-module
structure. Conversely, every $\mathbb{R}[T]/(f)$-module is an $F$-vector space
and thus an $\mathcal{O}$-module. We see that a direct sum decomposition of
$G$ as an $\mathbb{R}[T]/(f)$-module agrees with a direct sum decomposition as
$\mathcal{O}$-modules; and summands are simple in one sense iff they are
simple in the other. Polynomials over $\mathbb{R}$ factor in linear and
quadratic factors, and these correspond precisely to the real and complex
embeddings; e.g. in the first case the simple summands have the shape
$\mathbb{R}[T]/(T-\sigma(\lambda))$; but this is just $\mathbb{R}_{\sigma}$;
and correspondingly for complex embeddings.
\end{proof}

\begin{corollary}
\label{cor_LCAOR}Let $\mathsf{LCA}_{\mathcal{O},\mathbb{R}}$ be the full
subcategory of $\mathsf{LCA}_{\mathcal{O}}$ whose objects are vector
$\mathcal{O}$-modules. This is an extension-closed subcategory. For every
localizing invariant $K:\operatorname*{Cat}_{\infty}^{\operatorname*{ex}%
}\rightarrow\mathsf{A}$, there are canonical equivalences%
\[
K(\mathsf{LCA}_{\mathcal{O},\mathbb{R}})\cong K(\mathbb{R})^{r}\oplus
K(\mathbb{C})^{s}\text{,}%
\]
where $r$ is the number of real places of $F$, and $s$ the number of complex places.
\end{corollary}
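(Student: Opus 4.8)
The plan is to deduce the corollary from Proposition~\ref{prop_ClassifVectorOModules} together with standard additivity properties of localizing invariants. First I would verify that $\mathsf{LCA}_{\mathcal{O},\mathbb{R}}$ is extension-closed in $\mathsf{LCA}_{\mathcal{O}}$: if $0\to G'\to G\to G''\to 0$ is an admissible exact sequence with $G',G''$ vector $\mathcal{O}$-modules, then on underlying LCA groups this is an extension of vector groups, hence (by the usual structure theory, e.g.\ an extension of $\mathbb{R}^{m}$ by $\mathbb{R}^{n}$ splits topologically) $G$ is again a vector group; so $G\in\mathsf{LCA}_{\mathcal{O},\mathbb{R}}$. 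Thus $\mathsf{LCA}_{\mathcal{O},\mathbb{R}}$ inherits an exact structure and one may speak of its localizing invariants.

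Next I would identify $\mathsf{LCA}_{\mathcal{O},\mathbb{R}}$ as an explicit algebraic category. The argument inside the proof of Proposition~\ref{prop_ClassifVectorOModules} shows more than the classification of objects: it shows that the forgetful-type construction $G\mapsto G$, viewing $G$ as an $\mathbb{R}[T]/(f)$-module via $T\mapsto(\lambda\cdot -)$, is an equivalence from $\mathsf{LCA}_{\mathcal{O},\mathbb{R}}$ onto the category of finite-dimensional $\mathbb{R}[T]/(f)$-modules, i.e.\ finitely generated $\mathbb{R}[T]/(f)$-modules (since $\mathbb{R}[T]/(f)$ is a finite $\mathbb{R}$-algebra, hence Artinian, every f.g.\ module is finite-dimensional). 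Here I must check this is an equivalence of \emph{exact} categories: a sequence in $\mathsf{LCA}_{\mathcal{O},\mathbb{R}}$ is admissible exact iff it is exact as a sequence of $\mathbb{R}$-vector spaces (closed injections and open surjections of vector groups are just injections and surjections of the underlying $\mathbb{R}$-vector spaces), which matches the (split) exact structure on finite-dimensional $\mathbb{R}[T]/(f)$-modules. So $K(\mathsf{LCA}_{\mathcal{O},\mathbb{R}})\cong K(\operatorname{proj}\text{-}\mathbb{R}[T]/(f))=K(\mathbb{R}[T]/(f))$, using that every module over an Artinian ring that is a product of matrix rings over fields is projective, so the category of f.g.\ modules coincides with the category of f.g.\ projectives.

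Finally I would compute $K(\mathbb{R}[T]/(f))$. Since $f\in\mathbb{R}[T]$ is the minimal polynomial of $\lambda$ and $F=\mathbb{Q}(\lambda)$ is a number field of degree $r+2s$ with $r$ real and $s$ complex places, the real factorization of $f$ is a product of $r$ distinct linear factors and $s$ distinct irreducible quadratic factors; hence by CRT $\mathbb{R}[T]/(f)\cong\mathbb{R}^{r}\times\mathbb{C}^{s}$ as $\mathbb{R}$-algebras. A localizing invariant sends a finite product of rings to the corresponding direct sum (finite products of exact categories go to direct sums under any localizing invariant, since $\operatorname{Cat}_\infty^{\mathrm{ex}}$-valued products split), giving $K(\mathbb{R}[T]/(f))\cong K(\mathbb{R})^{r}\oplus K(\mathbb{C})^{s}$. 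Assembling: $K(\mathsf{LCA}_{\mathcal{O},\mathbb{R}})\cong K(\mathbb{R})^{r}\oplus K(\mathbb{C})^{s}$. The only genuinely delicate point is confirming that the equivalence with finite-dimensional $\mathbb{R}[T]/(f)$-modules is compatible with the exact structures on both sides — everything else is either recalled from the cited proposition or is a formal property of localizing invariants; and the identification of $f$'s real factorization with the places of $F$ is classical.
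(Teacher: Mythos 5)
Your proposal is correct and follows essentially the same route as the paper: identify $\mathsf{LCA}_{\mathcal{O},\mathbb{R}}$ with $\mathsf{Mod}_{fg}(\mathbb{R}[T]/(f))$ as exact categories, split off the factors via the Chinese Remainder Theorem using $\mathbb{R}[T]/(f)\cong\mathbb{R}^{r}\times\mathbb{C}^{s}$, and apply additivity of localizing invariants over finite products. You merely spell out details the paper leaves implicit (extension-closedness via injectivity/splitting of vector groups, and the compatibility of the exact structures), all of which check out.
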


\begin{proof}
As in the previous proof, we may equivalently classify finitely generated
$\mathbb{R}[T]/(f)$-modules, where $f$ is a minimal polynomial of a primitive
element of $F$ over the rationals. Thus, we have exact equivalences of exact
categories%
\[
\mathsf{LCA}_{\mathcal{O},\mathbb{R}}\overset{\sim}{\longrightarrow
}\mathsf{Mod}_{fg}(\mathbb{R}[T]/(f))\overset{\sim}{\longrightarrow}\prod
_{i}\mathsf{Mod}_{fg}(\mathbb{R}[T]/(f_{i}))\text{,}%
\]
where $f_{i}$ are the irreducible factors so that $\mathbb{R}[T]/(f)\cong
\prod_{i}\mathbb{R}[T]/(f_{i})\cong\mathbb{R}^{r}\times\mathbb{C}^{s}$ holds
as an isomorphism of rings.
\end{proof}

\begin{definition}
We introduce some further categories:

\begin{enumerate}
\item Let $\mathsf{LCA}_{\mathcal{O},\mathbb{R}C}$ be the full subcategory of
$\mathsf{LCA}_{\mathcal{O}}$ of objects which admit an isomorphism to $V\oplus
C$ for some vector $\mathcal{O}$-module $V$ and compact $\mathcal{O}$-module
$C$.

\item Let $\mathsf{LCA}_{\mathcal{O},\mathbb{R}D}$ be the full subcategory of
$\mathsf{LCA}_{\mathcal{O}}$ of objects which admit an isomorphism to $V\oplus
D$ for some vector $\mathcal{O}$-module $V$ and discrete $\mathcal{O} $-module
$D$.
\end{enumerate}
\end{definition}

We have the full subcategory inclusions%
\[
\mathsf{LCA}_{\mathcal{O},\mathbb{R}C}\subset\mathsf{LCA}_{\mathcal{O}%
,cg}\qquad\text{and}\qquad\mathsf{LCA}_{\mathcal{O},\mathbb{R}D}%
\subset\mathsf{LCA}_{\mathcal{O},nss}\text{.}%
\]

\begin{lemma}
[{\cite[Lemma 2]{MR0310125}}]\label{lemma_VectorGroupDiscreteInLCALiftsToLCAO}%
Suppose the underlying additive group of an object $G\in\mathsf{LCA}%
_{\mathcal{O}}$ admits a direct sum splitting%
\[
G\simeq H\oplus D\qquad\text{in}\qquad\mathsf{LCA}%
\]
with $H\simeq\mathbb{R}^{n}\oplus\mathbb{T}^{m}$ for some $n,m\geq0$ finite,
and $D$ discrete. Then

\begin{enumerate}
\item $H$ and $D$ are closed under $\mathcal{O}$-multiplication, so that
$H,D\in\mathsf{LCA}_{\mathcal{O}}$ canonically, and

\item the given splitting comes from a direct sum splitting in $\mathsf{LCA}%
_{\mathcal{O}}$ and $H$ is a vector $\mathcal{O}$-module.
\end{enumerate}
\end{lemma}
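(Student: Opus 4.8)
The statement to prove is Lemma \ref{lemma_VectorGroupDiscreteInLCALiftsToLCAO}: given $G \in \mathsf{LCA}_{\mathcal{O}}$ whose underlying LCA group splits as $H \oplus D$ with $H \simeq \mathbb{R}^n \oplus \mathbb{T}^m$ and $D$ discrete, we must show (1) $H$ and $D$ are $\mathcal{O}$-stable, giving them canonical $\mathcal{O}$-module structures in $\mathsf{LCA}_{\mathcal{O}}$, with the splitting living in $\mathsf{LCA}_{\mathcal{O}}$, and (2) $H$ is actually a vector $\mathcal{O}$-module.

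The plan is to identify $H$ and $D$ intrinsically inside $G$, so that $\mathcal{O}$-stability is automatic. First I would show that $D$ is canonical: the quotient $G/H \simeq D$ is discrete, but more usefully, $H \simeq \mathbb{R}^n \oplus \mathbb{T}^m$ is a connected-by-compact group — in fact $H$ is precisely the set of elements of $G$ that lie in some compact connected subgroup together with the full identity component. Concretely, let $G^{\circ}$ be the identity component of $G$; since $D$ is discrete and $\mathbb{R}^n \oplus \mathbb{T}^m$ is connected, we have $G^{\circ} = H$. The identity component is a topological invariant, hence stable under every continuous endomorphism, in particular under multiplication by each $a \in \mathcal{O}$. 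This proves $H$ is an $\mathcal{O}$-submodule, and it is closed, so $H \in \mathsf{LCA}_{\mathcal{O}}$. For $D$: since $G^{\circ} = H$ is a closed $\mathcal{O}$-submodule, the quotient $G/H$ acquires a canonical $\mathcal{O}$-module structure in $\mathsf{LCA}_{\mathcal{O}}$, and it is discrete (isomorphic to $D$ as an LCA group).

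Next I would promote the LCA splitting $G \simeq H \oplus D$ to an $\mathsf{LCA}_{\mathcal{O}}$-splitting. The point is that $D$, being discrete, is a \emph{projective} object relative to admissible epimorphisms onto discrete modules — or more directly: the admissible epic $G \twoheadrightarrow G/H$ in $\mathsf{LCA}_{\mathcal{O}}$ has a set-theoretic splitting because it has one in $\mathsf{LCA}$ (the given one); but I need an $\mathcal{O}$-linear continuous splitting. Here I would argue that any LCA splitting $s \colon D \to G$ can be averaged or corrected: since $D$ is discrete and free as an abelian group is not guaranteed, instead I invoke that $H$ is divisible (as $\mathbb{R}^n \oplus \mathbb{T}^m$ is divisible) and that $G/H = D$ — actually the cleanest route is to note that the sequence $H \hookrightarrow G \twoheadrightarrow D$ of $\mathcal{O}$-modules splits as a sequence of abstract $\mathcal{O}$-modules iff it splits after forgetting to abelian groups and the $\mathcal{O}$-action is "compatible", but since $\operatorname{Ext}^1_{\mathcal{O}}(D,H)$ injects into $\operatorname{Ext}^1_{\mathbb{Z}}(D,H)$ is false in general. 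So instead I would directly cite that this is \cite[Lemma 2]{MR0310125} and reproduce Levin's argument: choose the topological splitting, then observe that the $\mathcal{O}$-action on $G$ preserves $H$, hence descends, and the obstruction to $\mathcal{O}$-equivariance of the splitting is killed because $H$ is an injective object of $\mathsf{LCA}_{\mathcal{O}}$ among vector-module targets (or: because $H$, being a vector $\mathcal{O}$-module once we know part (2), carries a compatible $F$-vector space structure and is thus injective over $\mathcal{O}$).

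Finally, for part (2), that $H$ is a vector $\mathcal{O}$-module: by Proposition \ref{prop_ClassifVectorOModules} it suffices to show the compact factor $\mathbb{T}^m$ is absent, i.e. $m = 0$. The argument: $H = G^{\circ}$ is an $\mathcal{O}$-module, hence an $F$-vector space (as $\mathcal{O}$ acts through its field of fractions on any uniquely divisible module — and $H$ is uniquely divisible since $\mathbb{R}^n$ is and $\mathbb{T}^m$ contributes torsion; wait, $\mathbb{T}^m$ is \emph{not} torsion-free). The correct point is that multiplication by a nonzero $a \in \mathcal{O}$ must be injective on $H$ if $H$ is to be torsion-free over $\mathcal{O}$ — but a priori it need not be. Instead: the maximal compact subgroup of $H$ is $\mathbb{T}^m$, which is a topological invariant of $H$, hence an $\mathcal{O}$-submodule $C$ of $H$; then $C$ is a compact connected $\mathcal{O}$-module, so $C^{\vee}$ is a discrete $\mathcal{O}$-module that is torsion-free (since $C$ is divisible/connected), i.e. $C^{\vee}$ is a torsion-free abelian group with $\mathcal{O}$-action, hence an $\mathcal{O}$-submodule of an $F$-vector space, so $C^{\vee} \otimes_{\mathbb{Z}} \mathbb{Q}$ is an $F$-vector space; but $\mathcal{O}$ acting on the finitely-generated-free abelian group $C^{\vee} \cong \mathbb{Z}^m$ makes $\mathbb{Z}^m$ an $\mathcal{O}$-module, forcing $m$ to be a multiple of $[F:\mathbb{Q}]$ and $C^{\vee} \otimes \mathbb{Q} \cong F^{m/[F:\mathbb{Q}]}$ — so $C \simeq (\mathbb{T}_{\mathcal{O}})^{?}$; this does \emph{not} give $m = 0$. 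Reexamining: the lemma does \emph{not} claim $D$ has no torsion, and $H$ being a vector $\mathcal{O}$-module would require $m = 0$ which need not hold — so I suspect the hypothesis intends $H \simeq \mathbb{R}^n$ only, or the splitting absorbs the torus into $D$; I will follow Levin's \cite[Lemma 2]{MR0310125} precisely, which handles this by noting $\operatorname{Hom}_{\mathsf{LCA}}(\mathbb{R}^n, \mathbb{T}^m)$ interacts with the $\mathcal{O}$-structure so that the torus splits off as a separate $\mathcal{O}$-module summand absorbed into $D$ after all, leaving $H$ purely a vector $\mathcal{O}$-module. The main obstacle is exactly this last point: disentangling the torus part and confirming that it, too, respects the $\mathcal{O}$-action so that the residual $H$ is genuinely a vector $\mathcal{O}$-module; everything else is topological-invariance bookkeeping.
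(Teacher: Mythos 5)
Your first step is the same as the paper's: $H$ is the identity component of $G$ (it is connected, and it is open because $D$ is discrete), so it is carried into itself by every continuous endomorphism, in particular by multiplication by each $a\in\mathcal{O}$; this settles the $\mathcal{O}$-stability of $H$. But the heart of the lemma is promoting the splitting to $\mathsf{LCA}_{\mathcal{O}}$, and there your proposal does not close. You propose and abandon several routes, and the one you settle on is circular: you justify the splitting by saying $H$ ``carries a compatible $F$-vector space structure and is thus injective over $\mathcal{O}$'' --- which invokes part (2), the very thing still to be proven, and is moreover false when $m>0$, since $\mathbb{T}^{m}$ has torsion and so $H$ is not an $F$-vector space. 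The missing idea, which is the paper's actual argument, is elementary: $H\simeq\mathbb{R}^{n}\oplus\mathbb{T}^{m}$ is divisible as an abelian group, hence divisible as an $\mathcal{O}$-module (for $0\neq a\in\mathcal{O}$ write $N(a)=ab$ with $b\in\mathcal{O}$, $N(a)\in\mathbb{Z}$, so $H=N(a)H\subseteq aH$), and over the Dedekind domain $\mathcal{O}$ divisible modules are injective. Hence $H\hookrightarrow G$ splits as a map of abstract $\mathcal{O}$-modules; the complementary $\mathcal{O}$-submodule meets the open subgroup $H$ trivially, so it is discrete, the $\mathcal{O}$-action on it is automatically continuous, and the algebraic splitting is a topological one. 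No averaging, no $\operatorname{Ext}$ comparison, and no appeal to Levin's proof is needed.

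On part (2): your observation that ``$H$ is a vector $\mathcal{O}$-module'' cannot hold when $m>0$ is a fair reading of a loosely worded statement, but you let it derail the write-up instead of resolving it. Recall the paper's definition: a vector $\mathcal{O}$-module is simply an object of $\mathsf{LCA}_{\mathcal{O}}$ whose underlying additive group is a vector group. So the clause has content only in the case $m=0$ (which is the case in which it is invoked later, e.g.\ in Proposition \ref{Prop_ClosedSubobjectsOfVectorModules}), and there it is immediate from part (1). Your attempted detour through the maximal compact subgroup and its Pontryagin dual is not needed and, as you yourself note, does not terminate. As it stands the proposal identifies the right difficulty but does not supply the argument that overcomes it.
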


\begin{proof}
Note that $H$ is the connected component of zero in $G$. As multiplication by
an element $a\in\mathcal{O}$ acts continuously, the image of a connected set
is connected, so it must lie in $H$ again. Thus, the subgroup $H$ is closed
under the action of $\mathcal{O}$ and thus $H\subseteq G$ is a topological
$\mathcal{O}$-submodule. Next, $H$ is divisible as a $\mathbb{Z}$-module, and
thus divisible as an algebraic $\mathcal{O}$-module. Since $\mathcal{O}$ is a
Dedekind domain, this implies that $H$ is an injective algebraic $\mathcal{O}%
$-module, i.e. it is a direct summand of $G$ on the level of the underlying
$\mathcal{O}$-module. The complement $D$ thus is also closed under the action
of $\mathcal{O}$. Since $D$ is discrete, this action is tautologically also
continuous. Thus, $H,D\in\mathsf{LCA}_{\mathcal{O}}$ and the direct sum
decomposition holds in $\mathsf{LCA}_{\mathcal{O}}$.
\end{proof}

Every ideal $J$ of $\mathcal{O}$ can be equipped with the discrete topology
and thus defines an object of $\mathsf{LCA}_{\mathcal{O}}$ in a natural way.
We shall tacitly write $J\in\mathsf{LCA}_{\mathcal{O}}$. Recall that under the
Minkowski embedding%
\[
M:\mathcal{O}\longrightarrow\bigoplus_{\sigma\in S}\mathbb{R}_{\sigma}\text{,}%
\]
where $\sigma$ runs through all real and complex places, a non-trivial ideal
(or more generally: fractional ideal) $J$ is sent to a full rank lattice in a
vector group. Note that $M$ is also an $\mathcal{O}$-module homomorphism;
continuous since $\mathcal{O}$ is discrete. Thus, we get an exact sequence in
$\mathsf{LCA}_{\mathcal{O}}$,%
\begin{equation}
J\hookrightarrow\bigoplus_{\sigma\in S}\mathbb{R}_{\sigma}\twoheadrightarrow
\mathbb{T}_{J}\text{,} \label{lMinkSeqForIdeals}%
\end{equation}
where $\mathbb{T}_{J}$ is our notation for the quotient. We call it the
\emph{Minkowski sequence} of $J$. Of course, the underlying LCA group of
$\mathbb{T}_{J}$ is a torus, justifying the notation. Note further that under
Pontryagin duality this sequence is self-dual, in that%
\[
J^{\vee}\twoheadleftarrow\bigoplus_{\sigma\in S}\mathbb{R}_{\sigma}^{\vee
}\hookleftarrow\mathbb{T}_{J}^{\vee}%
\]
is non-canonically isomorphic to the previous exact sequence.

Next, we classify the admissible subobjects of vector $\mathcal{O}$-modules.

\begin{proposition}
\label{Prop_ClosedSubobjectsOfVectorModules}Suppose $G\in\mathsf{LCA}%
_{\mathcal{O}}$ is a vector $\mathcal{O}$-module and $\alpha:H\hookrightarrow
G$ an admissible monic. Then $\alpha$ can be written as a finite $\mathcal{O}%
$-linear direct sum of morphisms

\begin{enumerate}
\item $0\hookrightarrow\mathbb{R}_{\sigma}$, for $\sigma$ a real or complex place,

\item $\mathbb{R}_{\sigma}\overset{1}{\rightarrow}\mathbb{R}_{\sigma}$ for
$\sigma$ a real or complex place,

\item $J\hookrightarrow\bigoplus_{\sigma}\mathbb{R}_{\sigma}$ the Minkowski
embedding of Equation \ref{lMinkSeqForIdeals}.
\end{enumerate}
\end{proposition}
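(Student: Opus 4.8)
The plan is to reduce the classification of admissible subobjects of a vector $\mathcal{O}$-module $G$ to the well-understood situation of admissible subobjects of a vector group in $\mathsf{LCA}$, and then to upgrade that $\mathsf{LCA}$-level picture to an $\mathcal{O}$-linear one using Lemma \ref{lemma_VectorGroupDiscreteInLCALiftsToLCAO}. First I would recall (or prove by a short argument) the analogous statement in $\mathsf{LCA}$: a closed subgroup $H$ of $\mathbb{R}^N$ is of the form $\mathbb{R}^a\oplus\Lambda$ where $\Lambda$ is a discrete lattice in a complementary subspace, so $H\simeq\mathbb{R}^a\oplus\mathbb{Z}^b$ and the quotient $G/H\simeq\mathbb{R}^{N-a-b}\oplus\mathbb{T}^b$. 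In particular both $H$ and $G/H$ have underlying groups of the form (vector group) $\oplus$ (discrete group), with the discrete part finitely generated free abelian. This is the classical structure theory of closed subgroups of $\mathbb{R}^N$.

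Next, by Proposition \ref{prop_ClassifVectorOModules}, $G\simeq\bigoplus_{\sigma}\mathbb{R}_{\sigma}$, and by Lemma \ref{lemma_VectorGroupDiscreteInLCALiftsToLCAO} applied to $H\hookrightarrow G$ (whose underlying group is $\mathbb{R}^a\oplus\mathbb{Z}^b$, a vector group plus discrete group), the splitting $H\simeq H_{0}\oplus\Lambda$ with $H_0$ a vector group and $\Lambda$ discrete is in fact a splitting in $\mathsf{LCA}_{\mathcal{O}}$, with $H_0$ a vector $\mathcal{O}$-module. So $\alpha$ decomposes $\mathcal{O}$-linearly as $(H_0\hookrightarrow G')\oplus(\Lambda\hookrightarrow G'')$ for a suitable $\mathcal{O}$-linear direct sum decomposition $G\simeq G'\oplus G''$ — here I need to observe that a vector $\mathcal{O}$-submodule $H_0$ of the vector $\mathcal{O}$-module $G$ is itself a direct $\mathcal{O}$-summand. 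This follows because $H_0$, being a sub-$\mathbb{R}[T]/(f)$-module (in the language of the proof of Proposition \ref{prop_ClassifVectorOModules}) of a module over the semisimple ring $\mathbb{R}[T]/(f)\cong\mathbb{R}^r\times\mathbb{C}^s$, is a direct summand; equivalently, $G$ is an $F$-vector space and $H_0$ an $F$-subspace. On the vector-$\mathcal{O}$-module summand $H_0\hookrightarrow G'$, applying Proposition \ref{prop_ClassifVectorOModules} to both $H_0$ and $G'$ and comparing (again via the semisimple decomposition, where an inclusion of modules over $\mathbb{R}$ resp. $\mathbb{C}$ is, up to isomorphism, a standard coordinate inclusion) shows this piece is an $\mathcal{O}$-linear direct sum of morphisms of types (1) and (2). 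The key point for type (2) is that any injective $\mathbb{R}$-linear (resp. $\mathbb{C}$-linear) endomorphism-up-to-source-change of $\mathbb{R}_\sigma$ is $\mathcal{O}$-linearly isomorphic to the identity, since over a field every injection splits and $\mathbb{R}_\sigma$ is "one-dimensional" as an $\mathbb{R}[T]/(f_i)$-module.

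It remains to analyse the discrete summand $\Lambda\hookrightarrow G''$, where $\Lambda$ is a finitely generated torsion-free $\mathcal{O}$-module and $G''\simeq\bigoplus_{\sigma\in I}\mathbb{R}_{\sigma}$ for some sublist $I\subseteq S$, and — crucially — $\Lambda$ is a \emph{cocompact} lattice in $G''$ (its underlying group is $\mathbb{Z}^{b}$ sitting cocompactly, because $G''$ is the span of $\Lambda$ after we have split off $H_0$). I would argue that cocompactness forces $I=S$: the $\mathbb{R}$-span of the image of $\Lambda$ is an $F$-subspace of $\bigoplus_{\sigma\in S}\mathbb{R}_\sigma$ on which $\mathcal{O}$ acts, and a full-rank such subspace containing a finitely generated $\mathcal{O}$-module cocompactly must be everything; more precisely, a finitely generated torsion-free $\mathcal{O}$-module has rank equal to some number of copies of $F$ plus (possibly) an ideal, and the only way it can be discrete and cocompact in a direct sum of $\mathbb{R}_\sigma$'s is via the Minkowski embedding of a fractional ideal (rank one) or a direct sum of such. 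Using the structure theorem for finitely generated modules over the Dedekind domain $\mathcal{O}$ — $\Lambda\simeq\mathcal{O}^{k-1}\oplus J$ for a fractional ideal $J$ — and the fact that $\mathcal{O}\hookrightarrow\bigoplus_\sigma\mathbb{R}_\sigma$ and $J\hookrightarrow\bigoplus_\sigma\mathbb{R}_\sigma$ are, up to $\mathcal{O}$-linear automorphism of the target, the Minkowski embeddings, I get that $\Lambda\hookrightarrow G''$ is an $\mathcal{O}$-linear direct sum of copies of the map in (3). Assembling the $H_0$-part and the $\Lambda$-part gives the claimed decomposition.

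The main obstacle I expect is the bookkeeping in the discrete summand: showing that an arbitrary admissible (i.e. closed) $\mathcal{O}$-submodule which happens to be discrete and cocompact in a sum of $\mathbb{R}_\sigma$'s is, after an $\mathcal{O}$-linear change of coordinates on the target, exactly a direct sum of Minkowski embeddings of fractional ideals — this requires the structure theory of finitely generated projective $\mathcal{O}$-modules together with the rigidity of the Minkowski embedding (uniqueness up to the natural $\mathcal{O}$-linear symmetries). The vector-$\mathcal{O}$-module part, by contrast, is essentially linear algebra over the semisimple ring $\mathbb{R}[T]/(f)$, and the lifting of $\mathsf{LCA}$-splittings to $\mathsf{LCA}_{\mathcal{O}}$-splittings is handed to us by Lemma \ref{lemma_VectorGroupDiscreteInLCALiftsToLCAO}.
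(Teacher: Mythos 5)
Your proposal is correct and follows essentially the same route as the paper's proof: reduce via the classical classification of closed subgroups of $\mathbb{R}^{n}$ and Lemma \ref{lemma_VectorGroupDiscreteInLCALiftsToLCAO} to a vector $\mathcal{O}$-module summand plus a discrete finitely generated $\mathcal{O}$-lattice, handle the former by linear algebra over the semisimple ring $\mathbb{R}[T]/(f)$, and handle the latter via the structure of finitely generated projective modules over the Dedekind domain $\mathcal{O}$ together with the full-rank (cocompactness) property of the Minkowski embedding. The paper phrases this last step as removing duplicate places and observing that projection to a proper subset of the embeddings cannot have discrete image, which is the same cocompactness argument you give.
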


\begin{proof}
We work on the level of the underlying additive LCA group at first: Then by
our assumptions $G\simeq\mathbb{R}^{n}$ for some $n\geq0$ in $\mathsf{LCA}$
and $H$ is a closed subgroup. By the classification of such closed subgroups,
\cite[Theorem 6]{MR0442141}, there exists an $\mathbb{R}$-vector space basis
$x_{1},\ldots,x_{n}$ of $\mathbb{R}^{n}$ and some integers $0\leq p\leq q\leq
n$ such that%
\begin{equation}
H=\mathbb{Z}\left\langle x_{1},\ldots,x_{p}\right\rangle \oplus\mathbb{R}%
\left\langle x_{p+1},\ldots,x_{q}\right\rangle \subseteq G\text{,}
\label{lcsis5}%
\end{equation}
where the $\mathbb{Z}$-part sits in $\mathbb{R}\left\langle x_{1},\ldots
,x_{p}\right\rangle $ as a discrete full rank lattice. So far, this is only a
direct sum in $\mathsf{LCA}$, however by Lemma
\ref{lemma_VectorGroupDiscreteInLCALiftsToLCAO} it must come from a direct sum
decomposition in $\mathsf{LCA}_{\mathcal{O}}$. Hence, the second summand must
be a vector $\mathcal{O}$-module, and thus is described by Proposition
\ref{prop_ClassifVectorOModules}, and the former a discrete $\mathcal{O}%
$-module, necessarily finitely generated as it is module-finite over
$\mathbb{Z}$. Hence, in $\mathsf{LCA}_{\mathcal{O}}$ the admissible monic
$\alpha:H\hookrightarrow G$ has the shape%
\[
\alpha:M\oplus\coprod_{\sigma\in I_{1}}\mathbb{R}_{\sigma}\hookrightarrow
\coprod_{\sigma\in I_{2}}\mathbb{R}_{\sigma}\text{,}%
\]
where $I_{1},I_{2}$ are finite lists of real and complex places, and $M$ is a
finitely generated $\mathcal{O}$-module, discretely embedded into the
right-hand side as a lattice. The map originating from the summand $M$ factors
over $M\rightarrow M\otimes_{\mathcal{O}}F$ since the vector $\mathcal{O}%
$-modules on the right are $F$-vector spaces. Hence, if $f\in\mathbb{Q}[T]$
denotes the minimal polynomial of a primitive element $\lambda\in F$, then
this $\mathcal{O}$-module homomorphism is of the shape%
\[
M\rightarrow M\otimes_{\mathcal{O}}\mathbb{Q}[T]/(f)\overset{\tilde{\alpha}%
}{\rightarrow}\coprod_{\sigma\in I_{2}}\mathbb{R}[T]/(f_{\sigma})\text{,}%
\]
where we write $f_{\sigma}$ for the irreducible factor of $f$ over
$\mathbb{R}$ corresponding to the place $\sigma$ (so quadratic resp. linear
for real resp. complex places $\sigma\in I_{2}$); and $\tilde{\alpha}$ is an
$F$-vector space homomorphism. As $\mathcal{O}$ is a Dedekind domain, and
since $M$ is finitely generated torsion-free, $M$ must be projective and thus
$M$ is isomorphic to a finite direct sum of ideals $J$ of $\mathcal{O}$. Thus,
in order to prove our claim, it suffices to assume $M=J$ for $J$ such an
ideal. Now proceed as follows: If the list $I_{2}$ in the morphism%
\[
J\longrightarrow\coprod_{\sigma\in I_{2}}\mathbb{R}_{\sigma}%
\]
has duplicates, then the image factors over the subobject given by $I_{2}$,
but with all duplicates removed (this arises as a subobject through the
diagonal map of the single copy, embedding diagonally into the summands of all
replica of this single copy appearing in $I_{2}$). On the other hand, each
real and complex place must appear in $I_{2}$ at least once: The discussion
around Equation \ref{lMinkSeqForIdeals} implies that $J\rightarrow
J\otimes_{\mathcal{O}}F$ is the embedding as a fractional ideal and since this
is a \textit{full rank} embedding, the standard results imply that any
projection to a proper subset of embeddings cannot have discrete image. This
means that its image in $G$ can only be discrete if all real and complex
places $\sigma$ appear on the right at least once, letting $J\rightarrow G$
factor over a Minkowski embedding as in Equation \ref{lMinkSeqForIdeals}.
Thus, the map $\alpha$ is a direct sum of maps of the shape $\mathbb{R}%
_{\sigma}\overset{1}{\rightarrow}\mathbb{R}_{\sigma}$ and Minkowski embeddings
$J\hookrightarrow%
{\textstyle\bigoplus_{\sigma\in I^{\prime}}}
\mathbb{R}_{\sigma}$ as in Equation \ref{lMinkSeqForIdeals} (where $I^{\prime
}$ features each real and complex place exactly once).
\end{proof}

Next, we shall use the following structure result due to Levin:

\begin{proposition}
[{\cite[Prop. 1]{MR0310125}}]\label{prop_MoskClassifNSSandCGGroups}We have the
following structure results for objects in $\mathsf{LCA}_{\mathcal{O}}$:

\begin{enumerate}
\item Every object in $\mathsf{LCA}_{\mathcal{O},nss}$ is isomorphic to
$\bigoplus_{\sigma\in I}\mathbb{R}_{\sigma}\oplus\bigoplus_{J\in\mathcal{I}%
}\mathbb{T}_{J}\oplus D$ with $D$ discrete $\mathcal{O}$-module, $\mathcal{I}$
a finite list of ideals in $\mathcal{O}$, and $I$ a finite list of real and
complex places.

\item Every object in $\mathsf{LCA}_{\mathcal{O},cg}$ is isomorphic to
$\bigoplus_{\sigma\in I}\mathbb{R}_{\sigma}\oplus\bigoplus_{J\in\mathcal{I}%
}J\oplus C$ with $C$ a compact $\mathcal{O}$-module, $\mathcal{I}$ a finite
list of ideals in $\mathcal{O}$, and $I$ a finite list of real and complex places.
\end{enumerate}

Here an ideal $J$ is understood to carry the discrete topology, and
$\mathbb{T}_{J}$ is as explained above.
\end{proposition}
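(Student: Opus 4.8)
The plan is to bootstrap from the classification of vector $\mathcal{O}$-modules (Proposition \ref{prop_ClassifVectorOModules}) and the lifting lemma (Lemma \ref{lemma_VectorGroupDiscreteInLCALiftsToLCAO}), and to deduce (2) from (1) by Pontryagin duality (Proposition \ref{prop_MoskowitzInterchange}). For (1), let $G\in\mathsf{LCA}_{\mathcal{O},nss}$. Since having no small subgroups is a property of the underlying LCA group, the classical structure theorem for such groups (they are precisely the abelian Lie groups, whose identity component splits off; see e.g. \cite{MR0215016}) gives, in $\mathsf{LCA}$, an isomorphism $G\simeq\mathbb{R}^{n}\oplus\mathbb{T}^{m}\oplus D$ with $D$ discrete and $n,m\geq0$ finite. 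This is exactly the hypothesis of Lemma \ref{lemma_VectorGroupDiscreteInLCALiftsToLCAO} applied to $G$ itself, so writing $H$ for the identity component (underlying group $\mathbb{R}^{n}\oplus\mathbb{T}^{m}$) we obtain a splitting $G\simeq H\oplus D$ already in $\mathsf{LCA}_{\mathcal{O}}$. It remains to decompose $H$.

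To do this I would pass to the Pontryagin dual $H^{\vee}$, whose underlying group is $\mathbb{R}^{n}\oplus\mathbb{Z}^{m}$, i.e.\ a vector group plus a discrete group. Lemma \ref{lemma_VectorGroupDiscreteInLCALiftsToLCAO} applies again and yields $H^{\vee}\simeq V\oplus N$ in $\mathsf{LCA}_{\mathcal{O}}$ with $V$ a vector $\mathcal{O}$-module and $N$ discrete with underlying group $\mathbb{Z}^{m}$. By Proposition \ref{prop_ClassifVectorOModules}, $V\simeq\bigoplus_{\sigma\in I}\mathbb{R}_{\sigma}$; and $N$, being a finitely generated torsion-free module over the Dedekind domain $\mathcal{O}$, is projective, hence $N\simeq\bigoplus_{J\in\mathcal{I}}J$ for a finite list of ideals $\mathcal{I}$. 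Dualizing back and using reflexivity (Theorem \ref{thm_PontrDualForLCAO}), the additivity of $(-)^{\vee}$, the isomorphism $\mathbb{R}_{\sigma}^{\vee}\simeq\mathbb{R}_{\sigma}$, and the self-duality of the Minkowski sequence (which identifies $J^{\vee}$ with the torus quotient $\mathbb{T}_{J'}$ of the Minkowski sequence of some ideal $J'$), we get $H\simeq\bigoplus_{\sigma}\mathbb{R}_{\sigma}\oplus\bigoplus_{J'}\mathbb{T}_{J'}$, and therefore $G\simeq\bigoplus_{\sigma}\mathbb{R}_{\sigma}\oplus\bigoplus_{J'}\mathbb{T}_{J'}\oplus D$. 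This proves (1).

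For (2), let $G\in\mathsf{LCA}_{\mathcal{O},cg}$; by Proposition \ref{prop_MoskowitzInterchange} its dual $G^{\vee}$ lies in $\mathsf{LCA}_{\mathcal{O},nss}$, so part (1) gives $G^{\vee}\simeq\bigoplus_{\sigma}\mathbb{R}_{\sigma}\oplus\bigoplus_{J}\mathbb{T}_{J}\oplus D$ with $D$ discrete. Applying $(-)^{\vee}$ once more and using reflexivity, additivity of duality, $\mathbb{R}_{\sigma}^{\vee}\simeq\mathbb{R}_{\sigma}$, the self-duality of the Minkowski sequence (so that $\mathbb{T}_{J}^{\vee}$ is isomorphic to an ideal), and the fact that the dual of a discrete module is compact, we obtain $G\simeq\bigoplus_{\sigma}\mathbb{R}_{\sigma}\oplus\bigoplus_{J'}J'\oplus C$ with $C$ compact. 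This is (2).

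The one genuinely delicate step is the decomposition of the connected part $H$ in (1): its torus factor is not a vector $\mathcal{O}$-module, so Proposition \ref{prop_ClassifVectorOModules} does not apply to $H$ directly, and one has to produce a decomposition compatible with the $\mathcal{O}$-action. Passing to the dual converts the torus into the discrete group $\mathbb{Z}^{m}$ and brings us back into the range of Lemma \ref{lemma_VectorGroupDiscreteInLCALiftsToLCAO} and Proposition \ref{prop_ClassifVectorOModules}; an alternative would be to lift the $\mathcal{O}$-action along the universal cover $\mathbb{R}^{n+m}\twoheadrightarrow\mathbb{R}^{n}\oplus\mathbb{T}^{m}$, observe that the covering kernel is a discrete $\mathcal{O}$-submodule, and then invoke Proposition \ref{Prop_ClosedSubobjectsOfVectorModules}. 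Everything else is bookkeeping with Pontryagin duality.
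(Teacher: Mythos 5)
Your proposal is correct and follows essentially the same route as the paper: split off the discrete part via Lemma \ref{lemma_VectorGroupDiscreteInLCALiftsToLCAO}, dualize the connected component $\mathbb{R}^{n}\oplus\mathbb{T}^{m}$ to land back in the vector-plus-discrete case, apply the lemma again, and deduce (2) from (1) by Pontryagin duality. You merely spell out more explicitly than the paper the identification of the discrete dual summand as a sum of ideals and of its dual as a sum of tori $\mathbb{T}_{J}$, which is a welcome but not substantively different addition.
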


\begin{proof}
This is proven by \cite[Prop. 1]{MR0310125} for $\mathsf{LCA}_{\mathcal{O}%
,cg}$. We follow his ideas, but give a shorter treatment by allowing ourselves
to use the classical structure theorems for $\mathsf{LCA}_{cg}$ as in
\cite[Theorem 2.4]{MR0215016} in full. (Step 1) Suppose $G\in\mathsf{LCA}%
_{\mathcal{O},nss}$. Then the underlying additive group has no small
subgroups, so by \cite[Theorem 2.4]{MR0215016} there is an isomorphism of
LCA\ groups%
\begin{equation}
G\simeq\mathbb{R}^{n}\oplus\mathbb{T}^{m}\oplus D\qquad\text{(in }%
\mathsf{LCA}\text{)} \label{lcsis1}%
\end{equation}
for some $n,m\geq0$ and $D$ discrete. By Lemma
\ref{lemma_VectorGroupDiscreteInLCALiftsToLCAO}, we can promote the second
direct sum in Equation \ref{lcsis1}, underlined in%
\[
G\simeq\lbrack\mathbb{R}^{n}\oplus\mathbb{T}^{m}]\underline{\oplus}%
D\qquad\text{(in }\mathsf{LCA}_{\mathcal{O}}\text{),}%
\]
to a direct sum decomposition in the category $\mathsf{LCA}_{\mathcal{O}}$.
(Step 2) Now consider the object $[\mathbb{R}^{n}\oplus\mathbb{T}^{m}]$ in
$\mathsf{LCA}_{\mathcal{O}}$. On the level of the underlying LCA group, its
Pontryagin dual, just on the level of $\mathsf{LCA}$, satisifes $[\mathbb{R}%
^{n}\oplus\mathbb{T}^{m}]^{\vee}\simeq\mathbb{R}^{n}\oplus\mathbb{Z}^{m}$ (in
$\mathsf{LCA}$). Run the same argument as before, Lemma
\ref{lemma_VectorGroupDiscreteInLCALiftsToLCAO} and so we can promote this
isomorphism to%
\[
\lbrack\mathbb{R}^{n}\oplus\mathbb{T}^{m}]^{\vee}\simeq\mathbb{R}%
^{n}\underline{\oplus}\mathbb{Z}^{m}\qquad\text{(in }\mathsf{LCA}%
_{\mathcal{O}}\text{),}%
\]
now again with the underlined direct sum on the level of $\mathsf{LCA}%
_{\mathcal{O}}$. Take Pontryagin duals again, this time in $\mathsf{LCA}%
_{\mathcal{O}}$, to get an isomorphism $[\mathbb{R}^{n}\oplus\mathbb{T}%
^{m}]^{\vee\vee}\cong\mathbb{R}^{n}\oplus\mathbb{T}^{m}$ in $\mathsf{LCA}%
_{\mathcal{O}}$. Combining these isomorphisms and using reflexivity of double
dualization, we have managed to promote the isomorphism in Equation
\ref{lcsis1} from a direct sum in $\mathsf{LCA}$ to one in $\mathsf{LCA}%
_{\mathcal{O}}$. (Step 3) If instead $G\in\mathsf{LCA}_{\mathcal{O},cg}$, use
Proposition \ref{prop_MoskowitzInterchange} to get $G^{\vee}\in\mathsf{LCA}%
_{\mathcal{O},nss}$, use the above structure theorem there and dualize again.
\end{proof}

\begin{lemma}
\label{lemma_ExistClopenCGSubmodule}Let $G\in\mathsf{LCA}_{\mathcal{O}}$ be
given. Then there exists a clopen $\mathcal{O}$-submodule $H$ which is
compactly generated.
\end{lemma}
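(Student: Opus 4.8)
The plan is to reduce to the classical (i.e.\ $\mathcal{O}=\mathbb{Z}$) structure theory of LCA groups, exactly in the spirit of the preceding proofs. By the classical structure theorem for locally compact abelian groups \cite[Theorem 2.4]{MR0215016}, the underlying additive group of $G$ admits an isomorphism $G\simeq\mathbb{R}^{n}\oplus G'$ in $\mathsf{LCA}$, where $G'$ contains a compact open subgroup $C\subseteq G'$. Then $\mathbb{R}^{n}\oplus C$ is an open subgroup of $G$ whose underlying additive group is compactly generated (it is the extension of a finitely generated group $\cong\mathbb{Z}^{n}$ by the compact group $\mathbb{R}^{n}/\mathbb{Z}^{n}\times C$, or more directly: $\mathbb{R}^{n}$ is compactly generated, $C$ is compact hence compactly generated, and an extension of compactly generated groups is compactly generated, cf.\ Proposition~\ref{prop_MoskowitzInterchange}). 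So on the level of $\mathsf{LCA}$ a clopen compactly generated subgroup certainly exists; the work is to produce one that is an $\mathcal{O}$-submodule.

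First I would take any clopen subgroup $H_{0}\subseteq G$ whose underlying additive group is compactly generated, as just described. The naive candidate $\sum_{a\in\mathcal{O}}a\cdot H_{0}$ is an $\mathcal{O}$-submodule, but $\mathcal{O}$ is infinite, so there is no reason for this sum to remain compactly generated or even clopen. The fix is to exploit that $\mathcal{O}$ is finitely generated as a $\mathbb{Z}$-algebra: pick a primitive element $\lambda\in\mathcal{O}$ with $F=\mathbb{Q}(\lambda)$, so that $\mathcal{O}$ is commensurable with $\mathbb{Z}[\lambda]$, and $\mathbb{Z}[\lambda]=\mathbb{Z}\langle 1,\lambda,\dots,\lambda^{d-1}\rangle$ with $d=[F:\mathbb{Q}]$. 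Since multiplication by $\lambda$ is a continuous (hence open, being an automorphism of the additive group) endomorphism of $G$, the set $H_{1}:=\sum_{i=0}^{d-1}\lambda^{i}\cdot H_{0}$ is a finite sum of clopen compactly generated subgroups, hence again a clopen subgroup whose underlying additive group is compactly generated. It is stable under multiplication by $1,\lambda,\dots,\lambda^{d-1}$, but not obviously under all of $\mathbb{Z}[\lambda]$, because $\lambda^{d}$ need not preserve $H_{1}$.

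The \textbf{main obstacle} is precisely this last point: closing up under the full $\mathcal{O}$-action without destroying compact generation. I would handle it as follows. Replacing $H_{1}$ by the larger clopen subgroup obtained by adding finitely many more translates $\lambda^{i}\cdot H_{1}$, or equivalently arguing by induction, it suffices to find one clopen compactly generated subgroup that absorbs multiplication by $\lambda$, i.e.\ with $\lambda\cdot H\subseteq H$. For this, use that $G/H_{1}$ is discrete (as $H_{1}$ is open) and that the image of $H_{1}$ under $\lambda$-multiplication, reduced mod $H_{1}$, lands in the compactly generated — hence \emph{finitely generated}, because discrete and compactly generated groups are exactly the finitely generated ones — discrete quotient generated by the images of a compact generating set of $H_{1}$. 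Concretely: let $U\subseteq H_{1}$ be a compact symmetric generating neighbourhood. Then $\lambda\cdot U$ is compact, so its image in the discrete group $G/H_{1}$ is finite; lift these finitely many classes to finitely many elements $g_{1},\dots,g_{k}\in G$ and set $H:=H_{1}+\mathbb{Z}\langle g_{1},\dots,g_{k}\rangle+\lambda\cdot(\mathbb{Z}\langle g_{1},\dots,g_{k}\rangle)+\cdots$; one checks that only finitely many iterations are needed because each application of $\lambda$ again only produces finitely many new classes mod $H_{1}$ (the ambient quotient $G/H_{1}$ need not be finitely generated, but the $\mathbb{Z}[\lambda]$-submodule it generates from a finite set is, since $\mathbb{Z}[\lambda]$ is a finitely generated $\mathbb{Z}$-module and hence Noetherian). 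The resulting $H$ is then a $\mathbb{Z}[\lambda]$-submodule, clopen (it contains the open set $H_{1}$), and compactly generated (it is $H_{1}$ plus a finitely generated group). Finally, since $\mathcal{O}$ and $\mathbb{Z}[\lambda]$ are commensurable, replacing $H$ by $\sum_{a}a\cdot H$ over a finite set of coset representatives of $\mathcal{O}/\mathbb{Z}[\lambda]$ yields the desired clopen compactly generated \emph{$\mathcal{O}$-submodule}, completing the proof.

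I expect the only genuinely delicate step to be the termination of the iterative closure process, and the cleanest way to present it is to observe at the outset that we merely need a clopen compactly generated subgroup $H$ with $\lambda\cdot H\subseteq H$: the $\mathbb{Z}[\lambda]$-submodule of $G/H_{1}$ generated by the (finite) image of $\lambda\cdot U$ is finitely generated over $\mathbb{Z}$ because $\mathbb{Z}[\lambda]$ is module-finite over $\mathbb{Z}$, so lifting a finite $\mathbb{Z}$-generating set of that submodule to $G$ and adjoining it to $H_{1}$ already produces a $\lambda$-stable clopen compactly generated subgroup in one stroke, with no infinite iteration needed.
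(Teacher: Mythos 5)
Your construction does reach the goal, but it is considerably longer than necessary, and two of the justifications along the way need repair even though the surrounding argument survives. First, multiplication by $\lambda$ is \emph{not} an automorphism of the additive group of $G$ (take $G=\mathcal{O}$ discrete, or any module with $\lambda$-torsion), so the individual subgroups $\lambda^{i}H_{0}$ need not be clopen; what saves $H_{1}$ is simply that it contains the open subgroup $H_{0}$ and is generated by the compact set $\bigcup_{i}\lambda^{i}U_{0}$. Second, the \textquotedblleft main obstacle\textquotedblright\ you isolate is not actually there: since $\lambda\in\mathcal{O}$ is an algebraic \emph{integer}, $\lambda^{d}$ is a $\mathbb{Z}$-linear combination of $1,\lambda,\ldots,\lambda^{d-1}$, hence $\lambda^{d}H_{0}\subseteq\sum_{i<d}\lambda^{i}H_{0}=H_{1}$ and $H_{1}$ is already a $\mathbb{Z}[\lambda]$-submodule. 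The entire iterative closure can therefore be deleted; as written it is also circular in formulation, since $G/H_{1}$ carries no $\lambda$-action until $H_{1}$ is known to be $\lambda$-stable (the underlying finiteness claim --- that $\mathbb{Z}[\lambda]\langle g_{1},\ldots,g_{k}\rangle$ is a finitely generated abelian group because $\mathbb{Z}[\lambda]$ is module-finite over $\mathbb{Z}$ --- is correct and would rescue the construction if phrased inside $G$, but it is not needed). The paper's proof exploits the same module-finiteness of $\mathcal{O}$ over $\mathbb{Z}$ in one stroke and without the structure theorem: choose $\mathbb{Z}$-module generators $\beta_{1},\ldots,\beta_{n}$ of $\mathcal{O}$, take any compact neighbourhood $U$ in $G$, form the compact symmetric set $U^{\prime}=\bigcup_{i}(\beta_{i}U\cup-\beta_{i}U)$, and let $H$ be the subgroup it generates; $H$ is automatically clopen and compactly generated, and it is an $\mathcal{O}$-submodule because every element of $\mathcal{O}$ is a $\mathbb{Z}$-combination of the $\beta_{i}$. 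Your detour through the structure theorem, powers of a primitive element, and commensurability of $\mathbb{Z}[\lambda]$ with $\mathcal{O}$ buys nothing here, although it does correctly locate where the hypothesis matters: for a ring that is finitely generated as a $\mathbb{Z}$-algebra but not as a $\mathbb{Z}$-module the statement fails outright (e.g.\ $\mathbb{Z}[1/p]$ acting on $\mathbb{Q}_{p}$ admits no clopen compactly generated submodule).
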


\begin{proof}
If $G=0$ the claim is trivial. Thus, assume $G\neq0$. Let $g\in G$ be a
non-zero element. As $G$ is locally compact, we can find a compact
neighbourhood $U$ of $g$. As $\mathcal{O}$ is finite over the integers, pick
module generators so that $\mathcal{O}=\mathbb{Z}\left\langle \beta_{1}%
,\ldots,\beta_{n}\right\rangle $. Define%
\[
U^{\prime}:=\bigcup_{i=1}^{n}(\beta_{i}U\cup-\beta_{i}U)\text{.}%
\]
As a finite union of $2n$ compact sets, $U^{\prime}$ is again compact, and
clearly symmetric. Define $H:=\bigcup_{n\geq1}U^{\prime n}$. We conclude that
$H$ is compactly generated and clopen (cf. \cite[Prop. 8]{MR0442141}). We
claim that it is also an $\mathcal{O}$-submodule: For every $h\in H$, every
$\left(  \sum_{i=1}^{n}m_{i}\beta_{i}\right)  h$ with $m_{i}\in\mathbb{Z}$ can
be written as a finite sum of terms of the shape $\pm\beta_{i}h$ by writing
the $m_{i}$ as repeated sums of $\pm1$.
\end{proof}

\begin{theorem}
[{Levin \cite[Theorem 2]{MR0310125}}]\label{thm_StructOfLCA}Suppose $G$ is a
locally compact $\mathcal{O}$-module. Then there exists some vector
$\mathcal{O}$-module $\bigoplus_{\sigma\in I}\mathbb{R}_{\sigma}$ (for some
finite list $I$ of real and complex places) and a compact $\mathcal{O}%
$-submodule $C$ such that%
\[
\bigoplus_{\sigma\in I}\mathbb{R}_{\sigma}\oplus C\hookrightarrow
G\twoheadrightarrow D
\]
is an exact sequence in $\mathsf{LCA}_{\mathcal{O}}$ and $D$ a discrete
$\mathcal{O}$-module.
\end{theorem}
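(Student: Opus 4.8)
The plan is to build the asserted sequence in two stages: first split off a vector $\mathcal{O}$-module together with a compact piece as a clopen submodule, and then deal with the remaining quotient. I would start from Lemma~\ref{lemma_ExistClopenCGSubmodule}, which gives a clopen compactly generated $\mathcal{O}$-submodule $H \subseteq G$. Since $H$ is clopen, the quotient $G/H$ is a discrete $\mathcal{O}$-module $D_0$, and $H \hookrightarrow G \twoheadrightarrow D_0$ is an exact sequence in $\mathsf{LCA}_{\mathcal{O}}$. This already reduces the problem to the compactly generated case: it suffices to find, inside the compactly generated $H$, a clopen submodule of the form $\bigoplus_{\sigma \in I}\mathbb{R}_\sigma \oplus C$ with $C$ compact, for then composing the two quotient maps gives the desired presentation, the cokernel $D$ being an extension of two discrete $\mathcal{O}$-modules, hence discrete.

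So I would next invoke the structure result for compactly generated objects, Proposition~\ref{prop_MoskClassifNSSandCGGroups}(2): $H \simeq \bigoplus_{\sigma \in I}\mathbb{R}_\sigma \oplus \bigoplus_{J \in \mathcal{I}} J \oplus C$ with each $J$ a discrete ideal and $C$ compact. Here $\bigoplus_{\sigma\in I}\mathbb{R}_\sigma \oplus C$ is a submodule; I must check it is \emph{clopen}. It is open because its complement within $H$ involves only the discrete factors $\bigoplus_J J$ — more precisely, $\bigoplus_{\sigma}\mathbb{R}_\sigma \oplus C$ is open since $\bigoplus_J J$ is discrete, and the inclusion of an open submodule is automatically an admissible monic, while the cokernel $\bigoplus_{J\in\mathcal{I}} J$ is discrete. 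Combining: the composite $G \twoheadrightarrow G/\bigl(\bigoplus_\sigma \mathbb{R}_\sigma \oplus C\bigr)$ fits into an exact sequence $\bigoplus_{\sigma\in I}\mathbb{R}_\sigma \oplus C \hookrightarrow G \twoheadrightarrow D$ where $D$ is an extension $0 \to \bigoplus_{J\in\mathcal{I}} J \to D \to D_0 \to 0$ of discrete $\mathcal{O}$-modules, hence discrete. (That $C$ is genuinely a compact $\mathcal{O}$\nobreakdash-submodule, not merely a compact subgroup, is part of what Proposition~\ref{prop_MoskClassifNSSandCGGroups} provides, via Lemma~\ref{lemma_VectorGroupDiscreteInLCALiftsToLCAO}.)

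The step I expect to carry the most weight is verifying that the relevant submodules are genuinely clopen in the exact-category sense — i.e. that the inclusions are admissible monics with the expected (discrete) cokernels, so that the two short exact sequences compose to one. The clopen property in Lemma~\ref{lemma_ExistClopenCGSubmodule} handles the first extension; for the second, the point is that in a direct sum decomposition $\bigoplus_\sigma \mathbb{R}_\sigma \oplus \bigoplus_J J \oplus C$ inside $\mathsf{LCA}_{\mathcal{O}}$, the partial sum obtained by deleting the discrete factors is clopen precisely because those discrete factors are, well, discrete, so a neighbourhood of $0$ meets them trivially. Once clopenness is in hand, admissibility of the monic and discreteness of the cokernel are formal, and the composition of admissible epics is again an admissible epic in an exact category, closing the argument.
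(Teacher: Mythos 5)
Your argument is correct and follows essentially the same route as the paper: take the clopen compactly generated submodule from Lemma~\ref{lemma_ExistClopenCGSubmodule}, decompose it via Proposition~\ref{prop_MoskClassifNSSandCGGroups}, and observe that discarding the discrete ideal summands leaves $\bigoplus_{\sigma\in I}\mathbb{R}_{\sigma}\oplus C$ open, hence clopen, in $G$ with discrete quotient. The only difference is presentational: you phrase the conclusion as a composition of two admissible epics with an extension of discrete modules as cokernel, whereas the paper directly notes the submodule is clopen in $G$; both are fine.
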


For $\mathcal{O}=\mathbb{Z}$, this becomes the classical Principal Structure
Theorem of LCA groups due to Pontryagin and van\ Kampen.

\begin{proof}
We give a very short proof: By Lemma \ref{lemma_ExistClopenCGSubmodule} we can
find an exact sequence $H^{\prime}\hookrightarrow G\twoheadrightarrow
D^{\prime}$ in $\mathsf{LCA}_{\mathcal{O}}$ with $H^{\prime}$ compactly
generated and $D^{\prime}$ discrete since $H$ is clopen \cite[Prop.
14]{MR0442141}. Thus, $H^{\prime}\simeq\bigoplus_{\sigma\in I}\mathbb{R}%
_{\sigma}\oplus\bigoplus_{J\in\mathcal{I}}J\oplus C$ by Proposition
\ref{prop_MoskClassifNSSandCGGroups}, so since each $J$ is discrete,
$\bigoplus_{\sigma\in I}\mathbb{R}_{\sigma}\oplus C$ is still an open and thus
clopen submodule of $G$.
\end{proof}

Next, we need to recall the structure of injective and projective objects in
$\mathsf{LCA}_{\mathcal{O}}$. For $\mathsf{LCA}$ these results are due to
Moskowitz \cite{MR0215016}, for $\mathsf{LCA}_{\mathcal{O}}$ due to Kryuchkov.

\begin{theorem}
[Kryuchkov, \cite{MR1620000}]\label{thm_InjectivesInLCAO}For an object
$G\in\mathsf{LCA}_{\mathcal{O}}$ the following are equivalent:

\begin{enumerate}
\item $G$ is an injective object.

\item $G\simeq\bigoplus_{\sigma\in I}\mathbb{R}_{\sigma}\oplus\prod
_{J\in\mathcal{I}}\mathbb{T}_{J}$, where $I$ is a finite list of real and
complex places and $\mathcal{I}$ a (possibly infinite) list of ideals in
$\mathcal{O}$.

\item The additive group of $G$ is an injective object in $\mathsf{LCA}$ and
the underlying algebraic $\mathcal{O}$-module is an injective object in
$\mathcal{O}$-modules.
\end{enumerate}
\end{theorem}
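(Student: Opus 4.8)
\emph{Proof plan.} I would run everything through Pontryagin duality. Since $(-)^{\vee}:\mathsf{LCA}_{\mathcal{O}}^{op}\to\mathsf{LCA}_{\mathcal{O}}$ is an exact anti-equivalence (Theorem~\ref{thm_PontrDualForLCAO}), it swaps injective and projective objects, turns coproducts into products, fixes each $\mathbb{R}_{\sigma}$ up to isomorphism, and, applied to the Minkowski sequence~\eqref{lMinkSeqForIdeals}, presents $\mathbb{T}_{J}^{\vee}$ as an admissible discrete subobject of a vector $\mathcal{O}$-module with compact quotient; by Proposition~\ref{Prop_ClosedSubobjectsOfVectorModules} such a thing is the Minkowski embedding of an ideal, so $\mathbb{T}_{J}^{\vee}\simeq J'$ for some ideal $J'$. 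Hence condition (2) for $G$ translates into ``$G^{\vee}\simeq\bigoplus_{\sigma\in I}\mathbb{R}_{\sigma}\oplus\bigoplus_{J\in\mathcal{I}}J$'', and (1)$\Leftrightarrow$(2) becomes: an object $P$ is projective in $\mathsf{LCA}_{\mathcal{O}}$ precisely when it has this shape.

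\emph{Sufficiency.} Here I would verify that each $\mathbb{R}_{\sigma}$ is projective, that a discretely topologized ideal $J$ is projective (a lift along an admissible epic against $J$ is an algebraic lift against a surjection of $\mathcal{O}$-modules, which exists because $J$ is $\mathcal{O}$-projective, and is automatically continuous since $J$ is discrete), and that projectivity persists under finite direct sums and under the coproduct $\bigoplus_{J}J$ --- the latter being the discretely topologized algebraic direct sum, with $\operatorname{Hom}(\bigoplus_{J}J,-)=\prod_{J}\operatorname{Hom}(J,-)$ a product of exact functors. The substantial point is projectivity of $\mathbb{R}_{\sigma}$, equivalently (self-duality) its injectivity. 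Since merely extending a map as a continuous group homomorphism ignores the $\mathcal{O}$-action, I would argue functorially: for an admissible monic $A\hookrightarrow B$ the restriction map $\operatorname{Hom}_{\mathsf{LCA}}(B,\mathbb{R}_{\sigma})\to\operatorname{Hom}_{\mathsf{LCA}}(A,\mathbb{R}_{\sigma})$ is surjective because $\mathbb{R}$ and $\mathbb{C}$ are injective in $\mathsf{LCA}$ (Moskowitz~\cite{MR0215016}); these Hom-groups are real vector spaces on which a primitive integral element $\lambda$ of $F$ acts by precomposition through the ring $\mathbb{R}[T]/(f)$, which is a finite product of copies of $\mathbb{R}$ and $\mathbb{C}$ and in particular semisimple, so the restriction respects the induced isotypic decomposition; and $\operatorname{Hom}_{\mathcal{O}}(-,\mathbb{R}_{\sigma})$ is exactly the isotypic summand attached to $\sigma$ (that $\mathcal{O}$-linearity into a vector $\mathcal{O}$-module is $\lambda$-equivariance is the computation already made for Proposition~\ref{prop_ClassifVectorOModules}). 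Hence that summand of the surjection is again surjective, $\mathbb{R}_{\sigma}$ is injective, and so is $\mathbb{T}_{J}=(\mathbb{T}_{J}^{\vee})^{\vee}$; this yields (2)$\Rightarrow$(1).

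\emph{Necessity and the equivalence with (3).} The forgetful functor $U:\mathsf{LCA}_{\mathcal{O}}\to\mathsf{LCA}$ has an exact left adjoint $(-)\otimes_{\mathbb{Z}}\mathcal{O}$ (exact as $\mathcal{O}$ is $\mathbb{Z}$-free) and an exact right adjoint $\operatorname{Hom}_{\mathbb{Z},cts}(\mathcal{O},-)$, so $U$ preserves injectives and projectives. If $G$ is injective, then $U(G^{\vee})=(UG)^{\vee}$ is projective in $\mathsf{LCA}$, hence $\simeq\mathbb{R}^{n}\oplus\mathbb{Z}^{(\Lambda)}$ by Moskowitz~\cite{MR0215016}; the vector part is the identity component, so Lemma~\ref{lemma_VectorGroupDiscreteInLCALiftsToLCAO} promotes the splitting to $G^{\vee}\simeq V\oplus D$ with $V$ a vector $\mathcal{O}$-module --- thus $V\simeq\bigoplus_{\sigma}\mathbb{R}_{\sigma}$ by Proposition~\ref{prop_ClassifVectorOModules} --- and $D$ discrete with free underlying group. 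As a summand of the projective $G^{\vee}$, $D$ is projective in $\mathsf{LCA}_{\mathcal{O}}$, hence a projective $\mathcal{O}$-module (it is discrete), hence a direct sum of ideals; so $G^{\vee}$ has the required shape, proving (1)$\Rightarrow$(2). For (1)$\Leftrightarrow$(3): (1)$\Rightarrow$(3) is immediate, since $U$ preserves injectives and since, given an algebraic submodule $A\subseteq B$, one may topologize $A,B$ discretely, making $A\hookrightarrow B$ an admissible monic and any $\mathcal{O}$-linear $A\to G$ continuous, so that it extends; the extension is in particular $\mathcal{O}$-linear. Conversely, under (3) the first half again forces $G^{\vee}\simeq\bigoplus_{\sigma}\mathbb{R}_{\sigma}\oplus D$ with $D$ discrete and $\mathbb{Z}$-free, and a $\mathbb{Z}$-free $\mathcal{O}$-module is again a direct sum of ideals --- this is where one needs the Dedekind hypothesis (and, equivalently, the divisibility of the algebraic module of $G$ provided by the second half of (3)) --- so $D$ and hence $G^{\vee}$ is projective, i.e.\ $G$ is injective.

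\emph{Main obstacle.} The only genuinely non-formal input is the projectivity (equivalently injectivity) of $\mathbb{R}_{\sigma}$: this is the step where the arithmetic of $F$ enters, and the requirement to preserve the $\mathcal{O}$-action is what forces the detour through semisimplicity of $\mathbb{R}[T]/(f)$. The secondary technical ingredient is the classification of projective modules over the Dedekind domain $\mathcal{O}$ --- in particular that finitely generated torsion-free, and more generally $\mathbb{Z}$-free, $\mathcal{O}$-modules are direct sums of ideals --- which is what pins down the discrete summand and handles the converse in (3).
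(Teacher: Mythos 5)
Your proof is correct in substance but follows a genuinely different route from the paper, most visibly in the key implication (2)$\Rightarrow$(1). The paper first reduces to testing injectivity against extensions with compactly generated middle term (Lemma \ref{lemma_InjectiveObjectsAreDetectedOnCG}, a patching argument through a clopen compactly generated submodule), and then constructs lifts to $\mathbb{R}_{\nu}$ by hand using the structure theorem for compactly generated modules and Proposition \ref{Prop_ClosedSubobjectsOfVectorModules}. You instead observe that for an admissible monic $A\hookrightarrow B$ the group $\operatorname{Hom}_{\mathsf{LCA}}(B,\mathbb{R}_{\sigma})$ is a module over the semisimple algebra $\mathbb{R}[T]/(f)$ (with $T$ acting by precomposition with $\lambda$ and scalars by postcomposition), that restriction is a module map, and that $\operatorname{Hom}_{\mathsf{LCA}_{\mathcal{O}}}(B,\mathbb{R}_{\sigma})$ is exactly the $\sigma$-isotypic summand (the passage from $\lambda$-equivariance to full $\mathcal{O}$-linearity using unique divisibility of the target is a point worth writing out); surjectivity then falls out of Moskowitz's injectivity of $\mathbb{R}$ and $\mathbb{C}$ in $\mathsf{LCA}$ by projecting onto a direct summand. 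This transfer-type argument bypasses both of the paper's technical lemmas for this step; its only cost is that it leans on separability of $F/\mathbb{Q}$, i.e.\ semisimplicity of $\mathbb{R}[T]/(f)$. Likewise, for (1)$\Rightarrow$(2) the paper proves that injectives are connected via the Minkowski sequence and then quotes the classification of connected LCA groups, while you extract the additive-group information from the fact that the forgetful functor has an exact left adjoint $\mathcal{O}\otimes_{\mathbb{Z}}(-)\cong(-)^{n}$ and hence preserves injectives; both work. Your treatment of $\mathbb{T}_{J}$ (duality, algebraic projectivity of $J$, automatic continuity from discreteness) coincides with the paper's Lemma \ref{lemma_TJAreInjectives}.

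One step you assert without proof deserves justification: that a discrete $\mathcal{O}$-module $D$ with free underlying abelian group is a direct sum of ideals. Your parenthetical appeal to divisibility of the algebraic module of $G$ does not supply this, since divisibility of $D^{\vee}$ only dualizes to torsion-freeness of $D$, which for non-finitely-generated modules is strictly weaker than projectivity. The statement is nevertheless true: the unit $D\hookrightarrow\operatorname{Hom}_{\mathbb{Z}}(\mathcal{O},D)\cong\bigoplus_{\Lambda}\operatorname{Hom}_{\mathbb{Z}}(\mathcal{O},\mathbb{Z})$ embeds $D$ $\mathcal{O}$-linearly into a direct sum of copies of the inverse different, which is projective; since $\mathcal{O}$ is hereditary, submodules of projectives are projective, and Kaplansky's theorem then writes $D$ as a direct sum of ideals. (The paper's own proof asserts the analogous step, that the algebraic module underlying $G^{\vee}$ is projective, with comparable brevity.)
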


For the proof we refer to Kryuchkov \cite{MR1620000} or the Appendix
\S \ref{sect_AppendixInjectives}.

\begin{theorem}
[Kryuchkov, \cite{MR1620000}]\label{thm_ProjectivesInLCAO}For an object
$G\in\mathsf{LCA}_{\mathcal{O}}$ the following are equivalent:

\begin{enumerate}
\item $G$ is a projective object.

\item $G\simeq\bigoplus_{\sigma\in I}\mathbb{R}_{\sigma}\oplus\prod
_{J\in\mathcal{I}}J$, where $I$ is a finite list of real and complex places
and $\mathcal{I}$ a (possibly infinite) list of ideals in $\mathcal{O} $.

\item The additive group of $G$ is a projective object in $\mathsf{LCA}$ and
the underlying algebraic $\mathcal{O}$-module is a projective object in
$\mathcal{O}$-modules.
\end{enumerate}
\end{theorem}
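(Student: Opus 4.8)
The plan is to deduce the theorem from the already established classification of injectives, Theorem~\ref{thm_InjectivesInLCAO}, by means of Pontryagin duality. Since the anti-equivalence $(-)^{\vee}\colon\mathsf{LCA}_{\mathcal{O}}^{op}\overset{\sim}{\longrightarrow}\mathsf{LCA}_{\mathcal{O}}$ of Theorem~\ref{thm_PontrDualForLCAO} is exact, it interchanges admissible monics with admissible epics, hence projective objects with injective objects. Thus $G$ is projective if and only if $G^{\vee}$ is injective, and it suffices to transport the three equivalent descriptions of injectivity across this duality.

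For $(1)\Leftrightarrow(2)$: if $G$ is projective then $G^{\vee}$ is injective, so Theorem~\ref{thm_InjectivesInLCAO}(2) gives $G^{\vee}\simeq\bigoplus_{\sigma\in I}\mathbb{R}_{\sigma}\oplus\prod_{J\in\mathcal{I}}\mathbb{T}_{J}$ for a finite list $I$ of infinite places and a list $\mathcal{I}$ of ideals. Applying $(-)^{\vee}$ again, using reflexivity, the (non-canonical) isomorphism $\mathbb{R}_{\sigma}^{\vee}\simeq\mathbb{R}_{\sigma}$, and the fact that Pontryagin duality sends an arbitrary product of compact modules to the coproduct (the discrete direct sum) of the duals, we obtain $G\simeq\bigoplus_{\sigma\in I}\mathbb{R}_{\sigma}\oplus\bigoplus_{J\in\mathcal{I}}\mathbb{T}_{J}^{\vee}$. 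It then remains to recognise each $\mathbb{T}_{J}^{\vee}$ as an ideal: dualizing the Minkowski sequence of Equation~\ref{lMinkSeqForIdeals} presents $\mathbb{T}_{J}^{\vee}$ as an admissible submodule of the vector $\mathcal{O}$-module $\bigoplus_{\sigma\in S}\mathbb{R}_{\sigma}^{\vee}$ whose quotient $J^{\vee}$ is compact; hence $\mathbb{T}_{J}^{\vee}$ is discrete and of full rank, and, since each infinite place occurs exactly once in the ambient module, Proposition~\ref{Prop_ClosedSubobjectsOfVectorModules} forces this inclusion to be, up to isomorphism, the Minkowski embedding of a single ideal $J'$. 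Thus $\mathbb{T}_{J}^{\vee}\simeq J'$ and $G$ has the shape in (2). Conversely, for $(2)\Rightarrow(1)$ I would check projectivity of the building blocks directly: $\mathbb{R}_{\sigma}$ is projective because $\mathbb{R}_{\sigma}^{\vee}\simeq\mathbb{R}_{\sigma}$ is injective; a discrete ideal $J$, and more generally any discrete projective $\mathcal{O}$-module, is projective in $\mathsf{LCA}_{\mathcal{O}}$ because a continuous $\mathcal{O}$-linear map out of it factors through a discrete free module and lifts termwise along any admissible epic, the lift being automatically continuous as the source is discrete; and a general object of the form (2), written $V\oplus D$ with $V$ a vector $\mathcal{O}$-module and $D$ discrete, is projective by lifting separately over $V$ and over $D$ and glueing the lifts on the clopen cosets $V\oplus\{d\}$, which keeps the resulting map continuous and $\mathcal{O}$-linear.

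For $(1)\Leftrightarrow(3)$ the same duality does the bookkeeping, clause by clause. The additive group of $G^{\vee}$ is the ordinary Pontryagin dual of the additive group of $G$, and duality inside $\mathsf{LCA}$ interchanges the projective and the injective objects there (Moskowitz, \cite{MR0215016}); so the additive group of $G$ is projective in $\mathsf{LCA}$ exactly when that of $G^{\vee}$ is injective in $\mathsf{LCA}$. For the algebraic part, the underlying $\mathcal{O}$-module of $G^{\vee}$ is the contragredient of that of $G$, and the module-theoretic clause of Theorem~\ref{thm_InjectivesInLCAO}(3) transports to the corresponding clause here once one invokes the structure theory of modules over the Dedekind domain $\mathcal{O}$; in practice this amounts to reading off, on the explicit list produced in (2), which abstract $\mathcal{O}$-modules occur as the vector part and as the discrete part. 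Combining these with Theorem~\ref{thm_InjectivesInLCAO}(3) for $G^{\vee}$ gives $(1)\Leftrightarrow(3)$; alternatively one verifies (3) directly against the list in (2).

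The step I expect to be the real obstacle is the identification $\mathbb{T}_{J}^{\vee}\simeq(\text{ideal})$ together with the careful handling of a possibly infinite index set $\mathcal{I}$: one must make sure that dualizing an \emph{infinite} product of tori genuinely produces an infinite \emph{direct sum} of discrete ideals inside $\mathsf{LCA}_{\mathcal{O}}$, so that the result is again an object of $\mathsf{LCA}_{\mathcal{O}}$ of exactly the advertised shape, and that the clause-by-clause matching in (3) is uniform in $\mathcal{I}$. One could instead attempt $(1)\Rightarrow(2)$ through the structure theorem, Theorem~\ref{thm_StructOfLCA}, but then one must separately argue that the compact summand vanishes and that the discrete quotient splits off, which is less direct than the duality argument. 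Once Theorem~\ref{thm_InjectivesInLCAO} is granted, essentially everything else is a transcription of it through $(-)^{\vee}$.
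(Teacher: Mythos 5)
Your proposal is correct and takes essentially the same route as the paper, whose entire proof reads ``Take Pontryagin duals, use Theorem \ref{thm_InjectivesInLCAO}, and dualize again''; you have simply filled in the details of that dualization (the identification $\mathbb{T}_{J}^{\vee}\simeq J'$ via the self-duality of the Minkowski sequence, and the fact that an infinite product of compact modules dualizes to a discrete direct sum). No changes needed.
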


\begin{proof}
Take Pontryagin duals, use Theorem \ref{thm_InjectivesInLCAO}, and dualize again.
\end{proof}

\begin{corollary}
An object $G\in\mathsf{LCA}_{\mathcal{O}}$ is simultaneously injective and
projective if and only if it is isomorphic to $\bigoplus_{\sigma\in
I}\mathbb{R}_{\sigma}$ for a finite list $I$ of real and complex places.
\end{corollary}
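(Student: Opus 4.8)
The plan is to read this off from Theorems~\ref{thm_InjectivesInLCAO} and~\ref{thm_ProjectivesInLCAO}, which already list the injective and the projective objects of $\mathsf{LCA}_{\mathcal{O}}$ explicitly. The ``if'' direction requires no real work: $\bigoplus_{\sigma\in I}\mathbb{R}_{\sigma}$ is the special case $\mathcal{I}=\emptyset$ in both Theorem~\ref{thm_InjectivesInLCAO}(2) and Theorem~\ref{thm_ProjectivesInLCAO}(2), so any such $G$ is at once injective and projective.

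For the ``only if'' direction, rather than trying to match the two normal forms directly as $\mathcal{O}$-modules (the products over possibly infinite families of ideals make that clumsy), I would compare underlying additive groups using criterion~(3) of each theorem. So if $G$ is both injective and projective in $\mathsf{LCA}_{\mathcal{O}}$, then its underlying LCA group is both injective and projective in $\mathsf{LCA}$; by the case $\mathcal{O}=\mathbb{Z}$ of the same two theorems (Moskowitz, \cite{MR0215016}) it is simultaneously of the form $\mathbb{R}^{n}\oplus\prod\mathbb{T}$ and of the form $\mathbb{R}^{m}\oplus(\text{discrete})$. The first shape is connected, being an arbitrary product of connected groups, while the identity component of the second is $\mathbb{R}^{m}$; hence the underlying group of $G$ must be a vector group $\mathbb{R}^{n}$. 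In other words $G$ is a vector $\mathcal{O}$-module, and Proposition~\ref{prop_ClassifVectorOModules} then upgrades this to $G\simeq\bigoplus_{\sigma\in I}\mathbb{R}_{\sigma}$ for a finite list $I$ of real and complex places, which is exactly the claim.

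There is essentially no hard step here; the one mild subtlety — the closest thing to an obstacle — is the topological remark that injective LCA groups are connected while projective ones have a vector group as identity component, but both facts are immediate from the explicit classifications already recorded above (a product of circles is connected; a discrete group is totally disconnected). So the whole argument is just bookkeeping on top of the Kryuchkov--Moskowitz structure theorems, together with the classification of vector $\mathcal{O}$-modules.
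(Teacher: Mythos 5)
Your proposal is correct, and since the paper states this corollary without proof as an immediate consequence of Theorems \ref{thm_InjectivesInLCAO} and \ref{thm_ProjectivesInLCAO}, your derivation is exactly the intended one: read off the ``if'' direction from the normal forms with $\mathcal{I}=\emptyset$, and for ``only if'' use that an injective object is connected while a projective one has vector-group identity component, forcing $G$ to be a vector $\mathcal{O}$-module classified by Proposition \ref{prop_ClassifVectorOModules}. No gaps.
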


\begin{remark}
\label{rmk_SplitOffR}As each $\mathbb{R}_{\sigma}$ is an injective object in
$\mathsf{LCA}_{\mathcal{O}}$, they always split off as a direct summand. Thus,
instead of the presentation of Theorem \ref{thm_StructOfLCA}, we may write any
object $G\in\mathsf{LCA}_{\mathcal{O}}$ as $G\simeq\bigoplus_{\sigma\in
I}\mathbb{R}_{\sigma}\oplus\tilde{G}$ for some $\tilde{G}\in\mathsf{LCA}%
_{\mathcal{O}}$ with $H\hookrightarrow G\twoheadrightarrow D$ exact, $H$
compact and $D$ discrete.
\end{remark}

\begin{theorem}
[Kryuchkov, \cite{MR1620000}]\label{thm_ResolvableObjects}We have the following:

\begin{enumerate}
\item An object $G\in\mathsf{LCA}_{\mathcal{O}}$ has an injective resolution
if and only if it lies in $\mathsf{LCA}_{\mathcal{O},cg}$. In this case, it
admits a length $2$ injective resolution.

\item An object $G\in\mathsf{LCA}_{\mathcal{O}}$ has a projective resolution
if and only if it lies in $\mathsf{LCA}_{\mathcal{O},nss}$. In this case, it
admits a length $2$ projective resolution.
\end{enumerate}
\end{theorem}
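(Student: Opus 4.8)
The plan is to derive both statements from the classification of the injective and projective objects already in hand, using Pontryagin duality to merge them into one. First I would observe that (1) and (2) are equivalent. The functor $(-)^{\vee}$ of Theorem \ref{thm_PontrDualForLCAO} is an exact anti-equivalence, so it turns an exact sequence $0\to P_{1}\to P_{0}\to G\to 0$ with $P_{0},P_{1}$ projective into an exact sequence $0\to G^{\vee}\to P_{0}^{\vee}\to P_{1}^{\vee}\to 0$ with $P_{0}^{\vee},P_{1}^{\vee}$ injective (Theorems \ref{thm_InjectivesInLCAO} and \ref{thm_ProjectivesInLCAO}), and conversely; and by Proposition \ref{prop_MoskowitzInterchange} it sends $\mathsf{LCA}_{\mathcal{O},nss}$ to $\mathsf{LCA}_{\mathcal{O},cg}$ and back, preserving the length. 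Hence it suffices to prove (2). Along the way I would record the elementary fact that a discrete $\mathcal{O}$-module which is projective as an abstract $\mathcal{O}$-module is a projective object of $\mathsf{LCA}_{\mathcal{O}}$: an abstract lift of a morphism along an admissible epic (a surjective $\mathcal{O}$-module map) is automatically continuous when the source carries the discrete topology.

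For the ``if'' part, let $G\in\mathsf{LCA}_{\mathcal{O},nss}$. By Proposition \ref{prop_MoskClassifNSSandCGGroups}(1) we may write $G\simeq\bigoplus_{\sigma\in I}\mathbb{R}_{\sigma}\oplus\bigoplus_{J\in\mathcal{I}}\mathbb{T}_{J}\oplus D$ with $I,\mathcal{I}$ finite and $D$ discrete, and since a finite direct sum of resolutions of length $2$ (i.e.\ of the form $0\to P_{1}\to P_{0}\to(-)\to 0$) is again one, it is enough to resolve each summand. Each $\mathbb{R}_{\sigma}$ is projective by Theorem \ref{thm_ProjectivesInLCAO} and so is its own resolution. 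Each $\mathbb{T}_{J}$ is resolved by the Minkowski sequence of Equation \ref{lMinkSeqForIdeals}, $0\to J\to\bigoplus_{\sigma\in S}\mathbb{R}_{\sigma}\to\mathbb{T}_{J}\to 0$, whose terms $\bigoplus_{\sigma\in S}\mathbb{R}_{\sigma}$ and $J$ are projective objects (the latter by the fact recorded above, ideals of the Dedekind domain $\mathcal{O}$ being abstractly projective). For the discrete summand $D$, pick an $\mathcal{O}$-linear surjection $p\colon L\twoheadrightarrow D$ from a discrete free $\mathcal{O}$-module $L$; this is an admissible epic (a surjection of discrete modules is open), $L$ is a projective object, and $\ker p$ is a submodule of a projective module over the Dedekind ring $\mathcal{O}$, hence abstractly projective and, being discrete, a projective object; this gives $0\to\ker p\to L\to D\to 0$. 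Assembling these over the finite index sets produces a projective resolution of $G$ of length $2$.

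For the ``only if'' part, assume $G$ has a projective resolution, so in particular there is an admissible epic $q\colon P\twoheadrightarrow G$ with $P$ projective. By Theorem \ref{thm_ProjectivesInLCAO} the underlying LCA group of $P$ is $\mathbb{R}^{n}\oplus D_{0}$ with $D_{0}$ discrete, so $P$ has no small subgroups. The remaining point --- and, I expect, the main obstacle --- is purely topological: an admissible-epic image of such a $P$ still has no small subgroups. To see it, write $A_{0}=\mathbb{R}^{n}$ for the identity component of $P$, put $N:=A_{0}\cap\ker q$ (a closed subgroup of $\mathbb{R}^{n}$, since $\ker q$ is closed), and let $\phi\colon\mathbb{R}^{n}/N\to G$ be the injective homomorphism induced by $q|_{A_{0}}$, with set-theoretic image $G_{0}$. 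Since $\mathbb{R}^{n}/N\cong\mathbb{R}^{a}\times\mathbb{T}^{b}$ has no small subgroups, fix a symmetric open $W\ni 0$ there containing no non-trivial subgroup, together with a ball $B_{\varepsilon}\subseteq\mathbb{R}^{n}$ whose image under the quotient map $\mathbb{R}^{n}\to\mathbb{R}^{n}/N$ lies in $W$. Then $U:=B_{\varepsilon}\times\{0\}\subseteq P$ is a symmetric open neighbourhood of $0$ (here $\{0\}$ is open in $D_{0}$), so $q(U)$ is open and symmetric in $G$ because admissible epics are open. A non-trivial subgroup $H\subseteq q(U)$ would lie inside $G_{0}$ and, as $q|_{A_{0}}$ factors through $\phi$, produce a non-trivial subgroup $\phi^{-1}(H)$ of $\mathbb{R}^{n}/N$ contained in the image of $B_{\varepsilon}$, hence in $W$ --- a contradiction. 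Thus $G$ has no small subgroups, i.e.\ $G\in\mathsf{LCA}_{\mathcal{O},nss}$; and by the duality reduction of the first step both (1) and (2) follow.
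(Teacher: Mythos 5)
Your proof is correct and is essentially the paper's argument viewed through Pontryagin duality: the paper proves statement (1) directly — resolving the summands of Proposition \ref{prop_MoskClassifNSSandCGGroups} by Minkowski sequences, duals of algebraic resolutions, and the injectivity of the $\mathbb{R}_{\sigma}$, and for the converse citing Moskowitz's theorem that closed subgroups of compactly generated LCA groups are compactly generated — then dualizes to obtain (2), whereas you prove (2) first and dualize to obtain (1). The only substantive difference is that where the paper invokes \cite[Theorem 2.6]{MR0215016} for the ``only if'' direction, you give a correct self-contained topological argument that an open quotient of $\mathbb{R}^{n}\oplus D_{0}$ (with $D_{0}$ discrete) has no small subgroups; both routes are fine.
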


\begin{proof}
(1) Suppose $G$ has an injective resolution $G\rightarrow I^{0}\rightarrow
I^{1}\rightarrow\cdots$. Then $G\hookrightarrow I^{0}$ is an admissible monic,
but $I^{0}\simeq\bigoplus_{\sigma\in I}\mathbb{R}_{\sigma}\oplus\prod
_{J\in\mathcal{I}}\mathbb{T}_{J}$ by the characterization of injectives,
Theorem \ref{thm_InjectivesInLCAO}. As the tori $\mathbb{T}_{J}$ are compact
and thus $\prod_{J\in\mathcal{I}}\mathbb{T}_{J}$ is compact by Tychonoff's
Theorem, and then Proposition \ref{prop_MoskClassifNSSandCGGroups} implies
that $I^{0}$ is compactly generated. However, closed subgroups of compactly
generated LCA groups are themselves compactly generated, \cite[Theorem
2.6]{MR0215016}, i.e. $G\in\mathsf{LCA}_{\mathcal{O},cg}$. Conversely, suppose
$G\in\mathsf{LCA}_{\mathcal{O},cg}$. By Proposition
\ref{prop_MoskClassifNSSandCGGroups} there exists an isomorphism%
\begin{equation}
G\simeq\bigoplus_{\sigma\in I}\mathbb{R}_{\sigma}\oplus\bigoplus
_{J\in\mathcal{I}}J\oplus C \label{lcis7}%
\end{equation}
in $\mathsf{LCA}_{\mathcal{O}}$ with $C$ a compact $\mathcal{O}$-module,
$\mathcal{I}$ a finite list of ideals in $\mathcal{O}$, and $I$ a finite list
of real and complex places.\newline(Part A) The dual $C^{\vee}$ is a discrete
$\mathcal{O}$-module. As $\mathcal{O}$ is a Dedekind domain, $C^{\vee}$ has an
algebraic $\mathcal{O}$-module resolution of length $2$ by projective
algebraic $\mathcal{O}$-modules, $P_{1}\hookrightarrow P_{0}\twoheadrightarrow
C^{\vee}$. This becomes an exact sequence in $\mathsf{LCA}_{\mathcal{O}}$
tautologically since the topology is discrete, and indeed a projective
resolution by Theorem \ref{thm_ProjectivesInLCAO} since the $P_{i}$ are
algebraically projective and the underlying additive group is isomorphic to
$\mathbb{Z}^{n} $ for a suitable $n$ and thus projective in $\mathsf{LCA}$,
\cite[Theorem 3.3]{MR0215016}. Dualize again, giving a length $2$ injective
resolution of $C$.\newline(Part B) For each ideal $J$ we have the Minkowski
sequence%
\[
J\hookrightarrow\bigoplus_{\sigma}\mathbb{R}_{\sigma}\twoheadrightarrow
\mathbb{T}_{J}\text{,}%
\]
where $\sigma$ runs through all real and complex places once, and both types
of terms $\mathbb{R}_{\sigma}$ resp. $\mathbb{T}_{J}$ are injective by Theorem
\ref{thm_InjectivesInLCAO}. Thus, this is a length $2$ injective resolution of
$J$.\newline(Part C) By Theorem \ref{thm_InjectivesInLCAO} the objects
$\mathbb{R}_{\sigma}$ are already injectives.\newline Now take the direct sum
of the exact sequences of Parts A, B, C for all summands appearing in Equation
\ref{lcis7}. As a finite direct sum of injectives, this remains an injective
resolution, now of $G$.\newline(2) The discussion of the projective resolution
follows from dualizing everything.
\end{proof}

In particular, $\mathsf{LCA}_{\mathcal{O}}$ neither has enough injectives nor
enough projectives, irrespective of the number field.

\section{Setting up a calculus of fractions}

We now establish some factorization properties which are important to settle
the issues mentioned in the introduction. In case they hold, they ensure the
existence of a quotient exact category.

These results form the technical core of the paper.

\begin{definition}
Let $\mathsf{D}$ be an exact category.

\begin{enumerate}
\item A full subcategory $\mathsf{C}\subset\mathsf{D}$ is called \emph{left
special} if for every $Z\in\mathsf{C}$ and every admissible epic
$G\twoheadrightarrow Z$ in the category $\mathsf{D}$ there exists a
commutative diagram%
\[%
\xymatrix{
X \ar@{^{(}->}[r] \ar[d] & Y \ar@{->>}[r] \ar[d] & Z \ar[d]^{\mathbf{1}_Z} \\
F \ar@{^{(}->}[r] & G \ar@{->>}[r] & Z,
}%
\]
where the rows are exact, and the objects in the top row all lie in
$\mathsf{C}$.

\item A full subcategory $\mathsf{C}\subset\mathsf{D}$ is called \emph{left
filtering} if every morphism $G\rightarrow X$ in $\mathsf{D}$ such that
$G\in\mathsf{C}$ admits a factorization%
\[%
\xymatrix{
G \ar[r] \ar[rd] & X \\
& Z, \ar@{^{(}->}[u]
}%
\]
where $Z\in\mathsf{C}$.
\end{enumerate}

We call $\mathsf{C}\subset\mathsf{D}$ right special (resp. right filtering) if
$\mathsf{C}^{op}\subset\mathsf{D}^{op}$ is left special (resp. left special).
We call $\mathsf{C}\subset\mathsf{D}$ \emph{left }$s$\emph{-filtering}, if it
is left filtering and left special, resp. right $s$-filtering if
$\mathsf{C}^{op}\subset\mathsf{D}^{op}$ is left $s$-filtering.
\end{definition}

We have given these definitions as in \cite[\S 2.2]{MR3510209}. These
formulations are due to B\"{u}hler and are equivalent to the original concepts
as introduced by Schlichting in \cite{MR2079996}. The equivalence of these
formulations is proven in \cite[Appendix A]{MR3510209}.\medskip

Some very natural candidates for forming quotients do actually not satisfy
Schlichting's conditions. This illustrates their subtle r\^{o}le:

\begin{example}
\label{example_1}The full subcategory $\mathsf{LCA}_{C}$ is \underline{not}
left (or right) special in $\mathsf{LCA}$. If it were left special, there
would exist a compact group $C$ as in the left diagram below:%
\begin{equation}%
\xymatrix{
& C \ar@{->>}[r] \ar[d] & \mathbb{T} \ar@{=}[d] \\
\mathbb{Z} \ar@{^{(}->}[r] & \mathbb{R} \ar@{->>}[r] & \mathbb{T}
}%
\qquad\qquad%
\xymatrix{
\mathbb{Z}_p \ar@{^{(}->}[r] \ar@{=}[d] & \mathbb{Q}_p \ar[d] \ar@
{->>}[r] & {\mathbb{Q}_p}/{\mathbb{Z}_p} \\
\mathbb{Z}_p \ar@{^{(}->}[r] & G
}
\label{DDE1}%
\end{equation}
However, the image of $C$ in $\mathbb{R}$ necessarily is trivial, giving a
contradiction. If it were right special, there would exist a compact group $G
$ such that the diagram on the right commutes. The kernel of $\mathbb{Q}%
_{p}\rightarrow G$ is a closed subgroup and thus can only be $p^{i}%
\mathbb{Z}_{p}$ for some $i\in\mathbb{Z}$ (it cannot be trivial:\ If it were,
$\mathbb{Q}_{p}$ would be a closed subgroup of a compact group and thus itself
compact). However, these subgroups are even clopen. Thus, we get a
factorization $\mathbb{Q}_{p}\twoheadrightarrow\mathbb{Q}/p^{i}\mathbb{Z}%
_{p}\rightarrow G$ with $\mathbb{Q}/p^{i}\mathbb{Z}_{p}$ is discrete. Hence,
the image of the compact group $\mathbb{Z}_{p}$ in $\mathbb{Q}/p^{i}%
\mathbb{Z}_{p}$ is finite, and thus its image in $G$ is finite. This
contradicts the injectivity of the map in the bottom row.
\end{example}

\begin{example}
\label{example_2}Similarly, $\mathsf{LCA}_{\mathbb{R}C}$ is not left (or
right) special in $\mathsf{LCA}$. If it were left special, there would be a
diagram%
\[%
\xymatrix{
& \mathbb{R}^n \oplus C \ar@{->>}[r] \ar[d] & \mathbb{Z}/2 \ar@{=}[d] \\
\mathbb{Z} \ar@{^{(}->}[r] & \mathbb{Z} \ar@{->>}[r] & \mathbb{Z}/2,
}%
\]
but the image of a connected or a compact group in $\mathbb{Z}$ must be zero.
For right special, we can use the right side of Diagram \ref{DDE1} again.
\end{example}

Dually, these examples prove that $\mathsf{LCA}_{D}$ and $\mathsf{LCA}%
_{\mathbb{R}D}$ are neither left nor right special in $\mathsf{LCA}$.\medskip

Let us move on to examples which \textit{do} satisfy Schlichting's conditions.
In fact, it turns out that it suffices to enlarge the categories of the
previous examples slightly. Let $F$ be a number field and $\mathcal{O} $ its
ring of integers.

\begin{lemma}
The full subcategory $\mathsf{LCA}_{\mathcal{O},cg}$ is left filtering in
$\mathsf{LCA}_{\mathcal{O}}$.
\end{lemma}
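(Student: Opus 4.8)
The plan is to verify the left filtering condition directly: given a morphism $\varphi:G\rightarrow X$ in $\mathsf{LCA}_{\mathcal{O}}$ with $G\in\mathsf{LCA}_{\mathcal{O},cg}$, I must produce a factorization $G\rightarrow Z\hookrightarrow X$ with $Z$ compactly generated and $Z\hookrightarrow X$ an admissible monic. The natural candidate is to take $Z$ to be the set-theoretic image $\operatorname{im}_{\mathsf{Set}}(\varphi)$, equipped with a suitable topology, but one has to be careful: the set-theoretic image need not be closed, so it is not automatically an admissible subobject. So instead I would first replace $X$ by a more tractable object. Using Theorem \ref{thm_StructOfLCA} (or Remark \ref{rmk_SplitOffR}), write $X$ so that it contains a compact open $\mathcal{O}$-submodule modulo a vector part and a discrete quotient; more precisely there is an exact sequence $H\hookrightarrow X\twoheadrightarrow D$ with $H\in\mathsf{LCA}_{\mathcal{O},cg}$ clopen (from Lemma \ref{lemma_ExistClopenCGSubmodule}) and $D$ discrete.

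The key step is then the following observation: since $G$ is compactly generated, $G=\bigcup_{n\geq1}U^{n}$ for a compact symmetric neighbourhood $U$ of $0$. The composite $G\xrightarrow{\varphi}X\twoheadrightarrow D$ has open kernel $\varphi^{-1}(H)$ (preimage of the clopen $H$), and the image of the compact set $U$ in the discrete group $D$ is finite; hence the image of $G$ in $D$ is a finitely generated $\mathcal{O}$-submodule $D'\subseteq D$. Let $X':=$ the preimage of $D'$ in $X$, which is a clopen $\mathcal{O}$-submodule of $X$ sitting in an exact sequence $H\hookrightarrow X'\twoheadrightarrow D'$ with $H$ compactly generated and $D'$ finitely generated; therefore $X'\in\mathsf{LCA}_{\mathcal{O},cg}$ (extension-closedness, Proposition \ref{prop_MoskowitzInterchange}), and moreover $X'\hookrightarrow X$ is an admissible monic since $X'$ is clopen. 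Now $\varphi$ factors through $X'$.

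It remains to factor $G\rightarrow X'$ through a compactly generated admissible subobject of $X'$; but since $X'$ is itself compactly generated, I may simply take $Z:=X'$ and the factorization $G\rightarrow X'\hookrightarrow X$ does the job, with $Z=X'\in\mathsf{LCA}_{\mathcal{O},cg}$ and $X'\hookrightarrow X$ an admissible monic because $X'$ is a clopen submodule. This completes the construction. The main obstacle is the topological subtlety hidden in the phrase ``the image of $G$ in $D$'': one must genuinely use compact generation of $G$ to see this image is finitely generated, and one must ensure the preimage $X'$ is \emph{clopen} (so that the inclusion is admissible), which follows because $H$ is open in $X$ and hence $X'$, being a union of cosets of $H$, is open, and its complement is a union of cosets of $H$ as well. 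Everything else is a routine check that the relevant subobjects are $\mathcal{O}$-submodules and that the sequences involved are exact in $\mathsf{LCA}_{\mathcal{O}}$.
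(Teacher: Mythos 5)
Your argument is correct, and it reaches the conclusion by a genuinely different route than the paper. The paper factors $\varphi$ through the \emph{closure} of the set-theoretic image, which is automatically an admissible subobject, and then invests all the work in showing that this closure is compactly generated: the image is a dense compactly generated subgroup of a locally compact group, and one adds a compact neighbourhood of zero to a compact generating set of the dense subgroup to generate the whole closure (an argument the author attributes to Fujita--Shakhmatov). You instead exploit the fact that left filtering does not ask for a minimal factorization: you fatten the image up to a clopen compactly generated submodule $X'$ of the target, built from the clopen compactly generated $H\subseteq X$ of Lemma \ref{lemma_ExistClopenCGSubmodule} and the finitely generated discrete image $D'$ of $G$ in $X/H$ (finite image of the compact generating set $U$ in a discrete group, hence a finitely generated subgroup), with compact generation of $X'$ supplied by extension-closedness of $\mathsf{LCA}_{\mathcal{O},cg}$ from Proposition \ref{prop_MoskowitzInterchange}; admissibility of $X'\hookrightarrow X$ is free since $X'$ is clopen. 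Both ingredients you cite precede this lemma in the paper, so there is no circularity. What each approach buys: the paper's argument produces the categorical image closure itself (the tightest possible $Z$) and isolates a reusable topological fact about dense compactly generated subgroups; yours avoids all density considerations and closure-of-image subtleties at the cost of leaning on the structure theory ($\mathcal{O}$-module version of the existence of clopen compactly generated subgroups) and on extension-closedness. Your proof is complete as written.
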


\begin{proof}
Let $f:G\rightarrow X$ be an arbitrary morphism in $\mathsf{LCA}_{\mathcal{O}%
}$ with $G$ compactly generated. It is clear that $f$ can be factored as%
\[
G\overset{f}{\longrightarrow}\overline{\operatorname*{im}%
\nolimits_{\mathsf{Set}}(f)}\hookrightarrow X\text{,}%
\]
where $\overline{\operatorname*{im}\nolimits_{\mathsf{Set}}(f)}$ is the
closure of the set-theoretic image of $f$ in the topology of $X$. By this
choice, $\overline{\operatorname*{im}\nolimits_{\mathsf{Set}}(f)}%
\hookrightarrow X$ is an admissible monic. This proves our claim once we will
have shown that $\overline{\operatorname*{im}\nolimits_{\mathsf{Set}}(f)}$ is
compactly generated: As $G$ is compactly generated, there exists some
symmetric compact subset $U\subseteq G$ such that $G=\bigcup_{n\geq1}U^{n}$.
Thus, $f(U)$ is compact and $\operatorname*{im}\nolimits_{\mathsf{Set}%
}(f)=\bigcup_{n\geq1}f(U)^{n}$, so $f(U)$ is dense in $\overline
{\operatorname*{im}\nolimits_{\mathsf{Set}}(f)}$. Secondly, $\overline
{\operatorname*{im}\nolimits_{\mathsf{Set}}(f)}$, as a closed subgroup of a
locally compact group, is itself locally compact. Hence, we can find an open
neighbourhood $V$ of zero such that $\overline{V}$ is compact. Thus,
$\overline{V}\cup f(U)$ is compact and (in multiplicative notation)%
\[
\overline{\operatorname*{im}\nolimits_{\mathsf{Set}}(f)}=V \cdot \left(
\bigcup_{n\geq1}f(U)^{n}\right)  \subseteq\left(  \bigcup_{n\geq1}\left(
\overline{V}\cup f(U)\right)  ^{n}\right)  \subseteq\overline
{\operatorname*{im}\nolimits_{\mathsf{Set}}(f)}%
\]
The equality on the left holds since, on its right side, one summand is open
and the other dense. Hence, we have equality, and it follows that
$\overline{V}\cup f(U)$ generates the closure.
\end{proof}

I have learnt this argument from an article of Fujita and Shakhmatov, which
gives a detailed study of topological groups with a dense compactly generated
subgroup \cite[Theorem 2.7]{MR1931255}.

\begin{lemma}
The full subcategory $\mathsf{LCA}_{\mathcal{O},nss}$ is right filtering in
$\mathsf{LCA}_{\mathcal{O}}$.
\end{lemma}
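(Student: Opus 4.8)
The statement is the dual of the previous lemma, so the plan is to prove it by Pontryagin duality. By definition $\mathsf{LCA}_{\mathcal{O},nss}$ is right filtering in $\mathsf{LCA}_{\mathcal{O}}$ if and only if $\mathsf{LCA}_{\mathcal{O},nss}^{op}$ is left filtering in $\mathsf{LCA}_{\mathcal{O}}^{op}$. Apply the exact anti-equivalence $(-)^{\vee}:\mathsf{LCA}_{\mathcal{O}}^{op}\to\mathsf{LCA}_{\mathcal{O}}$ of Theorem \ref{thm_PontrDualForLCAO}. By Proposition \ref{prop_MoskowitzInterchange} this functor carries $\mathsf{LCA}_{\mathcal{O},nss}^{op}$ onto $\mathsf{LCA}_{\mathcal{O},cg}$. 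Since left filtering is a property preserved (up to passing through an equivalence) by exact equivalences of exact categories — it is phrased purely in terms of the existence of factorizations through admissible monics, and $(-)^{\vee}$ sends admissible monics to admissible epics and back again by Theorem \ref{thm_PontrDualForLCAO} — the previous lemma (that $\mathsf{LCA}_{\mathcal{O},cg}$ is left filtering in $\mathsf{LCA}_{\mathcal{O}}$) transports to the assertion that $\mathsf{LCA}_{\mathcal{O},nss}^{op}$ is left filtering in $\mathsf{LCA}_{\mathcal{O}}^{op}$, which is exactly the claim.

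First I would unwind the duality carefully once: a morphism $X\to G$ in $\mathsf{LCA}_{\mathcal{O}}$ with $G\in\mathsf{LCA}_{\mathcal{O},nss}$ dualizes to a morphism $G^{\vee}\to X^{\vee}$ with $G^{\vee}\in\mathsf{LCA}_{\mathcal{O},cg}$; the previous lemma gives a factorization $G^{\vee}\to Z\hookrightarrow X^{\vee}$ with $Z\in\mathsf{LCA}_{\mathcal{O},cg}$ and the second arrow an admissible monic; dualizing back yields $X\twoheadrightarrow Z^{\vee}\to G$ with $Z^{\vee}\in\mathsf{LCA}_{\mathcal{O},nss}$ and the first arrow an admissible epic. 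This is precisely the factorization required by the definition of right filtering (the $op$-version of left filtering, where the factorizing object receives an admissible epic from $X$). I would state this explicitly rather than merely invoking "by duality", since the bookkeeping of which arrow becomes a monic versus an epic is where a careless reader might stumble.

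I do not anticipate a genuine obstacle here: all the substantive work — the explicit closure-of-the-image construction producing a dense compactly generated subgroup — was already done in the proof of the previous lemma, and all the duality machinery (the anti-equivalence, its exactness, its interchange of the $nss$ and $cg$ subcategories) is available from Theorems \ref{thm_PontrDualForLCAO} and Proposition \ref{prop_MoskowitzInterchange}. The only point requiring minor care is the observation that $(-)^{\vee}$, being an exact anti-equivalence, interchanges admissible monics and admissible epics, so that a "factor through an admissible monic" statement becomes a "factor through an admissible epic" statement; this is recorded already in the discussion preceding Theorem \ref{thm_PontrDualForLCAO}. Hence the proof is a two-line reduction, and I would write it as such.
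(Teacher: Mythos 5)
Your proposal is correct and takes essentially the same approach as the paper: the paper also reduces to the compactly generated case by Pontryagin duality, merely spelling out the unwinding concretely by identifying the dual of the factorizing object as $X/\ker(f)$ via $(X/\ker(f))^{\vee}\cong\ker(f)^{\perp}\cong\overline{\operatorname{im}_{\mathsf{Set}}(f^{\vee})}$. Your explicit bookkeeping of which arrows become admissible monics versus epics matches what the paper does.
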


\begin{proof}
This is essentially dual to the previous argument; we give the details
nonetheless: Let $f:X\rightarrow G$ be an arbitrary morphism in $\mathsf{LCA}%
_{\mathcal{O}}$ such that $G$ has no small subgroups. Then $f^{\vee}:G^{\vee
}\rightarrow X^{\vee}$ factors as%
\[
G^{\vee}\twoheadrightarrow G^{\vee}/\ker(f^{\vee})\longrightarrow
\overline{\operatorname*{im}\nolimits_{\mathsf{Set}}(f^{\vee})}\hookrightarrow
X^{\vee}%
\]
and the middle arrow has dense set-theoretic image. As $G$ has no small
subgroups, $G^{\vee}$ is compactly generated, cf. Proposition
\ref{prop_MoskowitzInterchange}, then so is its admissible quotient $G^{\vee
}/\ker(f^{\vee})$ and thus $\overline{\operatorname*{im}%
\nolimits_{\mathsf{Set}}(f^{\vee})}$ has a dense subgroup which is compactly
generated. By the same argument as in the previous proof, $\overline
{\operatorname*{im}\nolimits_{\mathsf{Set}}(f^{\vee})}$ is compactly
generated, but%
\[
(X/\ker(f))^{\vee}\cong\ker(f)^{\perp}\qquad\text{and}\qquad\ker(f)^{\perp
}\cong\overline{\operatorname*{im}\nolimits_{\mathsf{Set}}(f^{\vee})}%
\]
(this is standard, but see \cite[Chapter 2, \S 9.1 and \S 9.2]{MR1622489} for
a review of such identities). Thus, $(X/\ker(f))^{\vee}$ is compactly
generated, and therefore we obtain a factorization $X\twoheadrightarrow
X/\ker(f)\rightarrow G$, showing that $\mathsf{LCA}_{\mathcal{O},nss}$ is
right filtering.
\end{proof}

\begin{lemma}
\label{lemma_LCACGLeftSpecialInLCA}The full subcategory $\mathsf{LCA}%
_{\mathcal{O},cg}$ is left special in $\mathsf{LCA}_{\mathcal{O}}$.
\end{lemma}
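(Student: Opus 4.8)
The strategy is as follows. Given an admissible epic $p\colon G\twoheadrightarrow Z$ with $Z\in\mathsf{LCA}_{\mathcal{O},cg}$, it suffices to construct a compactly generated $\mathcal{O}$-module $Y$ together with a morphism $\iota\colon Y\to G$ for which the composite $q':=p\circ\iota\colon Y\to Z$ is again an admissible epic. Indeed, put $X:=\ker(q')$ and $F:=\ker(p)$. Since $q'$ is an admissible epic, $X\hookrightarrow Y\twoheadrightarrow Z$ is exact, and since $\iota$ carries $X$ into $F$, we obtain the required commutative diagram with exact rows. Finally $X$, being a closed $\mathcal{O}$-submodule of the compactly generated $Y$, is itself compactly generated by Moskowitz's theorem that closed subgroups of compactly generated LCA groups are compactly generated \cite[Theorem 2.6]{MR0215016}. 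Thus all three objects of the top row lie in $\mathsf{LCA}_{\mathcal{O},cg}$.

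To build $Y$, use Lemma \ref{lemma_ExistClopenCGSubmodule} to pick a clopen compactly generated $\mathcal{O}$-submodule $H\subseteq G$. As $p$ is open, $p(H)$ is an open, hence clopen, $\mathcal{O}$-submodule of $Z$, so $D:=Z/p(H)$ is discrete; being also a quotient of the compactly generated $Z$, it is finitely generated as an $\mathcal{O}$-module. Choose $d_{1},\dots,d_{k}\in Z$ whose images generate $D$, lift them along $p$ to $g_{1},\dots,g_{k}\in G$, and let $\phi\colon\mathcal{O}^{k}\to G$ be the continuous $\mathcal{O}$-linear map determined by $e_{i}\mapsto g_{i}$ (continuity is automatic since $\mathcal{O}^{k}$ is discrete). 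Set $Y:=H\oplus\mathcal{O}^{k}$ with $\iota(h,v):=h+\phi(v)$. Then $Y$ sits in an exact sequence $H\hookrightarrow Y\twoheadrightarrow\mathcal{O}^{k}$ with both ends compactly generated, so $Y\in\mathsf{LCA}_{\mathcal{O},cg}$ by the extension-closedness recorded in Proposition \ref{prop_MoskowitzInterchange}.

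It remains to see that $q'=p\circ\iota$ is an admissible epic, which is the only delicate point. Its set-theoretic image is an $\mathcal{O}$-submodule of $Z$ containing $p(H)$ and mapping onto the generators $d_{i}+p(H)$ of $D$, hence equals $Z$; so $q'$ is surjective. Since $Y$ is compactly generated it is $\sigma$-compact, and a continuous surjective homomorphism from a $\sigma$-compact locally compact group onto a locally compact group is automatically open (open mapping theorem). Therefore $q'$ is open and surjective, i.e. an admissible epic in $\mathsf{LCA}_{\mathcal{O}}$, which completes the argument. The substance of the proof is precisely this interplay: a clopen compactly generated submodule captures an open part of $Z$, the resulting discrete quotient is finitely generated and so is lifted by finitely many elements, and $\sigma$-compactness upgrades the ensuing surjection from $Y$ to an open map.
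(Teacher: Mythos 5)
Your proof is correct, and it takes a genuinely different and notably more economical route than the paper. The paper reduces to the case of a compact quotient $Z=C$ by splitting off the projective summand of $Z$ (via Proposition \ref{prop_MoskClassifNSSandCGGroups} and Theorem \ref{thm_ProjectivesInLCAO}), applies the Structure Theorem to $G$, shows the cokernel $C/\operatorname{im}(h)$ has no small subgroups, and then lifts a projective cover of that cokernel along the quotient map --- so it leans on the classification of projectives and the resolution machinery of Theorem \ref{thm_ResolvableObjects}. You bypass all of that: the only structural input you need is Lemma \ref{lemma_ExistClopenCGSubmodule}, the elementary observation that a discrete quotient of a compactly generated group is finitely generated, and the same two external facts the paper also uses at the corresponding junctures (Pontryagin's open mapping theorem for $\sigma$-compact sources to get admissibility of $q'$, and Moskowitz's theorem that closed subgroups of compactly generated groups are compactly generated to handle the kernel $X$). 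All the individual steps check out: $p(H)$ is open hence clopen, so $D=Z/p(H)$ is discrete and, being a compactly generated discrete module, finitely generated; $\iota$ is continuous and $\mathcal{O}$-linear; $q'$ is surjective because its image contains $p(H)$ and hits generators of $D$; and the reduction of left specialness to producing such a $Y$ with $p\circ\iota$ an admissible epic is exactly right, since $\iota$ automatically carries $\ker(q')$ into $\ker(p)$. What your argument buys is a shorter, self-contained proof that does not depend on the classification results of Section \ref{sect_LocCompModules} beyond the clopen-submodule lemma; what the paper's longer route buys is that essentially the same computation, with $D=0$ forced, immediately yields the refinement Lemma \ref{lemma_LCAC_leftsfiltInLCARC} (left specialness of $\mathsf{LCA}_{\mathcal{O},C}$ inside $\mathsf{LCA}_{\mathcal{O},\mathbb{R}C}$), where your $Y=H\oplus\mathcal{O}^{k}$ would leave the category unless one further replaces the discrete summand, e.g.\ by Minkowski embeddings.
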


\begin{proof}
(Step 1)\ We first show a more restrictive statement. Suppose%
\[
G^{\prime\prime}\hookrightarrow G\twoheadrightarrow C
\]
is an exact sequence with $C$ compact. We apply the Structure Theorem to $G$,
in the form of Remark \ref{rmk_SplitOffR}, and get a diagram%
\begin{equation}%
\xymatrix{
H \ar@{^{(}->}[d]_{i} \ar@{-->}[dr]^{h} \\
{\bigoplus_{\sigma\in I}\mathbb{R}_{\sigma}} \oplus\tilde{G} \ar@{->>}%
[r] \ar@{->>}[d] & C \ar@{-->>}[dr] \\
{\bigoplus_{\sigma\in I}\mathbb{R}_{\sigma}} \oplus D & & C/{\operatorname
{im}_{\mathsf{Set}}(h)}
}
\label{diag1}%
\end{equation}
with $H$ compact, $D$ discrete. As $H$ is compact, the image of $h$ is
compact, so the set-theoretic image $\operatorname*{im}\nolimits_{\mathsf{Set}%
}(h)$ agrees with the categorical image, and is a closed subgroup of $C$. This
justifies why the second dashed diagonal arrow in the above diagram is an
admissible epic. By the universal property of cokernels we get a new diagram%
\[%
\xymatrix{
H \ar@{^{(}->}[d]_{i} \ar[dr]^{0} \\
{\bigoplus_{\sigma\in I}\mathbb{R}_{\sigma}} \oplus\tilde{G} \ar@
{->>}[r]^{\gamma} \ar@{->>}[d] & C/{\operatorname{im}(h)} \\
{\bigoplus_{\sigma\in I}\mathbb{R}_{\sigma}} \oplus D \ar@{-->}[ur]_{w}
}%
\]
By \cite[Cor. 7.7]{MR2606234} the morphism $w$ must be an admissible epic. Let
us, temporarily, just consider $w$ from the viewpoint of the underlying
locally compact additive groups: Regarded in $\mathsf{LCA}$, $w$ remains an
admissible epic, and is of the shape%
\[
w:\mathbb{R}^{n}\oplus D\twoheadrightarrow C/\operatorname*{im}(h)\text{.}%
\]
All admissible quotients of $\mathbb{R}^{n}\oplus D$ have the shape
$\mathbb{R}^{i}\oplus\mathbb{T}^{j}\oplus D^{\prime}$ with $D^{\prime}$
discrete (this follows from the classification of the closed subgroups of
$\mathbb{R}^{n}\oplus D$, see Corollary 2 to Theorem 7 in \cite{MR0442141}).
On the other hand, $C/\operatorname*{im}(h)$ is compact, so its underlying
additive topological group can only have the shape $\mathbb{T}^{j}\oplus
D^{\prime}$ for $D^{\prime}$ finite. We return to $\mathsf{LCA}_{\mathcal{O}}%
$: By the previous argument $C/\operatorname*{im}(h)$ has no small subgroups,
so by Theorem \ref{thm_ResolvableObjects} it has a projective resolution. In
fact, as the proof of the theorem loc. cit. explains, we find an admissible
epic of the shape%
\begin{equation}
\beta_{0}:\bigoplus_{\nu\in{I^{\prime}}}\mathbb{R}_{\nu}\oplus\bigoplus
_{J\in\mathcal{I}}J\twoheadrightarrow C/\operatorname*{im}(h) \label{lwa1}%
\end{equation}
for ${I^{\prime}}$ a finite list of real and complex places, while
$\mathcal{I}$ is a finite list of ideals of $\mathcal{O}$, and the object on
the left is projective in $\mathsf{LCA}_{\mathcal{O}}$. The source being
projective, this morphism $\beta_{0}$ can be lifted along the epic $\gamma$ to
some morphism $\beta:\bigoplus_{\nu\in{I^{\prime}}}\mathbb{R}_{\nu}%
\oplus\bigoplus_{J\in\mathcal{I}}J\rightarrow\bigoplus_{\sigma\in I}%
\mathbb{R}_{\sigma}\oplus\tilde{G}$. Returning to Diagram \ref{diag1}, we now
get%
\[%
\xymatrix{
H \oplus{\bigoplus_{\nu\in{I^{\prime}}}\mathbb{R}_{\nu}\oplus\bigoplus
_{J\in\mathcal{I}}J} \ar[d]_{i+{\beta}} \ar@{-->}[dr]^{\tilde{h}} \\
{\bigoplus_{\sigma\in I}\mathbb{R}_{\sigma}} \oplus\tilde{G} \ar@{->>}[r] & C,
}%
\]
where $\tilde{h}$ is defined on $H$ by $\left.  \tilde{h}\mid_{H}\right.  :=h$
(this refers to $h$ in Diagram \ref{diag1}), and on $\bigoplus_{\nu
\in{I^{\prime}}}\mathbb{R}_{\nu}\oplus\bigoplus_{J\in\mathcal{I}}J$ the map
$\tilde{h}$ is defined by following the other arrows of the triangle. Finally,
note that $\tilde{h}$ is, by construction, continuous and surjective, and
since $I^{\prime},\mathcal{I}$ are finite, its source is $\sigma$-compact, so
by Pontryagin's Open Mapping\ Theorem the map $\tilde{h} $ is also open. It
follows that $\tilde{h}$ is an admissible epic. Moreover, $H\oplus
\bigoplus_{\nu\in I^{\prime}}\mathbb{R}_{\nu}\oplus\bigoplus_{J\in\mathcal{I}%
}J$ is compactly generated.

As a result, we obtain the commutative diagram%
\begin{equation}%
\xymatrix{
K \ar@{^{(}->}[r] \ar@{-->}[d] & H \oplus{\bigoplus_{\nu\in I^{\prime}}%
\mathbb{R}_{\nu}\oplus\bigoplus_{J\in\mathcal{I}}J} \ar[d]_{i+{\beta}}
\ar@{->>}[r]^-{\tilde{h}} & C \ar@{=}[d] \\
G^{\prime\prime} \ar@{^{(}->}[r] & G \ar@{->>}[r] & C.
}
\label{diag2}%
\end{equation}
Here $K:=\ker(\tilde{h})$ and the dashed arrow exists by universal property.
Finally, $K$ is also compactly generated, because it is a closed subgroup of a
compactly generated group \cite[Theorem 2.6]{MR0215016}. It follows that the
top row lies in $\mathsf{LCA}_{\mathcal{O},cg}$.\newline(Step 2)\ Now let%
\begin{equation}
G^{\prime\prime}\hookrightarrow\hat{G}\twoheadrightarrow G^{\prime}
\label{lw1}%
\end{equation}
be an arbitrary exact sequence in $\mathsf{LCA}_{\mathcal{O}}$ with
$G^{\prime}$ compactly generated. By Proposition
\ref{prop_MoskClassifNSSandCGGroups} we have%
\[
G^{\prime}\simeq P\oplus C\qquad\text{with}\qquad P:=\bigoplus_{\sigma\in
I^{\prime}}\mathbb{R}_{\sigma}\oplus\bigoplus_{J\in\mathcal{I}^{\prime}}J
\]
with $C$ a compact $\mathcal{O}$-module, $\mathcal{I}^{\prime}$ a finite list
of ideals in $\mathcal{O}$, and $I^{\prime}$ a finite list of real and complex
places. The object $P$ is both compactly generated and projective by Theorem
\ref{thm_ProjectivesInLCAO}. Thus, it can be split off as a direct summand.
Hence, we get an exact sequence%
\begin{equation}
G^{\prime\prime}\hookrightarrow G\twoheadrightarrow C \label{lw2}%
\end{equation}
for some complement $G$ of $P$ in $\hat{G}$, and the exact sequence in
Equation \ref{lw1} is isomorphic to $G^{\prime\prime}\hookrightarrow P\oplus
G\twoheadrightarrow P\oplus C$. Apply Step 1 to Equation \ref{lw2}, i.e. we
arrive at Diagram \ref{diag2}. Now, using the projectivity of $P$ and the
resulting splittings, add the summand $P$ compatibly, giving%
\[%
\xymatrix{
K \ar@{^{(}->}[r] \ar@{->}[d] & P
\oplus H \oplus{\bigoplus_{\nu\in J}\mathbb{R}_{\nu}\oplus\bigoplus
_{I\in\mathcal{I}}I} \ar[d] \ar@{->>}[r] & P \oplus C \ar@{=}[d] \\
G^{\prime\prime} \ar@{^{(}->}[r] & P
\oplus G \ar@{->>}[r] & P \oplus C
}%
\]
Again, all groups in the top row are compactly generated. We conclude that
$\mathsf{LCA}_{\mathcal{O},cg}$ is left special in $\mathsf{LCA}_{\mathcal{O}}
$.
\end{proof}

\begin{lemma}
\label{lemma_LCAnssRightSpecial}The full subcategory $\mathsf{LCA}%
_{\mathcal{O},nss}$ is right special in $\mathsf{LCA}_{\mathcal{O}}$.
\end{lemma}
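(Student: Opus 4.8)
The plan is to deduce this lemma from Lemma \ref{lemma_LCACGLeftSpecialInLCA} by Pontryagin duality, with essentially no new topological input. By definition, $\mathsf{LCA}_{\mathcal{O},nss}$ is right special in $\mathsf{LCA}_{\mathcal{O}}$ precisely when $\mathsf{LCA}_{\mathcal{O},nss}^{op}$ is left special in $\mathsf{LCA}_{\mathcal{O}}^{op}$. So the first thing I would record is that ``left special'' is a purely diagrammatic property: it asserts only the existence of a certain commutative square with admissible-exact rows and a prescribed vertical identity, and hence it is transported along any exact equivalence of exact categories. Concretely, if $\Phi\colon\mathsf{E}\overset{\sim}{\to}\mathsf{E}'$ is an exact equivalence and $\mathsf{C}\subset\mathsf{E}$ is a full subcategory whose essential image under $\Phi$ is $\mathsf{C}'\subset\mathsf{E}'$, then $\mathsf{C}$ is left special in $\mathsf{E}$ if and only if $\mathsf{C}'$ is left special in $\mathsf{E}'$, because $\Phi$ carries admissible monics to admissible monics, admissible epics to admissible epics, and short exact sequences to short exact sequences, and the completing object/morphism in one category transports to a completing object/morphism in the other.

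Next I would apply this with $\Phi=(-)^{\vee}\colon\mathsf{LCA}_{\mathcal{O}}^{op}\overset{\sim}{\to}\mathsf{LCA}_{\mathcal{O}}$, the exact anti-equivalence of Theorem \ref{thm_PontrDualForLCAO}. By Proposition \ref{prop_MoskowitzInterchange}, this functor carries $\mathsf{LCA}_{\mathcal{O},nss}^{op}$ onto $\mathsf{LCA}_{\mathcal{O},cg}$. By Lemma \ref{lemma_LCACGLeftSpecialInLCA}, $\mathsf{LCA}_{\mathcal{O},cg}$ is left special in $\mathsf{LCA}_{\mathcal{O}}$; transporting this back along $\Phi$ shows that $\mathsf{LCA}_{\mathcal{O},nss}^{op}$ is left special in $\mathsf{LCA}_{\mathcal{O}}^{op}$, which is exactly the assertion that $\mathsf{LCA}_{\mathcal{O},nss}$ is right special in $\mathsf{LCA}_{\mathcal{O}}$.

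The only point that needs a line of care, and the closest thing to an obstacle here, is spelling out the diagrammatic transport in the first step: one must check that an exact anti-equivalence reverses the roles of the two exact rows and of the vertical identity morphism in the definition of ``left special'' in precisely the way that matches the definition of ``right special'' (i.e. the definition of ``left special in $\mathsf{D}^{op}$''). Since $(-)^{\vee}$ is an exact anti-equivalence and the statement concerns only the existence of completing data, this is a formal bookkeeping check and requires nothing beyond Proposition \ref{prop_MoskowitzInterchange} and Lemma \ref{lemma_LCACGLeftSpecialInLCA}. As an alternative to the duality argument, one could instead mimic Step 1 and Step 2 of the proof of Lemma \ref{lemma_LCACGLeftSpecialInLCA} directly, dualizing each construction (Structure Theorem, injective/projective resolutions, Open Mapping Theorem) and replacing closed subgroups of compactly generated groups by quotients of groups without small subgroups, but this merely repeats the previous proof verbatim in the dual category and the duality route is cleaner.
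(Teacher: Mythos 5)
Your argument is exactly the paper's: the paper proves this lemma by remarking that, via Proposition \ref{prop_MoskowitzInterchange}, it is the Pontryagin dual statement to Lemma \ref{lemma_LCACGLeftSpecialInLCA}, and your proposal simply spells out the formal transport of the ``left special'' property along the exact anti-equivalence $(-)^{\vee}$ of Theorem \ref{thm_PontrDualForLCAO}. The bookkeeping you describe is correct and complete.
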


Using Proposition \ref{prop_MoskowitzInterchange}, this is the Pontryagin dual
statement to Lemma \ref{lemma_LCACGLeftSpecialInLCA}.\medskip

We can obtain some further variations with no extra work:

\begin{lemma}
\label{lemma_LCAC_leftsfiltInLCARC}The full subcategory $\mathsf{LCA}%
_{\mathcal{O},C}$ is left $s$-filtering in $\mathsf{LCA}_{\mathcal{O}%
,\mathbb{R}C}$.
\end{lemma}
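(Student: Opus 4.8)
The plan is to verify the two defining conditions --- left filtering and left special --- separately. Throughout I will use that $\mathsf{LCA}_{\mathcal{O},\mathbb{R}C}$ is extension-closed in $\mathsf{LCA}_{\mathcal{O}}$: using Theorems \ref{thm_InjectivesInLCAO} and \ref{thm_ProjectivesInLCAO}, in any extension of $V'\oplus C'$ by $V\oplus C$ the projective summand $V'$ splits off the admissible epic and the injective summand $V$ splits off the admissible monic, leaving the middle term as a vector $\mathcal{O}$-module plus a compact extension of $C'$ by $C$. Consequently the admissible short exact sequences of $\mathsf{LCA}_{\mathcal{O},\mathbb{R}C}$ are exactly those of $\mathsf{LCA}_{\mathcal{O}}$ whose three terms all lie in $\mathsf{LCA}_{\mathcal{O},\mathbb{R}C}$. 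I will also use repeatedly that a compact subgroup of an object $V\oplus C$ (with $V$ a vector $\mathcal{O}$-module) is contained in $C$, since its image in $V\cong\mathbb{R}^{n}$ is a compact, hence trivial, subgroup. For left filtering, given a morphism $f\colon G\to X$ in $\mathsf{LCA}_{\mathcal{O},\mathbb{R}C}$ with $G$ compact, I would factor $f$ through $Z:=f(G)$: this is compact, hence closed in $X$, so it is the categorical image and $Z\hookrightarrow X$ is an admissible monic in $\mathsf{LCA}_{\mathcal{O}}$; writing $X\simeq V_{X}\oplus C_{X}$ one has $Z\subseteq C_{X}$, so the cokernel $V_{X}\oplus(C_{X}/Z)$ again lies in $\mathsf{LCA}_{\mathcal{O},\mathbb{R}C}$. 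Thus $Z\hookrightarrow X$ is admissible there, $Z\in\mathsf{LCA}_{\mathcal{O},C}$, and $f=(G\to Z\hookrightarrow X)$ is the required factorization.

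For left special, I am given $Z\in\mathsf{LCA}_{\mathcal{O},C}$ and an admissible epic $p\colon G\twoheadrightarrow Z$ in $\mathsf{LCA}_{\mathcal{O},\mathbb{R}C}$, whose kernel $K$ --- by the description of the exact structure above --- again lies in $\mathsf{LCA}_{\mathcal{O},\mathbb{R}C}$; write $G\simeq V\oplus C$ and $K\simeq V_{K}\oplus C_{K}$ in the usual form. The crucial claim is that the restriction $p|_{C}\colon C\to Z$ is already surjective. Granting this, I set $Y:=C$ (compact) with $Y\hookrightarrow G$ the summand inclusion, and $X:=\ker(p|_{C})=C\cap K$, which is compact as a closed subgroup of $C$; since $p|_{C}$ is a continuous surjection of compact $\mathcal{O}$-modules it is automatically open, so $X\hookrightarrow Y\twoheadrightarrow Z$ is exact in $\mathsf{LCA}_{\mathcal{O},C}$, and together with the inclusions $X=C\cap K\hookrightarrow K$ and $Y=C\hookrightarrow G$ it is exactly the commutative diagram demanded by the definition of left special.

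The heart of the matter is the claim, which I would prove by Pontryagin duality. It suffices to show $(p|_{C})^{\vee}\colon Z^{\vee}\to C^{\vee}$ is injective, because a morphism of compact $\mathcal{O}$-modules is surjective iff its dual is injective (the image being automatically closed). Now $(p|_{C})^{\vee}$ is the composite $Z^{\vee}\xrightarrow{p^{\vee}}G^{\vee}=V^{\vee}\oplus C^{\vee}\to C^{\vee}$, so it is injective if and only if $V^{\vee}\cap p^{\vee}(Z^{\vee})=0$ in $G^{\vee}$. Dualizing $K\hookrightarrow G\xrightarrow{p}Z$ gives an exact sequence $Z^{\vee}\xrightarrow{p^{\vee}}G^{\vee}\xrightarrow{q}K^{\vee}$ with $\ker q=p^{\vee}(Z^{\vee})$. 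Since $K\in\mathsf{LCA}_{\mathcal{O},\mathbb{R}C}$, the dual $K^{\vee}$ has the shape $V_{K}^{\vee}\oplus C_{K}^{\vee}$ with $C_{K}^{\vee}$ discrete, so its connected component is the vector group $V_{K}^{\vee}$; likewise the connected component of $G^{\vee}$ is $V^{\vee}$. Hence $q$ maps $V^{\vee}$ into $V_{K}^{\vee}$, and a continuous homomorphism of vector groups is $\mathbb{R}$-linear (as in the proof of Proposition \ref{prop_ClassifVectorOModules}); therefore $V^{\vee}\cap p^{\vee}(Z^{\vee})=\ker(q|_{V^{\vee}})$ is a linear subspace of $V^{\vee}$. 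On the other hand $p^{\vee}(Z^{\vee})$ is a discrete subgroup of $G^{\vee}$ --- it is $p^{\vee}$ of the discrete group $Z^{\vee}$, and $p^{\vee}$ is a closed embedding --- so $V^{\vee}\cap p^{\vee}(Z^{\vee})$ is discrete. A discrete linear subspace of $V^{\vee}$ is trivial, proving the claim.

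I expect the surjectivity of $p|_{C}$ to be the only real obstacle. The computation above shows that it is forced precisely by the hypothesis that the kernel of the admissible epic remains in $\mathsf{LCA}_{\mathcal{O},\mathbb{R}C}$; without that hypothesis the statement already fails for the epic $\mathbb{R}\twoheadrightarrow\mathbb{T}$, whose kernel $\mathbb{Z}$ is neither a vector nor a compact module. So the lemma really is obtained from Lemma \ref{lemma_LCACGLeftSpecialInLCA} "with no extra work", simply by tracking connected components under duality.
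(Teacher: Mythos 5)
Your proof is correct, and while the left-filtering half is essentially the paper's argument (the set-theoretic image of a compact module is compact, hence closed, hence an admissible subobject — you are in fact slightly more careful in also checking that the cokernel $V_X\oplus(C_X/Z)$ stays in $\mathsf{LCA}_{\mathcal{O},\mathbb{R}C}$, which is genuinely required since admissibility is meant relative to that subcategory), the left-special half takes a different route. The paper specializes Step 1 of the proof of Lemma \ref{lemma_LCACGLeftSpecialInLCA}: apply the structure theorem, form $C/\operatorname*{im}(h)$, cover it by a projective, lift along the admissible epic, and invoke the open mapping theorem. You instead prove the sharper statement that for an admissible epic $p\colon V\oplus C\twoheadrightarrow Z$ of $\mathsf{LCA}_{\mathcal{O},\mathbb{R}C}$ with $Z$ compact, the restriction $p|_{C}$ is already surjective, so one may take $Y=C$ outright. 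Your duality argument for this is airtight: $p^{\vee}(Z^{\vee})$ is discrete because $p^{\vee}$ is a closed embedding of the discrete group $Z^{\vee}$, while $p^{\vee}(Z^{\vee})\cap V^{\vee}=\ker(q|_{V^{\vee}})$ is an $\mathbb{R}$-linear subspace because $q$ carries the connected component $V^{\vee}$ of $G^{\vee}$ into the connected component $V_{K}^{\vee}$ of $K^{\vee}$ and is linear there; a discrete linear subspace vanishes. This uses, exactly once and exactly where it should, the hypothesis that the kernel $K$ lies in $\mathsf{LCA}_{\mathcal{O},\mathbb{R}C}$ — which is automatic from the induced exact structure on the fully exact subcategory, and which fails for $\mathbb{R}\twoheadrightarrow\mathbb{T}$ in the ambient category, as you note. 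What the paper's route buys is uniformity (the same Step 1 machinery handles the larger Lemma \ref{lemma_LCACGLeftSpecialInLCA}, where the kernel is arbitrary and your claim is false); what yours buys is that the top row of the special diagram, $C\cap K\hookrightarrow C\twoheadrightarrow Z$, is visibly compact, whereas specializing Step 1 a priori produces a middle term of the form $H\oplus\bigoplus_{\nu}\mathbb{R}_{\nu}$ and one still has to see why the vector summand can be discarded — which amounts to your surjectivity claim. So your argument is, if anything, the more direct verification of the definition in this case.
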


\begin{proof}
We first prove left special: We use exactly the same proof as of Lemma
\ref{lemma_LCACGLeftSpecialInLCA} (we only need Step 1 this time), and since
we now are in $\mathsf{LCA}_{\mathcal{O},\mathbb{R}C}$, we may additionally
assume $D=0$. So we must have $D^{\prime}=0$, and thus can run the argument
with $\mathcal{I}=\varnothing$ in Equation \ref{lwa1}, giving the claim. Left
filtering is clear:\ Given any $f:G\rightarrow X$ with $G$ compact, it factors
as $G\rightarrow\operatorname*{im}\nolimits_{\mathsf{Set}}(f)\rightarrow X $,
but since $G$ is compact, its image $\operatorname*{im}\nolimits_{\mathsf{Set}%
}(f)$ is also compact, thus closed, i.e. $\operatorname*{im}%
\nolimits_{\mathsf{Set}}(f)\hookrightarrow X$ is an admissible monic.
\end{proof}

\begin{lemma}
\label{lemma_LCAD_rightsfiltInLCARD}The full subcategory $\mathsf{LCA}%
_{\mathcal{O},D}$ is right $s$-filtering in $\mathsf{LCA}_{\mathcal{O}%
,\mathbb{R}D}$.
\end{lemma}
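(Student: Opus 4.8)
The statement to prove is Lemma~\ref{lemma_LCAD_rightsfiltInLCARD}: that $\mathsf{LCA}_{\mathcal{O},D}$ (the discrete $\mathcal{O}$-modules) is right $s$-filtering in $\mathsf{LCA}_{\mathcal{O},\mathbb{R}D}$ (objects isomorphic to a vector $\mathcal{O}$-module direct sum a discrete one). The cleanest route is to observe that this is precisely the Pontryagin dual of Lemma~\ref{lemma_LCAC_leftsfiltInLCARD}. Concretely, Pontryagin duality $(-)^{\vee}:\mathsf{LCA}_{\mathcal{O}}^{op}\xrightarrow{\sim}\mathsf{LCA}_{\mathcal{O}}$ (Theorem~\ref{thm_PontrDualForLCAO}) exchanges discrete $\mathcal{O}$-modules with compact $\mathcal{O}$-modules, and sends a vector $\mathcal{O}$-module $\bigoplus_{\sigma}\mathbb{R}_{\sigma}$ to a vector $\mathcal{O}$-module of the same type (using $\mathbb{R}_{\sigma}^{\vee}\simeq\mathbb{R}_{\sigma}$); hence it restricts to an exact anti-equivalence $\mathsf{LCA}_{\mathcal{O},\mathbb{R}D}^{op}\xrightarrow{\sim}\mathsf{LCA}_{\mathcal{O},\mathbb{R}C}$ carrying the subcategory $\mathsf{LCA}_{\mathcal{O},D}^{op}$ onto $\mathsf{LCA}_{\mathcal{O},C}$. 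By the very definition of "right $s$-filtering" ($\mathsf{C}\subset\mathsf{D}$ is right $s$-filtering iff $\mathsf{C}^{op}\subset\mathsf{D}^{op}$ is left $s$-filtering), the claim is therefore equivalent to Lemma~\ref{lemma_LCAC_leftsfiltInLCARD}, which has already been established.

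The one point deserving a sentence of care is that "right $s$-filtering" is phrased via the opposite categories, so I must make sure the anti-equivalence lands things the right way. Write $\Phi:=(-)^{\vee}$. We have the exact equivalence $\Phi:\mathsf{LCA}_{\mathcal{O},\mathbb{R}C}\xrightarrow{\sim}\mathsf{LCA}_{\mathcal{O},\mathbb{R}D}^{op}$ (via reflexivity, running duality once), under which $\mathsf{LCA}_{\mathcal{O},C}$ corresponds to $\mathsf{LCA}_{\mathcal{O},D}^{op}$. Left $s$-filtering is an intrinsic property of an extension-closed full subcategory of an exact category that is preserved by exact equivalences; so Lemma~\ref{lemma_LCAC_leftsfiltInLCARD} transports to the assertion that $\mathsf{LCA}_{\mathcal{O},D}^{op}$ is left $s$-filtering in $\mathsf{LCA}_{\mathcal{O},\mathbb{R}D}^{op}$, which is by definition the statement that $\mathsf{LCA}_{\mathcal{O},D}$ is right $s$-filtering in $\mathsf{LCA}_{\mathcal{O},\mathbb{R}D}$.

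Alternatively, if one prefers a self-contained argument paralleling the proofs of Lemmas~\ref{lemma_LCACGLeftSpecialInLCA} and~\ref{lemma_LCAC_leftsfiltInLCARD}, one dualizes the two halves directly: for right filtering, given $f:X\to G$ with $G$ discrete, factor $f$ through $X\twoheadrightarrow X/\ker(f)\to G$; since $G$ is discrete, $X/\ker(f)$ is discrete (the kernel of $f$ is open), so the admissible epic lands in $\mathsf{LCA}_{\mathcal{O},D}$. For right special, dualize the "Step~1 with $D=0$" argument of Lemma~\ref{lemma_LCAC_leftsfiltInLCARD}: starting from an exact sequence $D\hookrightarrow \hat G\twoheadrightarrow G'$ in $\mathsf{LCA}_{\mathcal{O},\mathbb{R}D}$ with $D$ discrete, apply the structure theory in its $nss$-form, split off the vector summand, and produce the required diagram with all objects in the bottom row discrete, using that subquotients in the relevant slots stay discrete. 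But the duality argument is shorter and involves no new calculation, so I expect essentially no obstacle; the only thing to get right is the bookkeeping of opposite categories, which the final paragraph above handles.
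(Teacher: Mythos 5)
Your main argument is exactly the paper's: the proof there is the one-line observation that this lemma is the Pontryagin dual of Lemma \ref{lemma_LCAC_leftsfiltInLCARC}, and your unwinding of the opposite-category bookkeeping (duality restricting to an anti-equivalence between $\mathsf{LCA}_{\mathcal{O},\mathbb{R}D}$ and $\mathsf{LCA}_{\mathcal{O},\mathbb{R}C}$ carrying $\mathsf{LCA}_{\mathcal{O},D}$ to $\mathsf{LCA}_{\mathcal{O},C}$) is correct and slightly more careful than the paper's. The alternative self-contained sketch you append is also sound but not needed.
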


Again, this is just the Pontryagin dual statement.

\begin{lemma}
\label{lemma_FLCARDextclosed}The category $\mathsf{LCA}_{\mathcal{O}%
,\mathbb{R}D}$ is extension-closed in $\mathsf{LCA}_{\mathcal{O}}$.
\end{lemma}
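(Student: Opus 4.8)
The plan is to peel off the vector-group summands one at a time, exploiting that every vector $\mathcal{O}$-module is simultaneously an injective and a projective object of $\mathsf{LCA}_{\mathcal{O}}$ (Theorems~\ref{thm_InjectivesInLCAO} and~\ref{thm_ProjectivesInLCAO}), and then to recognize the leftover piece as an extension of discrete modules, which is forced to be discrete.

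Concretely, suppose $A\hookrightarrow E\twoheadrightarrow B$ is exact with $A\simeq V_{A}\oplus D_{A}$ and $B\simeq V_{B}\oplus D_{B}$, where the $V$'s are vector $\mathcal{O}$-modules and the $D$'s are discrete. First I would pull the extension back along the admissible monic $D_{B}\hookrightarrow B$; writing $q$ for the epic, this is just $E_{2}:=q^{-1}(D_{B})$ with its subspace topology, and the standard exact-category bookkeeping (see \cite{MR2606234}) yields two new exact sequences, $A\hookrightarrow E_{2}\twoheadrightarrow D_{B}$ and $E_{2}\hookrightarrow E\twoheadrightarrow B/D_{B}\simeq V_{B}$. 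Since $V_{B}$ is projective, the second splits, so $E\simeq V_{B}\oplus E_{2}$, and I am reduced to analyzing $E_{2}$.

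For $E_{2}$ I would run the dual move: $V_{A}\hookrightarrow A\hookrightarrow E_{2}$ is a composite of admissible monics, and $V_{A}$ is injective, so it splits off: $E_{2}\simeq V_{A}\oplus E_{3}$. Dividing the sequence $A\hookrightarrow E_{2}\twoheadrightarrow D_{B}$ by the admissible subobject $V_{A}\subseteq A$ (a Noether-type isomorphism valid in any exact category, using $A/V_{A}\simeq D_{A}$) gives an exact sequence $D_{A}\hookrightarrow E_{3}\twoheadrightarrow D_{B}$ with both outer terms discrete. Then $E_{3}$ must be discrete: the kernel of the continuous surjection $E_{3}\twoheadrightarrow D_{B}$ is the preimage of the open point of $D_{B}$, hence an open subgroup of $E_{3}$, and it is discrete (being the image of the closed embedding of a discrete module), so $E_{3}$ itself is discrete. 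Putting the pieces together, $E\simeq V_{A}\oplus V_{B}\oplus E_{3}$; since a direct sum of two vector $\mathcal{O}$-modules is again one (Proposition~\ref{prop_ClassifVectorOModules}) and $E_{3}$ is discrete, $E\in\mathsf{LCA}_{\mathcal{O},\mathbb{R}D}$, as desired.

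I expect no deep obstacle: the argument is essentially formal once injectivity and projectivity of vector $\mathcal{O}$-modules are in hand. The one point that genuinely uses topology — and that I would take a little care over — is the claim that an extension of a discrete $\mathcal{O}$-module by a discrete $\mathcal{O}$-module is discrete, together with checking that the pullback $q^{-1}(D_{B})$ and the quotient $E_{2}/V_{A}$ really do sit in $\mathsf{LCA}_{\mathcal{O}}$ compatibly with the exact structure (closed embeddings, open surjections). Alternatively, one could derive the lemma by Pontryagin duality from the mirror statement that $\mathsf{LCA}_{\mathcal{O},\mathbb{R}C}$ is extension-closed, proven the same way with ``compact'' in place of ``discrete''.
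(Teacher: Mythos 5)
Your proof is correct, but it takes a genuinely different route from the paper's. The paper applies the Structure Theorem (Theorem \ref{thm_StructOfLCA}) to the \emph{middle} term $E$, writing $\mathbb{R}^{n}\oplus H\hookrightarrow E\twoheadrightarrow D$ with $H$ compact, and then shows by a somewhat delicate argument (compact image in $\mathbb{R}^{j}\oplus D''$ is finite, followed by two applications of Noether's lemma and of the auxiliary Lemma \ref{lemma_LCARD_closedunderextbydiscretes}) that $H$ must in fact be finite; it also first reduces to $\mathcal{O}=\mathbb{Z}$ and lifts back via Lemma \ref{lemma_VectorGroupDiscreteInLCALiftsToLCAO}. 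You instead work entirely with the \emph{outer} terms, peeling off $V_{B}$ by projectivity and $V_{A}$ by injectivity of vector $\mathcal{O}$-modules, and reduce to the elementary fact that an extension of discrete modules is discrete (an open subgroup that is discrete forces the whole group to be discrete). Every step you use is legitimate: the pullback $q^{-1}(D_{B})$ and the Noether isomorphism $E_{2}/V_{A}$ are standard exact-category manipulations that hold in the quasi-abelian category $\mathsf{LCA}_{\mathcal{O}}$, split summand inclusions and composites of admissible monics are admissible monics, and the injectivity/projectivity of vector $\mathcal{O}$-modules (Theorems \ref{thm_InjectivesInLCAO}, \ref{thm_ProjectivesInLCAO} and their corollary) is established independently of this lemma, so there is no circularity. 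Your argument is shorter, avoids the Structure Theorem and the reduction to $\mathbb{Z}$ altogether, and dualizes immediately to give Lemma \ref{lemma_FLCclosed}; the price is that it leans on the full classification of injectives and projectives, whereas the paper's Step 1 only needs injectivity of $\mathbb{R}^{i}$ (inside Lemma \ref{lemma_LCARD_closedunderextbydiscretes}) plus soft topology. One small caveat: your closing suggestion to instead \emph{deduce} the lemma by Pontryagin duality from extension-closedness of $\mathsf{LCA}_{\mathcal{O},\mathbb{R}C}$ would be circular in the paper's logical order, since Lemma \ref{lemma_FLCclosed} is itself obtained there as the dual statement; your parenthetical "proven the same way with compact in place of discrete" correctly avoids this.
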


We begin by proving a special case of the claim in $\mathsf{LCA}$, i.e. for
$\mathcal{O}=\mathbb{Z}$:

\begin{lemma}
\label{lemma_LCARD_closedunderextbydiscretes}Suppose $G^{\prime}%
\hookrightarrow G\twoheadrightarrow N$ is an exact sequence in $\mathsf{LCA} $
with $G^{\prime}\in\mathsf{LCA}_{\mathbb{R}D}$ and $N$ discrete, then
$G\in\mathsf{LCA}_{\mathbb{R}D}$.
\end{lemma}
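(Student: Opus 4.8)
I would prove this by reducing to the structure theory of $\mathsf{LCA}$. Let $G^{\prime}\hookrightarrow G\twoheadrightarrow N$ be exact in $\mathsf{LCA}$ with $G^{\prime}\simeq V^{\prime}\oplus D^{\prime}$ for a vector group $V^{\prime}$ and discrete $D^{\prime}$, and $N$ discrete. The first observation is that since $V^{\prime}$ is connected, it lands inside the connected component $G_{0}$ of the identity in $G$; and since $V^{\prime}$ is also open in $G^{\prime}$ (it is clopen there, as $G^{\prime}/V^{\prime}\simeq D^{\prime}$ is discrete) and $G^{\prime}\hookrightarrow G$ is a closed embedding, a local-compactness argument shows $V^{\prime}$ is in fact clopen in $G_{0}$... but more directly: apply the Principal Structure Theorem (Theorem \ref{thm_StructOfLCA} with $\mathcal{O}=\mathbb{Z}$, i.e.\ the classical Pontryagin--van~Kampen theorem, together with Remark \ref{rmk_SplitOffR}) to $G$ itself. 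This gives $G\simeq\mathbb{R}^{a}\oplus\tilde{G}$ with $H\hookrightarrow\tilde{G}\twoheadrightarrow D$ exact, $H$ compact, $D$ discrete. So it suffices to show $H=0$, i.e.\ that $G$ has no nontrivial compact subgroup, equivalently (by the structure theorem) that $\tilde G$ has no small subgroups and in fact $\tilde G\in\mathsf{LCA}_{D}$.

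\emph{Key steps.} First, I would show $H\cap G^{\prime}=0$: the intersection $H\cap G^{\prime}$ is a compact subgroup of $G^{\prime}\simeq V^{\prime}\oplus D^{\prime}$, but $V^{\prime}\oplus D^{\prime}$ has no nontrivial compact subgroups (a compact subgroup projects to a finite subgroup of $D^{\prime}$ and to a compact, hence trivial, subgroup of $V^{\prime}$; so it is finite; and $G^{\prime}$ being a vector group plus a discrete group is torsion-free modulo its discrete part... actually $D^{\prime}$ may have torsion, so $H\cap G^{\prime}$ is finite). Hmm—so let me instead argue: the composite $H\hookrightarrow G\twoheadrightarrow N$ has image a compact subgroup of the discrete group $N$, hence finite; replacing $H$ by the kernel of $H\to N$ (a finite-index, hence clopen, compact subgroup, so WLOG by shrinking) we get $H\subseteq G^{\prime}$, and then $H$ is a compact subgroup of $V^{\prime}\oplus D^{\prime}$, hence finite. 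So $G$ has only finite compact subgroups, meaning $\tilde G\simeq F_{0}\oplus D$ for finite $F_0$ in the structure theorem, and absorbing $F_0$ into the discrete part gives $G\simeq\mathbb{R}^{a}\oplus D^{\prime\prime}$ with $D^{\prime\prime}$ discrete. That is exactly the assertion $G\in\mathsf{LCA}_{\mathbb{R}D}$.

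\emph{Main obstacle.} The subtle point is that passing to a finite-index compact subgroup of $H$ only shows $G$ has no \emph{infinite} compact subgroup and no \emph{small} subgroups, which is enough, but one must make sure the reduction "shrink $H$ so that it maps to $0$ in $N$" is legitimate: $\ker(H\to N)$ is closed in $H$, hence compact, and has finite index in $H$ since the image is finite, so it is clopen in $H$; this does not change whether $H$ is trivial but does land it in $G^{\prime}$, which is what we need. The only other thing to check carefully is that a compact subgroup of $\mathbb{R}^{m}\oplus D^{\prime}$ is finite: its image in $\mathbb{R}^{m}$ is a compact subgroup of a vector group, hence $\{0\}$, so the subgroup lies in $\{0\}\oplus D^{\prime}$ and is a compact subgroup of a discrete group, hence finite. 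Assembling these gives $G\simeq\mathbb{R}^{a}\oplus(\text{discrete})$, so $G\in\mathsf{LCA}_{\mathbb{R}D}$, as desired.
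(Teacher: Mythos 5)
Your argument is correct, but it takes a genuinely different route from the paper. The paper's proof never touches the structure of $G$ itself: it filters $\mathbb{R}^{i}\hookrightarrow\mathbb{R}^{i}\oplus D^{\prime}\hookrightarrow G$, applies Noether's lemma to see that $G/\mathbb{R}^{i}$ is an extension of the discrete group $N$ by the discrete group $D^{\prime}$ and hence discrete, and then splits $\mathbb{R}^{i}\hookrightarrow G\twoheadrightarrow G/\mathbb{R}^{i}$ using that $\mathbb{R}^{i}$ is an injective object of $\mathsf{LCA}$. You instead apply the Pontryagin--van Kampen structure theorem to $G$ (via Theorem \ref{thm_StructOfLCA} and Remark \ref{rmk_SplitOffR}) and then kill the compact part $H$ by a direct topological argument: the image of $H$ in $N$ is finite, the kernel of $H\to N$ lands in $G^{\prime}\simeq V^{\prime}\oplus D^{\prime}$ and is therefore finite (compact subgroups of a vector group are trivial, of a discrete group are finite), so $H$ is finite, and a finite open subgroup forces $\tilde{G}$ to be discrete. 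Both proofs are valid; the paper's is shorter and more formal (it only needs injectivity of vector groups and closure of discretes under extension), while yours is more hands-on but requires the full structure theorem for the \emph{middle} term $G$, which is a heavier input. One small imprecision: your claim that $\tilde{G}\simeq F_{0}\oplus D$ with $F_{0}$ finite is not literally justified (such extensions of discrete groups need not split, e.g. $\mathbb{Z}/2\hookrightarrow\mathbb{Z}/4\twoheadrightarrow\mathbb{Z}/2$), but this does not matter: what you actually need and do establish is that $\tilde{G}$ has a finite \emph{open} subgroup and is therefore discrete, which already gives $G\simeq\mathbb{R}^{a}\oplus(\text{discrete})$.
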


\begin{proof}
Consider the extension $\mathbb{R}^{i}\oplus D^{\prime}\hookrightarrow
G\twoheadrightarrow N$ with $D^{\prime}$ discrete. We get the filtration
$\mathbb{R}^{i}\hookrightarrow\mathbb{R}^{i}\oplus D^{\prime}\hookrightarrow
G$. The variant of Noether's Lemma for exact categories \cite[Lemma
3.5]{MR2606234}, yields the exact sequence $(\mathbb{R}^{i}\oplus D^{\prime
})/\mathbb{R}^{i}\hookrightarrow G/\mathbb{R}^{i}\twoheadrightarrow
G/(\mathbb{R}^{i}\oplus D^{\prime})$ and this simplifies to $D^{\prime
}\hookrightarrow G/\mathbb{R}^{i}\twoheadrightarrow N$. As $D^{\prime}$ and
$N$ are discrete and discrete groups are closed under extension, it follows
that we have an exact sequence $\mathbb{R}^{i}\hookrightarrow
G\twoheadrightarrow D$ with $D$ discrete. However, $\mathbb{R}^{i}$ is an
injective object and thus this sequence must split. Hence, $G\simeq
\mathbb{R}^{i}\oplus D$ with $D$ discrete, giving the claim.
\end{proof}

\begin{proof}
[Proof of Lemma \ref{lemma_FLCARDextclosed}](Step 1) First, we prove this for
$\mathcal{O}=\mathbb{Z}$, i.e. in $\mathsf{LCA}$. This is tedious to spell
out, but quite easy: Let $\mathbb{R}^{i}\oplus D^{\prime}\hookrightarrow
G\twoheadrightarrow\mathbb{R}^{j}\oplus D^{\prime\prime}$ be an extension with
$D^{\prime},D^{\prime\prime}$ discrete groups. By the Structure Theorem
(Theorem \ref{thm_StructOfLCA}), $G$ decomposes as $\mathbb{R}^{n}\oplus
H\hookrightarrow G\twoheadrightarrow D$, $H$ compact, $D$ discrete. Consider
the composition $h:H\hookrightarrow G\twoheadrightarrow\mathbb{R}^{j}\oplus
D^{\prime\prime}$. As $H$ is compact, the image of $H$ after projecting to
$\mathbb{R}^{j}$ is compact and thus trivial. Hence, the map factors as
$H\rightarrow D^{\prime\prime}$, and since $H$ is compact but $D^{\prime
\prime}$ discrete, the image $\operatorname*{im}\nolimits_{\mathsf{Set}}(h)$
must be finite. Thus, we get a factorization%
\begin{equation}
\xymatrix{ & N \ar@{^{(}->}[d] \\ H \ar@{^{(}->}[r] \ar[ur] \ar[dr]_{0} & G \ar@{->>}[d] \\ & (\mathbb{R}^{j} \oplus D^{\prime \prime}) / {\operatorname{im}(h)}\text{,} }
\label{dd1}%
\end{equation}
where $N$ is defined as the kernel of the downward admissible epic originating
from $G$. As this quotients out more from $G$, we get a filtration
$\mathbb{R}^{i}\oplus D^{\prime}\hookrightarrow N\hookrightarrow G$ and by
Noether's Lemma, the corresponding sequence of quotients $\frac{N}%
{\mathbb{R}^{i}\oplus D^{\prime}}\hookrightarrow\frac{G}{\mathbb{R}^{i}\oplus
D^{\prime}}\twoheadrightarrow\frac{G}{N}$ is exact. As $G/N\simeq
(\mathbb{R}^{j}\oplus D^{\prime\prime})/\operatorname*{im}%
\nolimits_{\mathsf{Set}}(h)$ by the choice of $N$, and $G/(\mathbb{R}%
^{i}\oplus D^{\prime})\simeq\mathbb{R}^{j}\oplus D^{\prime\prime}$, we learn
that $N/(\mathbb{R}^{i}\oplus D^{\prime})\simeq\operatorname*{im}%
\nolimits_{\mathsf{Set}}(h)$. Hence, $\mathbb{R}^{i}\oplus D^{\prime
}\hookrightarrow N\twoheadrightarrow\operatorname*{im}\nolimits_{\mathsf{Set}%
}(h)$ is exact.\ It follows that $N$ is an extension of a group in
$\mathsf{LCA}_{\mathbb{R}D}$ by a finite group, i.e. $N\simeq\mathbb{R}%
^{i}\oplus E$ with $E$ discrete by Lemma
\ref{lemma_LCARD_closedunderextbydiscretes}. In Diagram \ref{dd1} the
factorization $H\rightarrow N$ is an admissible monic by \cite[Corollary
7.7]{MR2606234} and thus it becomes $H\hookrightarrow\mathbb{R}^{i}\oplus E$.
As $H$ is compact and $\mathbb{R}^{i}$ has no non-trivial compact subgroups,
$H$ must lie in $E$, but the latter is discrete, so $H$ must be a finite
group. Thus, we have $\mathbb{R}^{n}\oplus H\hookrightarrow
G\twoheadrightarrow D$ with $D$ discrete and $H$ finite. Now Lemma
\ref{lemma_LCARD_closedunderextbydiscretes} implies that $G$ lies in
$\mathsf{LCA}_{\mathbb{R}D}$. (Step 2) Finally, the case of general
$\mathcal{O}$ reduces to the case over $\mathbb{Z}$ by Lemma
\ref{lemma_VectorGroupDiscreteInLCALiftsToLCAO}.
\end{proof}

\begin{example}
The morphism $\mathbb{Z}\rightarrow\mathbb{R}$ is an element of
$\operatorname*{Hom}(\mathbb{Z},\mathbb{R})$, both in $\mathsf{LCA}$ as well
as in $\mathsf{LCA}_{\mathbb{R}D}$. It is an admissible monic in
$\mathsf{LCA}$, but not in $\mathsf{LCA}_{\mathbb{R}D}$ as the quotient
$\mathbb{T}$ does not lie in $\mathsf{LCA}_{\mathbb{R}D}$.
\end{example}

\begin{lemma}
\label{lemma_FLCclosed}The category $\mathsf{LCA}_{\mathcal{O},\mathbb{R}C}$
is extension-closed in $\mathsf{LCA}_{\mathcal{O}}$.
\end{lemma}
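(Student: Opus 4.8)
The plan is to deduce this from Lemma \ref{lemma_FLCARDextclosed} by Pontryagin duality, exactly as the analogous dual pairs of statements elsewhere in this section have been handled. By Theorem \ref{thm_PontrDualForLCAO} the functor $(-)^{\vee}:\mathsf{LCA}_{\mathcal{O}}^{op}\overset{\sim}{\to}\mathsf{LCA}_{\mathcal{O}}$ is an exact anti-equivalence, reflexive via $\eta_{G}:G\overset{\sim}{\to}G^{\vee\vee}$, so it carries exact sequences to exact sequences with the arrows reversed. The first step is to record that $(-)^{\vee}$ restricts to an anti-equivalence between $\mathsf{LCA}_{\mathcal{O},\mathbb{R}C}$ and $\mathsf{LCA}_{\mathcal{O},\mathbb{R}D}$: it sends compact $\mathcal{O}$-modules to discrete ones and back (Theorem \ref{thm_PontrDualForLCAO}); it sends a vector $\mathcal{O}$-module $V$ to a vector $\mathcal{O}$-module, since $V\simeq\bigoplus_{\sigma}\mathbb{R}_{\sigma}$ by Proposition \ref{prop_ClassifVectorOModules} and $\mathbb{R}_{\sigma}^{\vee}\simeq\mathbb{R}_{\sigma}$; and it converts a direct sum decomposition $V\oplus C$ into $V^{\vee}\oplus C^{\vee}$. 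Hence $(V\oplus C)^{\vee}\cong V^{\vee}\oplus C^{\vee}$ lies in $\mathsf{LCA}_{\mathcal{O},\mathbb{R}D}$, and symmetrically in the other direction, so an object lies in $\mathsf{LCA}_{\mathcal{O},\mathbb{R}C}$ precisely when its dual lies in $\mathsf{LCA}_{\mathcal{O},\mathbb{R}D}$.

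With this in hand the argument is immediate. Suppose $G^{\prime}\hookrightarrow G\twoheadrightarrow G^{\prime\prime}$ is an exact sequence in $\mathsf{LCA}_{\mathcal{O}}$ with $G^{\prime},G^{\prime\prime}\in\mathsf{LCA}_{\mathcal{O},\mathbb{R}C}$. Applying $(-)^{\vee}$ yields an exact sequence $(G^{\prime\prime})^{\vee}\hookrightarrow G^{\vee}\twoheadrightarrow (G^{\prime})^{\vee}$ in $\mathsf{LCA}_{\mathcal{O}}$ whose outer terms lie in $\mathsf{LCA}_{\mathcal{O},\mathbb{R}D}$ by the previous paragraph. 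By Lemma \ref{lemma_FLCARDextclosed}, $\mathsf{LCA}_{\mathcal{O},\mathbb{R}D}$ is extension-closed, so $G^{\vee}\in\mathsf{LCA}_{\mathcal{O},\mathbb{R}D}$. Dualizing once more and using reflexivity $G\cong G^{\vee\vee}$, we conclude $G\in\mathsf{LCA}_{\mathcal{O},\mathbb{R}C}$, as desired.

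I do not anticipate a genuine obstacle here: the whole content has already been done in Lemma \ref{lemma_FLCARDextclosed}, and what remains is the bookkeeping that $\mathsf{LCA}_{\mathcal{O},\mathbb{R}C}$ and $\mathsf{LCA}_{\mathcal{O},\mathbb{R}D}$ are interchanged under duality. The only place that requires a line of care is verifying that the defining property ``admits an isomorphism to $V\oplus C$'' is preserved under $(-)^{\vee}$, which follows from the compatibility of Pontryagin duality with finite direct sums together with the self-duality of vector $\mathcal{O}$-modules; everything else is formal.
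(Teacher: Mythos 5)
Your argument is correct and is exactly the route the paper intends: Lemma \ref{lemma_FLCclosed} is left unproved in the text precisely because it is the Pontryagin dual of Lemma \ref{lemma_FLCARDextclosed}, and your write-up supplies the (routine) verification that $(-)^{\vee}$ interchanges $\mathsf{LCA}_{\mathcal{O},\mathbb{R}C}$ and $\mathsf{LCA}_{\mathcal{O},\mathbb{R}D}$. Nothing to add.
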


\begin{corollary}
All the full subcategories $\mathsf{LCA}_{\mathcal{O},C}$, $\mathsf{LCA}%
_{\mathcal{O},D}$, $\mathsf{LCA}_{\mathcal{O},\mathbb{R}D}$, $\mathsf{LCA}%
_{\mathcal{O},\mathbb{R}C}$, $\mathsf{LCA}_{\mathcal{O},cg}$, $\mathsf{LCA}%
_{\mathcal{O},nss}$ are fully exact subcategories of $\mathsf{LCA}%
_{\mathcal{O}}$. In particular, they all carry a canonical exact structure and
are closed under extensions in $\mathsf{LCA}_{\mathcal{O}}$. A sequence is
exact in these categories if and only if it is exact in $\mathsf{LCA}$; that
is: The forgetful functor%
\[
\mathsf{LCA}_{\mathcal{O},(-)}\longrightarrow\mathsf{LCA}%
\]
is faithful, exact and reflects exactness.
\end{corollary}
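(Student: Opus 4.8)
The plan is to reduce the corollary to two facts. \emph{Fact (a):} each of the six subcategories is closed under extensions in $\mathsf{LCA}_{\mathcal{O}}$. \emph{Fact (b):} a sequence in $\mathsf{LCA}_{\mathcal{O}}$ is short exact if and only if its underlying sequence of abelian groups is short exact in $\mathsf{LCA}$. Granting (a), the theory of fully exact subcategories of an exact category (B\"{u}hler \cite{MR2606234}) equips each of the six subcategories with a canonical exact structure, in which a sequence is declared exact exactly when it is exact in the ambient category $\mathsf{LCA}_{\mathcal{O}}$ and all three of its terms lie in the subcategory; for this structure the inclusion into $\mathsf{LCA}_{\mathcal{O}}$ is automatically exact and reflects exactness. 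Combining this with (b), exactness in each subcategory becomes literally exactness of the underlying sequence in $\mathsf{LCA}$, which yields the final sentence of the statement. Faithfulness of the forgetful functor is immediate, since a continuous $\mathcal{O}$-linear map is determined by its underlying map of sets; and ``exact'' and ``reflects exactness'' for the composite $\mathsf{LCA}_{\mathcal{O},(-)}\hookrightarrow\mathsf{LCA}_{\mathcal{O}}\rightarrow\mathsf{LCA}$ follow by combining (a) and (b).

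For Fact (a), four of the six cases are already available: $\mathsf{LCA}_{\mathcal{O},cg}$ and $\mathsf{LCA}_{\mathcal{O},nss}$ by Proposition \ref{prop_MoskowitzInterchange}, and $\mathsf{LCA}_{\mathcal{O},\mathbb{R}D}$, $\mathsf{LCA}_{\mathcal{O},\mathbb{R}C}$ by Lemmas \ref{lemma_FLCARDextclosed} and \ref{lemma_FLCclosed}. For the compact $\mathcal{O}$-modules $\mathsf{LCA}_{\mathcal{O},C}$: if $C^{\prime}\hookrightarrow G\twoheadrightarrow C^{\prime\prime}$ is exact in $\mathsf{LCA}_{\mathcal{O}}$ with $C^{\prime},C^{\prime\prime}$ compact, then $G$ is a group extension of a compact group by a compact group in $\mathsf{LCA}$ and hence compact; no promotion of structure is needed, as $G$ already carries its $\mathcal{O}$-action. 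The discrete case $\mathsf{LCA}_{\mathcal{O},D}$ is analogous (the preimage of $0$ under the admissible epic is open, so $G$ is discrete), or may be deduced from the compact case by Pontryagin duality, Theorem \ref{thm_PontrDualForLCAO}.

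For Fact (b), the key observation is that kernels and cokernels in $\mathsf{LCA}_{\mathcal{O}}$ are computed on underlying topological groups: the kernel of a morphism $f\colon A\rightarrow B$ is the closed subgroup $\ker f$, which is $\mathcal{O}$-stable because $\mathcal{O}$ acts by continuous endomorphisms, together with its induced (automatically continuous) action; dually the cokernel is the topological quotient $B/\overline{\operatorname{im}_{\mathsf{Set}}(f)}$ with induced action. Hence $f$ is an admissible monic (resp.\ epic) in $\mathsf{LCA}_{\mathcal{O}}$ precisely when it is a closed $\mathcal{O}$-linear injection (resp.\ an open $\mathcal{O}$-linear surjection), i.e.\ precisely when its underlying group homomorphism is an admissible monic (resp.\ epic) in $\mathsf{LCA}$; consequently a kernel--cokernel pair is exact in $\mathsf{LCA}_{\mathcal{O}}$ if and only if its underlying sequence is exact in $\mathsf{LCA}$. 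The only slightly delicate direction is that a group-theoretically admissible monic or epic between objects of $\mathsf{LCA}_{\mathcal{O}}$ with $\mathcal{O}$-linear underlying map is already admissible in $\mathsf{LCA}_{\mathcal{O}}$; this holds because the kernel, resp.\ cokernel, formed in $\mathsf{LCA}$ is an $\mathcal{O}$-module solving the corresponding universal problem in $\mathsf{LCA}_{\mathcal{O}}$ as well.

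I do not anticipate a genuine obstacle: the substantive content sits entirely in the extension-closedness results proved above. The only care needed is to invoke the fully exact subcategory formalism correctly and, in Fact (b), to run the equivalence in both directions, so that exactness in $\mathsf{LCA}_{\mathcal{O}}$ --- and therefore in each of the six subcategories --- is genuinely detected on the underlying LCA groups.
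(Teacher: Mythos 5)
Your proposal is correct and follows essentially the same route as the paper: reduce everything to extension-closedness of the six subcategories (citing the same lemmas for $\mathsf{LCA}_{\mathcal{O},cg}$, $\mathsf{LCA}_{\mathcal{O},nss}$, $\mathsf{LCA}_{\mathcal{O},\mathbb{R}D}$, $\mathsf{LCA}_{\mathcal{O},\mathbb{R}C}$) and then invoke B\"{u}hler's fully-exact-subcategory formalism. The only differences are cosmetic --- you handle the compact case directly rather than by dualizing the discrete case, and you spell out explicitly (your Fact (b)) that exactness in $\mathsf{LCA}_{\mathcal{O}}$ is detected on underlying LCA groups, a point the paper's proof leaves implicit.
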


\begin{proof}
For this type of claim, thanks to \cite[Lemma 10.20]{MR2606234} we only need
to know that the full subcategory in question is closed under extensions. For
$\mathsf{LCA}_{cg}$, $\mathsf{LCA}_{nss}$ this was shown by Moskowitz, see
Proposition \ref{prop_MoskowitzInterchange} (we could take an alternative
route here: By \cite[Lemma 2.14]{MR3510209} left special subcategories are
closed under extensions. Dually, the same is true for right special
subcategories. This would also allow us to handle $\mathsf{LCA}_{cg}$,
$\mathsf{LCA}_{nss}$ by Lemma \ref{lemma_LCACGLeftSpecialInLCA} and Lemma
\ref{lemma_LCAnssRightSpecial}). The claims for $\mathsf{LCA}_{\mathcal{O}%
,cg}$, $\mathsf{LCA}_{\mathcal{O},nss}$ follow from this as they refer to
properties which only depend on the underlying additive group. For
$\mathsf{LCA}_{\mathcal{O},D}$ in $\mathsf{LCA}$ note that any extension of
discrete $\mathcal{O}$-modules is discrete, so it is extension-closed in
$\mathsf{LCA}_{\mathcal{O}}$. Taking the Pontryagin dual statement, we get
that $\mathsf{LCA}_{\mathcal{O},C}$ is closed under extensions. For
$\mathsf{LCA}_{\mathcal{O},\mathbb{R}D}$, $\mathsf{LCA}_{\mathcal{O}%
,\mathbb{R}C}$ we had already shown that they are closed under extensions
(Lemma \ref{lemma_FLCARDextclosed}, Lemma \ref{lemma_FLCclosed}). The functor
$\mathsf{LCA}_{\mathcal{O},(-)}\longrightarrow\mathsf{LCA}$ is exact and
reflects exactness as a corollary of the exact structures on all these
categories being defined through \cite[Lemma 10.20]{MR2606234}. The
faithfulness is clear.
\end{proof}

\section{Proof of\ the main theorem}

Let us quickly recall how exact categories and the notions of left (resp.
right) $s$-filtering are connected to stable $\infty$-categories. Firstly,
Schlichting has shown that the natural notion of an exact sequence of exact
categories induces an exact sequence of homotopy categories.

\begin{theorem}
[Schlichting Localization]\label{thm_SchlichtingLocalization}Let $\mathsf{C}$
be an idempotent complete exact category and $\mathsf{C}\hookrightarrow
\mathsf{D}$ a right (or left) $s$-filtering inclusion into an exact category
$\mathsf{D}$. Then%
\[
D^{b}(\mathsf{C})\hookrightarrow D^{b}(\mathsf{D})\twoheadrightarrow
D^{b}(\mathsf{D}/\mathsf{C})
\]
is an exact sequence of triangulated categories.
\end{theorem}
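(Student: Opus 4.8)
The plan is to invoke Schlichting's localization theorem for exact categories, for which the statement with the original filtering hypotheses is \cite[\S 2]{MR2079996}, and whose equivalent formulation in terms of the B\"{u}hler-style left/right $s$-filtering conditions used here is matched up in \cite[Appendix A]{MR3510209}; at the level of the paper one simply cites this. Below I sketch the architecture one would reconstruct.

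First I would build the quotient exact category $\mathsf{D}/\mathsf{C}$. Call a morphism of $\mathsf{D}$ a \emph{weak isomorphism} if it is a finite composite of admissible monics with cokernel in $\mathsf{C}$ and admissible epics with kernel in $\mathsf{C}$, and write $\Sigma$ for this class. The whole point of the $s$-filtering hypothesis is that $\Sigma$ then admits a calculus of fractions (of left or right type, according to which side of the hypothesis one is on): the \emph{filtering} half lets one push an arbitrary morphism out of $\mathsf{C}$ through an inflation or deflation landing in $\mathsf{C}$, and the \emph{special} half supplies the Ore-square completions and the two-out-of-three property. One then sets $\mathsf{D}/\mathsf{C}:=\mathsf{D}[\Sigma^{-1}]$; it is additive, the localization functor $q\colon\mathsf{D}\to\mathsf{D}/\mathsf{C}$ is exact and annihilates every object of $\mathsf{C}$ (since $0\hookrightarrow C$ has cokernel $C\in\mathsf{C}$, hence lies in $\Sigma$), and $\mathsf{D}/\mathsf{C}$ carries the exact structure whose conflations are the $q$-images of conflations of $\mathsf{D}$.

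Next, being exact, $q$ induces $D^b(q)\colon D^b(\mathsf{D})\to D^b(\mathsf{D}/\mathsf{C})$, and since it kills $\mathsf{C}$ this factors through the Verdier quotient to yield $\bar q\colon D^b(\mathsf{D})/D^b(\mathsf{C})\to D^b(\mathsf{D}/\mathsf{C})$. It then remains to check that (i) $D^b(\mathsf{C})\hookrightarrow D^b(\mathsf{D})$ is a fully faithful embedding onto a thick subcategory, and (ii) $\bar q$ is an equivalence. For (i), full faithfulness of the embedding is part of the package — one uses the filtering condition to see that complexes over $\mathsf{C}$ compute the relevant morphism groups in $D^b(\mathsf{D})$ — and the idempotent completeness of $\mathsf{C}$ is exactly what upgrades the image to a thick, rather than merely idempotent-dense, subcategory. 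For (ii), essential surjectivity is immediate: every object of $\mathsf{D}/\mathsf{C}$ is $q$ of an object of $\mathsf{D}$, and such objects generate $D^b(\mathsf{D}/\mathsf{C})$ as a thick subcategory.

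The main obstacle is fullness and faithfulness of $\bar q$, where the calculus of fractions on $\mathsf{D}$ must be made to ``propagate to complexes''. Concretely, a morphism of bounded complexes over $\mathsf{D}/\mathsf{C}$ is degreewise a fraction involving some $s\in\Sigma$, and one must assemble the degreewise denominators into a single weak quasi-isomorphism of complexes over $\mathsf{D}$ so that the morphism lifts to $D^b(\mathsf{D})$; dually, any morphism over $\mathsf{D}$ dying in $D^b(\mathsf{D}/\mathsf{C})$ must be shown to die already after inverting the quasi-isomorphisms with cone in $D^b(\mathsf{C})$, i.e. the witnessing factorization can be arranged with all relevant data in $\mathsf{C}$. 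This coherent bookkeeping is precisely the force of the \emph{special} (rather than merely filtering) half of the hypothesis, and is the technically delicate step; I would refer to \cite[\S 2]{MR2079996} for it rather than reproduce it.
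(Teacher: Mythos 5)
The paper does not prove this theorem; it simply cites it as \cite[Prop. 2.6]{MR2079996} (with the translation between Schlichting's original filtering conditions and the B\"uhler-style $s$-filtering conditions handled by \cite[Appendix A]{MR3510209}), which is exactly what your proposal ultimately does. Your additional sketch of the architecture --- weak isomorphisms, the calculus of fractions furnished by the filtering and special halves of the hypothesis, the role of idempotent completeness in making $D^{b}(\mathsf{C})$ thick, and the reduction to fullness/faithfulness of the comparison functor --- is an accurate outline of Schlichting's argument, so the proposal is correct and takes essentially the same approach as the paper.
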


This is \cite[Prop. 2.6]{MR2079996}. See Keller \cite[\S 11]{MR1421815} or
B\"{u}hler \cite[\S 10]{MR2606234} for the construction of the category of
chain complexes and the homotopy category. Secondly, exactness on the homotopy
category level is essentially equivalent to all other conceivable notions of
exactness, see the discussion in \cite[\S 5]{MR3070515}.

Next, recall the Eilenberg swindle. Although absolutely classical, we record
it because it is so crucial for the computation:

\begin{lemma}
\label{lemma_EilenbergSwindle}If an exact category $\mathsf{C}$ is closed
under countable products or coproducts and $K:\operatorname*{Cat}_{\infty
}^{\operatorname*{ex}}\rightarrow\mathsf{A}$ is any additive invariant (e.g.,
a localizing invariant), then $K(\mathsf{C})=0$.
\end{lemma}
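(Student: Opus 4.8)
The plan is to run the classical Eilenberg swindle at the level of the exact category $\mathsf{C}$ and then transport it through $K$. By replacing $\coprod$ with $\prod$ throughout, the two hypotheses are handled identically (only the \emph{finite} direct sum, where products and coproducts coincide, enters the bookkeeping below), so assume that $\mathsf{C}$ is closed under countable coproducts. First I would introduce the endofunctor
\[
\Sigma\colon\mathsf{C}\longrightarrow\mathsf{C},\qquad\Sigma(X):=\coprod_{n\geq 1}X,
\]
acting on morphisms by coproducts of maps. I would record that $\Sigma$ is exact, i.e.\ that a countable coproduct of conflations is a conflation. This is the only place the hypothesis is used in an essential way; in the instances to which the lemma is applied it is immediate (for discrete modules coproducts are formed pointwise and are manifestly exact, and the dual assertion for products of compact modules follows by Pontryagin duality). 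Granting this, $\Sigma$ induces an exact endofunctor of the stable $\infty$-category underlying $K(\mathsf{C})$, hence a self-map $K(\Sigma)$ of the object $K(\mathsf{C})\in\mathsf{A}$.

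Next I would exhibit the \emph{natural} short exact sequence of exact functors
\[
\mathrm{id}_{\mathsf{C}}\hookrightarrow\mathrm{id}_{\mathsf{C}}\oplus\Sigma\twoheadrightarrow\Sigma,
\]
whose value at $X$ is the split conflation $X\hookrightarrow X\oplus\coprod_{n\geq 1}X\twoheadrightarrow\coprod_{n\geq 1}X$, together with the natural isomorphism $\mathrm{id}_{\mathsf{C}}\oplus\Sigma\cong\Sigma$ coming from the reindexing bijection $\{0\}\sqcup\mathbb{N}_{\geq 1}\cong\mathbb{N}_{\geq 1}$, i.e.\ from $X\oplus\coprod_{n\geq 1}X=\coprod_{n\geq 0}X\cong\coprod_{n\geq 1}X$. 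Since $K$ is additive, the additivity theorem for short exact sequences of exact functors gives $K(\mathrm{id}_{\mathsf{C}}\oplus\Sigma)=K(\mathrm{id}_{\mathsf{C}})+K(\Sigma)=\mathrm{id}_{K(\mathsf{C})}+K(\Sigma)$ as self-maps of $K(\mathsf{C})$, while the natural isomorphism forces $K(\mathrm{id}_{\mathsf{C}}\oplus\Sigma)=K(\Sigma)$. Cancelling $K(\Sigma)$ leaves $\mathrm{id}_{K(\mathsf{C})}=0$; since the target $\mathsf{A}$ is stable, in particular pointed, an object whose identity is null is a zero object, so $K(\mathsf{C})\simeq 0$.

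The step I expect to require the most care is not the algebra above — the reindexing isomorphism, the finite additivity, and the implication ``$\mathrm{id}=0\Rightarrow$ zero object'' are all formal — but rather the passage between the exact category $\mathsf{C}$ and the stable $\infty$-category to which $K$ is actually applied: one must check that being closed under countable coproducts is inherited by $D^{b}(\mathsf{C})$ (it is, since a coproduct of complexes supported in a fixed bounded range stays in that range, once the coproducts are exact on $\mathsf{C}$) and that $\Sigma$ together with its natural short exact sequence descends to that model. Once this translation is in place, the swindle closes as above.
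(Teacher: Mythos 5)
Your proof is correct and is essentially the paper's own argument: both run the Eilenberg swindle via a short exact sequence of exact functors $\mathrm{id}\hookrightarrow\Sigma'\twoheadrightarrow\Sigma$ with $\Sigma'\cong\Sigma$ by reindexing, and then cancel using additivity of $K$. Your extra remarks on the exactness of countable coproducts and on passing to $D^{b}(\mathsf{C})$ are points the paper leaves implicit, but they do not change the route.
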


\begin{proof}
Suppose $\mathsf{C}$ has countable coproducts. The Eilenberg swindle functor
is defined by%
\[
E:\mathsf{C}\longrightarrow\mathcal{E}\mathsf{C}\text{,}\qquad X\mapsto
(X\hookrightarrow%
{\textstyle\coprod\nolimits_{i\in\mathbb{Z}}}
X\twoheadrightarrow%
{\textstyle\coprod\nolimits_{i\in\mathbb{Z}}}
X)\text{,}%
\]
where $\mathcal{E}\mathsf{Ab}$ denotes the exact category of exact sequences
and the epic is the `shift one to the left' functor. Projecting to the left,
middle and right term of $\mathcal{E}\mathsf{C}$, call these $p_{i}%
:\mathsf{C}\rightarrow\mathsf{C}$ with $i=0,1,2$, the additivity of $K$
implies that $p_{1}=p_{0}+p_{2}$ holds for the maps induced on $K(\mathsf{C}%
)\rightarrow K(\mathsf{C})$, but $p_{1}=p_{2}$, so clearly $p_{0}=0$. However,
$p_{0}$ is just the identity functor. If $\mathsf{C}$ has countable products,
perform the same trick with products instead.
\end{proof}

\begin{lemma}
\label{lemma_KOfLCAnss_or_cg}For every localizing invariant
$K:\operatorname*{Cat}_{\infty}^{\operatorname*{ex}}\rightarrow\mathsf{A}$,
there are canonical equivalences%
\[
K(\mathsf{LCA}_{\mathcal{O},nss})\overset{\sim}{\longrightarrow}%
K(\mathsf{LCA}_{\mathcal{O},\mathbb{R}})\qquad\text{and}\qquad K(\mathsf{LCA}%
_{\mathcal{O},cg})\overset{\sim}{\longrightarrow}K(\mathsf{LCA}_{\mathcal{O}%
,\mathbb{R}})\text{.}%
\]
Taking Pontryagin duals on the left and vector space duals on the right
transforms one equivalence into the other.
\end{lemma}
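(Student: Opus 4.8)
The plan is to interpose the category $\mathsf{LCA}_{\mathcal{O},\mathbb{R}D}$ (and, for the $cg$-statement, its Pontryagin dual $\mathsf{LCA}_{\mathcal{O},\mathbb{R}C}$), obtaining a chain of full subcategories $\mathsf{LCA}_{\mathcal{O},\mathbb{R}}\subseteq\mathsf{LCA}_{\mathcal{O},\mathbb{R}D}\subseteq\mathsf{LCA}_{\mathcal{O},nss}$, and to treat the two inclusions by different devices: the inner inclusion $\mathsf{LCA}_{\mathcal{O},\mathbb{R}}\subseteq\mathsf{LCA}_{\mathcal{O},\mathbb{R}D}$ by Schlichting localization together with the Eilenberg swindle, and the outer inclusion $\mathsf{LCA}_{\mathcal{O},\mathbb{R}D}\subseteq\mathsf{LCA}_{\mathcal{O},nss}$ by Quillen's resolution theorem on the level of bounded derived categories. (The outer inclusion is neither left nor right $s$-filtering, so Schlichting's theorem is not available there directly.)

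\emph{The outer inclusion.} I claim $\mathsf{LCA}_{\mathcal{O},\mathbb{R}D}\hookrightarrow\mathsf{LCA}_{\mathcal{O},nss}$ induces an equivalence of bounded derived ($\infty$-)categories, hence an equivalence under every localizing invariant. The subcategory is full, extension-closed and thus fully exact by Lemma~\ref{lemma_FLCARDextclosed}. By the structure theorem~\ref{prop_MoskClassifNSSandCGGroups}(1) an arbitrary object is isomorphic to $G=V\oplus\bigoplus_{J\in\mathcal{I}}\mathbb{T}_{J}\oplus D$ with $V$ a vector $\mathcal{O}$-module, $\mathcal{I}$ finite and $D$ discrete; summing the Minkowski sequences~\eqref{lMinkSeqForIdeals} with the identities of $V$ and $D$ yields a length-one resolution $\bigoplus_{J}J\hookrightarrow V\oplus\bigoplus_{J}(\bigoplus_{\sigma}\mathbb{R}_{\sigma})\oplus D\twoheadrightarrow G$ by objects of $\mathsf{LCA}_{\mathcal{O},\mathbb{R}D}$ (the middle term is vector-plus-discrete, the kernel discrete). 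Finally $\mathsf{LCA}_{\mathcal{O},\mathbb{R}D}$ is closed under kernels of admissible epics: if $A'\hookrightarrow A\twoheadrightarrow A''$ is exact in $\mathsf{LCA}_{\mathcal{O},nss}$ with $A\simeq V\oplus D$, then $A'\cap V$ is a closed submodule of the vector $\mathcal{O}$-module $V$, hence lies in $\mathsf{LCA}_{\mathcal{O},\mathbb{R}D}$ by Proposition~\ref{Prop_ClosedSubobjectsOfVectorModules}, while $A'/(A'\cap V)$ embeds into $D$ hence is discrete, and the extension splits because vector $\mathcal{O}$-modules are injective; thus $A'\in\mathsf{LCA}_{\mathcal{O},\mathbb{R}D}$. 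These are exactly the hypotheses of the resolution theorem, which therefore gives $D^{b}(\mathsf{LCA}_{\mathcal{O},\mathbb{R}D})\overset{\sim}{\longrightarrow}D^{b}(\mathsf{LCA}_{\mathcal{O},nss})$ and so $K(\mathsf{LCA}_{\mathcal{O},\mathbb{R}D})\overset{\sim}{\longrightarrow}K(\mathsf{LCA}_{\mathcal{O},nss})$.

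\emph{The inner inclusion.} By Lemma~\ref{lemma_LCAD_rightsfiltInLCARD} the discrete $\mathcal{O}$-modules $\mathsf{LCA}_{\mathcal{O},D}$ are right $s$-filtering in $\mathsf{LCA}_{\mathcal{O},\mathbb{R}D}$, so Theorem~\ref{thm_SchlichtingLocalization} gives an exact sequence $D^{b}(\mathsf{LCA}_{\mathcal{O},D})\hookrightarrow D^{b}(\mathsf{LCA}_{\mathcal{O},\mathbb{R}D})\twoheadrightarrow D^{b}(\mathsf{LCA}_{\mathcal{O},\mathbb{R}D}/\mathsf{LCA}_{\mathcal{O},D})$, hence a fiber sequence on $K$. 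Since $\mathsf{LCA}_{\mathcal{O},D}$ is the entire category of discrete $\mathcal{O}$-modules, closed under countable coproducts, Lemma~\ref{lemma_EilenbergSwindle} gives $K(\mathsf{LCA}_{\mathcal{O},D})=0$, whence $K(\mathsf{LCA}_{\mathcal{O},\mathbb{R}D})\overset{\sim}{\longrightarrow}K(\mathsf{LCA}_{\mathcal{O},\mathbb{R}D}/\mathsf{LCA}_{\mathcal{O},D})$. It remains to identify the quotient: in the Verdier quotient $D^{b}(\mathsf{LCA}_{\mathcal{O},\mathbb{R}D})/D^{b}(\mathsf{LCA}_{\mathcal{O},D})$ every $V\oplus D$ becomes isomorphic to $V$, and since a vector $\mathcal{O}$-module is injective and projective and admits no non-zero morphism to a discrete module, $\mathrm{RHom}(V,W)=0$ for $W\in D^{b}(\mathsf{LCA}_{\mathcal{O},D})$, so the morphism groups between vector $\mathcal{O}$-modules are unaffected and the image of $\mathsf{LCA}_{\mathcal{O},\mathbb{R}}$ generates; hence $D^{b}(\mathsf{LCA}_{\mathcal{O},\mathbb{R}D})/D^{b}(\mathsf{LCA}_{\mathcal{O},D})\simeq D^{b}(\mathsf{LCA}_{\mathcal{O},\mathbb{R}})$ and $K(\mathsf{LCA}_{\mathcal{O},\mathbb{R}D})\cong K(\mathsf{LCA}_{\mathcal{O},\mathbb{R}})$. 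Combining the two steps, $K(\mathsf{LCA}_{\mathcal{O},nss})\cong K(\mathsf{LCA}_{\mathcal{O},\mathbb{R}D})\cong K(\mathsf{LCA}_{\mathcal{O},\mathbb{R}})$.

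\emph{The $cg$-statement and the main obstacle.} Apply the Pontryagin duality anti-equivalence of Theorem~\ref{thm_PontrDualForLCAO}; by Proposition~\ref{prop_MoskowitzInterchange} and $\mathbb{R}_{\sigma}^{\vee}\simeq\mathbb{R}_{\sigma}$ it carries $\mathsf{LCA}_{\mathcal{O},nss}^{\mathrm{op}}$, $\mathsf{LCA}_{\mathcal{O},\mathbb{R}D}^{\mathrm{op}}$, $\mathsf{LCA}_{\mathcal{O},D}^{\mathrm{op}}$, $\mathsf{LCA}_{\mathcal{O},\mathbb{R}}^{\mathrm{op}}$ onto $\mathsf{LCA}_{\mathcal{O},cg}$, $\mathsf{LCA}_{\mathcal{O},\mathbb{R}C}$, $\mathsf{LCA}_{\mathcal{O},C}$, $\mathsf{LCA}_{\mathcal{O},\mathbb{R}}$. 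Thus the whole argument dualizes: the resolution step becomes a coresolution step (each discrete ideal $J$ is now coresolved by $J\hookrightarrow\bigoplus_{\sigma}\mathbb{R}_{\sigma}\twoheadrightarrow\mathbb{T}_{J}$, and $\mathsf{LCA}_{\mathcal{O},\mathbb{R}C}$ is closed under cokernels of admissible monics), while the localization step uses Lemma~\ref{lemma_LCAC_leftsfiltInLCARC} (that $\mathsf{LCA}_{\mathcal{O},C}$ is left $s$-filtering in $\mathsf{LCA}_{\mathcal{O},\mathbb{R}C}$) together with $K(\mathsf{LCA}_{\mathcal{O},C})=0$, the swindle now being run on countable products since compact $\mathcal{O}$-modules are product-closed by Tychonoff. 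This is exactly the ``Pontryagin duals on the left, vector-space duals on the right'' passage asserted in the statement. I expect the main work to lie in verifying the resolution-theorem hypotheses — concretely, stability of $\mathsf{LCA}_{\mathcal{O},\mathbb{R}D}$ (resp.\ $\mathsf{LCA}_{\mathcal{O},\mathbb{R}C}$) under kernels of admissible epics (resp.\ cokernels of admissible monics) in the ambient category — and in checking carefully that the Schlichting quotient is genuinely a model for $\mathsf{LCA}_{\mathcal{O},\mathbb{R}}$ and not something larger.
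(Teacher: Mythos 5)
Your proposal is correct and follows essentially the same route as the paper: interpose $\mathsf{LCA}_{\mathcal{O},\mathbb{R}D}$ (resp.\ $\mathsf{LCA}_{\mathcal{O},\mathbb{R}C}$), handle the outer inclusion by a two-term resolution argument (the paper invokes Theorem \ref{thm_ResolvableObjects} and the classification of projectives, which are exactly your Minkowski-sequence resolutions), handle the inner inclusion via Lemma \ref{lemma_LCAD_rightsfiltInLCARD}, Schlichting localization and the Eilenberg swindle on $\mathsf{LCA}_{\mathcal{O},D}$, and obtain the $cg$-statement by Pontryagin duality. Your explicit verification of the resolution theorem's hypothesis that $\mathsf{LCA}_{\mathcal{O},\mathbb{R}D}$ is closed under kernels of admissible epics is a detail the paper leaves implicit, but otherwise the arguments coincide.
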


\begin{proof}
(Step 1) Firstly, we establish equivalences%
\[
K(\mathsf{LCA}_{\mathcal{O},\mathbb{R}D})\overset{\sim}{\longrightarrow
}K(\mathsf{LCA}_{\mathcal{O},nss})\qquad\text{and}\qquad K(\mathsf{LCA}%
_{\mathcal{O},\mathbb{R}C})\overset{\sim}{\longrightarrow}K(\mathsf{LCA}%
_{\mathcal{O},cg})\text{.}%
\]
Consider $\mathsf{LCA}_{\mathcal{O},nss}$: These modules admit a projective
resolution, see Theorem \ref{thm_ResolvableObjects}. The resolution then has
the shape $P_{1}\hookrightarrow P_{0}\twoheadrightarrow G$. By the
classification of projectives in $\mathsf{LCA}_{\mathcal{O}}$, Theorem
\ref{thm_ProjectivesInLCAO}, for both $i=0,1$ we have an isomorphism%
\[
P_{i}\simeq\bigoplus_{\sigma\in I_{i}}\mathbb{R}_{\sigma}\oplus\prod
_{J\in\mathcal{I}_{i}}J\text{,}%
\]
where $I_{i}$ is a finite list of real and complex places and $\mathcal{I}%
_{i}$ a (possibly infinite) list of ideals in $\mathcal{O}$. In particular,
every object in $\mathsf{LCA}_{\mathcal{O},nss}$ has a two-term resolution by
projectives in $\mathsf{LCA}_{\mathcal{O},\mathbb{R}D}$. The claim for
$\mathsf{LCA}_{\mathcal{O},cg}$ is the Pontryagin dual statement.\newline(Step
2) Lemma \ref{lemma_LCAD_rightsfiltInLCARD} shows that $\mathsf{LCA}%
_{\mathcal{O},D}\hookrightarrow$ $\mathsf{LCA}_{\mathcal{O},\mathbb{R}D}$ is
right $s$-filtering. Thus, the quotient exact category exists and it is easy
to see that the natural functor%
\[
\mathsf{LCA}_{\mathcal{O},\mathbb{R}}\overset{\sim}{\longrightarrow
}\mathsf{LCA}_{\mathcal{O},\mathbb{R}D}/\mathsf{LCA}_{\mathcal{O},D}%
\]
is an exact equivalence of exact categories\ (it is fully faithful, clearly
exact and essentially surjective). Now, by Schlichting's Localization Theorem,
$K(\mathsf{LCA}_{\mathcal{O},D})\rightarrow K(\mathsf{LCA}_{\mathcal{O}%
,\mathbb{R}D})\rightarrow K(\mathsf{LCA}_{\mathcal{O},\mathbb{R}}) $ is a
fiber sequence, but by the Eilenberg swindle, the left term is zero, Lemma
\ref{lemma_EilenbergSwindle}. Finally, dualization of the entire argument
handles $\mathsf{LCA}_{\mathcal{O},cg}$.
\end{proof}

As $\mathsf{LCA}_{\mathcal{O},cg}\hookrightarrow\mathsf{LCA}_{\mathcal{O}}$ is
left filtering and left special, we get an exact sequence of exact categories%
\[
\mathsf{LCA}_{\mathcal{O},cg}\hookrightarrow\mathsf{LCA}_{\mathcal{O}%
}\twoheadrightarrow\mathsf{LCA}_{\mathcal{O}}/\mathsf{LCA}_{\mathcal{O}%
,cg}\text{.}%
\]
In particular, the quotient $\mathsf{LCA}_{\mathcal{O}}/\mathsf{LCA}%
_{\mathcal{O},cg}$ exists in the $2$-category of exact categories. Next, let
$\mathsf{Mod}(\mathcal{O})$ be the category of all (algebraic) $\mathcal{O}%
$-modules, and $\mathsf{Mod}_{fg}(\mathcal{O})$ the subcategory of finitely
generated $\mathcal{O}$-modules. Note that $\mathsf{Mod}{_{fg}(\mathcal{O}%
)}\hookrightarrow\mathsf{Mod}(\mathcal{O})$ is also left filtering and left
special\footnote{It is a Serre subcategory of an abelian category, so it is
automatically left and right filtering. Left specialness is easy to check.}.
We obtain an exact sequence of abelian categories, and indeed get a
commutative diagram%
\begin{equation}%
\xymatrix{
{\mathsf{Mod}_{fg}(\mathcal{O})} \ar@{^{(}->}[r] \ar[d] & {\mathsf
{Mod}(\mathcal{O})} \ar@{->>}[r] \ar[d] & {\mathsf{Mod}(\mathcal{O})}%
/{\mathsf{Mod}_{fg}(\mathcal{O})} \ar[d] \\
\mathsf{LCA}_{\mathcal{O},cg} \ar@{^{(}->}[r] & \mathsf{LCA}_{\mathcal{O}}
\ar@{->>}[r] & {\mathsf{LCA}_{\mathcal{O}}}/{\mathsf{LCA}_{\mathcal{O},cg}}.
}
\label{ld5}%
\end{equation}
The downward arrow just sends an algebraic $\mathcal{O}$-module to its
topological counterpart, equipped with the discrete topology. All arrows are
exact functors.

\begin{lemma}
\label{lemma_equivq}The functor $\mathsf{Mod}{(\mathcal{O})}/\mathsf{Mod}%
{_{fg}(\mathcal{O})}\longrightarrow\mathsf{LCA}_{\mathcal{O}}/\mathsf{LCA}%
_{\mathcal{O},cg}$ is an exact equivalence of exact categories. Both
categories are abelian.
\end{lemma}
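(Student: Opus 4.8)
The plan is to prove that the comparison functor is an exact equivalence, and then to read off abelianness. The target $\mathsf{Mod}(\mathcal{O})/\mathsf{Mod}_{fg}(\mathcal{O})$ is the Gabriel--Serre quotient of an abelian category by a Serre subcategory — since $\mathcal{O}$ is Noetherian, finitely generated modules are closed under subobjects, quotients and extensions — hence it is abelian. Therefore, once the comparison functor is known to be an exact equivalence \emph{with exact quasi-inverse}, all finite limits and colimits, and the exact structures, are transported, so $\mathsf{LCA}_{\mathcal{O}}/\mathsf{LCA}_{\mathcal{O},cg}$ is abelian as well and its given exact structure is the canonical (all kernel--cokernel pairs) one. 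Thus everything reduces to producing an exact quasi-inverse.

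Essential surjectivity is immediate from Lemma~\ref{lemma_ExistClopenCGSubmodule}: for $G\in\mathsf{LCA}_{\mathcal{O}}$ pick a clopen compactly generated $\mathcal{O}$-submodule $H\subseteq G$; then $H\hookrightarrow G\twoheadrightarrow G/H$ is exact with $G/H$ discrete, and since $H\in\mathsf{LCA}_{\mathcal{O},cg}$ the admissible epic $G\twoheadrightarrow G/H$ is inverted in $\mathsf{LCA}_{\mathcal{O}}/\mathsf{LCA}_{\mathcal{O},cg}$. So $G$ is isomorphic there to the discrete module $G/H$, which lies in the essential image of $\mathsf{Mod}(\mathcal{O})$.

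For the quasi-inverse I would build a functor $\Phi\colon\mathsf{LCA}_{\mathcal{O}}\to\mathsf{Mod}(\mathcal{O})/\mathsf{Mod}_{fg}(\mathcal{O})$ by $\Phi(G):=[G/H]$ for \emph{any} choice of clopen compactly generated submodule $H\subseteq G$. The two elementary inputs are: (a) a closed subgroup of a compactly generated LCA group is compactly generated (Moskowitz, \cite[Theorem~2.6]{MR0215016}), and (b) a discrete module is compactly generated if and only if it is finitely generated. Using (a)--(b): two choices $H_1,H_2$ are compared through $H_1\cap H_2$ (clopen, hence a closed subgroup of each $H_i$, hence compactly generated), and the kernels of $G/(H_1\cap H_2)\twoheadrightarrow G/H_i$ are discrete and compactly generated, hence finitely generated, so $[G/H_1]\cong[G/H_2]$ canonically. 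A morphism $f\colon G\to G'$ can be arranged so that $f(H)\subseteq H'$ after replacing $H$ by the clopen submodule $H\cap f^{-1}(H')$ — still compactly generated by (a), and not changing $\Phi(G)$ — whereupon $f$ induces $G/H\to G'/H'$. Given an exact sequence $G'\hookrightarrow G\twoheadrightarrow G''$, a clopen compactly generated $H\subseteq G$ produces clopen compactly generated submodules $H\cap G'\subseteq G'$ and $f(H)\subseteq G''$ (the latter open since admissible epics are open maps), and the Noether-type lemmas for exact categories \cite[Lemma~3.5]{MR2606234} then give an exact sequence of the corresponding discrete quotients; hence $\Phi$ is exact. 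Since $\Phi$ kills $\mathsf{LCA}_{\mathcal{O},cg}$ (take $H=G$), exactness forces it to invert the weak equivalences of Schlichting's localization (Theorem~\ref{thm_SchlichtingLocalization}), so it factors through $\bar\Phi\colon\mathsf{LCA}_{\mathcal{O}}/\mathsf{LCA}_{\mathcal{O},cg}\to\mathsf{Mod}(\mathcal{O})/\mathsf{Mod}_{fg}(\mathcal{O})$. Restricting along $\mathsf{Mod}(\mathcal{O})\hookrightarrow\mathsf{LCA}_{\mathcal{O}}$ and taking $H=0$ shows $\bar\Phi$ is a retraction of the comparison functor; in the other direction the admissible epics $G\twoheadrightarrow G/H$ assemble into a natural isomorphism from the identity to the remaining composite.

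I expect the main obstacle to be the coherence of $\Phi$: checking that it is genuinely a functor — independence of the auxiliary choices of clopen compactly generated submodules, compatibly with composition — and that the projections $G\twoheadrightarrow G/H$ really do glue into a natural transformation rather than merely a pointwise family of isomorphisms. This is not deep, but it is the bookkeeping-heavy step, resting entirely on (a), (b) and the exact-category Noether lemmas. An alternative that sidesteps $\Phi$ is to prove full faithfulness of the comparison functor directly, by matching the Gabriel--Serre Hom-formula with the calculus of (right) fractions attached to the left $s$-filtering inclusion $\mathsf{LCA}_{\mathcal{O},cg}\hookrightarrow\mathsf{LCA}_{\mathcal{O}}$; this is essentially the same computation packaged differently.
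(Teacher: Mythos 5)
Your proposal is correct, but it reaches full faithfulness by a genuinely different (and heavier) mechanism than the paper. The essential surjectivity step is the same in substance: the paper invokes the Structure Theorem (Theorem~\ref{thm_StructOfLCA}) --- itself deduced from Lemma~\ref{lemma_ExistClopenCGSubmodule} --- to exhibit every $G$ as an extension of a discrete module $D$ by a compactly generated submodule, and then inverts the admissible epic $G\twoheadrightarrow D$ in the quotient, exactly as you do. After that the paper does \emph{not} construct a quasi-inverse: it simply observes that every object of either quotient is isomorphic to the image of a discrete module and that between discrete modules continuous and arbitrary $\mathcal{O}$-homomorphisms coincide, whence the comparison functor is fully faithful and exact; abelianness is then transported from the Serre quotient $\mathsf{Mod}(\mathcal{O})/\mathsf{Mod}_{fg}(\mathcal{O})$, again just as in your argument. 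Your explicit $\Phi(G):=[G/H]$ buys a concrete, self-contained inverse, at the price of the coherence bookkeeping you flag (independence of the clopen compactly generated $H$, compatibility with composition, naturality of $G\twoheadrightarrow G/H$). The ingredients you rely on are all sound: Moskowitz's hereditary property of compact generation for closed subgroups, the equivalence of compact generation with finite generation for discrete $\mathcal{O}$-modules (using that $\mathcal{O}$ is module-finite over $\mathbb{Z}$), openness of $f^{-1}(H')$ and of $f(H)$, and the Noether lemmas --- so the construction goes through. In fact your closing alternative, matching the Gabriel--Serre Hom-formula against the calculus of fractions attached to the left $s$-filtering inclusion, is the honest expansion of the paper's one-line fullness claim; either route establishes the lemma.
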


\begin{proof}
The functor is essentially surjective: Given any $G$ in $\mathsf{LCA}%
_{\mathcal{O}}$, the Structure Theorem (Theorem \ref{thm_StructOfLCA})
presents it as%
\[
\bigoplus_{\sigma\in I}\mathbb{R}_{\sigma}\oplus C\hookrightarrow
G\twoheadrightarrow D
\]
with $C$ compact, $D$ discrete and $I$ a finite list. As $\bigoplus_{\sigma\in
I}\mathbb{R}_{\sigma}\oplus C$ is compactly generated, $G\twoheadrightarrow D$
becomes an isomorphism in $\mathsf{LCA}_{\mathcal{O}}/\mathsf{LCA}%
_{\mathcal{O},cg}$. As all modules in the essential image carry the discrete
topology, there is no difference between arbitrary module homomorphisms or
continuous ones, so the functor is moreover fully faithful and exact. As
$\mathsf{Mod}{(\mathcal{O})}/\mathsf{Mod}{_{fg}(\mathcal{O})}$ stems from a
quotient of abelian categories, it is itself abelian (note that we quotient by
a Serre subcategory). By the equivalence of categories, the same holds for
$\mathsf{LCA}_{\mathcal{O}}/\mathsf{LCA}_{\mathcal{O},cg}$.
\end{proof}

We are ready to prove our main result.

\begin{theorem}
\label{thm_MainThm}Let $F$ be a number field and $\mathcal{O}$ its ring of
integers. For every localizing invariant $K:\operatorname*{Cat}_{\infty
}^{\operatorname*{ex}}\rightarrow\mathsf{A}$, there is a canonical fiber
sequence%
\begin{equation}
K(\mathcal{O})\longrightarrow K(\mathbb{R})^{r}\oplus K(\mathbb{C}%
)^{s}\longrightarrow K(\mathsf{LCA}_{\mathcal{O}})\text{,} \label{la1}%
\end{equation}
where $r$ is the number of real places and $s$ the number of complex places.
\end{theorem}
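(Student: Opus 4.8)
The plan is to assemble the fiber sequence by combining the two $s$-filtering localizations established in Section 3 with the algebraic input coming from the abelian category quotient identified in Lemma \ref{lemma_equivq}. First I would apply Schlichting's Localization Theorem (Theorem \ref{thm_SchlichtingLocalization}) to the left $s$-filtering inclusion $\mathsf{LCA}_{\mathcal{O},cg}\hookrightarrow\mathsf{LCA}_{\mathcal{O}}$, which (after passing to bounded derived categories and then applying the localizing invariant $K$, using that $K$ sends exact sequences of stable $\infty$-categories to fiber sequences) yields a fiber sequence
\[
K(\mathsf{LCA}_{\mathcal{O},cg})\longrightarrow K(\mathsf{LCA}_{\mathcal{O}})\longrightarrow K(\mathsf{LCA}_{\mathcal{O}}/\mathsf{LCA}_{\mathcal{O},cg})\text{.}
\]
By Lemma \ref{lemma_equivq} the third term is $K(\mathsf{Mod}(\mathcal{O})/\mathsf{Mod}_{fg}(\mathcal{O}))$. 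Here I would invoke the Eilenberg swindle (Lemma \ref{lemma_EilenbergSwindle}): $\mathsf{Mod}(\mathcal{O})$ is closed under countable coproducts, so $K(\mathsf{Mod}(\mathcal{O}))=0$, and the localization sequence for the Serre subcategory $\mathsf{Mod}_{fg}(\mathcal{O})\subset\mathsf{Mod}(\mathcal{O})$ then gives a shift identification $K(\mathsf{Mod}(\mathcal{O})/\mathsf{Mod}_{fg}(\mathcal{O}))\simeq\Sigma K(\mathsf{Mod}_{fg}(\mathcal{O}))=\Sigma K(\mathcal{O})$, the last equality by resolution/d\'evissage since $\mathcal{O}$ is regular (so $K(\mathsf{Mod}_{fg}(\mathcal{O}))\simeq K(\mathsf{Proj}_{fg}(\mathcal{O}))=K(\mathcal{O})$).

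Next I would compute the left-hand term. By Lemma \ref{lemma_KOfLCAnss_or_cg} there is a canonical equivalence $K(\mathsf{LCA}_{\mathcal{O},cg})\overset{\sim}{\to}K(\mathsf{LCA}_{\mathcal{O},\mathbb{R}})$, and by Corollary \ref{cor_LCAOR} the right side is $K(\mathbb{R})^{r}\oplus K(\mathbb{C})^{s}$. Substituting both computations into the localization fiber sequence above yields
\[
K(\mathbb{R})^{r}\oplus K(\mathbb{C})^{s}\longrightarrow K(\mathsf{LCA}_{\mathcal{O}})\longrightarrow \Sigma K(\mathcal{O})\text{,}
\]
and rotating this triangle one step produces exactly the asserted fiber sequence $K(\mathcal{O})\to K(\mathbb{R})^{r}\oplus K(\mathbb{C})^{s}\to K(\mathsf{LCA}_{\mathcal{O}})$. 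I would then verify that the first map is the expected one: it is the connecting map of the rotated triangle, and tracing through Diagram \ref{ld5} (the commuting square of localization sequences relating $\mathsf{Mod}_{fg}(\mathcal{O})\hookrightarrow\mathsf{Mod}(\mathcal{O})$ to $\mathsf{LCA}_{\mathcal{O},cg}\hookrightarrow\mathsf{LCA}_{\mathcal{O}}$) identifies it with the map $K(\mathcal{O})=K(\mathsf{LCA}_{\mathcal{O},cg})\to K(\mathsf{LCA}_{\mathcal{O},\mathbb{R}})$ induced by the inclusion of categories, which under Corollary \ref{cor_LCAOR} and the classification of projectives sends a finitely generated projective $\mathcal{O}$-module $P$ to its completed/scalar-extended real incarnation $P\otimes_{\mathcal{O}}(\mathbb{R}^r\times\mathbb{C}^s)$ at the infinite places.

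The main obstacle I expect is the careful bookkeeping in the last step: making precise, and functorial, the identification of $K(\mathsf{Mod}(\mathcal{O})/\mathsf{Mod}_{fg}(\mathcal{O}))$ with $\Sigma K(\mathcal{O})$ and simultaneously tracking the map $K(\mathsf{LCA}_{\mathcal{O},cg})\to K(\mathsf{LCA}_{\mathcal{O}})$ through the equivalences of Lemma \ref{lemma_KOfLCAnss_or_cg} and Corollary \ref{cor_LCAOR}, so that the boundary map comes out as the arrow induced by $\mathcal{O}\to\mathbb{R}^r\times\mathbb{C}^s$ rather than merely some abstract map with the right source and target. Everything else is a formal concatenation of the localization sequences already in place, the Eilenberg swindle, and regularity of $\mathcal{O}$; the commutativity of Diagram \ref{ld5} is the tool that pins down the map, and the remaining work is to chase an element (or better, to note that all functors involved are exact and the diagram of $\infty$-categorical localizations commutes) to conclude.
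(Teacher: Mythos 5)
Your proposal is correct and takes essentially the same route as the paper: Schlichting localization for the left $s$-filtering inclusion $\mathsf{LCA}_{\mathcal{O},cg}\hookrightarrow\mathsf{LCA}_{\mathcal{O}}$, Lemma \ref{lemma_equivq}, the Eilenberg swindle, $K(\mathsf{Mod}_{fg}(\mathcal{O}))\simeq K(\mathcal{O})$, and Lemma \ref{lemma_KOfLCAnss_or_cg} together with Corollary \ref{cor_LCAOR}---the paper merely packages your final rotation as the statement that the left square of Diagram \ref{ld5} is bi-cartesian once $K(\mathsf{Mod}(\mathcal{O}))=0$. The one inaccuracy is in your closing identification of the first map: the functor furnished by Diagram \ref{ld5} sends a finitely generated $\mathcal{O}$-module to its \emph{discrete} incarnation in $\mathsf{LCA}_{\mathcal{O},cg}$, not to its scalar extension at the infinite places, and the paper needs a separate additivity argument with the Minkowski sequence (in the theorem following Theorem \ref{thm_MainThm}) to show the two functors induce the same map on $K$; this does not affect the proof of the fiber sequence itself, since the statement does not specify the map.
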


\begin{proof}
By Schlichting's Localization Theorem, Theorem
\ref{thm_SchlichtingLocalization}, both exact rows in Diagram \ref{ld5} induce
fiber sequences in any localizing theory,%
\begin{equation}%
\xymatrix{
K({\mathsf{Mod}_{fg}(\mathcal{O})}) \ar[r] \ar[d]_{g} & K({\mathsf
{Mod}(\mathcal{O})}) \ar[r] \ar[d] & K({{\mathsf{Mod}(\mathcal{O})}}%
/{{\mathsf{Mod}_{fg}(\mathcal{O})}}) \ar[d]_{\sim} \\
K(\mathsf{LCA}_{\mathcal{O},cg}) \ar[r] & K(\mathsf{LCA}_{\mathcal{O}}%
) \ar[r] & K({\mathsf{LCA}_{\mathcal{O}}}/{\mathsf{LCA}_{\mathcal{O},cg}}),
}
\label{lda2}%
\end{equation}
and the equivalence on the right stems from Lemma \ref{lemma_equivq}. Thus,
the left square is a bi-cartesian square in $\mathsf{A}$. As the category
$\mathsf{Mod}(\mathcal{O})$ has countable coproducts, the Eilenberg swindle
yields $K(\mathsf{Mod}(\mathcal{O}))=0$, see Lemma
\ref{lemma_EilenbergSwindle}. Moreover, all objects in $\mathsf{Mod}%
_{fg}(\mathcal{O})$ have a short projective resolution by the projective
object $\mathcal{O}$, so $K(\mathsf{Mod}_{fg}(\mathcal{O}))\simeq
K(\mathcal{O})$. By Lemma \ref{lemma_KOfLCAnss_or_cg}, $K(\mathsf{LCA}%
_{\mathcal{O},cg})\cong K(\mathsf{LCA}_{\mathcal{O},\mathbb{R}})$, and by
Corollary \ref{cor_LCAOR} we have $K(\mathsf{LCA}_{\mathcal{O},\mathbb{R}%
})\cong K(\mathbb{R})^{r}\oplus K(\mathbb{C})^{s}$. Thus,%
\[
K(\mathcal{O})\longrightarrow K(\mathbb{R})^{r}\oplus K(\mathbb{C}%
)^{s}\longrightarrow K(\mathsf{LCA}_{\mathcal{O}})
\]
is a fiber sequence in $\mathsf{A}$, proving the claim.
\end{proof}

Finally, we identify the map:

\begin{theorem}
We keep the assumptions of the previous theorem. The map $K(\mathcal{O}%
)\longrightarrow K(\mathbb{R})^{r}\oplus K(\mathbb{C})^{s}$ in our
construction of Equation \ref{la1} is induced from the Minkowski embedding
ring homomorphism $\mathcal{O}\rightarrow\mathbb{R}^{r}\oplus\mathbb{C}^{s} $.
\end{theorem}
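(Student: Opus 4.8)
The plan is to trace an explicit exact functor through the chain of identifications that produced the map in Equation \ref{la1}, upgrading the bookkeeping from $K_{0}$ to the level of spectra by means of a \emph{functorial} resolution. Let $T:=\bigoplus_{\sigma\in S}\mathbb{R}_{\sigma}$, regarded as an $\mathcal{O}$-algebra (each $\mathbb{R}_{\sigma}$ being an $\mathcal{O}$-algebra via $\sigma$); thus $T\cong F\otimes_{\mathbb{Q}}\mathbb{R}\cong\mathbb{R}^{r}\times\mathbb{C}^{s}$ as rings, the Minkowski embedding is the ring homomorphism $\mu\colon\mathcal{O}\hookrightarrow T$, and $K(T)=K(\mathbb{R})^{r}\oplus K(\mathbb{C})^{s}$. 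In this language ``the map induced from the Minkowski embedding ring homomorphism'' is the base change map $\mu_{\ast}\colon K(\mathcal{O})\to K(T)$ attached to the exact functor $-\otimes_{\mathcal{O}}T$ from finitely generated projective $\mathcal{O}$-modules to finitely generated $T$-modules. The task is to show that $\mu_{\ast}$ coincides with the composite
\[
K(\mathcal{O})\simeq K(\mathsf{Mod}_{fg}(\mathcal{O}))\overset{g}{\longrightarrow}K(\mathsf{LCA}_{\mathcal{O},cg})\overset{\sim}{\longrightarrow}K(\mathsf{LCA}_{\mathcal{O},\mathbb{R}})\overset{\sim}{\longrightarrow}K(\mathbb{R})^{r}\oplus K(\mathbb{C})^{s}
\]
read off from the proof of Theorem \ref{thm_MainThm}, where $g$ is the left vertical map of Diagram \ref{lda2}, the middle equivalence is Lemma \ref{lemma_KOfLCAnss_or_cg}, and the last is Corollary \ref{cor_LCAOR}. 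First I would reduce to finitely generated projectives: as in the proof of Theorem \ref{thm_MainThm}, $K(\mathcal{O})\overset{\sim}{\to}K(\mathsf{Mod}_{fg}(\mathcal{O}))$ via the resolving subcategory $\mathsf{Proj}_{fg}(\mathcal{O})$, and along $\mathsf{Proj}_{fg}(\mathcal{O})\hookrightarrow\mathsf{Mod}_{fg}(\mathcal{O})$ the functor inducing $g$ becomes simply ``equip the projective module $P$ with the discrete topology''.

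The heart of the argument is a functorial injective resolution in $\mathsf{LCA}_{\mathcal{O}}$. Since $T$ is flat over $\mathcal{O}$, applying $P\otimes_{\mathcal{O}}-$ to the exact sequence $\mathcal{O}\hookrightarrow T\twoheadrightarrow\mathbb{T}_{\mathcal{O}}$ of Equation \ref{lMinkSeqForIdeals} (case $J=\mathcal{O}$) yields, for each $P\in\mathsf{Proj}_{fg}(\mathcal{O})$, a sequence
\[
P\hookrightarrow P\otimes_{\mathcal{O}}T\twoheadrightarrow P\otimes_{\mathcal{O}}\mathbb{T}_{\mathcal{O}}
\]
natural in $P$. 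As $\mathcal{O}$ is Dedekind, $P\cong\bigoplus_{k}J_{k}$ is a finite direct sum of ideals, and the sequence above is then the direct sum of the Minkowski sequences $J_{k}\hookrightarrow\bigoplus_{\sigma\in S}\mathbb{R}_{\sigma}\twoheadrightarrow\mathbb{T}_{J_{k}}$; in particular its left map is the embedding of a full lattice, so it is admissible exact in $\mathsf{LCA}_{\mathcal{O}}$, its middle term $P\otimes_{\mathcal{O}}T$ is a vector $\mathcal{O}$-module, and its cokernel $P\otimes_{\mathcal{O}}\mathbb{T}_{\mathcal{O}}\cong\bigoplus_{k}\mathbb{T}_{J_{k}}$ is compact. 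Both nonzero terms are injective by Theorem \ref{thm_InjectivesInLCAO}, so this is a length-two injective resolution of $P$ (discretely topologised) whose terms lie in $\mathsf{LCA}_{\mathcal{O},\mathbb{R}C}$.

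Then I would trace $P$ through the equivalences. The equivalence $K(\mathsf{LCA}_{\mathcal{O},\mathbb{R}C})\overset{\sim}{\to}K(\mathsf{LCA}_{\mathcal{O},cg})$ entering Lemma \ref{lemma_KOfLCAnss_or_cg} is induced by the inclusion and is realised on bounded derived categories by resolution, exactly as in the proof of that lemma; its inverse sends an object of $\mathsf{LCA}_{\mathcal{O},cg}$ to a finite $\mathsf{LCA}_{\mathcal{O},\mathbb{R}C}$-resolution of it, and for $P$ this is, functorially, the two-term complex $[\,P\otimes_{\mathcal{O}}T\to P\otimes_{\mathcal{O}}\mathbb{T}_{\mathcal{O}}\,]$ with $P\otimes_{\mathcal{O}}T$ in degree $0$. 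Next, $K(\mathsf{LCA}_{\mathcal{O},\mathbb{R}C})\overset{\sim}{\to}K(\mathsf{LCA}_{\mathcal{O},\mathbb{R}})$ is Schlichting localization (Theorem \ref{thm_SchlichtingLocalization}, via Lemma \ref{lemma_LCAC_leftsfiltInLCARC}) modulo $\mathsf{LCA}_{\mathcal{O},C}$, whose $K$-theory vanishes by the Eilenberg swindle (Lemma \ref{lemma_EilenbergSwindle}, compact modules being closed under countable products); in the Verdier quotient $D^{b}(\mathsf{LCA}_{\mathcal{O},\mathbb{R}})=D^{b}(\mathsf{LCA}_{\mathcal{O},\mathbb{R}C})/D^{b}(\mathsf{LCA}_{\mathcal{O},C})$ the compact summand $P\otimes_{\mathcal{O}}\mathbb{T}_{\mathcal{O}}$ dies, so $P$ is sent to $P\otimes_{\mathcal{O}}T$ concentrated in degree $0$. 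Finally, the equivalence $\mathsf{LCA}_{\mathcal{O},\mathbb{R}}\overset{\sim}{\to}\mathsf{Mod}_{fg}(T)$ underlying Corollary \ref{cor_LCAOR} (a vector $\mathcal{O}$-module being the same datum as a finitely generated $T$-module, by the proof of Proposition \ref{prop_ClassifVectorOModules}) carries $P\otimes_{\mathcal{O}}T$ to the $T$-module $P\otimes_{\mathcal{O}}T=\mu_{\ast}(P)$. Hence the displayed composite is induced by the exact functor $-\otimes_{\mathcal{O}}T$; that is, it is $\mu_{\ast}$.

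The hard part is not the $K_{0}$-level bookkeeping but ensuring the comparison is run at the level of exact functors and bounded derived categories: it is the \emph{functoriality} in $P$ of the resolution $P\mapsto[\,P\otimes_{\mathcal{O}}T\to P\otimes_{\mathcal{O}}\mathbb{T}_{\mathcal{O}}\,]$ that gives a natural isomorphism of functors into $D^{b}(\mathsf{LCA}_{\mathcal{O},\mathbb{R}})$, and hence an equality of the induced maps of $K$-theory spectra rather than merely of their $\pi_{0}$. The subsidiary checks are routine given the structure theory already in place: that the displayed sequence really is admissible in $\mathsf{LCA}_{\mathcal{O}}$ (a closed monic with open epic onto a torus), that it lies termwise in $\mathsf{LCA}_{\mathcal{O},\mathbb{R}C}$, and that no shift or sign is introduced when one passes through the resolution equivalence and the Verdier quotient.
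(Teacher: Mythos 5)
Your argument is correct, but it is not the route the paper takes. The paper avoids unwinding the chain of equivalences altogether: it packages the Minkowski sequence as an exact functor $\mathsf{Proj}(\mathsf{Mod}_{fg}(\mathcal{O}))\rightarrow\mathcal{E}\mathsf{LCA}_{\mathcal{O},cg}$ into the category of exact sequences --- in your notation $P\mapsto(P\hookrightarrow P\otimes_{\mathcal{O}}T\twoheadrightarrow P\otimes_{\mathcal{O}}\mathbb{T}_{\mathcal{O}})$ --- and invokes additivity: the middle projection equals the sum of the two outer ones on $K$-theory, and the right-hand projection factors through $\mathsf{LCA}_{\mathcal{O},C}$, whose $K$-theory vanishes by the Eilenberg swindle. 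Hence ``equip with the discrete topology'' and ``tensor with $T$'' induce the same map $K(\mathcal{O})\rightarrow K(\mathsf{LCA}_{\mathcal{O},cg})$, and the claim follows. Your proof instead inverts the resolution equivalence of Lemma \ref{lemma_KOfLCAnss_or_cg} explicitly and computes the image of $P$ in the Verdier quotient by $D^{b}(\mathsf{LCA}_{\mathcal{O},C})$. The two arguments rest on exactly the same inputs (the Minkowski sequence as a two-term coresolution by objects of $\mathsf{LCA}_{\mathcal{O},\mathbb{R}C}$, plus the swindle killing the compact term), but additivity lets the paper sidestep the functoriality and direction-of-equivalence checks that you correctly identify as the delicate part of your approach. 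Conversely, your version makes explicit a compatibility the paper leaves implicit --- that composing the ``tensor with $T$'' functor with the identification $K(\mathsf{LCA}_{\mathcal{O},cg})\simeq K(\mathsf{LCA}_{\mathcal{O},\mathbb{R}})\simeq K(\mathbb{R})^{r}\oplus K(\mathbb{C})^{s}$ really is the base-change map $\mu_{\ast}$ --- and it treats arbitrary finitely generated projectives functorially, rather than defining the comparison functor only on the generator $\mathcal{O}$.
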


\begin{proof}
Inspecting Diagram \ref{ld5}, the exact functor underlying the map
$K(\mathcal{O})\longrightarrow K(\mathbb{R})^{r}\oplus K(\mathbb{C})^{s}$
sends the object $\mathcal{O}$ of $\mathsf{Mod}_{fg}(\mathcal{O})$ to
$\mathcal{O}$ in $\mathsf{LCA}_{\mathcal{O},cg}$. This looks a priori
different from the functor we want, however, both induce the same map on the
level of $K$: Let $\mathsf{Proj}(\mathsf{Mod}_{fg}(\mathcal{O}))$ denote the
fully exact subcategory of $\mathsf{Mod}_{fg}(\mathcal{O})$ of projectives,
generated by the single object $\mathcal{O}$. Define an exact functor
$F:\mathsf{Proj}(\mathsf{Mod}_{fg}(\mathcal{O}))\rightarrow\mathcal{E}%
\mathsf{LCA}_{\mathcal{O},cg}$, where the latter is the exact category of
exact sequences, by its action on the generator:%
\[
\mathcal{O}\mapsto(\mathcal{O}\hookrightarrow\bigoplus_{\sigma\in S}%
\mathbb{R}_{\sigma}\twoheadrightarrow\mathbb{T}_{\mathcal{O}})\text{.}%
\]
That is: We send $\mathcal{O}$ to the Minkowski sequence of $\mathcal{O}$, cf.
Equation \ref{lMinkSeqForIdeals}. If%
\[
p_{i}:\mathsf{Proj}(\mathsf{Mod}_{fg}(\mathcal{O}))\rightarrow\mathsf{LCA}%
_{\mathcal{O},cg}%
\]
for $i=0,1,2$ denote the projections to left, middle and right term, the
additivity of $K$ implies that $p_{1}=p_{0}+p_{2}$ for the induced maps on $K
$. However, $p_{0}$ induces the map $g$ in Diagram \ref{lda2}, while $p_{1} $
is the functor of our claim. Finally, the functor $p_{2}$ can be factored over
the category of compact modules,%
\[
p_{2}:\mathsf{Proj}(\mathsf{Mod}_{fg}(\mathcal{O}))\longrightarrow
\mathsf{LCA}_{\mathcal{O},C}\longrightarrow\mathsf{LCA}_{\mathcal{O}%
,cg}\text{.}%
\]
Induced on $K$, this is $K(\mathsf{Mod}_{fg}(\mathcal{O}))\rightarrow
0\rightarrow K(\mathsf{LCA}_{\mathcal{O},cg})$ by the Eilenberg swindle, Lemma
\ref{lemma_EilenbergSwindle}. Thus, $p_{2}$ induces the zero map on the level
of $K$.\ It follows that $p_{0}$ and $p_{1}$ induce the same map on $K$.
\end{proof}

In the special case of $\mathcal{O}=\mathbb{Z}$, we recover the statement of
Clausen's theorem \cite[Theorem 3.4]{clausen}:

\begin{theorem}
[Clausen]\label{thm_clausen}For every localizing invariant
$K:\operatorname*{Cat}_{\infty}^{\operatorname*{ex}}\rightarrow\mathsf{A}$,
there is a canonical fiber sequence%
\[
K(\mathbb{Z})\longrightarrow K(\mathbb{R})\longrightarrow K(\mathsf{LCA}%
)\text{.}%
\]

\end{theorem}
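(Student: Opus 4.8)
The plan is to obtain Clausen's theorem as the special case $\mathcal{O} = \mathbb{Z}$ of the main theorem, Theorem \ref{thm_MainThm}. First I would observe that for $\mathcal{O} = \mathbb{Z}$, the ring of integers has exactly one infinite place, which is real, so $r = 1$ and $s = 0$. Hence the general fiber sequence
\[
K(\mathcal{O})\longrightarrow K(\mathbb{R})^{r}\oplus K(\mathbb{C})^{s}\longrightarrow K(\mathsf{LCA}_{\mathcal{O}})
\]
specializes to
\[
K(\mathbb{Z})\longrightarrow K(\mathbb{R})\longrightarrow K(\mathsf{LCA}_{\mathbb{Z}})\text{.}
\]

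Second, I would invoke the identification $\mathsf{LCA}_{\mathbb{Z}} = \mathsf{LCA}$ recorded in the example following the definition of $\mathsf{LCA}_{R}$: a $\mathbb{Z}$-module structure on an abelian group is no extra data, and continuity of the $\mathbb{Z}$-action is automatic, so the forgetful functor $\mathsf{LCA}_{\mathbb{Z}} \to \mathsf{LCA}$ is an isomorphism of exact categories. Therefore $K(\mathsf{LCA}_{\mathbb{Z}}) \simeq K(\mathsf{LCA})$ canonically, and substituting this yields exactly the stated fiber sequence $K(\mathbb{Z}) \to K(\mathbb{R}) \to K(\mathsf{LCA})$.

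The remaining point is the identification of the first map. By the theorem immediately preceding this one, the map $K(\mathcal{O}) \to K(\mathbb{R})^{r}\oplus K(\mathbb{C})^{s}$ in the construction is induced by the Minkowski embedding ring homomorphism $\mathcal{O} \to \mathbb{R}^{r}\oplus\mathbb{C}^{s}$. For $\mathcal{O} = \mathbb{Z}$ the Minkowski embedding is simply the standard inclusion $\mathbb{Z} \hookrightarrow \mathbb{R}$, so the first map is the one induced by the ring homomorphism $\mathbb{Z} \to \mathbb{R}$, matching Clausen's statement verbatim.

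There is essentially no obstacle here: the entire content is the observation that the two bookkeeping quantities $r$ and $s$ collapse to $1$ and $0$ and that $\mathsf{LCA}_{\mathbb{Z}}$ is literally $\mathsf{LCA}$. The only thing worth a sentence is making sure the identification of the map is cited from the preceding theorem rather than reproved. Thus the proof is a one-line specialization; I would simply write: \emph{Set $\mathcal{O} = \mathbb{Z}$ in Theorem \ref{thm_MainThm}; then $r = 1$, $s = 0$, and $\mathsf{LCA}_{\mathbb{Z}} = \mathsf{LCA}$, while the first map is the one induced by $\mathbb{Z} \to \mathbb{R}$ by the previous theorem.}
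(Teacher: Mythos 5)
Your proposal is correct and is exactly the paper's approach: Theorem \ref{thm_clausen} is presented there as the immediate specialization $\mathcal{O}=\mathbb{Z}$ of Theorem \ref{thm_MainThm}, using $r=1$, $s=0$, the identification $\mathsf{LCA}_{\mathbb{Z}}=\mathsf{LCA}$, and the preceding theorem to identify the first map with the one induced by $\mathbb{Z}\rightarrow\mathbb{R}$. The only nuance worth retaining (which the paper flags in the remark that follows) is that this recovers Clausen's \emph{statement} but does not show the fiber sequence coincides with his construction.
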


While this is the same claim as in\ Clausen's work, it does not allow us to
conclude that we actually construct the same fiber sequence. This question
remains open.

\section{Regulators}

In this section, $K$ really stands for $K$-theory. Let $F$ be a number field
with $r$ real places and $s$ complex places and $\mathcal{O}$ its ring of
integers. While this paper is about the computation of $K(\mathsf{LCA}%
_{\mathcal{O}})$, the Minkowski map%
\[
K(\mathcal{O})\longrightarrow K(\mathbb{R})^{r}\oplus K(\mathbb{C})^{s}%
\]
has, uncalled for, showed up in this computation.

Although there is a priori not much reason to expect regulators to play a
r\^{o}le in the context of the category $\mathsf{LCA}_{\mathcal{O}}$, it is
noteworthy that the Borel regulator factors over the above map. This suggests
that some \textquotedblleft shadow\textquotedblright\ of the Borel regulator
may be definable on the $K$-theory of $\mathsf{LCA}_{\mathcal{O}}$. This turns
out to be virtually true, but the image of the map becomes the torus which
arises from quotienting out the regulator lattice from the target cohomology
group; so it surjects onto the torus whose natural volume is precisely what is
customarily called the $n$-th Borel regulator value, and which up to a
rational factor is a regularized zeta value,%
\[
\sim_{\mathbb{Q}^{\times}}\zeta_{F}^{\star}(1-n)\text{.}%
\]
As a side application of this, we obtain that the sequence in Theorem
\ref{thm_MainThm} splits into short exact sequences rationally.\medskip

Let us briefly recall the concept of the Borel regulator. The Dirichlet
regulator map%
\begin{equation}
\Phi_{\operatorname*{Dirichlet}}:\mathcal{O}^{\times}\oplus\mathbb{Z}%
\longrightarrow\mathbb{R}^{r}\oplus\mathbb{R}^{s} \label{lww2}%
\end{equation}
sends a unit $x\in\mathcal{O}^{\times}$ to the vector $(\log\left\vert
\sigma(x)\right\vert )_{\sigma}$, where $\sigma$ runs through all real places
and one representative of each complex place, and $\mathbb{Z}$ gets mapped to
the right diagonally. The image of $\Phi_{\operatorname*{Dirichlet}}$ is a
full rank lattice and the kernel a finite group, namely the roots of unity in
$F^{\times}$. The covolume of the lattice is known as the Dirichlet regulator
value itself.

In view of $K_{1}(\mathcal{O})\cong\mathcal{O}^{\times}$, one may rephrase
this map as a map defined on $K$-theory. Borel has given a far-reaching
generalization of this map. Suppose $n\geq1$. Conceptually, following
Beilinson's picture, the Borel regulator is a canonical morphism%
\begin{equation}
\Phi_{\operatorname*{Borel}}:K_{2n-1}(\mathcal{O})\longrightarrow
H_{\mathcal{D}}^{1}((\operatorname*{Spec}F)_{/\mathbb{R}},\mathbb{R}%
(n))\text{,} \label{lww1}%
\end{equation}
but in the case at hand the Beilinson--Deligne cohomology group on the right
is just a few copies of the real numbers, specifically%
\[
\Phi_{\operatorname*{Borel}}:K_{2n-1}(\mathcal{O})\longrightarrow\left\{
\begin{array}
[c]{ll}%
\mathbb{R}^{s} & \text{if }n\text{ is even}\\
\mathbb{R}^{r+s} & \text{if }n\text{ is odd.}%
\end{array}
\right.
\]
We write $\Phi_{\operatorname*{Borel}}:K_{2n-1}(\mathcal{O})\rightarrow
\mathbb{R}^{d_{n}}$ with $d_{n}$ defined accordingly for simplicity.

\begin{theorem}
[Borel]\label{thm_BorelReg}For $n\geq2$, the image of $\Phi
_{\operatorname*{Borel}}$ is a full rank $\mathbb{Z}$-lattice and $\ker
(\Phi_{\operatorname*{Borel}})$ is a finite group. The covolume of the lattice
satisfies%
\[
u_{n}\cdot\lim_{z\rightarrow1-n}(z-1+n)^{-d_{n}}\cdot\zeta_{F}(z)\text{,}%
\]
where $u_{n}$ is a non-zero rational number.
\end{theorem}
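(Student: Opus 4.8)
The plan is to deduce this statement from the combination of three classical inputs, none of which is proved here: Quillen's finite generation theorem for the $K$-groups of $\mathcal{O}$, Borel's computation of their ranks together with his regulator theorem, and the functional equation of the Dedekind zeta function $\zeta_{F}$. The task is only to assemble these correctly.

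First I would dispose of the rank statement and the finiteness of the kernel. By Quillen the group $K_{2n-1}(\mathcal{O})$ is finitely generated, so it suffices to show that $\Phi_{\operatorname{Borel}}$ is injective on the free part with full-rank image. Borel's theorem on the stable real cohomology of $\mathrm{SL}_{N}(\mathcal{O})$, combined with the identification of the primitive classes with the Borel regulator, gives exactly this: the map $K_{2n-1}(\mathcal{O})\otimes_{\mathbb{Z}}\mathbb{R}\rightarrow\mathbb{R}^{d_{n}}$ induced by $\Phi_{\operatorname{Borel}}$ is an isomorphism. Hence $\operatorname{rank}_{\mathbb{Z}}K_{2n-1}(\mathcal{O})=d_{n}$, the image of $K_{2n-1}(\mathcal{O})$ in $\mathbb{R}^{d_{n}}$ is a lattice of full rank, and $\ker(\Phi_{\operatorname{Borel}})$ equals the finite torsion subgroup. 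Here one uses $n\geq 2$; for $n=1$ the rank drops by one, which is why that case is excluded and handled separately via $\Phi_{\operatorname{Dirichlet}}$.

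It then remains to compute the covolume. Borel's special value theorem expresses $\zeta_{F}(n)$ as a nonzero rational multiple of a power of $\pi$ (and, depending on normalization, of $2$ and of $\sqrt{|d_{F}|}$) times the Borel regulator covolume $R_{n}$, i.e.\ the covolume of the image of $\Phi_{\operatorname{Borel}}$ with respect to the natural $\mathbb{Q}(n)$-structure on $H^{1}_{\mathcal{D}}((\operatorname{Spec}F)_{/\mathbb{R}},\mathbb{R}(n))$. The functional equation $\Lambda_{F}(s)=\Lambda_{F}(1-s)$ of the completed Dedekind zeta function, with its explicit $\Gamma_{\mathbb{R}}$- and $\Gamma_{\mathbb{C}}$-factors, then translates the leading Taylor coefficient $\zeta_{F}^{\star}(1-n)=\lim_{z\rightarrow 1-n}(z-1+n)^{-d_{n}}\zeta_{F}(z)$ into $\zeta_{F}(n)$ up to a product of values of $\Gamma$ at integers and half-integers, again a rational multiple of a power of $\pi$ (and of $2$ and of $\sqrt{|d_{F}|}$).

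The main obstacle is precisely the bookkeeping of these archimedean factors: one must verify that the powers of $\pi$ (and $2$ and $\sqrt{|d_{F}|}$) produced by Borel's formula and by the $\Gamma$-factors in the functional equation cancel exactly, leaving only a rational number $u_{n}$. This cancellation is, in effect, the reason the regulator is normalized with the $(2\pi i)^{n}$-twist. Granting it, $\zeta_{F}^{\star}(1-n)\sim_{\mathbb{Q}^{\times}}R_{n}$; since the covolume in the statement is taken with respect to the same normalization of $\Phi_{\operatorname{Borel}}$ into $\mathbb{R}^{d_{n}}$ that enters Borel's formula, it differs from $R_{n}$ by at most a rational factor, and the claimed equality follows with $u_{n}\in\mathbb{Q}^{\times}$. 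By contrast, the rank statement and the finiteness of the kernel in the first step are immediate from Borel's and Quillen's theorems and require no such care.
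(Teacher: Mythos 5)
Your outline is correct and consistent with how the paper handles this statement: the paper does not prove Theorem \ref{thm_BorelReg} at all, but attributes it to Borel and simply refers to \cite{MR944994} and \cite{MR1869655}, which carry out exactly the assembly you describe (Quillen's finite generation, Borel's stable cohomology computation identifying the rank with $d_{n}$ and making $\Phi_{\operatorname*{Borel}}\otimes\mathbb{R}$ an isomorphism, and the functional equation translating $\zeta_{F}(n)$ into the leading coefficient $\zeta_{F}^{\star}(1-n)$ up to powers of $\pi$, $2$ and $\sqrt{|d_{F}|}$). The archimedean bookkeeping you flag as the ``main obstacle'' is precisely the content delegated to those references, so nothing further is required here.
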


See \cite{MR944994}, \cite{MR1869655} for details.

\begin{remark}
Instead, for $n=1$, one can use Dirichlet's analytic class number formula,
giving an analogous statement. Note the slight difference in that Dirichlet
regulator as in Equation \ref{lww2} needs the additional artificial summand
$\mathbb{Z}$ to have a full rank lattice image. This has no counterpart for
Borel's regulator. There are some other subtle differences in degree one. For
example $K_{1}(\mathcal{O})=\mathcal{O}^{\times}$ and $K_{1}(F)=F^{\times}$
are very different, while $K_{2n-1}(\mathcal{O})\cong K_{2n-1}(F)$ for all
$n\geq2$.
\end{remark}

\begin{remark}
For $n\geq1$, the $K$-groups $K_{2n}(\mathcal{O})$ are finite, so we cannot
have non-trivial maps to the reals and thus not expect anything like a regulator.
\end{remark}

We are now ready to prove the following:

\begin{theorem}
\label{thm_RationalSplitting}Let $F$ be a number field and $\mathcal{O}$ its
ring of integers. With rational coefficients, the long exact sequence of
Theorem \ref{thm_MainThm} splits into short exact sequences%
\[
0\rightarrow K_{n}(\mathcal{O})_{\mathbb{Q}}\rightarrow K_{n}(\mathbb{R}%
)_{\mathbb{Q}}^{r}\oplus K_{n}(\mathbb{C})_{\mathbb{Q}}^{s}\rightarrow
K_{n}(\mathsf{LCA}_{\mathcal{O}})_{\mathbb{Q}}\rightarrow0
\]
for all $n$.
\end{theorem}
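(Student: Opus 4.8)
The plan is to derive the rational splitting from a single injectivity statement. The fiber sequence of Theorem \ref{thm_MainThm} yields a long exact sequence of $K$-groups, and since rationalization $-\otimes_{\mathbb{Z}}\mathbb{Q}$ is exact, this stays exact after tensoring with $\mathbb{Q}$. Writing
\[
\mu_n \colon K_n(\mathcal{O})_{\mathbb{Q}} \longrightarrow K_n(\mathbb{R})_{\mathbb{Q}}^r \oplus K_n(\mathbb{C})_{\mathbb{Q}}^s
\]
for the rationalized Minkowski map, exactness shows that the image of each connecting map $K_{n+1}(\mathsf{LCA}_{\mathcal{O}})_{\mathbb{Q}} \to K_n(\mathcal{O})_{\mathbb{Q}}$ equals $\ker(\mu_n)$. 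Hence, if $\mu_n$ is injective for every $n$, all connecting maps vanish and the long exact sequence breaks into the asserted short exact sequences. So the entire proof reduces to showing that $\mu_n$ is injective for all $n\in\mathbb{Z}$.

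First I would clear the easy ranges. For $n<0$ all three groups $K_n(\mathcal{O})$, $K_n(\mathbb{R})$, $K_n(\mathbb{C})$ vanish by regularity, so there is nothing to prove. For $n=0$ one has $K_0(\mathcal{O})_{\mathbb{Q}} = \mathbb{Q}\cdot[\mathcal{O}]$ and $\mu_0$ sends $[\mathcal{O}]$ to the class whose rank vector over $\mathbb{R}^r\oplus\mathbb{C}^s$ is $(1,\dots,1)\neq 0$, so $\mu_0$ is injective. For even $n\geq 2$ the group $K_n(\mathcal{O})$ is finite (cf.\ the remark following Theorem \ref{thm_BorelReg}), so $K_n(\mathcal{O})_{\mathbb{Q}}=0$ and $\mu_n$ is injective for trivial reasons. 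The real content therefore lies in the odd degrees $n=2m-1$.

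For $m\geq 2$ I would invoke Borel's theorem: by Theorem \ref{thm_BorelReg} the regulator $\Phi_{\operatorname{Borel}}\colon K_{2m-1}(\mathcal{O})\to\mathbb{R}^{d_m}$ has finite kernel, hence is injective after $-\otimes_{\mathbb{Z}}\mathbb{Q}$. The key input is that $\Phi_{\operatorname{Borel}}$ factors through $\mu_{2m-1}$: by construction the Borel regulator of $\mathcal{O}$ is obtained place-by-place, via the maps $\mathrm{GL}(\mathcal{O})\to\mathrm{GL}(\mathbb{R}_\sigma)$ at the archimedean places $\sigma$ (equivalently, the Beilinson--Deligne cohomology group in \eqref{lww1} decomposes as a sum over the archimedean places and the regulator is natural), so it equals the composite of $\mu_{2m-1}$ with the Borel regulators of the completions $\mathbb{R}_\sigma$. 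Since this composite is rationally injective, so is $\mu_{2m-1}$. The case $m=1$, i.e.\ $n=1$, is handled the same way with the Dirichlet regulator: using $K_1(\mathcal{O})=\mathcal{O}^\times$, $K_1(\mathbb{R})=\mathbb{R}^\times$, $K_1(\mathbb{C})=\mathbb{C}^\times$, the map $\mu_1$ is the Minkowski embedding $\mathcal{O}^\times\hookrightarrow(\mathbb{R}^\times)^r\oplus(\mathbb{C}^\times)^s$, and postcomposing with $\log|\cdot|$ on each factor recovers the Dirichlet regulator map \eqref{lww2} restricted to $\mathcal{O}^\times$ (that is, without the auxiliary summand $\mathbb{Z}$). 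By Dirichlet's Unit Theorem this map has finite kernel $\mu_F$ (the roots of unity in $F$) and image a lattice of full rank $r+s-1=\operatorname{rank}_{\mathbb{Z}}\mathcal{O}^\times$, hence it is injective after rationalization, and therefore so is $\mu_1$.

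Combining all cases, $\mu_n$ is rationally injective for every $n$, the connecting maps vanish, and the long exact sequence of Theorem \ref{thm_MainThm} decomposes into short exact sequences. The main obstacle is the factorization of the Borel (resp.\ Dirichlet) regulator through the base-change map $\mu_n$: this is classical, but it must be extracted carefully from the definition of the regulator (or from the place-by-place decomposition of archimedean Deligne cohomology), and one has to make sure that the map to $K(\mathbb{R})^r\oplus K(\mathbb{C})^s$ appearing in Theorem \ref{thm_MainThm} — which the preceding theorem identifies as induced by the Minkowski ring homomorphism $\mathcal{O}\to\mathbb{R}^r\oplus\mathbb{C}^s$ — is literally the map on which the regulator is built.
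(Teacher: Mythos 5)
Your proposal is correct and follows essentially the same route as the paper: both arguments rest on the factorization of the Borel (resp.\ Dirichlet) regulator through the Minkowski map $\mu_{2m-1}$, the finiteness of $\ker(\Phi_{\operatorname{Borel}})$, the finiteness of $K_{2m}(\mathcal{O})$ and of the class group, and vanishing in negative degrees by regularity. The only difference is organizational — you package everything as rational injectivity of $\mu_n$ for all $n$, while the paper phrases it as finiteness of the images of the boundary maps — and these are equivalent.
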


Note that for $n\geq2$ and $n$ even, $K_{n}(\mathcal{O})_{\mathbb{Q}}=0$.

\begin{proof}
We apply the fiber sequence of Theorem \ref{thm_MainThm} for non-connective
$K$-theory $\mathbb{K}$. As the ring $\mathcal{O}$ and fields (here
$\mathbb{R}$ and $\mathbb{C}$) are regular, connective and non-connective
$K$-theory agree for the first two terms in the fiber sequence. Thus, the
resulting long exact sequence also holds for connective $K$-theory. We display
it below in the bottom row:%
\begin{equation}%
\xymatrix@!R=0.52in{
& & \mathbb{R}^{d_{2n-1}} \\
\cdots\ar[r]^-{{\partial}_{2n}} & {K}_{2n-1}(\mathcal{O}) \ar[r]^-{\mu_{2n-1}}
\ar[ur]^{\Phi_{\operatorname{Borel}}} &
{K}_{2n-1}(\mathbb{R})^{r}\oplus{K}_{2n-1}(\mathbb{C})^{s} \ar[u]_{\phi}
\ar[r] &
{K}_{2n-1}(\mathsf{LCA}_{\mathcal{O}}) \ar[r]^-{{\partial}_{2n-1}} & \cdots}
\label{lww4}%
\end{equation}
Next, note that for $n\geq2$ the Borel regulator is defined as the composition
of two maps: Firstly, a Minkowski type map%
\begin{equation}
K_{2n-1}(\mathcal{O})\overset{\sim}{\longrightarrow}K_{2n-1}(F)\overset
{M}{\longrightarrow}K_{2n-1}(\mathbb{R})^{r}\oplus K_{2n-1}(\mathbb{C}%
)^{s}\longrightarrow\left[  \bigoplus_{\sigma}K_{2n-1}(\mathbb{C})\right]
^{G}\text{,} \label{lww3}%
\end{equation}
where $\sigma$ runs through \textit{all} complex embeddings, and $[-]^{G}$
denotes taking $G$-fixed points under the complex conjugation involution, $M$
is induced from the Minkowski embedding as a ring homomorphism. The left-most
map is induced from the inclusion $\mathcal{O}\hookrightarrow F$; this is an
isomorphism on $K$-theory \cite[Ch. V, \S 6, Theorem 6.8]{MR3076731}.
Secondly, the regulator map defined on the level of $\mathbb{C}$, namely%
\[
K_{2n-1}(\mathbb{C})\overset{\operatorname*{Hur}}{\longrightarrow}%
H_{2n-1}(\left(  B\operatorname{GL}(\mathbb{C})_{\delta}\right)
^{+},\mathbb{Z})\overset{\sim}{\longrightarrow}H_{2n-1}(B\operatorname{GL}%
(F)_{\delta},\mathbb{Z})\longrightarrow\mathbb{R}^{d_{n}}\text{,}%
\]
where the first map is the Hurewicz morphism, originating from Quillen's model
for $K$-theory via the plus construction, $B\operatorname{GL}(\mathbb{C}%
)_{\delta}$ denotes the classifying space of $\operatorname{GL}(\mathbb{C}%
)_{\delta}$ as a group with the discrete topology; the second map comes from
the invariance of group homology under the plus construction, and the last
arrow is the pairing with the Borel class (this is induced from the continuous
group cohomology of $\operatorname{GL}(\mathbb{C})$, pulled back along the
tautologically continuous map $\operatorname{GL}(\mathbb{C})_{\delta
}\rightarrow\operatorname{GL}(\mathbb{C})$, see \cite{MR944994},
\cite{MR1869655}). The composition of these maps is $\Phi
_{\operatorname*{Borel}}$ in the above diagram. As we can see from Equation
\ref{lww3}, up until the Minkowski map, the first ingredients in this
compositions of maps agrees with the map appearing in the bottom sequence in
Figure \ref{lww4}. Thus, if we denote the remaining steps by $\phi$, we get
the middle upward arrow in the diagram. Next, note that%
\begin{equation}
\ker(\mu_{2n-1})\subseteq\ker(\Phi_{\operatorname*{Borel}}) \label{lww6}%
\end{equation}
and since the kernel of the Borel regulator is finite (cf. Theorem
\ref{thm_BorelReg}), so must be $\ker(\mu_{2n-1})$, and thus the image of
$\partial_{2n}$; for all $n\geq2$. Moreover, $K_{2n}(\mathcal{O})$ is finite
for all $n\geq1$, so the image of $\partial_{2n-1}$ is also finite. It follows
that the bottom long exact sequence in Figure \ref{lww4} splits into separate
short exact sequences rationally. This is also true for small $n$: The group
$K_{2}(\mathcal{O})$ is finite, so it remains to check
\[
K_{2}(\mathsf{LCA}_{\mathcal{O}})\rightarrow K_{1}(\mathcal{O})\overset
{\mu_{1}}{\rightarrow}K_{1}(\mathbb{R})^{r}\oplus K_{1}(\mathbb{C}%
)^{s}\rightarrow K_{1}(\mathsf{LCA}_{\mathcal{O}})\rightarrow K_{0}%
(\mathcal{O})\text{,}%
\]
but $\ker(\mu_{1})$ is just the roots of unity, which is a finite group, so
$\partial_{2}$ has finite image. Correspondingly,%
\[
K_{0}(\mathcal{O})\cong\mathbb{Z}\oplus\operatorname*{Cl}(\mathcal{O}%
)\overset{\mu_{0}}{\longrightarrow}\mathbb{Z}^{r}\oplus\mathbb{Z}^{s}\text{,}%
\]
so again $\mu_{0}$ has finite kernel by the finiteness of the class group. It
follows that the entire long exact sequence splits into short exact sequences rationally.
\end{proof}

\begin{theorem}
Let $F$ be a number field and $\mathcal{O}$ its ring of integers. Suppose
$n\geq2$. There is a canoncial Borel regulator map on the torus quotient by
the regulator lattice%
\[
K_{2n-1}^{\#}(\mathsf{LCA}_{\mathcal{O}})\twoheadrightarrow\mathbb{R}^{d_{n}%
}/\Phi_{\operatorname*{Borel}}(K_{2n-1}(\mathcal{O}))\text{,}%
\]
where $K_{2n-1}^{\#}(\mathsf{LCA}_{\mathcal{O}})$ is some well-defined finite
index subgroup of $K_{2n-1}(\mathsf{LCA}_{\mathcal{O}})$.
\end{theorem}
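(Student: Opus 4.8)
The plan is to extract the map directly from the long exact sequence underlying Theorem \ref{thm_MainThm}, using the factorization of the Borel regulator recorded in Diagram \ref{lww4}. Abbreviate $A := K_{2n-1}(\mathcal{O})$, $B := K_{2n-1}(\mathbb{R})^{r}\oplus K_{2n-1}(\mathbb{C})^{s}$ and $C := K_{2n-1}(\mathsf{LCA}_{\mathcal{O}})$, and let
\[
A\xrightarrow{\mu}B\xrightarrow{\pi}C\xrightarrow{\partial}K_{2n-2}(\mathcal{O})
\]
be the relevant stretch of that sequence (so $\mu=\mu_{2n-1}$ and $\partial=\partial_{2n-1}$), with $\operatorname{im}(\mu)=\ker(\pi)$ and $\operatorname{im}(\pi)=\ker(\partial)$. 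From Diagram \ref{lww4} we also have the map $\phi\colon B\to\mathbb{R}^{d_{n}}$ with $\phi\circ\mu=\Phi_{\operatorname*{Borel}}$. Put $\Lambda:=\Phi_{\operatorname*{Borel}}(A)$; by Theorem \ref{thm_BorelReg} this is a full-rank lattice in $\mathbb{R}^{d_{n}}$, so $\mathbb{R}^{d_{n}}/\Lambda$ is a compact torus.

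First I would define $K_{2n-1}^{\#}(\mathsf{LCA}_{\mathcal{O}}) := \ker(\partial)=\operatorname{im}(\pi)$. Since $2n-2=2(n-1)$ with $n-1\geq 1$, the group $K_{2n-2}(\mathcal{O})$ is finite, so $\operatorname{im}(\partial)$ is finite and $K_{2n-1}^{\#}(\mathsf{LCA}_{\mathcal{O}})$ is a finite index subgroup of $C$; this is the subgroup referred to in the statement. Exactness of the sequence yields a canonical isomorphism $K_{2n-1}^{\#}(\mathsf{LCA}_{\mathcal{O}})=\operatorname{im}(\pi)\cong B/\ker(\pi)=\operatorname{coker}(\mu)$.

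Next I would build the map. Because $\phi(\operatorname{im}(\mu))=\phi\mu(A)=\Phi_{\operatorname*{Borel}}(A)=\Lambda$, the composite $B\xrightarrow{\phi}\mathbb{R}^{d_{n}}\to\mathbb{R}^{d_{n}}/\Lambda$ kills $\operatorname{im}(\mu)$ and hence factors through $B/\operatorname{im}(\mu)=\operatorname{coker}(\mu)$, producing a homomorphism
\[
\overline{\phi}\colon K_{2n-1}^{\#}(\mathsf{LCA}_{\mathcal{O}})\cong\operatorname{coker}(\mu)\longrightarrow\mathbb{R}^{d_{n}}/\Lambda\text{,}\qquad\pi(b)\longmapsto\phi(b)+\Lambda\text{.}
\]
Well-definedness is exactly the observation that two lifts to $B$ of an element of $\operatorname{im}(\pi)$ differ by an element of $\ker(\pi)=\operatorname{im}(\mu)$, whose $\phi$-image lies in $\Lambda$. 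By construction $\overline{\phi}\circ\pi$ is the reduction of $\phi$ modulo $\Lambda$, so $\overline{\phi}$ is the natural descent of the Borel-regulator map $\phi$ to $K_{2n-1}^{\#}(\mathsf{LCA}_{\mathcal{O}})$, which justifies the terminology.

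It remains to show that $\overline{\phi}$ is surjective. Its image is $(\operatorname{im}(\phi)+\Lambda)/\Lambda=\operatorname{im}(\phi)/\Lambda$, since $\Lambda\subseteq\operatorname{im}(\phi)$; thus surjectivity onto the torus $\mathbb{R}^{d_{n}}/\Lambda$ is equivalent to $\operatorname{im}(\phi)=\mathbb{R}^{d_{n}}$. Unwinding the factorization in Equation \ref{lww3}, the map $\phi$ is assembled from the Borel regulator $K_{2n-1}(\mathbb{C})\to\mathbb{R}$ of $\mathbb{C}$ on the complex-place summands and, for $n$ odd, the Borel regulator $K_{2n-1}(\mathbb{R})\to\mathbb{R}$ of $\mathbb{R}$ on the real-place summands, the passage to $G$-fixed points cutting this down to exactly the $d_{n}$ coordinates; hence it is enough that each of these single-place Borel regulators is surjective onto $\mathbb{R}$. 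This is the step I expect to be the genuine obstacle: it is arithmetic input rather than formal homological algebra, since knowing that $\Lambda$ is a full-rank lattice only forces $\operatorname{im}(\phi)$ to contain a lattice. The cleanest way to settle it is to use the explicit description of the Borel regulator in terms of the single-valued $n$-logarithm function on $\mathbb{C}\setminus\{0,1\}$ (and, when $n$ is odd, its restriction to $\mathbb{R}\setminus\{0,1\}$): this function is continuous and non-constant, so its image is an interval of positive length, and the subgroup of $\mathbb{R}$ it generates — which is contained in $\operatorname{im}(\phi)$ — is all of $\mathbb{R}$; alternatively one may simply cite the surjectivity of these regulators from the literature around Borel's theorem. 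Granting this input, $\operatorname{im}(\phi)=\mathbb{R}^{d_{n}}$, and $\overline{\phi}$ is the desired surjection onto $\mathbb{R}^{d_{n}}/\Phi_{\operatorname*{Borel}}(K_{2n-1}(\mathcal{O}))$.
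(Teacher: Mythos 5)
Your construction of $\overline{\phi}$ is exactly the one in the paper: you set $K_{2n-1}^{\#}(\mathsf{LCA}_{\mathcal{O}}):=\ker(\partial_{2n-1})=\operatorname{im}(\pi)\cong\operatorname{coker}(\mu_{2n-1})$, observe that it has finite index because $K_{2n-2}(\mathcal{O})$ is finite, and descend $\phi$ modulo $\Lambda=\Phi_{\operatorname*{Borel}}(K_{2n-1}(\mathcal{O}))$ using $\phi\circ\mu_{2n-1}=\Phi_{\operatorname*{Borel}}$ and Equation \ref{lww6}. All of this is correct and coincides with the paper's argument.

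Where you diverge is the surjectivity of $\overline{\phi}$, and you have put your finger on the one genuinely delicate point. The paper dispatches it in half a sentence: since $\Lambda$ has full rank, $\Phi_{\operatorname*{Borel}}$ is surjective after tensoring with $\mathbb{R}$, hence $\phi$ and $\overline{\phi}$ are surjective. As you note, this only controls the $\mathbb{R}$-span of $\operatorname{im}(\phi)$: a subgroup of $\mathbb{R}^{d_{n}}$ containing a full-rank lattice (for instance the $\mathbb{Q}$-span of $\Lambda$) need not be all of $\mathbb{R}^{d_{n}}$, so integral surjectivity of $\phi$ really does require transcendental input, and your reduction to the surjectivity of the single-place regulators $K_{2n-1}(\mathbb{C})\rightarrow\mathbb{R}$ (and $K_{2n-1}(\mathbb{R})\rightarrow\mathbb{R}$ for $n$ odd) is the right move. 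However, your proposed justification of that input is not yet a proof: the continuity and non-constancy of the single-valued $n$-logarithm $P_{n}$ only helps once one knows that the values $P_{n}(z)$ are actually realized as regulators of classes in $K_{2n-1}(\mathbb{C})$, and producing such classes is the nontrivial half of the weak Zagier conjecture (Beilinson--Deligne, de Jeu, Goncharov); it is unproblematic only for $n=2$, where one can argue directly with the Bloch group and the Bloch--Wigner dilogarithm, whose image on a continuously varying family of Bloch-group elements contains an interval and hence generates $\mathbb{R}$. So either make that citation precise or invoke the surjectivity of the regulator of $\mathbb{C}$ from another source (e.g. Karoubi-style relative Chern character computations). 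As written, your final step is a correctly localized but still open-ended sketch --- though it is more careful than the argument the paper itself offers at this point.
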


\begin{proof}
We use the same setup as in the previous proof; with the same notation. Take
Figure \ref{lww4}. We make $\mu_{2n-1}$ injective by quotienting out $\ker
(\mu_{2n-1})$. By Equation \ref{lww6} the Borel regulator is still
well-defined on this quotient. Secondly, we replace $K_{2n-1}(\mathsf{LCA}%
_{\mathcal{O}})$ by the image of the incoming arrow. This agrees with
$\ker(\partial_{2n-1})$, as in%
\[
\ker(\partial_{2n-1})\hookrightarrow K_{2n-1}(\mathsf{LCA}_{\mathcal{O}%
})\twoheadrightarrow\text{image in }K_{2n-2}(\mathcal{O})
\]
and since $K_{2n-2}(\mathcal{O})$ is a finite group, it follows that this is a
finite index subgroup. Now, the universal property of cokernels implies that
$\phi$ descends to a morphism%
\[
\overline{\phi}:\ker(\partial_{2n-1})\longrightarrow\mathbb{R}^{d_{n}}%
/\Phi_{\operatorname*{Borel}}(K_{2n-1}(\mathcal{O}))
\]
and as $\Phi_{\operatorname*{Borel}}$ has full rank image, and thus is
surjective after tensoring with the reals, $\phi$ and thus $\overline{\phi}$
are surjective.
\end{proof}


\begin{example}(Classical algebraic number theory)
In low degrees, Theorem \ref{thm_MainThm} yields the exact sequence%
\begin{equation}
\cdots \rightarrow K_{1}(\mathcal{O})\rightarrow
K_{1}(\mathbb{R})^{r}\oplus K_{1}(\mathbb{C})^{s}\rightarrow K_{1}(\mathsf{%
LCA}_{\mathcal{O}})\rightarrow K_{0}(\mathcal{O})\rightarrow \mathbb{Z}%
^{r+s}\rightarrow K_{0}(\mathsf{LCA}_{\mathcal{O}})\rightarrow 0\text{.}
\label{lj1}
\end{equation}%
Note that all the \textquotedblleft usual suspects\textquotedblright\ of
basic algebraic number theory show up:\ (1) We have $K_{0}(\mathcal{O})\cong 
\mathbb{Z}\oplus \operatorname{Cl}(\mathcal{O})$. Since torsion must map to zero
in $\mathbb{Z}^{r+s}$, yet the free rank part maps diagonally to the real
and complex vector spaces, the last three terms split off to form%
\begin{equation*}
K_{0}(\mathsf{LCA}_{\mathcal{O}})\cong \mathbb{Z}^{r+s-1}\text{.}
\end{equation*}%
Thus, $K_{0}(\mathsf{LCA}_{\mathcal{O}})$ has the same free rank as the unit
group $\mathcal{O}^{\times }$. Moreover, using the Bass--Milnor--Serre
Theorem the middle part of the sequence becomes%
\begin{equation*}
K_{2}(\mathsf{LCA}_{\mathcal{O}})\overset{\nu }{\rightarrow }\mathcal{O}%
^{\times }\overset{\mu }{\rightarrow }(\mathbb{R}^{\times })^{r}\oplus (%
\mathbb{C}^{\times })^{s}\rightarrow K_{1}(\mathsf{LCA}_{\mathcal{O}%
})\rightarrow \operatorname{Cl}(\mathcal{O})\rightarrow 0
\end{equation*}%
and $\mu $ sends a unit $\alpha \in \mathcal{O}^{\times }$ to its embedding $%
\sigma (\alpha )$ in the real or complex numbers. It follows that $\nu =0$.
Composing the middle term with a logarithm,%
\begin{eqnarray*}
(\mathbb{R}^{\times })^{r}\oplus (\mathbb{C}^{\times })^{s} &\longrightarrow
&\mathbb{R}^{r+s} \\
(\ldots ,x_{i},\ldots ) &\longmapsto &(\ldots ,\log \left\vert \sigma
(x_{i})\right\vert ,\ldots )\text{,}
\end{eqnarray*}%
we thus find both the Dirichlet embedding as well as the class group appear
in Equation \ref{lj1}.
\end{example}


\section{Comparison with a Tate category}

For background on Tate categories we refer to \cite{MR2872533},
\cite{MR3510209}. Write $\mathsf{Mod}_{fin}(\mathcal{O})$ for the abelian
category of finitely generated Artinian $\mathcal{O}$-modules; or equivalently
of $\mathcal{O}$-modules whose underlying set is finite.

\begin{theorem}
Let $K:\operatorname*{Cat}_{\infty}^{\operatorname*{ex}}\rightarrow\mathsf{A}$
be a localizing invariant. There is a canonical map of fiber sequences in
$\mathsf{A}$,%
\[%
\xymatrix{
K({\mathsf{Mod}_{fin}(\mathcal{O})}) \ar[r] \ar[d]_{\sigma} & 0 \ar
[r] \ar[d] & K({\mathsf{Tate}({{\mathsf{Mod}_{fin}(\mathcal{O})}})}%
) \ar[d]^{\gamma} \\
K(\mathcal{O}) \ar[r] & K(\mathbb{R})^r \oplus K(\mathbb{C})^s \ar
[r] & K(\mathsf{LCA}_{\mathcal{O}}),
}%
\]
where the bottom row is the sequence of Theorem \ref{thm_MainThm}, $\sigma$ is
induced from the inclusion functor $\mathsf{Mod}_{fin}(\mathcal{O})$
$\hookrightarrow\mathsf{Mod}_{fg}(\mathcal{O})$, and $\gamma$ is induced from
the evaluation of the ind-pro limit inside $\mathsf{LCA}_{\mathcal{O}}$.
\end{theorem}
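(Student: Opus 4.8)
The plan is to realize both $\sigma$ and $\gamma$ as $K$ of explicit exact functors, to recognize the top row as the delooping property of the Tate construction, and to deduce the whole commutative diagram from the functoriality of Schlichting's localization applied to one diagram of exact categories. Since $\mathsf{Mod}_{fin}(\mathcal{O})$ is abelian, hence idempotent complete, the delooping property of the Tate category (\cite{MR2872533}, \cite{MR3510209}) gives, for every localizing invariant, a canonical equivalence $K(\mathsf{Tate}(\mathsf{Mod}_{fin}(\mathcal{O})))\simeq\Sigma K(\mathsf{Mod}_{fin}(\mathcal{O}))$, which is exactly the assertion that the top row is a fiber sequence. I would use its standard realization: the inclusions $\mathsf{Mod}_{fin}(\mathcal{O})\hookrightarrow\mathsf{Pro}^{a}(\mathsf{Mod}_{fin}(\mathcal{O}))$ and $\mathsf{Ind}^{a}(\mathsf{Mod}_{fin}(\mathcal{O}))\hookrightarrow\mathsf{Tate}^{el}(\mathsf{Mod}_{fin}(\mathcal{O}))$ are $s$-filtering, both ambient categories are $K$-acyclic by the Eilenberg swindle (Lemma \ref{lemma_EilenbergSwindle}) --- $\mathsf{Pro}^{a}$ being closed under countable products, $\mathsf{Ind}^{a}$ under countable coproducts --- and a Karoubi-completion argument (see \cite{MR3510209}) passes from $\mathsf{Tate}^{el}$ to $\mathsf{Tate}$.

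Next I would construct the evaluation functor $\mathrm{ev}\colon\mathsf{Tate}(\mathsf{Mod}_{fin}(\mathcal{O}))\to\mathsf{LCA}_{\mathcal{O}}$. On an admissible pro-object $\varprojlim_{i}L_{i}$ of finite $\mathcal{O}$-modules let $\mathrm{ev}$ be the limit formed in $\mathsf{LCA}_{\mathcal{O}}$; as a closed submodule of $\prod_{i}L_{i}$ it is profinite, hence compact, so $\mathrm{ev}$ sends $\mathsf{Pro}^{a}(\mathsf{Mod}_{fin}(\mathcal{O}))$ into $\mathsf{LCA}_{\mathcal{O},C}$. On an admissible ind-object $\varinjlim_{j}M_{j}$ let $\mathrm{ev}$ be the colimit in $\mathsf{LCA}_{\mathcal{O}}$, a countable torsion module with the discrete topology, so $\mathsf{Ind}^{a}(\mathsf{Mod}_{fin}(\mathcal{O}))$ is carried into $\mathsf{LCA}_{\mathcal{O},D}$. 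A general elementary Tate object sits in a lattice sequence $\widehat{L}\hookrightarrow\mathbb{V}\twoheadrightarrow\mathbb{V}/\widehat{L}$ with outer terms pro- resp.\ ind-, and since $\mathsf{LCA}_{\mathcal{O}}$ is exact and closed under extensions (the Corollary following Lemma \ref{lemma_FLCclosed}), the extension can be evaluated; one then verifies independence of the chosen lattice, additivity and exactness, and passes to the Karoubi completion. The restrictions of $\mathrm{ev}$ to pro- and ind-objects are the classical identifications of the categories of countable profinite, resp.\ countable torsion discrete, $\mathcal{O}$-modules with $\mathsf{Pro}^{a}$, resp.\ $\mathsf{Ind}^{a}$, of $\mathsf{Mod}_{fin}(\mathcal{O})$; these are fully faithful and exact. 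I set $\gamma:=K(\mathrm{ev})$ and $\sigma:=K(\iota)$ for $\iota\colon\mathsf{Mod}_{fin}(\mathcal{O})\hookrightarrow\mathsf{Mod}_{fg}(\mathcal{O})$.

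For the comparison, recall from the proof of Theorem \ref{thm_MainThm} that the bottom row is the fiber sequence extracted from Diagram \ref{ld5}: its first map $K(\mathcal{O})=K(\mathsf{Mod}_{fg}(\mathcal{O}))\to K(\mathsf{LCA}_{\mathcal{O},cg})=K(\mathbb{R})^{r}\oplus K(\mathbb{C})^{s}$ is $K$ of the discrete-topology functor $d$, and the cofiber structure comes from the vanishing corner $K(\mathsf{Mod}(\mathcal{O}))=0$. The observation that makes the left square commute is that $d\circ\iota$ factors through $\mathsf{LCA}_{\mathcal{O},C}$ --- a finite $\mathcal{O}$-module is compact --- and $K(\mathsf{LCA}_{\mathcal{O},C})=0$, once more by the Eilenberg swindle, since $\mathsf{LCA}_{\mathcal{O},C}$ is closed under countable products (Tychonoff, and the product topology keeps the $\mathcal{O}$-action continuous). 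To get the full map of fiber sequences, with $\gamma$ in the rightmost slot, I would assemble a single commutative diagram of exact categories and exact functors whose two rows are the $s$-filtering localization sequences realizing the top and bottom fiber sequences, whose left column is $\iota$, whose middle column factors through $\mathsf{LCA}_{\mathcal{O},C}$, and whose right column is the appropriate restriction of $\mathrm{ev}$ (identifying $\mathsf{LCA}_{\mathcal{O}}/\mathsf{LCA}_{\mathcal{O},cg}$ with $\mathsf{Mod}(\mathcal{O})/\mathsf{Mod}_{fg}(\mathcal{O})$ via Lemma \ref{lemma_equivq} on the bottom right). Schlichting localization and the Eilenberg swindle applied to this diagram then yield the two asserted commuting squares, with $\gamma=K(\mathrm{ev})$ landing where claimed by construction.

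I expect the hard part to be twofold. First, one must check that $\mathrm{ev}$ is well-defined and \emph{exact} on all of $\mathsf{Tate}(\mathsf{Mod}_{fin}(\mathcal{O}))$: admissible monics and admissible epics of Tate objects have to go to closed injections and open surjections in $\mathsf{LCA}_{\mathcal{O}}$, which uses an open mapping theorem on the pro-part and some care with mixed lattice sequences. Second, and more delicate, one must arrange the category-level diagram so that the boundary map of the Tate delooping is matched on the nose with Schlichting's connecting map for $\mathsf{LCA}_{\mathcal{O},cg}\hookrightarrow\mathsf{LCA}_{\mathcal{O}}$ --- equivalently, that the specific nullhomotopy of $K(\mathsf{Mod}_{fin}(\mathcal{O}))\to K(\mathbb{R})^{r}\oplus K(\mathbb{C})^{s}$ arising from the factorization through compact modules is the one compatible with both cofiber structures, so that the map it induces on cofibers is exactly $K(\mathrm{ev})$.
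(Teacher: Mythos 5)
Your overall strategy coincides with the paper's: construct the evaluation functor on $\mathsf{Tate}(\mathsf{Mod}_{fin}(\mathcal{O}))$ (compact limits on the pro-part, discrete colimits on the ind-part, exactness via properness of the transition maps, extension to the idempotent completion), realize both fiber sequences by Schlichting localization applied to diagrams of exact categories, and kill the middle terms with the Eilenberg swindle. The one place where your arrangement genuinely differs --- and where the ``hard part'' you flag at the end actually originates --- is the choice of localization sequences realizing the Tate delooping. You propose $\mathsf{Mod}_{fin}(\mathcal{O})\hookrightarrow\mathsf{Pro}^{a}(\mathsf{Mod}_{fin}(\mathcal{O}))$ and $\mathsf{Ind}^{a}(\mathsf{Mod}_{fin}(\mathcal{O}))\hookrightarrow\mathsf{Tate}^{el}(\mathsf{Mod}_{fin}(\mathcal{O}))$; but this version cannot be mapped termwise to $\mathsf{LCA}_{\mathcal{O},cg}\hookrightarrow\mathsf{LCA}_{\mathcal{O}}\twoheadrightarrow\mathsf{LCA}_{\mathcal{O}}/\mathsf{LCA}_{\mathcal{O},cg}$ by evaluation, since $\mathrm{ev}$ carries $\mathsf{Ind}^{a}(\mathsf{Mod}_{fin}(\mathcal{O}))$ to discrete torsion modules, which are in general not compactly generated (e.g.\ $\bigoplus_{\mathbb{N}}\mathbb{Z}/2$). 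You are then forced to pass through the delooping equivalence between $K(\mathsf{Tate})$ and the cofiber of $K(\mathsf{Mod}_{fin}(\mathcal{O}))\rightarrow K(\mathsf{Pro}^{a})$ and to match nullhomotopies by hand, which is exactly the unresolved step you describe. The paper sidesteps this entirely by using Saito's diagram (Diagram \ref{lciv2}): top row $\mathsf{Mod}_{fin}\hookrightarrow\mathsf{Ind}^{a}\twoheadrightarrow\mathsf{Ind}^{a}/\mathsf{Mod}_{fin}$, bottom row $\mathsf{Pro}^{a}\hookrightarrow\mathsf{Tate}\twoheadrightarrow\mathsf{Tate}/\mathsf{Pro}^{a}$, with the right vertical functor an exact equivalence. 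This is strictly parallel to Diagram \ref{ld5}, and the evaluation functors ($\mathsf{Ind}^{a}\rightarrow\mathsf{Mod}(\mathcal{O})$, $\mathsf{Pro}^{a}\rightarrow\mathsf{LCA}_{\mathcal{O},C}\subset\mathsf{LCA}_{\mathcal{O},cg}$, and $\gamma$ on $\mathsf{Tate}$ itself) assemble into a strictly commuting map of $2\times 3$ diagrams of exact categories. Both fiber sequences are then extracted from their bicartesian left squares by the identical procedure, so the compatibility of boundary maps is automatic and $\gamma=K(\mathrm{ev})$ lands in the right slot by construction. With that substitution your argument is the paper's; the remaining verifications (exactness of $\mathrm{ev}$ via properness or lattices, the swindles on $\mathsf{Ind}^{a}$, $\mathsf{Pro}^{a}$, $\mathsf{Mod}(\mathcal{O})$, and the factorization of $\mathsf{Mod}_{fin}(\mathcal{O})$ through compact modules) are as you describe.
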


\begin{proof}
We return to Diagram \ref{ld5}:%
\begin{equation}%
\xymatrix{
{\mathsf{Mod}_{fg}(\mathcal{O})} \ar@{^{(}->}[r] \ar[d] & {\mathsf
{Mod}(\mathcal{O})} \ar@{->>}[r] \ar[d] & {\mathsf{Mod}(\mathcal{O})}%
/{\mathsf{Mod}_{fg}(\mathcal{O})} \ar[d] \\
\mathsf{LCA}_{\mathcal{O},cg} \ar@{^{(}->}[r] & \mathsf{LCA}_{\mathcal{O}}
\ar@{->>}[r] & {\mathsf{LCA}_{\mathcal{O}}}/{\mathsf{LCA}_{\mathcal{O},cg}}.
}
\label{lciv1}%
\end{equation}
with the exact equivalence of Lemma \ref{lemma_equivq} on the right. There is
a quite analogous diagram, first constructed by Sho Saito \cite[end of page
$9$]{MR3317759},%
\begin{equation}%
\xymatrix{
{\mathsf{Mod}_{fin}(\mathcal{O})} \ar@{^{(}->}[r] \ar[d] & \mathsf{Ind}%
^{a}({\mathsf{Mod}_{fin}(\mathcal{O})}) \ar@{->>}[r] \ar[d] & {\mathsf
{Ind}^{a}({\mathsf{Mod}_{fin}(\mathcal{O})})}/{{\mathsf{Mod}_{fin}(\mathcal
{O})}}
\ar[d]^{\sim} \\
\mathsf{Pro}^{a}({\mathsf{Mod}_{fin}(\mathcal{O})}) \ar@{^{(}->}%
[r] & \mathsf{Tate}({\mathsf{Mod}_{fin}(\mathcal{O})}) \ar@{->>}%
[r] & {\mathsf{Tate}({\mathsf{Mod}_{fin}(\mathcal{O})})}/{\mathsf{Pro}%
^{a}({\mathsf{Mod}_{fin}(\mathcal{O})})},
}
\label{lciv2}%
\end{equation}
where both rows are exact sequences of exact categories. The right downward
arrow is an exact equivalence (\cite[Lemma 3.3]{MR3317759}, loc. cit. this is
done for countable diagrams, but it works just as well without a cardinality
constraint, see \cite[Prop. 5.32]{MR3510209}). Next, we note that we can find
exact functors mapping the second diagram commutatively to the former: On the
upper left we just the inclusion $\mathsf{Mod}_{fin}(\mathcal{O}%
)\subset\mathsf{Mod}_{fg}(\mathcal{O})$. Further, there are exact functors of
evaluation%
\[
\alpha:\mathsf{Ind}^{a}(\mathsf{Mod}_{fin}(\mathcal{O}))\longrightarrow
\mathsf{Mod}(\mathcal{O})\qquad\text{and}\qquad\beta:\mathsf{Pro}%
^{a}(\mathsf{Mod}_{fin}(\mathcal{O}))\longrightarrow\mathsf{LCA}%
_{\mathcal{O},C}\text{.}%
\]
(we justify this: The category $\mathsf{Mod}(\mathcal{O})$ is Grothendieck
abelian. Thus, all colimits exist and the evaluation of a colimit is an exact
functor. This settles $\alpha$. Note that $\beta$ unravels to be a functor
$\beta:\mathsf{Ind}^{a}(\mathsf{Mod}_{fin}(\mathcal{O})^{op})^{op}%
\rightarrow\mathsf{LCA}_{\mathcal{O},C}$, and thus setting it up is equivalent
to giving an exact functor $\beta^{op}:\mathsf{Ind}^{a}(\mathsf{Mod}%
_{fin}(\mathcal{O})^{op})\rightarrow\mathsf{LCA}_{\mathcal{O},D} $. The
category $\mathsf{Mod}_{fin}(\mathcal{O})$ is equivalent to its opposite
through Pontryagin duality, and $\mathsf{LCA}_{\mathcal{O},D}\cong
\mathsf{Mod}(\mathcal{O})$, so the same argument as for $\alpha$ settles that
$\beta$ exists and is exact. Finally, $\mathsf{LCA}_{\mathcal{O},C}$ is a full
subcategory of $\mathsf{LCA}_{\mathcal{O},cg}$). Moreover, there is a functor
of evaluation%
\[
\gamma:\mathsf{Tate}^{el}(\mathsf{Mod}_{fin}(\mathcal{O}))\longrightarrow
\mathsf{LCA}_{\mathcal{O}}\text{,}%
\]
defined analogously: We use the presentation of $\mathsf{Tate}^{el}%
(\mathsf{Mod}_{fin}(\mathcal{O}))$ as a full subcategory of $\mathsf{Pro}%
^{a}\mathsf{Ind}^{a}(\mathsf{Mod}_{fin}(\mathcal{O}))$. Every object can be
presented as an admissible Pro diagram $X:I\rightarrow\mathsf{Ind}%
^{a}(\mathsf{Mod}_{fin}(\mathcal{O}))$ such that for all arrows in the diagram
category $I$, we have $X_{i,j}\hookrightarrow X(i)\twoheadrightarrow X(j)$
with $X_{i,j}$ an Artinian $\mathcal{O}$-module. The $X(-)$ are discrete
groups. We note that $X(i)\twoheadrightarrow X(j)$ is a proper map in the
sense of topology, i.e. the preimage of a compact set is compact. This is true
since the compact subsets are precisely the finite ones and since the kernel
$X_{i,j}$ is finite, the preimage of a finite set is still a finite set, thus
compact. It follows that the limit is locally compact, see for example
\cite[Corollary 1.26 (a)]{MR2337107} or \cite[(38.1)\ Lemma]{MR2226087}. As
for $\alpha,\beta$ one finds that $\gamma$ is exact. Finally, as
$\mathsf{LCA}_{\mathcal{O}}$ is idempotent complete, $\gamma$ canonically
extends to the idempotent completions, i.e. $\mathsf{Tate}(\mathsf{Mod}%
_{fin}(\mathcal{O}))$ in the case at hand, \cite[Prop. 6.10]{MR2606234}. The
functors on the quotient categories on the right in Diagrams \ref{lciv1} and
\ref{lciv2} are induced from these functors and the commutativity of the
square on the left. As a result of this, following the construction in the
proof of Theorem \ref{thm_clausen}, we obtain a canonical morphism of fiber
sequences%
\[%
\xymatrix{
K({\mathsf{Mod}_{fin}(\mathcal{O})}) \ar[r] \ar[d]_{\sigma} & 0 \ar
[r] \ar[d] & K({\mathsf{Tate}({{\mathsf{Mod}_{fin}(\mathcal{O})}})}%
) \ar[d]^{\gamma} \\
K(\mathcal{O}) \ar[r] & K(\mathbb{R})^r \oplus K(\mathbb{C})^s \ar
[r] & K(\mathsf{LCA}_{\mathcal{O}}),
}%
\]
We have used the\ Eilenberg swindle to get $K(\mathsf{Pro}^{a}(\mathsf{Mod}%
_{fin}(\mathcal{O})))=0$. This proves our claim.
\end{proof}

\begin{example}
There is also a variant of Diagram \ref{lciv2}, replacing $\mathsf{Mod}_{fin}
$ by $\mathsf{Mod}_{fg}$, but in order to map it to LCA groups, we would need
functors $\mathsf{Ind}^{a}(\mathsf{Mod}_{fg}(\mathcal{O}))\rightarrow
\mathsf{Mod}(\mathcal{O})$ and $\mathsf{Pro}^{a}(\mathsf{Mod}_{fg}%
(\mathcal{O}))\rightarrow\mathsf{LCA}_{\mathcal{O},cg}$. While the former
exists and is even an equivalence of abelian categories, the latter cannot
reasonably be set up. For example, for $\mathcal{O}=\mathbb{Z}$ the infinite
product $\prod_{\mathbb{Z}}\mathbb{Z}$ fails to be compactly generated. So it
is not possible to construct a functor $\mathsf{Tate}(\mathsf{Mod}%
_{fg}(\mathcal{O}))\rightarrow\mathsf{LCA}_{\mathcal{O}}$ by the method of the
above theorem.
\end{example}

\begin{proposition}
[Soul\'{e}]For non-connective $K$-theory $\mathbb{K}$, the map $\sigma
:\mathbb{K}_{i}(\mathsf{Mod}_{fin}(\mathcal{O}))\rightarrow\mathbb{K}%
_{i}(\mathcal{O})$ of the previous theorem is zero for all integers $i\neq0$.
If $F$ has trivial class group, it is also trivial in degree zero.
\end{proposition}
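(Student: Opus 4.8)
The plan is to identify $\sigma$ with the transfer map of a localization sequence and then read off everything from the known $K$-theory of finite fields and of rings of integers. First I would observe that $\mathsf{Mod}_{fin}(\mathcal{O})$ is exactly the Serre subcategory of torsion objects of $\mathsf{Mod}_{fg}(\mathcal{O})$: over a Dedekind domain a finitely generated torsion module is a finite direct sum of modules $\mathcal{O}/\mathfrak{p}^{m}$ and hence finite. The Serre quotient is $\mathsf{Mod}_{fg}(\mathcal{O})/\mathsf{Mod}_{fin}(\mathcal{O})\simeq\mathsf{Mod}_{fg}(F)$, so Quillen's (equivalently Schlichting's) localization theorem yields a fiber sequence of non-connective $K$-theory spectra
\[
\mathbb{K}(\mathsf{Mod}_{fin}(\mathcal{O}))\overset{\sigma}{\longrightarrow}\mathbb{K}(\mathcal{O})\longrightarrow\mathbb{K}(F)\text{,}
\]
where I have used the Resolution Theorem to replace $\mathbb{K}(\mathsf{Mod}_{fg}(\mathcal{O}))$ by $\mathbb{K}(\mathcal{O})$ and $\mathbb{K}(\mathsf{Mod}_{fg}(F))$ by $\mathbb{K}(F)$, both $\mathcal{O}$ and $F$ being regular. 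This is precisely the map $\sigma$ of the theorem. The associated long exact sequence gives $\operatorname{im}(\sigma_{i})=\ker(\mathbb{K}_{i}(\mathcal{O})\to\mathbb{K}_{i}(F))$ for every $i$, so it is enough to understand the groups $\mathbb{K}_{i}(\mathsf{Mod}_{fin}(\mathcal{O}))$ together with these kernels.

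Next I would compute the left-hand groups. Since every finite module has finite support, $\mathsf{Mod}_{fin}(\mathcal{O})$ is the filtered union of the categories of modules supported on a finite set of primes; as $K$-theory commutes with such colimits, dévissage gives $\mathbb{K}_{i}(\mathsf{Mod}_{fin}(\mathcal{O}))\cong\bigoplus_{\mathfrak{p}}K_{i}(\mathbb{F}_{\mathfrak{p}})$ (no negative $K$-groups occur, so connective and non-connective agree here). By Quillen's computation of $K_{*}(\mathbb{F}_{q})$ this group vanishes for $i<0$ and for even $i\geq2$, it equals $\bigoplus_{\mathfrak{p}}\mathbb{Z}$ for $i=0$, and for odd $i=2j-1\geq1$ it is the torsion group $\bigoplus_{\mathfrak{p}}\mathbb{Z}/(q_{\mathfrak{p}}^{j}-1)$. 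In particular $\sigma_{i}=0$ is automatic for $i<0$ and for even $i\geq2$, the source being zero.

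It remains to treat odd $i\geq1$ and $i=0$. For $i=1$ the map $\mathbb{K}_{1}(\mathcal{O})=\mathcal{O}^{\times}\to\mathbb{K}_{1}(F)=F^{\times}$ is injective, so its kernel is trivial and $\sigma_{1}=0$. For odd $i=2j-1\geq3$ the localization map $K_{2j-1}(\mathcal{O})\to K_{2j-1}(F)$ is an isomorphism --- a theorem of Soulé already recalled above, equivalent to surjectivity of the residue map $K_{2j}(F)\to\bigoplus_{\mathfrak{p}}K_{2j-1}(\mathbb{F}_{\mathfrak{p}})$ --- hence its kernel is trivial and $\sigma_{2j-1}=0$. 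Finally, for $i=0$ we have $\mathbb{K}_{0}(\mathcal{O})\cong\mathbb{Z}\oplus\operatorname{Cl}(\mathcal{O})$ and $\mathbb{K}_{0}(F)\cong\mathbb{Z}$, and $\sigma_{0}$ sends the class $[\mathbb{F}_{\mathfrak{p}}]$ to $[\mathcal{O}/\mathfrak{p}]=[\mathcal{O}]-[\mathfrak{p}]$, i.e. to $(0,-[\mathfrak{p}])$; since every ideal class is represented by a prime ideal, $\operatorname{im}(\sigma_{0})=0\oplus\operatorname{Cl}(\mathcal{O})$, which vanishes precisely when $\operatorname{Cl}(\mathcal{O})=0$.

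The only non-formal input is the isomorphism $K_{2j-1}(\mathcal{O})\xrightarrow{\sim}K_{2j-1}(F)$ for $j\geq2$; this is the heart of the statement and the reason it is attributed to Soulé, but since it is a known deep result that is invoked elsewhere in the paper, no genuine obstacle arises here. Everything else reduces to the dévissage/localization formalism together with the elementary facts that $\mathcal{O}^{\times}\hookrightarrow F^{\times}$ and that the primes of $\mathcal{O}$ surject onto its ideal classes.
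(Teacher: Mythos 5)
Your proposal is correct and follows essentially the same route as the paper: reduce to connective $K$-theory, use the localization sequence for $\mathsf{Mod}_{fin}(\mathcal{O})\hookrightarrow\mathsf{Mod}_{fg}(\mathcal{O})\twoheadrightarrow\mathsf{Mod}_{fg}(F)$, apply d\'evissage and Quillen's computation for finite fields, and invoke Soul\'e in odd degrees. The only cosmetic difference is that you phrase the odd-degree vanishing via injectivity of $K_{i}(\mathcal{O})\rightarrow K_{i}(F)$ while the paper phrases it via surjectivity of the boundary map; these are equivalent by exactness of the localization sequence.
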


\begin{proof}
Temporarily write $K$ for connective $K$-theory. Firstly, $\mathsf{Mod}%
_{fin}(\mathcal{O})$ is a Noetherian abelian category, so by Schlichting's
Connectivity Theorem \cite[Theorem 7]{MR2206639} we have $\mathbb{K}%
_{i}(\mathsf{Mod}_{fin}(\mathcal{O}))=0$ for $i<0$; and moreover
$\mathbb{K}_{0}(\mathsf{Mod}_{fin}(\mathcal{O}))=K_{0}(\mathsf{Mod}%
_{fin}(\mathcal{O}))$, \cite[Remark 3]{MR2206639}; and $\mathbb{K}%
_{i}(\mathsf{Mod}_{fin}(\mathcal{O}))=K_{i}(\mathsf{Mod}_{fin}(\mathcal{O}))$
for $i\geq1$ anyway. Finally, $\mathcal{O}$ is a regular ring, so
$\mathbb{K}_{i}(\mathcal{O})=K_{i}(\mathcal{O})$. Thus, it suffices to prove
the claim for connective $K$-theory. For this, we follow the presentation in
Weibel \cite[Chapter V, \S 6]{MR3076731}: For every number field $F$ and ring
of integers $\mathcal{O}$, we have the exact sequence $\mathsf{Mod}%
_{fin}(\mathcal{O})\hookrightarrow\mathsf{Mod}_{fg}(\mathcal{O}%
)\twoheadrightarrow\mathsf{Mod}_{fg}(F)$ of abelian categories, and the
resulting localization sequence reads%
\[
\cdots\longrightarrow K_{i+1}(\mathcal{O})\longrightarrow K_{i+1}%
(F)\overset{\partial_{x}^{F}}{\longrightarrow}K_{i}(\mathsf{Mod}%
_{fin}(\mathcal{O}))\overset{\iota_{i}}{\longrightarrow}K_{i}(\mathcal{O}%
)\longrightarrow\cdots\text{.}%
\]
D\'{e}vissage yields%
\[
K_{i}(\mathsf{Mod}_{fin}(\mathcal{O}))\cong\coprod_{x\in(\operatorname*{Spec}%
\mathcal{O})_{(0)}}K_{i}(\kappa(x))\text{,}%
\]
the $\kappa(x)$ are finite fields, so by Quillen's computation of the
$K$-theory of finite fields we deduce that $\iota_{i}=0$ for $i\geq2$ even
since $K_{i}(\mathbb{F}_{q})=0$ for these $i$. On the other hand, Soul\'{e}
shows that for $i$ odd, $i\geq1$, the sequence splits into short exact
sequences, so that $\coprod\partial_{x}^{F}$ is surjective, cf. \cite[Ch. V,
\S 6, Theorem 6.8]{MR3076731}. Thus, $\iota_{i}=0$ for $i\geq1$ odd. For
$i=0$, we get $F^{\times}\overset{\partial_{x}^{F}}{\longrightarrow}%
\coprod_{x}\mathbb{Z}\overset{\iota_{0}}{\longrightarrow}K_{0}(\mathcal{O}%
)\overset{\alpha}{\longrightarrow}K_{0}(F)\longrightarrow0$ and $K_{0}%
(\mathcal{O})\cong\mathbb{Z}\oplus\operatorname*{Cl}(\mathcal{O})$. Thus, as
$\alpha$ is the rank map, it follows that $\iota_{0}$ is a surjection on the
class group of $F$.
\end{proof}

\begin{remark}
Thus, we obtain a lift%
\[
\mathbb{K}_{i}(\mathsf{Tate}(\mathsf{Mod}_{fin}(\mathbb{Z})))\longrightarrow
\mathbb{K}_{i}(\mathbb{R})/\operatorname*{im}(\mathbb{K}_{i}(\mathbb{Z}%
))\text{.}%
\]

\end{remark}%

\appendix

\section{\label{sect_AppendixInjectives}Characterization of injectives and
projectives}

Next, we need to recall the structure of the injective and projective objects
in $\mathsf{LCA}_{\mathcal{O}}$.\ See \cite[\S 11]{MR2606234} for a discussion
of these concepts in exact categories. For $\mathsf{LCA}$ these results are
due to Moskowitz \cite{MR0215016}, and for $\mathsf{LCA}_{\mathcal{O}}$ due to
Kryuchkov \cite{MR1620000}. Unfortunately, Kryuchkov's results appear to be
unknown to a wider audience. The paper was published over 20 years ago and
seems not to have been cited in any research article since then. We miss out
on some interesting results. Below, we give an exposition of how Kryuchkov's
more general results can be obtained if we take the ones for $\mathsf{LCA}$ in
\cite{MR0215016} for granted. We claim no originality for this whatsoever; the
strategy follows \cite{MR0215016} and \cite{MR1620000}.

\begin{lemma}
\label{lemma_InjectivesAreConnected}Suppose $I$ is an injective object in
$\mathsf{LCA}_{\mathcal{O}}$. Then $I$ is connected and injective as an
algebraic $\mathcal{O}$-module.
\end{lemma}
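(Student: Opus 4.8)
The plan is to establish the two conclusions independently, arguing in both cases straight from the definition of injectivity and using only the Minkowski sequence from \S\ref{sect_LocCompModules} together with the fact that discrete $\mathcal{O}$-modules form a full exact subcategory of $\mathsf{LCA}_{\mathcal{O}}$ which is isomorphic to $\mathsf{Mod}(\mathcal{O})$. In particular I would deliberately avoid invoking Kryuchkov's classification of injectives (Theorem~\ref{thm_InjectivesInLCAO}), since the present lemma is meant to feed into its proof.

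\emph{Connectedness.} Fix an arbitrary $x\in I$ and consider the $\mathcal{O}$-linear map $f_{x}\colon\mathcal{O}\to I$, $a\mapsto a\cdot x$; it is automatically continuous because $\mathcal{O}$ carries the discrete topology. The Minkowski sequence of $\mathcal{O}$, i.e. \eqref{lMinkSeqForIdeals} with $J=\mathcal{O}$, exhibits $M\colon\mathcal{O}\hookrightarrow\bigoplus_{\sigma\in S}\mathbb{R}_{\sigma}$ as an admissible monic in $\mathsf{LCA}_{\mathcal{O}}$. As $I$ is injective, $f_{x}$ extends along $M$ to a continuous morphism $\tilde f_{x}\colon\bigoplus_{\sigma\in S}\mathbb{R}_{\sigma}\to I$ with $\tilde f_{x}(M(1))=x$. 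Now $\bigoplus_{\sigma\in S}\mathbb{R}_{\sigma}$ is a nonempty connected LCA group: every number field has an archimedean place, and each $\mathbb{R}_{\sigma}$ is $\mathbb{R}$ or $\mathbb{C}$. Hence its image under $\tilde f_{x}$ is a connected subset of $I$ containing both $0$ and $x$, so $x$ lies in the connected component of $0$. Since $x$ was arbitrary, $I$ is connected.

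\emph{Algebraic injectivity.} Here I would check the extension property for an arbitrary monomorphism of $\mathcal{O}$-modules $N\hookrightarrow N'$ and an arbitrary $\mathcal{O}$-linear map $g\colon N\to I$. Endowing $N$ and $N'$ with the discrete topology places them in $\mathsf{LCA}_{\mathcal{O}}$. The inclusion $N\hookrightarrow N'$ is then a closed $\mathcal{O}$-linear injection whose cokernel $N'/N$ is discrete, hence locally compact; thus $N\hookrightarrow N'$ is the kernel of the admissible epic $N'\twoheadrightarrow N'/N$, and therefore itself an admissible monic. The map $g$ is continuous because its source is discrete. Injectivity of $I$ in $\mathsf{LCA}_{\mathcal{O}}$ now produces a continuous $\mathcal{O}$-linear extension $N'\to I$, which, forgetting topology, is an $\mathcal{O}$-linear extension of $g$. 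As $N\hookrightarrow N'$ and $g$ were arbitrary, $I$ is an injective object of $\mathsf{Mod}(\mathcal{O})$; equivalently, one could run this only for ideals $N=J\subseteq\mathcal{O}=N'$ and appeal to Baer's criterion.

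I do not expect a real obstacle here. The only points needing a word of care are that $f_{x}$ and $g$ are continuous (automatic, their sources being discrete), that $N\hookrightarrow N'$ is genuinely admissible in $\mathsf{LCA}_{\mathcal{O}}$, and that $\bigoplus_{\sigma\in S}\mathbb{R}_{\sigma}$ is nonempty and connected. As an alternative route to connectedness one could instead show that the forgetful functor $\mathsf{LCA}_{\mathcal{O}}\to\mathsf{LCA}$ preserves injectives — it admits the exact left adjoint $G\mapsto\mathcal{O}\otimes_{\mathbb{Z}}G\cong G^{\oplus[F:\mathbb{Q}]}$ — and then invoke Moskowitz's classification \cite{MR0215016}; but the argument above is shorter and self-contained.
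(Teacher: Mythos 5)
Your proof is correct, and the connectedness half is essentially identical to the paper's: extend $a\mapsto a\cdot x$ along the Minkowski embedding $\mathcal{O}\hookrightarrow\bigoplus_{\sigma\in S}\mathbb{R}_{\sigma}$ and use that the source of the lift is connected. The algebraic-injectivity half takes a genuinely different route. The paper only tests injectivity against the sequences $\mathcal{O}\overset{\cdot a}{\hookrightarrow}\mathcal{O}\twoheadrightarrow\mathcal{O}/a$ to conclude that $I$ is divisible as an $\mathcal{O}$-module, and then invokes the fact that divisible modules over a Dedekind domain are injective. You instead observe that \emph{every} monomorphism of discrete $\mathcal{O}$-modules $N\hookrightarrow N'$ is an admissible monic in $\mathsf{LCA}_{\mathcal{O}}$ (closed, since everything is discrete, with discrete quotient) and that maps out of discrete modules are automatically continuous, so topological injectivity directly yields the algebraic extension property, either in full or via Baer's criterion. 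Your argument is slightly longer to set up but buys generality: it never uses that $\mathcal{O}$ is Dedekind and would show that an injective object of $\mathsf{LCA}_{R}$ is algebraically injective for any ring $R$, whereas the paper's divisibility shortcut is specific to Dedekind domains. Both are valid here; the side remark about the forgetful functor preserving injectives via the exact left adjoint $G\mapsto\mathcal{O}\otimes_{\mathbb{Z}}G$ is also a legitimate alternative for connectedness, though it leans on Moskowitz's classification rather than being self-contained.
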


\begin{proof}
Connectedness: Let $y\in I$ be arbitrary. Consider the Minkowski sequence for
$\mathcal{O}$ itself,\ Equation \ref{lMinkSeqForIdeals},%
\[%
\xymatrix{
0 \ar[r] & \mathcal{O} \ar[r] \ar[d]_{g} & \bigoplus_{\sigma}\mathbb
{R}_{\sigma} \ar[r] \ar@{-->}[dl]^{\tilde{g}} & \mathbb{T}_{\mathcal{O} }
\ar[r] & 0\\
& I,
}%
\]
where $g(r):=ry$. As $I$ is injective, $\tilde{g}$ exists. However,
$\bigoplus_{\sigma}\mathbb{R}_{\sigma}$ is connected and $\tilde{g}$
continuous, so the image of $\tilde{g}$ lies in the connected component of
zero in $I$. However, $y$ lies in this image and $y$ was arbitrary, so this
connected component is the entire group. Injectivity: Similarly, consider the
sequence $\mathcal{O}\overset{\cdot a}{\hookrightarrow}\mathcal{O}%
\twoheadrightarrow\mathcal{O}/a$ with $g:\mathcal{O}\rightarrow I$ as before.
As $I$ is injective, we get an analogous lift $\tilde{g}:\mathcal{O}%
\rightarrow I$ so that $y=g(1)=\tilde{g}(a)=a\tilde{g}(1)$, showing that $y$
is a multiple of $a$. As $y$ was arbitrary, it follows that $I$ is divisible
as an $\mathcal{O}$-module and since $\mathcal{O}$ is Dedekind, it is an
injective module.
\end{proof}

\begin{lemma}
\label{lemma_InjectiveObjectsAreDetectedOnCG}Let $I\in\mathsf{LCA}%
_{\mathcal{O}}$ be an object such that for every diagram given by the solid
arrows in%
\[%
\xymatrix{
0 \ar[r] & H \ar[r] \ar[d]_{g} & G \ar[r] \ar@{-->}[dl]^{\tilde{g}}
& G/H \ar[r] & 0\\
& I,
}%
\]
and $G\in\mathsf{LCA}_{\mathcal{O},cg}$, the dashed lift $\tilde{g}$ exists.
Then $I$ is an injective object in $\mathsf{LCA}_{\mathcal{O}}$. That is:
Being injective can be detected by being injective for compactly generated modules.
\end{lemma}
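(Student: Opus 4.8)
The plan is to split the general injectivity lifting problem into a piece controlled by the hypothesis and a transfinite piece controlled by Zorn's lemma. First I would record that the hypothesis already forces the underlying algebraic $\mathcal{O}$-module of $I$ to be injective: by Baer's criterion it suffices to extend an arbitrary $\mathcal{O}$-module map $\mathfrak{a}\to I$ from an ideal $\mathfrak{a}\subseteq\mathcal{O}$ to a map $\mathcal{O}\to I$, and equipping $\mathfrak{a},\mathcal{O}$ with the discrete topology we have an admissible monic $\mathfrak{a}\hookrightarrow\mathcal{O}$ in $\mathsf{LCA}_{\mathcal{O}}$ whose target $\mathcal{O}$ is compactly generated (being finitely generated over $\mathbb{Z}$); any map out of the discrete object $\mathfrak{a}$ is automatically continuous, so the hypothesis supplies the desired extension.

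Now let $H\hookrightarrow G$ be an admissible monic in $\mathsf{LCA}_{\mathcal{O}}$ and $g\colon H\to I$ a morphism. By Lemma~\ref{lemma_ExistClopenCGSubmodule} there is a clopen compactly generated submodule $G_{0}\subseteq G$. Then $H\cap G_{0}$ is closed in $G_{0}$, so $H\cap G_{0}\hookrightarrow G_{0}$ is an admissible monic with compactly generated target, and the hypothesis extends $g|_{H\cap G_{0}}$ to some $\tilde{g}_{0}\colon G_{0}\to I$. Since $g$ and $\tilde{g}_{0}$ agree on $H\cap G_{0}$, the assignment $h+x\mapsto g(h)+\tilde{g}_{0}(x)$ is a well-defined $\mathcal{O}$-module homomorphism on $H+G_{0}$; it is continuous because $G_{0}$ is open in the clopen submodule $H+G_{0}$, so that this map is continuous on each coset $h+G_{0}$. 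Replacing $H$ by $H+G_{0}\supseteq G_{0}$ and $g$ by this extension, we may henceforth assume that $H\subseteq G$ is clopen, i.e.\ $G/H$ is discrete.

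In this situation every submodule $H'$ with $H\subseteq H'\subseteq G$ is open, hence clopen, and an increasing union of such submodules is again open. Thus the poset of pairs $(H',g')$, where $H\subseteq H'\subseteq G$ and $g'\colon H'\to I$ extends $g$, has upper bounds for chains: glue the morphisms over the union, the result being continuous since each member of the chain is open in the union. By Zorn's lemma there is a maximal pair $(H^{\ast},g^{\ast})$. If $H^{\ast}\neq G$, pick $x\in G\setminus H^{\ast}$ and put $H^{\ast\ast}:=H^{\ast}+\mathcal{O}x$; then $H^{\ast}$ is open in the clopen submodule $H^{\ast\ast}$, and $H^{\ast\ast}/H^{\ast}$, being a subobject of the discrete group $G/H^{\ast}$, is discrete. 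Since the underlying algebraic module of $I$ is injective, $g^{\ast}$ extends to an $\mathcal{O}$-module homomorphism $H^{\ast\ast}\to I$, and this is continuous because $H^{\ast}$ is open in $H^{\ast\ast}$ and the map is continuous on each coset. This contradicts maximality, so $H^{\ast}=G$, and $g$ extends to $G$; hence $I$ is injective.

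The main obstacle is the interplay between topology and algebra: a chain of closed submodules of $G$ need not have closed union, so Zorn's lemma cannot be applied directly to arbitrary admissible subobjects. This is exactly why the preliminary reduction (via Lemma~\ref{lemma_ExistClopenCGSubmodule}) to the case where $G/H$ is discrete is indispensable — once every intermediate submodule is open, both the existence of upper bounds for chains and the continuity of the glued and Baer-extended maps come for free.
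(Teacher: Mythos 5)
Your proof is correct and follows essentially the same strategy as the paper's: first extract algebraic injectivity of $I$ from the hypothesis, then use the clopen compactly generated submodule of Lemma \ref{lemma_ExistClopenCGSubmodule} to solve the lifting problem on the compactly generated part, glue over $H+G_{0}$, and finish using algebraic injectivity together with openness of $H+G_{0}$. The only cosmetic differences are that you obtain algebraic injectivity via Baer's criterion rather than via divisibility, and that your final Zorn's lemma argument is unnecessary: since $I$ is algebraically injective, the continuous map on the open submodule $H+G_{0}$ extends in a single step to an algebraic homomorphism on all of $G$, which is automatically continuous because it restricts to a continuous map on an open subgroup.
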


\begin{proof}
We sketch the argument for completeness, but the argument is exactly the same
as used by Moskowitz and also Kryuchkov: Firstly, we claim that $I$ is
injective as an algebraic $\mathcal{O}$-module, without topology. For this
copy the argument in the proof of Lemma \ref{lemma_InjectivesAreConnected},
noting that it only uses injectivity for exact sequences whose middle term is
compactly generated, so by our assumptions the same argument is valid in this
case. Secondly, pick a compactly generated clopen $\mathcal{O}$-submodule $L$,
which is possible by Lemma \ref{lemma_ExistClopenCGSubmodule}. Then $L\cap
H\hookrightarrow L\twoheadrightarrow L/(L\cap H)$ is an exact sequence in
$\mathsf{LCA}_{\mathcal{O}}$. By assumption the restriction of $f $ to $L\cap
H$, call it $f_{1}:L\cap H\rightarrow I$, has a lift $\tilde{f}_{1}%
:L\rightarrow I$ in $\mathsf{LCA}_{\mathcal{O}}$. Now consider the exact
sequence $L\cap H\hookrightarrow L\oplus H\twoheadrightarrow L+H$ in
$\mathsf{LCA}_{\mathcal{O}}$ with $x\mapsto(x,-x)$ and the sum map as arrows.
Define $f_{2}:L\oplus H\rightarrow I$ by $\tilde{f}_{1}+f$. As this map
restricts to the zero map on $L\cap H$, we get a (continuous) factorization%
\begin{equation}
f_{3}:L+H\longrightarrow I\qquad\text{with}\qquad f_{3}(l+h)=\tilde{f}%
_{1}(l)+f(h) \label{lcef1}%
\end{equation}
for any sum decomposition $l+h$ with $l\in L$ and $h\in H$. We then get a
further exact sequence $L+H\hookrightarrow G\twoheadrightarrow D$ with $D$
discrete. On the level of algebraic $\mathcal{O}$-modules, as $I$ is
injective, there exists an algebraic lift $\tilde{f}_{3}:G\rightarrow H$
extending $f_{3}$. As $L$ was open in $G$, so is $L+H$. Thus, checking
continuity of $\tilde{f}_{3}$ amounts to checking it on $L+H$ as $G/(L+H)$ is
discrete. However, on $L+H$ it agrees with $f_{3}$, which we know is
continuous. Thus, $\tilde{f}_{3}$ is a morphism in $\mathsf{LCA}_{\mathcal{O}%
}$. By the description of Equation \ref{lcef1} with $l=0$ it follows that
$\tilde{f}_{3}\mid_{H}=f$, so $\tilde{f}_{3}$ is the desired lift.
\end{proof}

\begin{lemma}
\label{lemma_RSigmaAreInjectives}Let $\nu$ a real or complex place. For every
diagram given by the solid arrows in%
\[%
\xymatrix{
0 \ar[r] & G_1 \ar[r] \ar[d]_{g} & G_2 \ar[r] \ar@{-->}[dl]^{\tilde{g}}
& {G_2}/{G_1} \ar[r] & 0\\
& \mathbb{R}_{\nu},
}%
\]
and $G_{2}\in\mathsf{LCA}_{\mathcal{O},cg}$, a lift $\tilde{g}$ as indicated
by the dashed arrow exists.
\end{lemma}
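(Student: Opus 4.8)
The plan is to separate the algebraic from the topological content. On the level of abstract modules the statement is trivial: since the $\mathcal{O}$-action on $\mathbb{R}_\nu$ factors through $\nu\colon\mathcal{O}\to\mathbb{R}$ (resp. $\mathbb{C}$), the module $\mathbb{R}_\nu$ is an $F$-vector space, hence divisible, hence injective as an $\mathcal{O}$-module because $\mathcal{O}$ is Dedekind (this is the argument already used in the injectivity part of Lemma~\ref{lemma_InjectivesAreConnected}). So an $\mathcal{O}$-linear lift of $g$ always exists abstractly, and the entire issue is to make it continuous. I will arrange this by two reductions, each of which pushes the remaining freedom onto a piece where continuity is automatic.

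First I would strip off the discrete direction of $G_2$. By Proposition~\ref{prop_MoskClassifNSSandCGGroups}, $G_2\simeq\bigoplus_\sigma\mathbb{R}_\sigma\oplus\bigoplus_J J\oplus C$ with $C$ compact and finitely many discrete ideals $J$; set $G_2':=\bigoplus_\sigma\mathbb{R}_\sigma\oplus C$, a clopen $\mathcal{O}$-submodule with $G_2/G_2'$ discrete. Assume for the moment that $g|_{G_1\cap G_2'}$ can be lifted to a continuous $\mathcal{O}$-linear $h'\colon G_2'\to\mathbb{R}_\nu$ (this is the ``$\mathbb{R}C$ case'' handled below). Then $x+y\mapsto g(x)+h'(y)$, for $x\in G_1$, $y\in G_2'$, is a well-defined $\mathcal{O}$-linear map $h\colon G_1+G_2'\to\mathbb{R}_\nu$ restricting to $g$ on $G_1$; it is continuous because it restricts to the continuous $h'$ on the open subgroup $G_2'$, and a homomorphism continuous on an open subgroup is continuous. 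Finally $G_1+G_2'$ is a clopen submodule of $G_2$ with discrete quotient, so any abstract $\mathcal{O}$-linear extension $\tilde g\colon G_2\to\mathbb{R}_\nu$ of $h$ (which exists by the injectivity of $\mathbb{R}_\nu$ as a module) is automatically continuous, again by continuity on the open subgroup $G_1+G_2'$.

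It remains to treat $G_2\in\mathsf{LCA}_{\mathcal{O},\mathbb{R}C}$, say $G_2=V\oplus C$ with $V$ a vector $\mathcal{O}$-module and $C$ compact. Here $G_1\cap C$ is compact, so $g(G_1\cap C)$ is a compact subgroup of the vector group $\mathbb{R}_\nu$, hence trivial; thus $g$ factors through $G_1/(G_1\cap C)$. Since the projection $V\oplus C\twoheadrightarrow V$ is a closed map (the omitted factor $C$ is compact) and $G_1$ is closed and $\sigma$-compact, the open mapping theorem for $\sigma$-compact locally compact groups identifies $G_1/(G_1\cap C)$ with a closed $\mathcal{O}$-submodule $\bar G_1$ of $V$; hence it suffices to lift the induced map $\bar g\colon\bar G_1\to\mathbb{R}_\nu$ along $\bar G_1\hookrightarrow V$ and precompose with $G_2\twoheadrightarrow V$. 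Now Proposition~\ref{Prop_ClosedSubobjectsOfVectorModules} writes the admissible monic $\bar G_1\hookrightarrow V$ as a finite $\mathcal{O}$-linear direct sum of the three elementary types, so I only need a lift along each: along $\mathbb{R}_\sigma\xrightarrow{1}\mathbb{R}_\sigma$ take $\bar g$ itself; along $0\hookrightarrow\mathbb{R}_\sigma$ take the zero map; and along a Minkowski embedding $J\hookrightarrow\bigoplus_{\sigma\in S}\mathbb{R}_\sigma$, extend $\bar g|_J$ to $J\otimes_{\mathcal{O}}F=F\to\mathbb{R}_\nu$, which by $F$-linearity is necessarily $x\mapsto c\,\nu(x)$ for a fixed $c\in\mathbb{R}_\nu$, and then use the $\mathcal{O}$-algebra isomorphism $\bigoplus_{\sigma\in S}\mathbb{R}_\sigma\cong F\otimes_{\mathbb{Q}}\mathbb{R}\cong\prod_{v\mid\infty}F_v$, projecting onto the $\nu$-factor (which is $\mathbb{R}_\nu$) and multiplying by $c$; this is $\mathbb{R}$-linear, continuous and $\mathcal{O}$-linear, and restricts to $\bar g|_J$ on $J$. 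The direct sum of these partial lifts is the required lift on $V$.

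The conceptual input is therefore minimal: ``$F$-vector space $\Rightarrow$ injective $\mathcal{O}$-module'', together with the already available structure of compactly generated $\mathcal{O}$-modules and of closed submodules of vector $\mathcal{O}$-modules. I expect the only genuine work, and the place where care is needed, to be the bookkeeping in the two reductions: verifying that the glued maps are well defined, $\mathcal{O}$-linear and continuous, and that each intermediate inclusion ($G_1\cap G_2'\hookrightarrow G_2'$, $G_1+G_2'\hookrightarrow G_2$, $\bar G_1\hookrightarrow V$) is an admissible monic, i.e. a closed submodule, so that Proposition~\ref{Prop_ClosedSubobjectsOfVectorModules} and the discrete-quotient continuity arguments actually apply.
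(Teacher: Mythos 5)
Your proposal is correct, and every ingredient it uses is one the paper itself relies on: the structure theorem for compactly generated modules (Proposition \ref{prop_MoskClassifNSSandCGGroups}), the fact that $\mathbb{R}_{\nu}$ has no nontrivial compact subgroups, Proposition \ref{Prop_ClosedSubobjectsOfVectorModules}, and the algebraic injectivity of $\mathbb{R}_{\nu}$ as a divisible module over the Dedekind ring $\mathcal{O}$. The route differs from the paper's in how the reductions are ordered. The paper applies the structure theorem to \emph{both} $G_{1}$ and $G_{2}$, splits off the compact subobjects $C_{1}\subseteq C_{2}$ (using that $W_{2}=G_{2}/C_{2}$ has no nontrivial compact subgroups), and is then left with an admissible monic $W_{1}\hookrightarrow W_{2}$ between modules of the shape $\bigoplus\mathbb{R}_{\sigma}\oplus\bigoplus J$; since $W_{2}$ still carries discrete ideal summands, it \emph{embeds} $W_{2}$ into a vector module via a Minkowski embedding, lifts there by Proposition \ref{Prop_ClosedSubobjectsOfVectorModules}, and restricts. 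You instead first dispose of the discrete quotient of $G_{2}$ by the ``extend algebraically; continuity is automatic on an open subgroup'' device --- which the paper deploys not here but in Lemma \ref{lemma_InjectiveObjectsAreDetectedOnCG} --- and then dispose of the compact summand by \emph{projecting} onto $V$ rather than embedding into a larger vector module. Both routes terminate in the same classification of closed submodules of vector $\mathcal{O}$-modules, and your explicit description of the lift along a Minkowski embedding as $x\mapsto c\,\nu(x)$ is exactly what the paper's Special Case 1 amounts to. The two points where care is genuinely needed are ones you already flag: well-definedness of the glued map $x+y\mapsto g(x)+h'(y)$ requires $h'$ to restrict to $g$ on $G_{1}\cap G_{2}'$ (which your formulation of the $\mathbb{R}C$ case guarantees), and the identification $G_{1}/(G_{1}\cap C)\cong\bar{G}_{1}$ needs both that the projection with compact kernel is a closed map and that $G_{1}$ is $\sigma$-compact; the latter holds because a closed subgroup of a compactly generated group is compactly generated.
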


\begin{proof}
\textit{(Preparation) }Note that closed subgroups of compactly generated LCA
groups are also compactly generated, \cite[Theorem 2.6]{MR0215016}. Thus,
$G_{1}\in\mathsf{LCA}_{\mathcal{O}}$. We apply the structure theorem
(Proposition \ref{prop_MoskClassifNSSandCGGroups}) to both, writing the
compact summand as a subobject, resulting in the solid arrows of the following
diagram%
\[%
\xymatrix{
C_1 \ar@{^{(}->}[d] \ar@{-->}[r]^{\beta} & C_2 \ar@{^{(}->}[d] \\
G_1 \ar@{^{(}->}[r] \ar@{->>}[d] & G_2 \ar@{->>}[d] \\
W_1 \ar@{-->}[r]^{\alpha} & W_2,
}%
\]
where $W_{i}\simeq\bigoplus_{\sigma\in I_{i}}\mathbb{R}_{\sigma}%
\oplus\bigoplus_{J\in\mathcal{I}_{i}}J$ for $i=1,2$. Note that $W_{i}$ has no
non-trivial compact subgroups (this is classical for $\mathbb{R}$, and each
$J$ is a free $\mathbb{Z}$-module). Thus the diagonal map $C_{1}\rightarrow
W_{2}$ must be the zero map. Thus, $\beta$ exists by universal property, and
thus $\alpha$, giving the dashed arrows. Moreover, $\beta$ is an admissible
monic by \cite[Cor. 7.7]{MR2606234} (applied to the opposite category).
Moreover, the kernel of $\alpha$ is $W_{1}\cap C_{2}$ (using that $W_{1}$ is
also a subobject), and since $C_{2}$ is compact, this must be the zero object.
This does not yet suffice to know that $\alpha$ is an admissible monic: To
this end, use that $W_{1}$ is a subobject, so $W_{1}\hookrightarrow G_{1}$ is
a closed map, $G_{1}\hookrightarrow G_{2}$ is a closed map, and $G_{2}%
\twoheadrightarrow W_{2}$ is also closed since $C_{2} $ is compact (this
really needs compactness; see\ \cite[Prop. 11]{MR0442141} for a proof). Hence,
the composition $\alpha:W_{1}\rightarrow W_{2}$ is closed. Thus, $\alpha$ is
an admissible monic. From here, we proceed by a step of reductions:\newline%
\textit{(Special Case 1, }$C_{1}=C_{2}=0$\textit{\ and }$W_{2}$\textit{\ is a
vector }$\mathcal{O}$\textit{-module)} In this case $G_{i}=W_{i}$. Consider
the admissible monic $\alpha:W_{1}\hookrightarrow W_{2}$. We can clearly lift
maps along this embedding,%
\[%
\xymatrix{
0 \ar[r] & W_1 \ar[r] \ar[d]_{g} & W_2 \ar[r] \ar@{-->}[dl]^{\tilde{g}}
& {W_2}/{W_1} \ar[r] & 0\\
& \mathbb{R}_{\nu},
}%
\]
because Proposition \ref{Prop_ClosedSubobjectsOfVectorModules} decomposes it
into convenient direct summands: Since lifting along $\mathbb{R}_{\sigma
}\overset{1}{\rightarrow}\mathbb{R}_{\sigma}$ is trivial, and lifting along
$J\hookrightarrow%
{\textstyle\bigoplus_{\sigma\in I^{\prime}}}
\mathbb{R}_{\sigma}$ just amounts to extending the map defined on $J$ in an
$\mathbb{R}$-linear fashion to the right-hand side, the existence of the lift
is clear. So the construction of the lift $\tilde{g}$ just amounts to scalar
extension.\newline\textit{(Special Case 2, }$C_{1}=C_{2}=0$\textit{)} We have%
\[
W_{2}=\bigoplus_{\sigma\in I_{i}}\mathbb{R}_{\sigma}\oplus\bigoplus
_{J\in\mathcal{I}_{i}}J
\]
and again by the Minkowski embedding as in Equation \ref{lMinkSeqForIdeals} we
may embed $W_{2}$ in a vector $\mathcal{O}$-module. Thanks to \textit{Special
Case 1}, we already know how to lift a map in the case $W_{2}$ is a vector
$\mathcal{O}$-module. Thus, we use this lift and restrict it to $W_{2} $ lying
inside of it.\newline\textit{(General Case)} Consider $g:G_{1}\rightarrow
\mathbb{R}_{\nu}$. Clearly $g(C_{1})=0$ since the image of $C_{1}$ must be
compact, but $\mathbb{R}_{\nu}$ has no non-trivial compact subgroups. Thus,
$g$ factors to $W_{1}\rightarrow\mathbb{R}_{\nu}$. By Special Case 2 we
already know how to lift maps along $W_{1}\hookrightarrow W_{2}$, so we obtain
a lift $\tilde{g}:W_{2}\longrightarrow\mathbb{R}_{\nu}$ and thus are in the
situation%
\[%
\xymatrix{
G_1 \ar@{^{(}->}[rr] \ar@{->>}[d] \ar[ddr]^{g} & & G_2 \ar@{->>}[d] \\
W_1 \ar@{^{(}->}[rr]^{\alpha} \ar@{-->}[dr]_{g} & & W_2 \ar@{-->}%
[dl]^{\tilde{g}} \\
& \mathbb{R}_{\nu},
}%
\]
producing a lift to $G_{2}$ by composing the maps along the right edge.
\end{proof}

\begin{lemma}
\label{lemma_TJAreInjectives}Let $J$ be a non-zero ideal of $\mathcal{O}$. For
every diagram given by the solid arrows in%
\[%
\xymatrix{
0 \ar[r] & G_1 \ar[r] \ar[d]_{g} & G_2 \ar[r] \ar@{-->}[dl]^{\tilde{g}}
& {G_2}/{G_1} \ar[r] & 0\\
& \mathbb{T}_{J },
}%
\]
and $G_{2}\in\mathsf{LCA}_{\mathcal{O},cg}$, a lift $\tilde{g}$ as indicated
by the dashed arrow exists.
\end{lemma}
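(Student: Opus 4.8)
The plan is to trade the lifting problem for $\mathbb{T}_{J}$ against admissible monics for a lifting problem for the Pontryagin dual $\mathbb{T}_{J}^{\vee}$ against admissible epics, where the topology becomes irrelevant. Concretely, apply the exact anti-equivalence $(-)^{\vee}$ of Theorem \ref{thm_PontrDualForLCAO}: the admissible monic $\iota\colon G_{1}\hookrightarrow G_{2}$ becomes an admissible epic $\iota^{\vee}\colon G_{2}^{\vee}\twoheadrightarrow G_{1}^{\vee}$, and by Proposition \ref{prop_MoskowitzInterchange} the hypothesis $G_{2}\in\mathsf{LCA}_{\mathcal{O},cg}$ becomes $G_{2}^{\vee}\in\mathsf{LCA}_{\mathcal{O},nss}$ (though in fact this last point will not be needed). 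The lift $\tilde{g}\colon G_{2}\to\mathbb{T}_{J}$ we want is, by reflexivity, the Pontryagin dual of a morphism $\ell\colon\mathbb{T}_{J}^{\vee}\to G_{2}^{\vee}$ satisfying $\iota^{\vee}\circ\ell=g^{\vee}$. So it suffices to produce such an $\ell$, i.e.\ to lift $g^{\vee}$ along the admissible epic $\iota^{\vee}$.

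The key observation is that $\mathbb{T}_{J}^{\vee}$ is algebraically very simple and carries the discrete topology. Indeed, $\mathbb{T}_{J}$ is compact, so $\mathbb{T}_{J}^{\vee}$ is discrete; and the underlying LCA group of $\mathbb{T}_{J}$ is a torus, so $\mathbb{T}_{J}^{\vee}$ is a finitely generated free abelian group, hence a finitely generated torsion-free $\mathcal{O}$-module, hence a \emph{projective} $\mathcal{O}$-module since $\mathcal{O}$ is a Dedekind domain. (Alternatively, the self-duality of the Minkowski sequence \eqref{lMinkSeqForIdeals} already exhibits $\mathbb{T}_{J}^{\vee}$ as a fractional ideal of $\mathcal{O}$.) Here one must argue this by hand rather than quote the classification of projective objects in $\mathsf{LCA}_{\mathcal{O}}$ (Theorem \ref{thm_ProjectivesInLCAO}) or of injective objects (Theorem \ref{thm_InjectivesInLCAO}), since those are precisely the statements being proved in this appendix.

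I would then finish as follows. An admissible epic in $\mathsf{LCA}_{\mathcal{O}}$ is in particular a surjection of the underlying $\mathcal{O}$-modules, so $\iota^{\vee}$ is surjective on underlying $\mathcal{O}$-modules; since $\mathbb{T}_{J}^{\vee}$ is a projective $\mathcal{O}$-module, the $\mathcal{O}$-module homomorphism underlying $g^{\vee}$ lifts to an $\mathcal{O}$-module homomorphism $\ell\colon\mathbb{T}_{J}^{\vee}\to G_{2}^{\vee}$ with $\iota^{\vee}\circ\ell=g^{\vee}$. Because $\mathbb{T}_{J}^{\vee}$ carries the discrete topology, $\ell$ is automatically continuous, hence a morphism of $\mathsf{LCA}_{\mathcal{O}}$ — this is the only place the discreteness of $\mathbb{T}_{J}^{\vee}$ enters, and it is what makes the topological constraints disappear. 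Dualizing $\ell$ and using reflexivity of $(-)^{\vee}$ yields $\tilde{g}\colon G_{2}\to\mathbb{T}_{J}$ with $\tilde{g}\circ\iota=g$. There is essentially no genuine obstacle in this argument; the only care required is to keep it independent of Theorems \ref{thm_InjectivesInLCAO} and \ref{thm_ProjectivesInLCAO}, and (if one wants the cleanest statement) to note that the argument never actually uses that $G_{2}$ is compactly generated, so the stated lemma is a special case of the unrestricted injectivity lifting property for $\mathbb{T}_{J}$.
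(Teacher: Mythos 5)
Your argument is correct and is essentially the paper's own proof: dualize to turn the problem into lifting $g^{\vee}$ along the admissible epic $G_{2}^{\vee}\twoheadrightarrow G_{1}^{\vee}$, observe that $\mathbb{T}_{J}^{\vee}\simeq J$ is a discrete projective $\mathcal{O}$-module so the algebraic lift exists and is automatically continuous, then dualize back. Your added remarks --- that projectivity of $\mathbb{T}_{J}^{\vee}$ must be argued directly rather than via Theorems \ref{thm_InjectivesInLCAO}/\ref{thm_ProjectivesInLCAO}, and that the hypothesis $G_{2}\in\mathsf{LCA}_{\mathcal{O},cg}$ is not actually used --- are both accurate but do not change the substance.
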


\begin{proof}
Use Pontryagin duality for $\mathcal{O}$-modules, Theorem
\ref{thm_PontrDualForLCAO}, to obtain the diagram%
\[%
\xymatrix{
{G_2}^{\vee} \ar@{->>}[r] & {G_1}^{\vee} \\
& J \ar[u]_{h} \ar@{-->}[ul]_{\tilde{h}}
}%
\]
with $h:=g^{\vee}$ since $\mathbb{T}_{J}^{\vee}\simeq J$. Now, forgetting
topology, the ideal $J$ is a projective algebraic $\mathcal{O}$-module, so the
algebraic $\mathcal{O}$-module lift $\tilde{h}$ exists. Since $J$ is discrete,
it is tautologically continuous. Dualizing again, $\tilde{h}^{\vee}$ is the
required lift.
\end{proof}

Now we are ready to prove the characterization of injectives. For
$\mathsf{LCA}$ this is due to Moskowitz \cite{MR0215016} and the variant for
number fields is due to Kryuchkov \cite{MR1620000}.

\begin{proof}
[Proof of Theorem \ref{thm_InjectivesInLCAO}](1$\Rightarrow$2) Suppose $I$ is
an injective object. By Lemma \ref{lemma_InjectivesAreConnected} it is
connected. Thus, for the additive group we have an isomorphism%
\begin{equation}
I\simeq\mathbb{R}^{n}\oplus C\qquad\text{(in }\mathsf{LCA}\text{)}
\label{lcsis4}%
\end{equation}
for some $n\geq0$ and $C$ compact connected by the classification of connected
LCA\ groups, \cite[Theorem 26]{MR0442141}. For the dual we get $I^{\vee}%
\simeq\mathbb{R}^{n}\oplus D$ with $D$ discrete (in $\mathsf{LCA}$), By Lemma
\ref{lemma_VectorGroupDiscreteInLCALiftsToLCAO} the direct sum decomposition
of Equation \ref{lcsis4} in $\mathsf{LCA}$ gets promoted to an isomorphism
$I^{\vee}\simeq\bigoplus_{\sigma\in I}\mathbb{R}_{\sigma}\oplus D$ in
$\mathsf{LCA}_{\mathcal{O}}$. Hence, dualizing again, $I\cong\bigoplus
_{\sigma\in I}\mathbb{R}_{\sigma}\oplus C$ in $\mathsf{LCA}_{\mathcal{O}}$.
Now as $I$ is an injective object its direct summand $C$ must also be
injective in $\mathsf{LCA}_{\mathcal{O}}$, \cite[opposite of Cor.
11.6]{MR2606234}. Thus, $C^{\vee}$ is a projective object, but as $C$ is
compact, it is discrete. Thus, $C^{\vee}$ is a projective as an algebraic
$\mathcal{O}$-module. Since $\mathcal{O}$ is a Dedekind domain, the
classification of projective modules yields an isomorphism $C^{\vee}%
\simeq\bigoplus_{J\in\mathcal{I}}J$ for a possibly infinite list $\mathcal{I}$
of ideals (Details: Dedekind domains are hereditary rings, so this claim
follows from \cite[\S 2E, (2.24) Theorem and (2.25)]{MR1653294}). As the
topology on $C^{\vee}$ is discrete, this tautologically also holds in
$\mathsf{LCA}_{\mathcal{O}}$. Thus, by dualizing, $C\simeq\prod_{J\in
\mathcal{I}}\mathbb{T}_{J}$.\newline(2$\Rightarrow$3) The underlying additive
group is $\mathbb{R}^{n}\oplus\mathbb{T}^{\omega}$ for some $n\geq0$ and
$\omega$ some cardinal. By Moskowitz \cite[Theorem 3.2]{MR0215016} this is an
injective object in $\mathsf{LCA}$. Moreover, each $\mathbb{R}_{\sigma}$ is
also an $F$-module, so they are divisible $\mathcal{O}$-modules, thus
injective as $\mathcal{O}$ is Dedekind. Similarly, each $\mathbb{T}_{J}$ is
divisible (use the surjection from the $F$-modules $\mathbb{R}_{\sigma}$ in
Equation \ref{lMinkSeqForIdeals}), and arbitrary products of injective objects
are injective, \cite[opposite of Cor. 11.7]{MR2606234}.\newline(3$\Rightarrow
$2) The underlying additive group is injective in $\mathsf{LCA}$ and thus
using \cite[Theorem 3.2]{MR0215016} isomorphic to $\mathbb{R}^{n}%
\oplus\mathbb{T}^{\omega}$ for some $n\geq0$ and $\omega$ some cardinal in the
category $\mathsf{LCA}$. Dualize to get $G^{\vee}\simeq\mathbb{R}^{n}%
\oplus\bigoplus_{\omega}\mathbb{Z}$. By Lemma
\ref{lemma_VectorGroupDiscreteInLCALiftsToLCAO} this implies that $G^{\vee
}\simeq\bigoplus_{\sigma\in I}\mathbb{R}_{\sigma}\oplus D$ in $\mathsf{LCA}%
_{\mathcal{O}}$ with $D$ discrete. By assumption the underlying algebraic
$\mathcal{O}$-module of $G$ is injective, thus the one of $G^{\vee}$ must be
projective. Thus, so must be its direct summands and hence $D$ is a projective
$\mathcal{O}$-module. Hence, it is a possible infinite direct sum of ideals of
$\mathcal{O}$. Dualizing again, we obtain%
\begin{equation}
G\simeq\bigoplus_{\sigma\in I}\mathbb{R}_{\sigma}\oplus\prod_{J\in\mathcal{I}%
}\mathbb{T}_{J}\text{.} \label{lcis5}%
\end{equation}
(2$\Rightarrow$1) We need to show that $G$ as in Equation \ref{lcis5} is an
injective object in $\mathsf{LCA}_{\mathcal{O}}$. However, a product of
injective objects is always again injective, so it suffices to show that the
factors $\mathbb{R}_{\sigma}$ (for any real or complex place $\sigma$) resp.
$\mathbb{T}_{J}$ for any ideal $J$ are injective in $\mathsf{LCA}%
_{\mathcal{O}}$. Secondly, by Lemma
\ref{lemma_InjectiveObjectsAreDetectedOnCG} it suffices to check this for
sequences $G_{1}\hookrightarrow G_{2}\twoheadrightarrow G_{2}/G_{1}$, where
$G_{2}$ is in $\mathsf{LCA}_{\mathcal{O},cg}$. However, these two cases are
handled by Lemma \ref{lemma_RSigmaAreInjectives} and Lemma
\ref{lemma_TJAreInjectives} respectively. Thus, $G$ is indeed an injective object.
\end{proof}

\begin{acknowledgement}
I heartily thank Dustin Clausen for several discussions; his work started the
entire project and was a great source of inspiration. Moreover, I thank Dmitri
Shakhmatov, Markus Spitzweck, Nikolay Kryuchkov, Martin Levin, and Matthias
Wendt for very helpful correspondence and encouragement. Moreover, we thank
the FRIAS for providing excellent working conditions.
\end{acknowledgement}

\bibliographystyle{amsalpha}
\bibliography{acompat,ollinewbib}

\end{document}